\providecommand{\keywords}[1]
{
  {\small	
  \textbf{\textit{Key words and phrases---}} #1}
}
\providecommand{\subjclass}[1]
{
  {\small	
  \textbf{\textit{2020 Mathematics Subject Classification---}} #1}
}
\newcommand\wrt{w.r.t.\@ifnextchar,{}{\ }}
\newcommand\aE{a.e.\@ifnextchar,{}{\ }}
\newcommand\as{a.s.\@ifnextchar,{}{\ }}
\newcommand\ps{p.s.\@ifnextchar,{}{\ }}
\newcommand\ie{i.e.\@ifnextchar,{}{\ }}
\newcommand\eg{e.g.\@ifnextchar,{}{\ }}
\newcommand\etal{et al.\@ifnextchar,{}{\ }}
\newcommand\etc{etc\@ifnextchar.{}{.\@}}
\theoremstyle{plain}
\newtheorem{Th}{Theorem}[section]
\newtheorem{theo}[Th]{Theorem}
\newtheorem{lemme}[Th]{Lemma}
\newtheorem{corol}[Th]{Corollary}
\newtheorem{prop}[Th]{Proposition}
\newtheorem{defin}[Th]{Definition}
\newtheorem{lemma}[Th]{Lemma}
\newtheorem{definition}[Th]{Definition}
\newtheorem{assumption}{Assumption}
\theoremstyle{definition}
\newtheorem{remark}[Th]{Remark}
\newtheorem*{remark*}{Remark}
\DeclareMathOperator{\argmax}{argmax}
\newcommand{\scalProd}[2]{\langle #1, #2 \rangle}
\newcommand{\inv}[1]{\frac{1}{#1}}
\DeclarePairedDelimiter{\ceil}{\lceil}{\rceil}
\DeclarePairedDelimiter{\floor}{\lfloor}{\rfloor}
\newcommand{\cF}{\mathcal{F}}
\newcommand{\cI}{\mathcal{I}}
\newcommand{\cL}{\mathcal{L}}
\newcommand{\cN}{\mathcal{N}}
\newcommand{\cO}{\mathcal{O}}
\newcommand{\cU}{\mathcal{U}}
\newcommand{\Prb}{\mathbb{P}}
\renewcommand{\P}{\mathbb{P}}
\newcommand{\Esp}{\mathbb{E}}
\newcommand{\ind}{\mathds{1}}
\newcommand{\Var}{\mathrm{Var}}
\newcommand{\Cov}{\mathrm{Cov}}
\newcommand{\N}{\mathbb{N}}
\newcommand{\Z}{\mathbb{Z}}
\newcommand{\R}{\mathbb{R}}
\newcommand{\drv}{\mathrm{d}}
\newcommand{\norm}[1]{\Vert#1\Vert}
\newcommand{\Bignorm}[1]{\left\Vert#1\right\Vert}
\newcommand{\BigNorm}[1]{\left\Vert#1\right\Vert}
\newcommand{\normTV}[1]{\Vert#1\Vert_{\text{TV}}}
\newcommand{\eps}{\varepsilon}
\newcommand{\leqlex}{\leq_{\mathrm{lex}}}
\DeclareRobustCommand{\bbDelta}{{\mathpalette\bb@Delta\relax}}
\newcommand{\bb@Delta}[2]{%
  \begingroup
  \sbox\z@{$\m@th#1\Delta$}%
  \dimendef\Dht=6 \dimendef\Dwd=8
  \setlength{\Dwd}{\wd\z@}%
  \setlength{\Dht}{\ht\z@}%
  \begin{picture}(\Dwd,\Dht)
  \put(0,0){$\m@th#1\Delta$}
  \put(.42\Dwd,.7\Dht){\line(10,-26){.25\Dht}}
  \end{picture}%
  \endgroup
}
\newcommand{\DeltaR}{{\bbDelta}}
\newcommand{\dgr}{d}
\newcommand{\T}{T}
\newcommand{\G}{G}
\newcommand{\rooot}{\partial}
\newcommand{\parent}{\mathrm{p}}
\newcommand{\SpaceX}{\mathcal{X}}
\newcommand{\SpaceY}{\mathcal{Y}}
\newcommand{\SpaceZ}{\mathcal{Z}}
\newcommand{\BorelX}{\mathcal{B}(\SpaceX)}
\newcommand{\BorelY}{\mathcal{B}(\SpaceY)}
\newcommand{\BorelZ}[1]{\mathcal{B}(#1)}
\newcommand{\Tpast}{T^{\infty}}
\newcommand{\Neighbor}{\mathcal{O}}
\newcommand{\Shape}{\mathcal{S}\mathit{h}}
\newcommand{\ShapeValue}{\mathcal{S}}
\newcommand{\ShapeSetValues}{{\mathcal{N}}}
\newcommand{\f}{\mathrm{f}}
\newcommand{\heightFunction}{h}
\newcommand{\height}[1]{\heightFunction(#1)}
\newcommand{\h}{\mathrm{h}}
\newcommand{\Hterm}{\mathrm{H}}
\newcommand{\ThetaNeighborhood}{\cO}
\newcommand{\dimTheta}{d}
\newcommand{\thetaTrue}{\theta^\star}
\newcommand{\Ptrue}{\mathbb{P}_{\thetaTrue}}
\newcommand{\Etrue}{\mathbb{E}_{\thetaTrue}}
\newcommand{\Pzeta}{\Prb_{\thetaTrue, \zeta}}
\newcommand{\Ezeta}{\Esp_{\thetaTrue, \zeta}}
\newcommand{\PtrueZeta}{\Prb_{\thetaTrue \bowtie \zeta}}
\title{Asymptotic properties of the maximum likelihood estimator for Hidden Markov Models indexed by binary trees}
\author{%
  Julien Weibel\footnote{Institut Denis Poisson,
Universit\'{e} d'Orl\'{e}ans,
Universit\'{e} de Tours,
CNRS,
France
and
CERMICS, \'{E}cole des Ponts, France.
    \textrm{\textbf{julien.weibel@normalesup.org}}}
    }
\date{September 8, 2024}
\begin{document}

\maketitle

\begin{abstract}
We consider hidden Markov models indexed by a binary tree
where the hidden state space is a general metric space.
We study the maximum likelihood estimator (MLE) 
	of the model parameters
	based only on the observed variables.
In both stationary and non-stationary regimes,
we prove strong consistency and 
asymptotic normality of the MLE
under standard assumptions.
Those standard assumptions imply
uniform exponential memorylessness properties of the initial distribution 
conditional on the observations.
The proofs rely on 
ergodic theorems for Markov chain indexed by trees 
with neighborhood-dependent functions.
\end{abstract}

\keywords{Hidden Markov tree (HMT), hidden Markov model (HMM), branching process, maximum likelihood estimator (MLE), asymptotic normality, consistency, geometric ergodicity}

\subjclass{62M05, 62F12, 60J80, 60J85}

\section{Introduction}

In this article, we consider a generalization of the hidden Markov chain/model (HMM)
	where the process is indexed by a binary tree,
	which we call hidden Markov tree (HMT).
The HMT is composed of a hidden process and an observed process.
The hidden process is a branching Markov process, that is, 
a random process $X = (X_u, u\in\T)$ with values in a metric space $\SpaceX$
indexed by a rooted tree $\T$ with the Markov property:
sibling nodes take independent and identically distributed values 
that depend only on the value of their parent node.
Note that the hidden process is sometimes called latent process in the literature.
Conditionally on the hidden process $X$, the observed process $Y = (Y_u, u\in\T)$,
with values in another metric space $\SpaceY$,
is composed of independent random variables $Y_u$ which only depends on $X_u$
	for all $u\in\T$.
See Definitions~\ref{def:Markov_proc} and \ref{def:HMT} below for a complete formal definitions.
In this article, we consider the case where the tree $\T$ is the (deterministic) complete infinite rooted binary tree,
that is, each vertex has exactly two children.
See Figure~\ref{fig_dependance_HMT} for a graphical representation of the dependance between
the variables composing the HMT process $(X,Y)$ indexed by $\T$.

\begin{figure}[h]
\centering
\includegraphics[height=5cm]{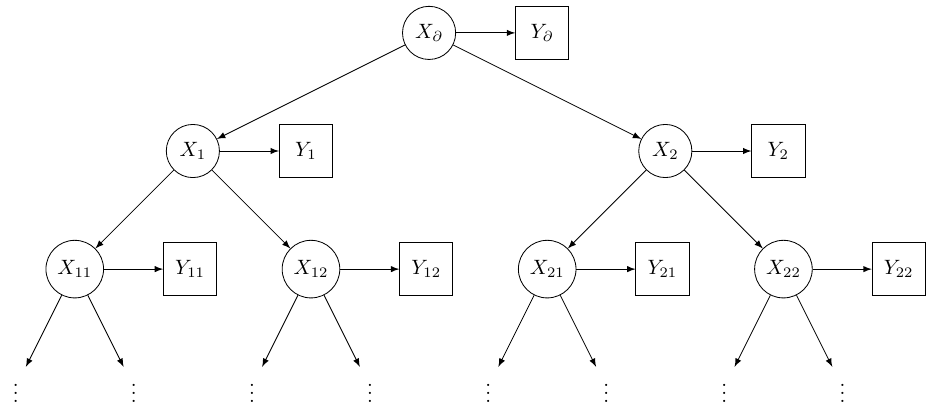}
\caption{Graph of dependance for variables of a HMT process indexed by the complete infinite rooted binary tree $\T$.
	The observed variables are represented inside square,
	while the hidden variables are represented inside circles.
	}
\label{fig_dependance_HMT}
\end{figure}

\subsection{Literature review}
	\label{section_Literature_review}

HMMs were first introduced by Baum and Petrie in \cite{baumStatisticalInferenceProbabilistic1966}
and were popularized by Rabiner's tutorial \cite{rabinerTutorialHiddenMarkov1989}.
Since then, HMMs have been used in a wide variety of applications such as
speech recognition \cite{yuAutomaticSpeechRecognition2015},
bioinformatics \cite{koskiHiddenMarkovModels2001},
finance \cite{mamonHiddenMarkovModels2014},
and time-series analysis
\cite{zucchiniHiddenMarkovModels2009};
see also \cite{bouguilaHiddenMarkovModels2022}
for a more global reference on HMMs applications.

HMTs were first introduced in \cite{CrouseHMT} to account for the multi-scale dependency of wavelet coefficients
	in statistical signal processing with applications in wavelet-based image processing 
\cite{rombergMultiscaleClassificationUsing2000,choiMultiscaleImageSegmentation2001,duarteWaveletdomainCompressiveSignal2008,
	shahdoostiImageDenoisingDual2017}.
After that, HMTs have been used in several application contexts such as
natural language processing \cite{graveHiddenMarkovTree2013,kondoHiddenMarkovTree2013},
flood mapping \cite{xieGeographicalHiddenMarkov2018},
medical imaging \cite{makhijaniAccelerated3DMERGE2012,huMicrocalcificationDiagnosisDigital2017,hanzouli-bensalahFrameworkBasedHidden2017},
plant growth modeling \cite{durandAnalysisPlantArchitecture2005},
and bioinformatics 
\cite{olariuModifiedVariationalBayes2009,biesingerDiscoveringMappingChromatin2013,nakashimaLineageEMAlgorithm2020}.

In practice, maximum likelihood estimation for HMMs
often relies on iterative numerical methods to approximate 
the maximum likelihood estimator (MLE).
Those methods are often based on
the expectation-maximization algorithm
which is an algorithm for models with missing data
and was popularized by Dempster \etal
\cite{dempsterMaximumLikelihoodIncomplete1977a} in a celebrated article.
For HMMs with finite hidden state space, 
the first presentation of a complete expectation-maximization strategy
is due to Baum \etal \cite{baumMaximizationTechniqueOccurring1970a},
and is the well-known ‘‘forward-backward’’ or Baum-Welch algorithm.
For more details on the expectation-maximization 
and ‘‘forward-backward’’ algorithms and their stochastic approximations, 
see \cite[Chapters~10 and~11]{CappeHMM}.
In the HMT case, the ‘‘forward-backward’’ algorithm
must be replaced by the ‘‘upward-downward’’ algorithm
developed in \cite{CrouseHMT}.
See also \cite{durandComputationalMethodsHidden2004} 
for alternative ‘‘upward-downward’’ recursive formulae
that can handle underflow issues implicitly.

\medskip

The statistical properties of the MLE for the HMM were first studied in \cite{baumStatisticalInferenceProbabilistic1966}
which proved consistency and asymptotic normality in the case where both the hidden and the observed processes
can only take finitely many values.
Those results were then successively extended in a series of articles
\cite{lerouxMaximumlikelihoodEstimationHidden1992,bickelAsymptoticNormalityMaximumlikelihood1998,jensenAsymptoticNormalityMaximum1999,leglandExponentialForgettingGeometric2000,doucAsymptoticsMaximumLikelihood2001}. 
An extension of all those results for HMMs  with autoregression
	(that is, when conditionally on the hidden Markov chain,
	the observed process is an inhomogeneous $s$-order Markov chain for some $s\in\N$)
was later developed in \cite{doucAsymptoticPropertiesMaximum2004},
which proved, using weaker assumptions, strong consistency and asymptotic normality of the MLE
for auto-regressive HMMs with compact hidden state space
and with possibly non-stationary regime.
The methods used in \cite{doucAsymptoticPropertiesMaximum2004}
relies on expressing the log-likelihood as an additive function of an extended Markov chain with infinite past
thanks to stationarity and using geometric ergodicity of this extended chain (extension to non-stationary regime is then made separately).
The method of \cite{doucAsymptoticPropertiesMaximum2004} was  adapted in 
\cite{kasaharaAsymptoticPropertiesMaximum2019}
under similar assumptions
to allow the transition densities of the hidden process to be zero valued.
Since the article \cite{doucAsymptoticPropertiesMaximum2004},
the strong consistency of the MLE was proved 
under weaker assumptions
in \cite{genon-catalotLerouxMethodGeneral2006,doucConsistencyMaximumLikelihood2011,doucMaximizingSetAsymptotic2016},
but no generalization has been made for the asymptotic normality of the MLE.

In this article, we will adapt the proof method of \cite{doucAsymptoticPropertiesMaximum2004} to the HMT case.
We shall also refer to the monograph \cite{CappeHMM} which exposes in details the theory of HMMs,
and in particular to its Chapter 12 which covers the strong consistency and asymptotic normality of the MLE,
under the same assumptions used in \cite{doucAsymptoticPropertiesMaximum2004},
for HMMs where the hidden state space is a general metric space.

\medskip

To adapt the proof method of \cite{doucAsymptoticPropertiesMaximum2004}
to the HMT case,
we will need almost sure (a.s.) and $L^2$ ergodic convergence results 
for branching Markov chains
under geometric ergodicity of the transition kernel
as in \cite{GuyonLimitTheorem,weibelErgodicTheoremBranching2024}.
Indeed, we will need variants of those results for 
neighborhood-dependent functions
(that is, the function associated to each vertex $u$
depends on variables $X_v$ for vertices $v$ in the neighborhood of $u$)
which we develop in \Cref{subsection_ergodic_theorem} and \Cref{appendix_ergodic_theorems}.

\subsection{New contribution}
\label{section_intro_new_contribution}

In this article, we consider the case where the distribution of the HMT
is parametrized by some vector $\theta$, that is,
the transition kernel $Q_\theta$ between the hidden variables
and the transition kernel $G_\theta$ from hidden variables to observed variables
both depend on $\theta$. 
As an example, if the hidden state space $\SpaceX$ is finite 
and $Y_u$ conditioned on $X_u$ is a Gaussian random variable for each $u\in\T$,
then $\theta$ could parametrized the transition matrix of the hidden process
	and the mean and variances of the Gaussian distribution associated to each hidden state values.
Our goal is to estimate the true parameter $\thetaTrue$ of the HMT process
among a compact set of possible parameters $\Theta\subset \R^{\dimTheta}$, for some integer $\dimTheta$,
using only the knowledge of the observed process $Y$ over $n$ generations of the tree.
Note that as our assumptions will imply uniform exponential memorylessness properties for the initial distribution,
we cannot try estimate the initial distribution. Denote $\rooot$ the root of the tree $\T$.
Thus, we assume that the distribution of the hidden root variable $X_\rooot$ 
is some unknown measure $\zeta$ which does not depend on $\theta$.
Denote by $\Pzeta$ the probability distribution of the HMT under the true parameter $\thetaTrue$
	when the initial unknown distribution of $X_\rooot$ is $\zeta$.
When $\zeta$ is the unique invariant measure of $Q_\theta$ (\ie in the stationary case),
we write $\Ptrue$ instead of $\Pzeta$.

To estimate the true parameter $\thetaTrue$ of the HMT, we will use the maximum likelihood estimator (MLE).
We will work with the likelihood conditioned on the hidden state of the root vertex $X_{\rooot}$.
The reason to do this is that the computation of the stationary distribution of the joint process $(X,Y)$,
and thus also the true likelihood, is intractable   in typical applications.
Note that the idea of conditioning on the initial hidden state was already used in \cite{doucAsymptoticPropertiesMaximum2004}
for HMMs with the same motivation,
and conditioning on initial observations in time series goes back at least to \cite{mannStatisticalTreatmentLinear1943}.
Remind that $\T$ denote the (deterministic) complete infinite rooted binary tree.
Denote $\T_n$ the tree $\T$ up to and including the $n$-th generation.
Hence, for any value $x\in\SpaceX$, we denote by $\ell_{n,x}(\theta)$ the log-likelihood under the parameter $\theta$
of the observed process $(Y_u, u\in \T_n)$ until the $n$-th generation of the tree $\T$
conditionally on $X_{\rooot} = x$
(see \eqref{eq_def_l_nx_theta_2} on page~\pageref{eq_def_l_nx_theta_2} for exact definition).
Then,  for any value $x\in\SpaceX$, 
we define the MLE $\hat \theta_{n,x}$ as the maximizer of $\ell_{n,x}$ over $\Theta$
(see \eqref{eq_def_MLE_hat_theta_nx}  on page~\pageref{eq_def_MLE_hat_theta_nx} for exact definition).

\medskip

Our goal is to study the asymptotic properties of the MLE.
We prove the strong consistency and the asymptotic normality of the MLE
in the stationary case in Sections~\ref{section_strong_consistency} and~\ref{section_asymptotic_normality}, respectively.
We then extend those results to the non-stationary case in  \Cref{section_non_stationary}.
In our results, the hidden state space $\SpaceX$ and the observed state space $\SpaceY$
are both general metric spaces.
We prove our results under the same assumptions used
in \cite{doucAsymptoticPropertiesMaximum2004} and in \cite[Chapter~12]{CappeHMM} for HMMs
with $L^1$ and $L^2$ integrability assumptions
replaced by $L^2$ and $L^4$ integrability assumptions, respectively,
to accommodate
the stronger assumptions needed in ergodic theorems for branching Markov chains.
See \Cref{rem_intro_diff_HMM_HMT} below for a discussion on 
the main differences between the HMM case as in \cite{doucAsymptoticPropertiesMaximum2004,CappeHMM}
and the HMT case we develop in this article.

We first state that strong consistency of the MLE holds under standard assumptions for HMMs.
Following \cite{doucAsymptoticPropertiesMaximum2004}, we assume a fully dominated model, that is,
the transition kernels $Q_\theta$ and $G_\theta$ admits densities $q_\theta$ and $g_\theta$ \wrt 
to common measures $\lambda$ and $\mu$, respectively (see \Cref{assump_HMM_1}).
We also assume (see \Cref{assump_HMM_2}) : 
\begin{equation}\label{eq_intro_bounds_q_theta}
0 < \sigma^- \leq \inf_{x,x'\in\SpaceX} q_\theta(x,x') \leq \sup_{x,x'\in\SpaceX} q_\theta(x,x') \leq \sigma^+ < \infty .
\end{equation}
This assumption is rather strong as it imposes a full connection for the hidden space,
see \cite{kasaharaAsymptoticPropertiesMaximum2019} for an extension of the method in \cite{doucAsymptoticPropertiesMaximum2004}
for HMMs where $q_\theta$ is allowed to be zero valued.
Nevertheless, this assumption implies 
the uniform exponential memorylessness properties with mixing rate $\rho:=1-\sigma^-/\sigma^+$ 
of the initial distribution conditional on the observations $(Y_u, u\in \T_n)$.
The other assumptions are more standard regularity assumptions for the densities $q_\theta$ and $g_\theta$
(see Assumptions~\ref{assump_HMM_1}-\ref{assump_HMM_4}),
and identifiability of the model.
We can now state the strong consistency of the MLE under those assumptions,
see Theorems~\ref{thm_Strong_consistency_MLE} and~\ref{thm_strong_constistency_non_stationary}
for the precise statements in the stationary and non-stationary case, respectively.

\begin{theo}[Strong consistency of the MLE]
	\label{intro:thm_Strong_consistency_MLE}
Under those assumptions of fully dominated model
with density satisfying \eqref{eq_intro_bounds_q_theta}
and other more standard regularity assumptions,
and under the assumption that the model is identifiable,
for any $x\in\SpaceX$,
the MLE $\hat \theta_{n,x}$ is strongly consistent, that is, 
	the sequence $(\hat \theta_{n,x})_{n\in\N}$ converges $\Pzeta$-almost surely  to the true parameter $\thetaTrue\in\Theta$.
\end{theo}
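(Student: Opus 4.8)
The plan is to follow the classical Wald-type strategy for maximum likelihood consistency, adapted to the branching/tree setting. The fundamental obstacle is that the conditional log-likelihood $\ell_{n,x}(\theta)$, unlike in the i.i.d. case, is not a sum of independent terms: each summand involves the observation at a vertex together with its relationship to the whole observed configuration through the conditioning on $X_\rooot=x$, and the number of vertices in $\T_n$ grows like $2^{n+1}$. The first step is therefore to establish that the normalized log-likelihood $|\T_n|^{-1}\ell_{n,x}(\theta)$ converges, $\Pzeta$-almost surely and uniformly in $\theta\in\Theta$, to a deterministic limit $\ell(\theta)$ that does not depend on the conditioning value $x$ nor on the initial distribution $\zeta$. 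This is where the uniform exponential memorylessness property — guaranteed by the strong bound \eqref{eq_intro_bounds_q_theta} with mixing rate $\rho=1-\sigma^-/\sigma^+$ — does the essential work: it lets me replace the conditional log-likelihood of each vertex by a term depending only on observations in a bounded neighborhood, up to an error decaying geometrically in the distance to the root, so that the limiting object is genuinely local.

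Concretely, I would first rewrite $\ell_{n,x}(\theta)$ as a sum over vertices $u\in\T_n$ of conditional log-likelihood increments, and then show that each increment can be approximated by a neighborhood-dependent function of the observed process with an exponentially small approximation error; this is exactly the kind of quantity the ergodic theorems for branching Markov chains with neighborhood-dependent functions, announced in the introduction (\Cref{subsection_ergodic_theorem} and \Cref{appendix_ergodic_theorems}), are built to handle. Applying those ergodic theorems — using the $L^2$ integrability strengthening noted in the paper to control the branching fluctuations — yields the almost sure convergence of the normalized log-likelihood to $\ell(\theta)=\Etrue[\,\cdot\,]$ for a suitable stationary functional. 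Uniformity in $\theta$ over the compact set $\Theta$ should come from the regularity and continuity assumptions on the densities $q_\theta,g_\theta$ (Assumptions~\ref{assump_HMM_1}--\ref{assump_HMM_4}) via an equicontinuity/bracketing argument, upgrading pointwise a.s. convergence to uniform a.s. convergence.

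The second main step is to identify the maximizer of the limit $\ell(\theta)$ with the true parameter $\thetaTrue$. Here I would argue that $\ell(\thetaTrue)-\ell(\theta)$ is a limit of (nonnegative) relative-entropy-type quantities, so that $\ell$ is maximized at $\thetaTrue$, and that the identifiability assumption forces the maximizer to be unique, \ie $\ell(\theta)=\ell(\thetaTrue)$ if and only if $\theta=\thetaTrue$. Combining uniform convergence of $|\T_n|^{-1}\ell_{n,x}$ to $\ell$ with the uniqueness of the maximizer is the standard route: since $\hat\theta_{n,x}$ maximizes $\ell_{n,x}$, any limit point of $(\hat\theta_{n,x})_{n\in\N}$ — which exist by compactness of $\Theta$ — must maximize $\ell$, hence equal $\thetaTrue$, giving $\hat\theta_{n,x}\to\thetaTrue$ almost surely. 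The independence of the limit from $x$ and $\zeta$ ensures the conclusion holds for every $x\in\SpaceX$ under $\Pzeta$.

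I expect the hard part to be the first step: proving the uniform almost sure convergence of the conditional log-likelihood. The difficulty is twofold. First, one must quantify how fast the conditional law of the root state, given the observations up to generation $n$, forgets its conditioning — this is the memorylessness estimate, and controlling it uniformly over $\theta$ and over the exponentially many vertices requires care in summing the geometric errors against the growing tree. Second, the ergodic theorem being invoked is for neighborhood-dependent functions on a branching structure rather than for a stationary sequence, so the approximation of $\ell_{n,x}$ by a genuinely neighborhood-local functional, with controlled $L^2$ error, is the technical crux on which the whole argument rests.
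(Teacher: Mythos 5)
Your outline of the stationary part reproduces the paper's own route almost step for step: the telescopic decomposition of $\ell_{n,x}(\theta)$ into increments $\h_{u,\height{u},x}(\theta)$, the exponential forgetting bound (\Cref{lemme_exp_coupling_HMT}) making these increments uniformly Cauchy in the depth of the past so that infinite-past limits $\h_{u,\infty}(\theta)$ exist, the ergodic theorems for neighborhood-shape-dependent functions to get $\Ptrue$-a.s.\ convergence of $\vert\T_n\vert^{-1}\ell_{n,x}(\theta)$ to the contrast function, a compactness-plus-continuity upgrade to uniform convergence, and the Wald argument. But there is a genuine gap in how you reach the stated $\Pzeta$-a.s.\ conclusion for an arbitrary initial distribution $\zeta$. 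All of your convergence machinery (the ergodic theorems, hence the analogue of \Cref{prop_unif_cv_l}) lives under the stationary law $\Ptrue$, and the observation that the limit $\ell(\theta)$ ``does not depend on $\zeta$'' does not transfer almost-sure statements: a $\Ptrue$-null event on which uniform convergence fails could a priori carry positive $\Pzeta$-mass. The paper closes this with a short but indispensable absolute-continuity argument (proof of \Cref{thm_strong_constistency_non_stationary}): integrating out the root and using $\sigma^-\leq q_\theta\leq\sigma^+$ together with $\drv\pi_{\thetaTrue}/\drv\lambda\in[\sigma^-,\sigma^+]$, the law of $(X'_{\T^*},Y'_{\T^*})$ under $\Pzeta$ is dominated by its stationary counterpart with Radon--Nikodym density bounded by $(\sigma^+/\sigma^-)^2$, so every $\Ptrue$-a.s.\ statement about the observations off the root holds $\Pzeta$-a.s. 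Your proposal contains no mechanism of this kind, so as written it only yields $\Ptrue$-a.s.\ consistency.

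The second gap is in the identification step. Your KL argument correctly shows that $\thetaTrue$ is a global maximum of $\ell$, but the assertion that ``identifiability forces $\ell(\theta)=\ell(\thetaTrue)$ iff $\theta=\thetaTrue$'' assumes precisely what must be proved: identifiability is a statement about the laws of $(Y_u,u\in\T_n)$, so one must show that any $\theta$ with $\ell(\theta)=\ell(\thetaTrue)$ is \emph{equivalent} to $\thetaTrue$, i.e.\ induces the same law of the observed process. This is the hard half of \Cref{prop_global_max_l}, and it is not a formality in the tree setting: the paper rewrites $\ell(\thetaTrue)-\ell(\theta)=0$ as a vanishing limit of conditional KL divergences over blocks $\T(\cdot,m)$, discards a nonnegative term so as to isolate a sub-block $\T_n$ separated from all conditioning variables by a spatial gap of length $m-n$ along the backward spine, and then lets this gap tend to infinity, using the uniform geometric ergodicity of $Q_\theta$ (via the bounds \eqref{eq_p_Y_Tn_upper_bound}--\eqref{eq_p_Y_Tn_lower_bound} and dominated convergence for the variables $W_{m,k}(\theta)$) to show the conditioning washes out, giving $\Etrue\bigl[\log\bigl(p_{\thetaTrue}(Y_{\T_n})/p_{\theta}(Y_{\T_n})\bigr)\bigr]=0$ and hence equality of all finite-dimensional marginals. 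Your proposal contains none of this gap-insertion argument. (One can instead take the hypothesis in the form the paper's \Cref{thm_Strong_consistency_MLE} literally uses --- ``$\ell$ has a unique maximum'' --- in which case your easy KL direction suffices for the Wald step; but then relating that hypothesis to identifiability of the model is exactly the missing argument.)
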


To prove asymptotic normality of the MLE, in addition to the assumptions used in \Cref{intro:thm_Strong_consistency_MLE},
we need existence and regularity assumptions for the gradient and the Hessian of the transition densities $q_\theta$ and $g_\theta$
(see Assumptions~\ref{assump_HMM_grad_1}-\ref{assump_HMM_grad_3}).
Denote by $\cI(\thetaTrue)$ the limiting Fisher information matrix of the model
(see \eqref{eq_def_Fisher_information} on page \pageref{eq_def_Fisher_information} for precise definition).
The proof of asymptotic normality in the non-stationary case is an extension of the stationary case.
The proof of asymptotic normality in the stationary case 
follows from a standard argument for asymptotic normality of the MLE
that relies on Theorem~\ref{intro:thm_Strong_consistency_MLE} 
and Theorems~\ref{intro:thm_convergence_score} and~\ref{intro:thm_LLN_observed_information} below.

The following theorem, which we only prove in the stationary case, states that
the normalized score $\vert \T_n \vert^{-1/2} \, \nabla_\theta \ell_{n,x}(\thetaTrue)$
has asymptotic normal fluctuations with covariance matrix $\cI(\thetaTrue)$,
see \Cref{thm_convergence_score} for the precise statement.
Note that the extra assumption in \Cref{intro:thm_convergence_score} (not present in the case of HMMs)
that $\rho < 1/\sqrt{2}$ for the mixing rate $\rho$ of the HMT process
comes from the approximation bounds used in the proof of this theorem.
See \Cref{rem_rho_smaller_than_2} below
for a discussion on this condition on $\rho$.

\begin{theo}[Asymptotic normality of the normalized score]
	\label{intro:thm_convergence_score}
Under the assumptions from \Cref{intro:thm_Strong_consistency_MLE} 
and existence and regularity assumptions for the gradient and the Hessian of the transition densities
(see Assumptions~\ref{assump_HMM_grad_1}-\ref{assump_HMM_grad_3}),
and under the assumption that $\rho < 1/\sqrt{2}$ for the mixing rate $\rho$ of the HMT process,
in the stationary case we have:
\begin{equation*}
\vert \T_n \vert^{-1/2} \, \nabla_\theta \ell_{n,x}(\thetaTrue) 
	\underset{n\to\infty}{\overset{(d)}{\longrightarrow}}
\cN(0, \cI(\thetaTrue))
\quad \text{under $\Ptrue$.}
\end{equation*}
\end{theo}

The following theorem states the locally uniform convergence $\Pzeta$-\as
of the normalized observed information $-\vert\T_n\vert^{-1} \nabla_\theta^2 \ell_{n,x}(\theta)$
towards the Fisher information matrix $\cI(\thetaTrue)$,
see Theorems~\ref{thm_LLN_observed_information} and~\ref{thm_LLN_observed_information_non_stationary}
for the precise statements in the stationary and non-stationary case, respectively.
Note that in this theorem we need the stronger assumption $\rho < 1/2$ for the mixing rate $\rho$ of the HMT process
as we use more restrictive approximation bounds in the proof of this theorem
than the ones used in the proof of \Cref{intro:thm_convergence_score}.

\begin{theo}[Convergence of the normalized observed information]
	\label{intro:thm_LLN_observed_information}
Under the assumptions from \Cref{intro:thm_convergence_score} on the HMT model,
and under the assumption that $\rho < 1/2$ for the mixing rate $\rho$ of the HMT process,
for all $x\in\SpaceX$, we have:
\begin{equation*}
\lim_{\delta\to 0} \lim_{n\to\infty} \sup_{\theta\in\Theta \,:\, \Vert \theta - \thetaTrue \Vert \leq \delta}
	\ \Bigl\Vert{-\vert\T_n\vert^{-1} \nabla_\theta^2 \ell_{n,x}(\theta) - \cI(\thetaTrue)}\Bigr\Vert = 0
\quad \text{$\Pzeta$-\as}
\end{equation*}
\end{theo}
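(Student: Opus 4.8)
The plan is to prove the locally uniform almost sure convergence of the normalized observed information by decomposing the Hessian $\nabla_\theta^2 \ell_{n,x}(\theta)$ into a sum of neighborhood-dependent contributions along the tree and then applying the ergodic theorem for branching Markov chains from \Cref{subsection_ergodic_theorem}. First I would write the conditional log-likelihood $\ell_{n,x}(\theta)$ as a telescoping sum of conditional log-likelihood increments, one for each generation (or each vertex), using the upward-downward recursive structure. Differentiating twice in $\theta$, each increment becomes a function depending on the hidden and observed variables in a neighborhood of the corresponding vertex $u$, conditioned on the observations up to generation $n$. The key difficulty is that these increments depend on the initial conditioning $X_\rooot = x$ and on the finite horizon $n$ through the backward (downward) conditioning on future observations.

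The heart of the argument is therefore an approximation step. The plan is to replace each finite-horizon increment by a limiting increment that depends only on an infinite past of hidden and observed variables, exploiting the uniform exponential memorylessness with mixing rate $\rho$ guaranteed by \eqref{eq_intro_bounds_q_theta}. The derivatives of the conditional densities introduce products of first- and second-order sensitivity terms, and after differentiating twice the forgetting estimates must be applied to both factors; this is precisely why the more restrictive bound $\rho < 1/2$ is required here, rather than the $\rho < 1/\sqrt{2}$ sufficient for the score. I would establish, uniformly in $\theta$ over a neighborhood of $\thetaTrue$, an $L^2$ (and almost sure summability) bound on the difference between the true finite-horizon increment and its stationary infinite-past surrogate, with the difference controlled by a geometric factor $\rho^{k}$ in the distance $k$ from the vertex to the leaves or the root. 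The $L^4$ integrability assumptions replacing the $L^2$ ones in the HMM case enter here to control the second moments of the squared sensitivity terms appearing in the Hessian.

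Once the finite-horizon increments are approximated by genuinely neighborhood-dependent stationary functions of the branching Markov chain, I would invoke the ergodic theorem for Markov chains indexed by binary trees with neighborhood-dependent functions (from \Cref{subsection_ergodic_theorem} and \Cref{appendix_ergodic_theorems}). This yields $\Pzeta$-almost sure convergence of $-\vert\T_n\vert^{-1}\nabla_\theta^2\ell_{n,x}(\theta)$, for each fixed $\theta$, to the expectation of the limiting Hessian increment under the stationary law, which one identifies with the Fisher information $\cI(\thetaTrue)$ at $\theta = \thetaTrue$ via the usual information-matrix identity (second Bartlett identity), and more generally with a limiting matrix $\cI(\theta,\thetaTrue)$ for $\theta \neq \thetaTrue$.

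The main obstacle, and the last step, is upgrading pointwise-in-$\theta$ convergence to the locally uniform convergence stated in the theorem, namely $\lim_{\delta\to 0}\lim_{n\to\infty}\sup_{\Vert\theta-\thetaTrue\Vert\leq\delta}$. I would handle this by an equicontinuity argument: using the regularity Assumptions~\ref{assump_HMM_grad_1}-\ref{assump_HMM_grad_3} on the gradient and Hessian of the densities, I would show the family $\{\theta \mapsto -\vert\T_n\vert^{-1}\nabla_\theta^2\ell_{n,x}(\theta)\}_n$ is uniformly equicontinuous in $\theta$ (almost surely), for instance by dominating the third-order sensitivity or by a uniform modulus-of-continuity estimate on the Hessian increments that is again summable thanks to the $\rho < 1/2$ forgetting. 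Combining equicontinuity with the pointwise limit over a countable dense set of parameters, and continuity of the limiting map $\theta\mapsto\cI(\theta,\thetaTrue)$ at $\thetaTrue$, gives the double limit. I expect the equicontinuity/uniformity step, together with keeping the geometric bounds uniform over the neighborhood of $\thetaTrue$, to be the most delicate part; the ergodic theorem itself is invoked essentially as a black box once the increments are cast in neighborhood-dependent form.
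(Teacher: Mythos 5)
Your architecture matches the paper's: telescoping the Hessian into per-vertex increments (the paper obtains these through Louis's missing-information identity, splitting each increment into a conditional-expectation part $\Lambda_{u,k,x}(\theta)$ and a conditional-variance part $\Gamma_{u,k,x}(\theta)$, see \eqref{eq_def_Lambda_ukx}--\eqref{eq_Hessian_likelihood}), approximating by trimmed-past increments with geometric $L^2$ bounds, invoking the neighborhood-shape-dependent ergodic theorems at fixed trimming depth $k$ and letting $k\to\infty$, recovering almost sure convergence from the exponential $L^2$ decay via Borel--Cantelli, and identifying the limit with $\cI(\thetaTrue)$ through the Fisher-information (Bartlett) identity. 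Two of your steps, however, have genuine problems. First, your explanation of $\rho<1/2$ (``the forgetting estimates must be applied to both factors'') misses the actual mechanism and contains no substitute for it: the $\Gamma$ terms are controlled through conditional covariances over \emph{pairs} of vertices, which requires new two-vertex forward and backward coupling bounds (\Cref{lemma_forward_coupling_two_vertices,lemma_backward_coupling_two_vertices,lemma_bound_covar_terms_Gamma}) and a six-term decomposition of the variance differences (\Cref{lemma_Gamma_incr_L2_bound}); the resulting double sums over pairs pit the coupling decay $\rho^{d(v,w)}$ against the branching rate $2$ of $\T$, producing factors of order $(2\rho)^{k}$ --- that competition, not a ``product of two forgetting factors,'' is what forces $\rho<1/2$. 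Second, your first proposed route to local uniformity --- dominating a third-order sensitivity --- is unavailable: Assumptions~\ref{assump_HMM_grad_1}--\ref{assump_HMM_grad_3} control only first and second derivatives of $q_\theta$ and $g_\theta$, and no third-derivative hypothesis exists in the model. Your fallback (a modulus-of-continuity estimate on the Hessian increments) is the viable route and is what the paper executes: continuity of $\theta\mapsto\Lambda_{\rooot,k,x}(\theta)$ and $\theta\mapsto\Gamma_{\rooot,k,x}(\theta)$ by dominated convergence on explicit integral-ratio representations (\Cref{lemma_Lambda_root_continuous}), then the same $L^2$-decay/ergodic machinery applied to the sup-differences $\sup_{\theta'\,:\,\Vert\theta'-\theta\Vert\leq\delta}\vert\Lambda_{u,\height{u},x}(\theta')-\Lambda_{u,\height{u},x}(\theta)\vert$, and finally $\delta\to 0$ using $L^2$-continuity of the infinite-past increment. (Relatedly, be careful not to feed the infinite-past surrogates themselves into the ergodic theorem: their ``pasts'' have random shapes and spread laterally, so they are not neighborhood-shape-dependent functions, cf.\ \Cref{rem_intro_diff_HMM_HMT}; only the fixed-$k$ trimmed increments qualify.)

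The clearest gap is the conclusion under $\Pzeta$. The statement concerns an arbitrary unknown initial distribution $\zeta$, while everything in your proof --- stationarity of the extended process on $\Tpast$, the ergodic theorems, the construction of the infinite-past limits --- lives under the stationary law $\Ptrue$; you assert $\Pzeta$-almost sure convergence without any bridge between the two. The paper supplies this bridge by proving the absolute continuity $\Pzeta( X_{\T^*}' \in \cdot, Y_{\T^*}' \in\cdot) \ll \Ptrue( X_{\T^*} \in \cdot, Y_{\T^*} \in\cdot)$ in \eqref{eq_absolute_continuous_HMT_non_stationary}, a consequence of the Doeblin bounds $\sigma^- \leq \frac{\drv \pi_{\thetaTrue}}{\drv \lambda} \leq \sigma^+$, so that $\Ptrue$-almost sure statements about measurable functionals of $Y_{\T^*}$ transfer verbatim to $\Pzeta$ (this is exactly how \Cref{thm_LLN_observed_information_non_stationary} is deduced from the stationary \Cref{thm_LLN_observed_information}). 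Without this step, or an equivalent coupling argument as in \Cref{lemma_coupling_time}, your proof establishes only the stationary case of the theorem.
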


In particular, combining Theorems~\ref{intro:thm_Strong_consistency_MLE} 
	and~\ref{intro:thm_LLN_observed_information},
we get that the normalized observed information
$-\vert\T_n\vert^{-1} \nabla_\theta^2 \ell_{n,x}(\hat\theta_{n,x})$
at the MLE $\hat\theta_{n,x}$ is a strongly consistent estimator
of the Fisher information matrix $\cI(\thetaTrue)$.

As announced above, 
following a standard argument for asymptotic normality of the MLE,
Theorems~\ref{intro:thm_Strong_consistency_MLE},
\ref{intro:thm_convergence_score} and~\ref{intro:thm_LLN_observed_information}
imply
the following theorem which states that the MLE 
has asymptotic normal fluctuations with covariance matrix $\cI(\thetaTrue)^{-1}$.
See Theorems~\ref{thm_asymptotic_normality} and~\ref{thm_asymptotic_normality_non_stationary}
for the precise statements in the stationary and non-stationary case, respectively.

\begin{theo}[Asymptotic normality of the MLE]
	\label{intro:thm_asymptotic_normality}
Under the assumptions from \Cref{intro:thm_convergence_score} on the HMT model,
that $\thetaTrue$ is an interior point of $\Theta$,
and the Fisher information matrix $\cI(\thetaTrue)$ is non-singular,
and under the assumption that $\rho < 1/2$ for the mixing rate $\rho$ of the HMT process,
we have the following convergence in distribution: 
\begin{align*}
\vert \T_n \vert^{1/2} \bigl( \hat{\theta}_{n} - \thetaTrue \bigr)
	\underset{n\to\infty}{\overset{(d)}{\longrightarrow}}
\cN(0, \cI(\thetaTrue)^{-1})
\quad \text{under $\Pzeta$,}
\end{align*}
where $\cN(0,M)$ denotes the centered Gaussian distribution with covariance matrix $M$.
\end{theo}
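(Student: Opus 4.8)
The plan is to derive the asymptotic normality of the MLE via the classical Taylor-expansion argument for M-estimators, using the three preceding theorems as black boxes. Since $\thetaTrue$ is assumed to be an interior point of $\Theta$ and $\hat\theta_{n,x} \to \thetaTrue$ $\Pzeta$-a.s.\ by \Cref{intro:thm_Strong_consistency_MLE}, for $n$ large enough (a.s.) the MLE lies in the interior of $\Theta$, so that the first-order optimality condition $\nabla_\theta \ell_{n,x}(\hat\theta_{n,x}) = 0$ holds. The starting point is then a first-order Taylor expansion of the score $\theta \mapsto \nabla_\theta \ell_{n,x}(\theta)$ around $\thetaTrue$, with integral (mean-value) remainder:
\begin{equation*}
0 = \nabla_\theta \ell_{n,x}(\hat\theta_{n,x})
 = \nabla_\theta \ell_{n,x}(\thetaTrue)
 + \Bigl( \int_0^1 \nabla_\theta^2 \ell_{n,x}\bigl(\thetaTrue + t(\hat\theta_{n,x} - \thetaTrue)\bigr) \, \drv t \Bigr) (\hat\theta_{n,x} - \thetaTrue).
\end{equation*}
Writing $H_{n,x}$ for the integrated Hessian in the parenthesis and rearranging after multiplication by $\vert\T_n\vert^{1/2}$, I would obtain
\begin{equation*}
\bigl( -\vert\T_n\vert^{-1} H_{n,x} \bigr) \, \vert\T_n\vert^{1/2} (\hat\theta_{n,x} - \thetaTrue)
 = \vert\T_n\vert^{-1/2} \, \nabla_\theta \ell_{n,x}(\thetaTrue).
\end{equation*}

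Next I would control each factor separately. The right-hand side converges in distribution to $\cN(0, \cI(\thetaTrue))$ by \Cref{intro:thm_convergence_score}. For the matrix on the left, I would argue that $-\vert\T_n\vert^{-1} H_{n,x} \to \cI(\thetaTrue)$ $\Pzeta$-a.s. The key point here is that the argument of the Hessian inside the integral, namely $\thetaTrue + t(\hat\theta_{n,x} - \thetaTrue)$, lies within distance $\Vert \hat\theta_{n,x} - \thetaTrue \Vert$ of $\thetaTrue$ uniformly in $t\in[0,1]$; since this distance tends to $0$ a.s., the \emph{locally uniform} convergence in \Cref{intro:thm_LLN_observed_information} applies. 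Concretely, given $\eps > 0$ I would pick $\delta$ so that the $\limsup_n$ of the supremum over the $\delta$-ball is below $\eps$, then use strong consistency to ensure that eventually $\Vert \hat\theta_{n,x} - \thetaTrue \Vert \le \delta$, so that the integrated Hessian is a convex average of matrices each within $\eps$ of $\cI(\thetaTrue)$; integrating over $t$ preserves this bound. This yields $-\vert\T_n\vert^{-1} H_{n,x} \to \cI(\thetaTrue)$ a.s.

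With both limits in hand, I would invoke the non-singularity of $\cI(\thetaTrue)$ together with the continuity of matrix inversion on the set of invertible matrices: since $-\vert\T_n\vert^{-1} H_{n,x}$ converges a.s.\ to an invertible matrix, it is eventually invertible and its inverse converges a.s.\ to $\cI(\thetaTrue)^{-1}$. Then
\begin{equation*}
\vert\T_n\vert^{1/2} (\hat\theta_{n,x} - \thetaTrue)
 = \bigl( -\vert\T_n\vert^{-1} H_{n,x} \bigr)^{-1} \, \vert\T_n\vert^{-1/2} \, \nabla_\theta \ell_{n,x}(\thetaTrue),
\end{equation*}
and an application of Slutsky's theorem (a.s.\ convergent matrix factor times a distributionally convergent vector factor) gives convergence in distribution to $\cI(\thetaTrue)^{-1} \cN(0, \cI(\thetaTrue)) = \cN\bigl(0, \cI(\thetaTrue)^{-1} \cI(\thetaTrue) \cI(\thetaTrue)^{-1}\bigr) = \cN(0, \cI(\thetaTrue)^{-1})$, using symmetry of $\cI(\thetaTrue)$.

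I expect the structural argument above to be essentially routine, as it is the standard M-estimator scheme; the genuine analytic substance is entirely packaged inside the three cited theorems, where the tree-indexed ergodic machinery and the mixing-rate conditions on $\rho$ enter. The one step demanding care — and the main obstacle at this level — is the justification that the locally uniform a.s.\ limit in \Cref{intro:thm_LLN_observed_information} may be combined with the random argument $\thetaTrue + t(\hat\theta_{n,x}-\thetaTrue)$: one must handle the interaction of the (random) location of the MLE with the order of limits $\lim_{\delta\to 0}\lim_{n\to\infty}$ in that theorem, which is precisely why locally uniform rather than merely pointwise convergence of the observed information is needed. The passage to the non-stationary regime would follow the same template, replacing the cited stationary theorems by their non-stationary counterparts (Theorems~\ref{thm_strong_constistency_non_stationary} and~\ref{thm_LLN_observed_information_non_stationary}), with the score convergence transferred from the stationary case.
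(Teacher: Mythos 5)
Your stationary-case argument is exactly the paper's proof of \Cref{thm_asymptotic_normality}: the same Taylor expansion with integral remainder around $\thetaTrue$, the same a.s.\ identification of the integrated Hessian limit via strong consistency combined with the locally uniform convergence of \Cref{thm_LLN_observed_information} (including the correct handling of the $\lim_{\delta\to 0}\lim_{n\to\infty}$ order), and the same inversion-plus-Cram\'er--Slutsky conclusion. Up to that point there is nothing to object to.

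The genuine gap is in your last sentence. The theorem you are asked to prove is stated under $\Pzeta$, i.e.\ in the non-stationary regime, and there the template does \emph{not} go through by ``replacing the cited stationary theorems by their non-stationary counterparts, with the score convergence transferred from the stationary case.'' There is no non-stationary score CLT in the paper, and none can be obtained by the transfer mechanism that works for the other two ingredients: the paper passes consistency and the Hessian LLN from $\Ptrue$ to $\Pzeta$ via the absolute continuity $\Pzeta(X_{\T^*}\in\cdot, Y_{\T^*}\in\cdot) \ll \Ptrue(X_{\T^*}\in\cdot, Y_{\T^*}\in\cdot)$ established in the proof of \Cref{thm_strong_constistency_non_stationary}, but absolute continuity preserves almost sure convergence only --- it does not preserve convergence in distribution, which is what \Cref{thm_convergence_score} provides. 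The paper's actual route is different: it constructs in \Cref{lemma_coupling_time} a coupling $\PtrueZeta$ of the stationary process $(X,Y)$ and the non-stationary process $(X',Y')$ with an a.s.\ finite coupling time $N$ (the set of uncoupled vertices forms a Bienaym\'e--Galton--Watson tree with mean offspring $2(1-\sigma^-)$, which is subcritical precisely because $\rho<1/2$ forces $\sigma^-\geq 1/2$ --- this is one of the two places where the hypothesis $\rho<1/2$ genuinely enters, the other being \Cref{prop_conv_Gamma_term}). It then shows via \Cref{lemma_coupling_continuity_Dnx} that $\vert D_{n,x}(\hat{\theta}'_{n,x}) - D_{n,x}(\hat{\theta}_{n,x})\vert \to 0$ $\PtrueZeta$-a.s., where $D_{n,x} = \ell'_{n,x}-\ell_{n,x}$, and combines this with a Taylor expansion of $\ell_{n,x}$ around its maximizer $\hat{\theta}_{n,x}$, which yields the quadratic lower bound $D_{n,x}(\hat{\theta}'_{n,x}) - D_{n,x}(\hat{\theta}_{n,x}) \geq M \vert \eps_n\vert^2$ with $\eps_n = \vert\T_n\vert^{1/2}(\hat{\theta}_{n,x}-\hat{\theta}'_{n,x})$, using positive definiteness of $\cI(\thetaTrue)$ and \Cref{thm_LLN_observed_information}. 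This gives $\eps_n\to 0$ $\PtrueZeta$-a.s., so the non-stationary MLE inherits the Gaussian limit of the stationary one. Without this coupling-and-comparison step, your proposal proves the theorem only under $\Ptrue$, not under a general $\Pzeta$.
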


Note that the standard argument used in the proof of \Cref{intro:thm_asymptotic_normality}
implies that we have the following joint convergence in distribution:
\begin{equation*}
\left( \vert \T_n \vert^{1/2} \bigl( \hat{\theta}_{n} - \thetaTrue \bigr) , \,
\vert \T_n \vert^{-1/2} \, \nabla_\theta \ell_{n,x}(\thetaTrue)  \right)
	\underset{n\to\infty}{\overset{(d)}{\longrightarrow}}
(\cI(\thetaTrue)^{-1/2}\, G, \, \cI(\thetaTrue)^{1/2}\, G)
\quad \text{under $\Ptrue$,}
\end{equation*}
where $G$ is Gaussian random variable distributed as $\cN(0,I_{\dimTheta})$ with
$I_{\dimTheta}$ the identity matrix of dimension ${\dimTheta}\times {\dimTheta}$,
and $\cI(\thetaTrue)^{1/2}$ is a root matrix of $\cI(\thetaTrue)$.

\medskip

The following remark is a discussion on the condition
on the mixing rate $\rho$ of the HMT process $(X,Y)$
that appear in Theorems~\ref{intro:thm_asymptotic_normality},
\ref{intro:thm_convergence_score} and~\ref{intro:thm_LLN_observed_information}.

\begin{remark}[On the condition on the mixing rate $\rho$]
	\label{rem_rho_smaller_than_2}
Note that in central limit theorems for branching Markov chains,
three regimes with different asymptotic behaviors (and different normalization terms) for
$\rho < 1/\sqrt{2}$, $\rho = 1/\sqrt{2}$ and $\rho >1/\sqrt{2}$
were observed in \cite{PendaCLT}, 
corresponding to a competition between the ergodic mixing rate $\rho$ and the branching rate $2$ in $\T$,
see also \cite{athreyaLimitTheoremsMultitype1969,bitsekipendaDeviationInequalitiesModerate2014a,PendaCLTestim}.
However, the condition on $\rho$ disappears when we consider martingale increments 
in the central limit theorem for branching Markov chains,
see \cite{GuyonLimitTheorem,bercuAsymptoticAnalysisBifurcating2009,delmasDetectionCellularAging2010}.

In our case, the condition $\rho < 1/\sqrt{2}$ on the mixing rate $\rho$
that appears in \Cref{intro:thm_convergence_score}
is due to the coupling bounds and the grouping of terms used in the proof of \Cref{lemma_score_incr_L2_bound}
(the upper bounds at the end of the proof only add a constant multiplicative factor).
It is an open question whether or not some convergence would still hold in \Cref{intro:thm_convergence_score}
with $\rho \geq 1/\sqrt{2}$ even with a possibly stronger normalization term
and a possibly non Gaussian limit.
Nevertheless, note that the proof of \Cref{intro:thm_convergence_score} relies on decomposing
the score $\nabla_\theta \ell_{n,x}(\theta)$ as a sum of martingale increments,
which could indicate that convergence is possible for $\rho \geq 1/\sqrt{2}$.

Moreover,
the stronger condition $\rho < 1/2$ on the mixing rate $\rho$
that appears in \Cref{intro:thm_LLN_observed_information}, and thus in \Cref{intro:thm_asymptotic_normality},
is due to the coupling bounds from \Cref{lemma_bound_covar_terms_Gamma}
and the grouping of terms used in the proof of \Cref{lemma_Gamma_incr_L2_bound}
(the upper bounds in the rest of the proof only add a constant multiplicative factor).
It is an open question whether or not convergence would still hold in \Cref{intro:thm_LLN_observed_information}
and in \Cref{intro:thm_asymptotic_normality} with $\rho \geq 1/2$ 
even with a possibly stronger normalization term 
and a possibly non Gaussian limit in \Cref{intro:thm_asymptotic_normality}.
Also note that the condition $\rho < 1/2$ is used 
when proving that \Cref{intro:thm_asymptotic_normality} extends to the non-stationary case
to construct a coupling between a stationary HMT process and a non-stationary HMT process,
see \Cref{lemma_coupling_time}.
\end{remark}

In the following remark,
we discuss the main differences between the HMM case as in \cite{doucAsymptoticPropertiesMaximum2004,CappeHMM}
and the HMT case we develop in this article.

\begin{remark}[On main differences with the HMM case]
	\label{rem_intro_diff_HMM_HMT}

In both HMM and HMT cases, 
the study of the log-likelihood is based on decomposing it as a sum of increments,
and then extending the ‘‘past’’ seen by each variable.
However, while the extended ‘‘past’’ only spreads backwards in the HMM case,
the extended ‘‘past’’ in the HMT case is a subtree that also spreads laterally
due to the different topologies between the line $\Z$ and the binary tree,
see Figure~\ref{fig_Delta_u_k} on page~\pageref{fig_Delta_u_k} for an illustration.
See also Sections~\ref{subsection_ergodic_theorem} and~\ref{subsection_decomp_likelihood_ordering}
for the definition of those ‘‘past’’ and extended ‘‘past’’.
Moreover, 
due to the enumeration of vertices in the tree in a breadth-first-search manner,
those extended ‘‘past’’ do not have the same ‘‘shapes’’
for all vertices, see \Cref{subsection_ergodic_theorem}.
Also note that the infinite expanded ‘‘past’’ of a vertex
relies on a random infinite “backward spine” of left / right ancestors
(see Figure~\ref{fig_illustration_Tpast} on page~\pageref{fig_illustration_Tpast}),
which adds extra randomness to the ‘‘shape’’ of the ‘‘past’’.

Furthermore, contrary to the HMM case,
the lateral spreading of each vertex's ‘‘past’’ in the HMT case
implies that log-likelihood increments with infinite extended ‘‘pasts’’ 
do not form a branching Markov process.
For this reason, we need to work with log-likelihood increments
whose ‘‘past’’ is trimmed to a fixed common subtree height,
and only expand to infinite ‘‘past’’ in the limit.
To prove convergence for sums of log-likelihood increments
with trimmed ‘‘pasts’’ which have different shapes,
we need to develop new ergodic theorems
for branching Markov chains and neighborhood-dependent functions,
see \Cref{subsection_ergodic_theorem} and \Cref{appendix_ergodic_theorems}.

In the proof of asymptotic normality of the normalized score,
the score is decomposed as a sum of martingale increments
which is no longer stationary in the HMT case
due to the ‘‘pasts’’ of vertices having different shapes.
Thus, to apply the central limit theorem for martingales,
we first need to verify convergence for the quadratic variations
of the martingale increment sequences
and Lindeberg's condition.
Moreover, the computation of the approximation bounds
for the increments used to decompose the score and the observed information
are more involved and impose conditions on the value of the mixing rate $\rho$,
as already discussed in \Cref{rem_rho_smaller_than_2}.
This also implies that the proof scheme 
for convergence of the observed information matrix
needs to be modified
as we cannot have almost sure convergence for all the increments simultaneously,
and we must rely on $L^2$ convergence instead.

Lastly, as discussed in \Cref{section_Literature_review},
the results for HMMs in \cite{doucAsymptoticPropertiesMaximum2004}
allowed for autoregression
	(remind, that is, when conditionally on the hidden Markov chain,
	the observed process is an inhomogeneous $s$-order Markov chain for some $s\in\N$).
Our results for HMTs are stated for processes without autogression.
However, as our approach adapts the proof scheme of
\cite{doucAsymptoticPropertiesMaximum2004},
note that with straightforward modifications of our proofs, we could allow
for autoregression in HMT processes.
\end{remark}

\subsection{Organization of the paper}

The rest of the paper is organized as follows.
In Section~\ref{section_notations}, we define the notations used in this article, HMT processes
	and the log-likelihood for the HMT.
For the stationary case, we prove the strong consistency of the MLE in \Cref{section_strong_consistency},
	and its asymptotic normality in \Cref{section_asymptotic_normality}.
In \Cref{section_non_stationary}, we extend those results to the non-stationary case.
In \Cref{appendix_ergodic_theorems}, we develop
the ergodic theorems for branching Markov chains with neighborhood-dependent functions
needed in the proofs of the asymptotic properties of the MLE.

\section{Definition of HMT and notations}
\label{section_notations}

In this section, we first define the notations we use for the infinite complete binary tree $\T$.
We then define branching Markov chains and hidden Markov models (HMMs) indexed by the binary tree $\T$,
which we will simply call Hidden Markov Tree (HMT) models.
We continue with the basic assumptions we need to define the log-likelihood for the HMT.
Lastly, we present the ergodic theorems for branching Markov chains 
and neighborhood-dependent functions needed in this article,
whose proofs can be found in \Cref{appendix_ergodic_theorems}.

\subsection{Notations for trees}

Let $\T = \cup_{n\in\N} \{ 0,1\}^{n}$ denote the infinite complete plane rooted binary tree, 
that is the plane rooted tree where each vertex $u$ has exactly two children $u0$ and $u1$.
Denote by $\rooot$ the root vertex of $\T$ (which is the unique point in $\{0,1\}^{0}$).
If $u$ is distinct from the root, we denote by $\parent(u)$ its parent vertex.
We denote by $\height{u}$ its height, \ie the number of edges separating $u$ from the root $\rooot$.
(The height of the root $\rooot$ is zero.)
In particular, for $k\leq \height{u}$, note that $\parent^k(u)$ denotes the $k$-th ancestor of $u$.
For two vertices $u,v\in\T$, we denote by $u\land v$ the most recent common ancestor
	of $u$ and $v$,
	and by $\dgr(u,v)$ the graph-distance between $u$ and $v$ in $\T$,
	that is $\dgr(u,v) = \height{u} + \height{v} - 2 \height{u\land v}$.
For all $n\in\N$, denote by $\G_n$ the $n$-th generation of the tree,
that is vertices that are at distance $n$ from the root,
and denote by $\T_n = \cup_{0\leq k\leq n} \G_k$ the tree up to generation $n$.
For a vertex $u\in\T$, we denote by $\T(u)$ the subtree of $\T$ composed of descendants of $u$,
and for all $k\in\N$, we denote by $\T(u,k) = \T(u) \cap \T_{\height{u}+k}$ the subtree of $\T(u)$ 
	composed of descendants of $u$ at distance up to $k$ from $u$.
We will use the convention that for a subtree $T_{\text{sub}}$ of $T$, we write $T_{\text{sub}}^*$ for the subtree $T_{\text{sub}}$ without its root vertex,
for instance, $\T_n^* = \T_n \setminus \{\rooot\}$ and $\T(u)^* = \T(u) \setminus \{u\}$.
For a finite subset $A\subset \T$, we denote by $\vert A \vert$ its cardinal.

We will sometimes use Neveu's notation, which we define recursively:
a vertex $u\in T$ with height $\height{u}=n$ can be represented as a sequence $(u_{(j)})_{1\leq j\leq n}$
where $u$ is the $u_{(n)}$-th child of $\parent(u)$ and $\parent(u)$ can be represented by $(u_{(j)})_{1\leq j\leq n-1}$;
and the representation of the root $\rooot$ is the empty sequence.
Note that Neveu's notation can also be interpreted as encoding the path from the root $\rooot$ to the vertex $u$:
starting from the root $u_0=\rooot$, at each generation $j$ we go from $u_j$ to its $u_{(j+1)}$ child which we denote by $u_{j+1}$,
and at generation $n$ we get $u_n=u$.

For simplicity, we will write $u_{(k:n)} = (u_{(j)})_{k\leq j \leq n}$ and $u_{(k:n)} = (u_{(j)})_{k\leq j \leq n}$ for path sequences 
where $k,n\in \Z$ with $k<n$.

As $\T$ is a plane rooted tree, we can order its vertices in a breadth-first-search manner, that is,
the total order relation $\leq$ on $\T$ is defined for all $u,v\in\T$ as $u \leq v$
if $\height{u} < \height{v}$ or $\height{u} = \height{v}$ and $u \leqlex v$ (where $\leqlex$ is the lexicographical order on $\T$).
Moreover, we denote by $v < u$ if $u \leq v$ and $v\neq u$.

\subsection{Definition of HMT processes}

For a sequence $(x_u, u\in\T)$, for simplicity, we will write $x_{A} = (x_u , u\in A)$ for all subsets $A\subset \T$.
For a metric space $\SpaceX$, we will always assume it is equipped with its Borel $\sigma$-field $\BorelX$.

For a measure $\mu$ on a metric space $\SpaceX$ and an integrable function $f\in L^1(\mu)$,
we write $\mu(f) = \int_{\SpaceX} f\, \drv \mu$.
For two probability measures $\mu_1, \mu_2$ on a metric space $\SpaceX$,
we denote the total variation norm between them
by $\normTV{\mu_1 - \mu_2} = \sup_{A\subset \SpaceX} \vert \mu_1(A) - \mu_2(A) \vert$
(where $A$ ranges over all measurable subsets of $\SpaceX$).
We also remind the identities  
$\normTV{\mu_1 - \mu_2} = \inv{2} \sup_{f : \vert f \vert \leq 1} \vert \mu_1(f) - \mu_2(f) \vert
= \sup_{f : 0 \leq f  \leq 1} \vert \mu_1(f) - \mu_2(f) \vert$
(where $f$ is a measurable function).
Note that $\normTV{\mu_1 - \mu_2}$ takes value in $[0,1]$.

Denote by $X = (X_u, u\in \T)$ the hidden (stochastic) process with values in a metric space $\SpaceX$,
and by $Y = (Y_u, u\in \T)$ the observed (stochastic) process with values in a metric space $\SpaceY$.
We assume that the hidden process $X$ is a branching Markov process.
\begin{definition}[Branching Markov process]
	\label{def:Markov_proc}
The stochastic process $X$
	is called a \emph{(branching) Markov process}
	with transition (probability) kernel $Q$ on $(\SpaceX, \BorelX)$ 
		and initial (probability) distribution $\nu$ on $\SpaceX$
	if for all $n\in\N$, we have:
\begin{align*}
	\Prb(X_{\T_n} \in \drv x_{\T_n})
	& = \nu(\drv x_\rooot)\,  \Pi_{u\in \T_n^*} 
			Q(x_{\parent(u)}; \drv x_u)	. 
\end{align*}
\end{definition}

We can now define the Hidden Markov Tree process.

\begin{defin}[Hidden Markov Tree process]
	\label{def:HMT}
The stochastic process $(X,Y) = ((X_u,Y_u), u\in\T)$ is called a \emph{Hidden Markov Tree (HMT)}
with parameter $(Q,G,\nu)$ if:
\begin{enumerate}[label=(\roman*)]
\item
the hidden process $X = (X_u, u\in \T)$ is a branching Markov process
	with transition kernel $Q$ and initial distribution $\nu$,
\item
the observed process $Y = (Y_u, u\in \T)$ conditioned on the hidden process $X$
is composed of independent variables whose laws factorize using the transition (probability) kernel $G$ on $(\SpaceX, \BorelY)$,
that is for all $n\in\N$, we have:
\begin{equation*}
	\Prb( Y_{\T_n} = y_{\T_n} \,\vert\, X_{\T_n} = x_{\T_n} )
	= \Pi_{u\in \T_n} G( x_u; \drv y_u).
\end{equation*}
\end{enumerate}
\end{defin}

Note that the definitions of branching Markov chains and HMT processes
also work for non-plane rooted trees.

In particular, note that if $(X,Y) =((X_u,Y_u), u\in\T)$ is a HMT process, then the joint process $((X_u,Y_u), u\in\T)$
is also a branching Markov chain
(but the observed process $Y$ is not necessarily Markov).
The following fact, which we shall reuse later, illustrates the Markov property of the HMT process $(X,Y)$.
For any $k\in\N^*$, any $u\in\T$ with height at least $k$, and any subset $A\subset \T$, we have :
\begin{equation}\label{eq_illustration_Markov_prop}
\cL( X_u \,\vert\, Y_A, X_{\parent^k(u)} ) = \cL( X_u \,\vert\, Y_{A \cap \T(\parent^{k-1}(u))}, X_{\parent^k(u)} )
\end{equation}
where $\cL(R \,\vert\, S)$ denotes the distribution of a random variable $R$ conditionally on another random variable $S$.

\begin{figure}[t]
\centering
\includegraphics[height=5cm]{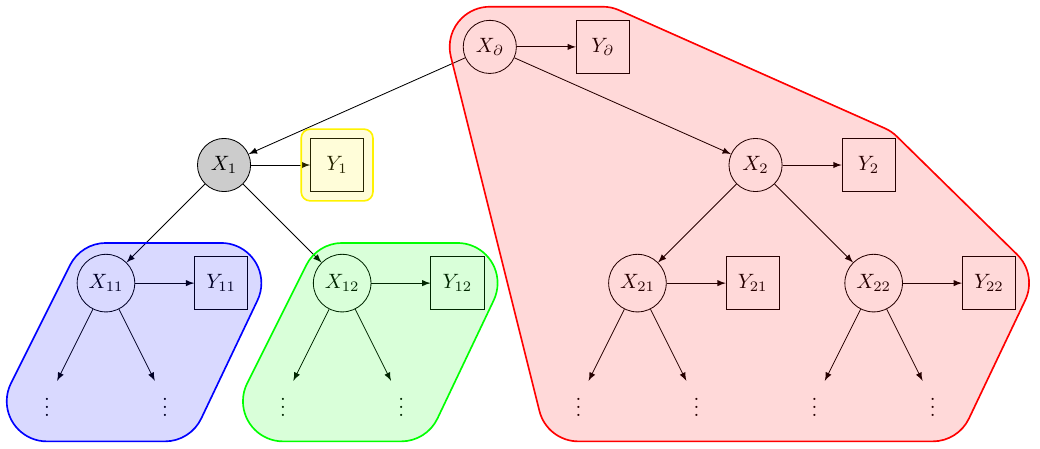}
\caption{Illustration of the Markov property for the HMT.
	Conditioning on $X_1$ (in grey) implies that the HMT process $(X,Y)$ becomes independent
	between the four connected components of variable-dependence tree from Figure~\ref{fig_dependance_HMT}
		where the vertex $X_1$ is removed,
	that is, $Y_{1}$, $(X_{\T(11)}, Y_{\T(11)})$, $(X_{\T(12)},Y_{\T(12)})$ and $(X_{\T\setminus\T(1)},Y_{\T\setminus\T(1)})$
	(respectively in yellow, blue, green and red) are independent
	conditionally on $X_1$. 
	}
\label{fig_Markov_prop_HMT}
\end{figure}

We say that a branching Markov process $X$ is \emph{stationary} 
if all its variables are identically distributed (that is, for all $u\in\T$, $X_u$ has the same distribution as $X_\rooot$),
or equivalently if its initial distribution $\nu$ is invariant for its transition kernel $Q$ (that is, $\nu Q = \nu$).
Moreover, if $(X,Y)$ is a HMT process and the hidden process $X$ is stationary,
then the joint process $(X,Y)$ is also stationary.

\subsection{Basic  assumptions and definition of the log-likelihood}

In this article, we consider the case where the kernels in the definition of the HMT $(Q_\theta,G_\theta,\nu_\theta)$
are parametrized by some vector $\theta$ that we want to estimate
using only the knowledge of the observed process $(Y_u, u\in \T_n)$ up to generation $n$.
We denote by $\Theta$ the set of all possible vector parameters $\theta$,
	which we assume to be a subset of $\R^{\dimTheta}$ for some integer $\dimTheta$.
And we denote by $\thetaTrue$ the true parameter of the HMT.

Through this article, with the exception of \Cref{section_non_stationary},
we assume that the hidden process $X$ is stationary.

\begin{assumption}[Stationarity]
	\label{assump_HMM_0}
The hidden process $(X_u , u\in\T)$ is stationary,
(and thus the joint process $((X_u,Y_u) , u\in\T)$ is also stationary).
\end{assumption}

We denote by $\Prb_\theta$ the probability distribution under the parameter $\theta$ of the stationary joint process $(X,Y)$,
and by $\Esp_\theta$ the corresponding expectation.

To prove asymptotic properties of the MLE for the HMT,
we will use assumptions similar to the HMM case in \cite[Chapter 12]{CappeHMM} and \cite{doucAsymptoticPropertiesMaximum2004}.
We first assume that the HMT model is fully dominated.
For two probability measures $\lambda,\mu$ on the same space,
we write $\lambda \ll \mu$ to denote that $\lambda$ is absolutely continuous \wrt to $\mu$.

\begin{assumption}[Fully dominated model, {\cite[Assumption 12.0.1]{CappeHMM}}]
	\label{assump_HMM_1}
~
\begin{enumerate}[label=(\roman*)]
\item\label{assump_HMM_1:item1}
There exists a probability measure $\lambda$ on $\SpaceX$ such that
for every $x\in\SpaceX$ and every $\theta\in\Theta$,
$Q_\theta(x,\cdot) \ll \lambda$,
with density $q_\theta(x,\cdot)$.
That is, $Q_\theta(x;A) = \int_{A} q_\theta(x,x')\, \lambda(\drv x')$ for all $A\in \BorelX$.
Moreover, the density function $q_\theta(\cdot,\cdot)$ is $\BorelX \otimes \BorelX$ measurable.
\item\label{assump_HMM_1:item2}
There exists a $\sigma$-finite measure $\mu$ on $\SpaceY$ such that
for every $x\in\SpaceX$ and every $\theta\in\Theta$,
$G_\theta(x,\cdot) \ll \mu$,
with density $g_\theta(x,\cdot)$.
That is, $G_\theta(x;B) = \int_{B} g_\theta(x,y)\, \mu(\drv y)$ for all $B\in \BorelY$.
Moreover, the density function $g_\theta(\cdot,\cdot)$ is $\BorelX \otimes \BorelY$ measurable.
\end{enumerate}
\end{assumption}

In addition to \Cref{assump_HMM_1}, we use the following regularity assumptions
on the density functions $q_\theta$ and $g_\theta$.

\begin{assumption}[Regularity, {\cite[Assumption 12.2.1]{CappeHMM}}]
	\label{assump_HMM_2}
~
\begin{enumerate}[label=(\roman*)]
\item\label{assump_HMM_2:item1}
The transition density $q_\theta$ is bounded:
there exist $\sigma^-, \sigma^+ \in (0, +\infty)$ such that
$\forall x,x'\in\SpaceX,\ \forall \theta\in\Theta$, 
$0 < \sigma^- \leq q_\theta(x,x') \leq \sigma^+ < +\infty$.
\item\label{assump_HMM_2:item2}
For every $y\in\SpaceY$, 
the function $\theta \mapsto \int_{\SpaceX} g_\theta(x,y) \, \lambda(\drv x)$
is bounded away from $0$ and $\infty$ uniformly on $\Theta$, that is,
$\sup_{\theta\in\Theta} \int_{\SpaceX} g_\theta(x,y) \, \lambda(\drv x) < \infty$
and $\inf_{\theta\in\Theta} \int_{\SpaceX} g_\theta(x,y) \, \lambda(\drv x) > 0$.
\item\label{assump_HMM_2:item3}
For every $x\in\SpaceX$ and $y\in\SpaceY$, we have $g_\theta(x,y)>0$.
\end{enumerate}
\end{assumption}
We will denote by $\rho = 1- \sigma^- / \sigma^+ \in (0,1)$ the \emph{mixing rate} of the hidden process $X$.

Note that \Cref{assump_HMM_2}-\ref{assump_HMM_2:item3}
is due to our choice of conditioning on $X_\rooot=x$ for some $x\in\SpaceX$ in the log-likelihood we analyse,
we discuss how to get rid of this assumption after the definition of the log-likelihood
at the end of this subsection.

As $\lambda$ is a probability measure and $q_\theta$ is the density of a probability kernel,
\Cref{assump_HMM_2}-\ref{assump_HMM_2:item1} implies
that $\sigma^- \leq 1 \leq \sigma^+$.
Moreover, \Cref{assump_HMM_2}-\ref{assump_HMM_2:item1} 
also implies the following result.
	
\begin{lemma}
	\label{lemma_Q_unif_geom_ergodic}
Assume that Assumptions~\ref{assump_HMM_1}-\ref{assump_HMM_1:item1} and \ref{assump_HMM_2}-\ref{assump_HMM_2:item1} hold.
Then, for all $\theta\in\Theta$, the transition kernel $Q_\theta$
has a unique invariant measure $\pi_\theta$
and is uniformly geometrically ergodic, that is:
\begin{equation*}
\forall x\in\SpaceX, \forall \theta\in\Theta, \forall n\geq 0, \quad
	\norm{Q^n_\theta(x,\cdot) - \pi_\theta}_{\text{TV}} \leq (1-\sigma^-)^n \leq \rho^n.
\end{equation*}
\end{lemma}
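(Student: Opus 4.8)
The plan is to read the uniform lower bound in \Cref{assump_HMM_2}-\ref{assump_HMM_2:item1} as a Doeblin minorization condition over the whole state space and turn it into a total-variation contraction. First I would observe that since $q_\theta(x,x') \geq \sigma^-$ for all $x,x'\in\SpaceX$ and all $\theta\in\Theta$, and since $\lambda$ is a probability measure, for every $x\in\SpaceX$ and every measurable $A\subset\SpaceX$ we have $Q_\theta(x,A) = \int_A q_\theta(x,x')\,\lambda(\drv x') \geq \sigma^-\lambda(A)$. Thus the kernel splits as $Q_\theta(x,\cdot) = \sigma^-\lambda(\cdot) + (1-\sigma^-)R_\theta(x,\cdot)$, where $R_\theta(x,\cdot) := (1-\sigma^-)^{-1}\bigl(Q_\theta(x,\cdot) - \sigma^-\lambda(\cdot)\bigr)$ is a genuine probability kernel: it is a nonnegative measure by the minorization, and it has total mass $(1-\sigma^-)^{-1}(1-\sigma^-)=1$.

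Next I would convert this decomposition into a contraction. For any two probability measures $\mu_1,\mu_2$ on $\SpaceX$, the Dirac part $\sigma^-\lambda$ cancels because $\mu_1$ and $\mu_2$ are both probability measures (same total mass $1$), leaving $\mu_1 Q_\theta - \mu_2 Q_\theta = (1-\sigma^-)(\mu_1 R_\theta - \mu_2 R_\theta)$. Since applying a Markov kernel never increases the total-variation distance, this gives $\normTV{\mu_1 Q_\theta - \mu_2 Q_\theta} \leq (1-\sigma^-)\,\normTV{\mu_1-\mu_2}$, and crucially this holds uniformly in $\theta\in\Theta$ because $\sigma^-$ does not depend on $\theta$.

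With the contraction in hand, existence and uniqueness of the invariant measure follow from the Banach fixed point theorem applied to $\mu\mapsto\mu Q_\theta$ on $(\ProbaX, \normTV{\cdot})$; to sidestep any discussion of completeness I would rather argue directly. Iterating the contraction gives $\normTV{Q_\theta^{n+m}(x,\cdot) - Q_\theta^n(x,\cdot)} \leq (1-\sigma^-)^n$ for all $m$, so $(Q_\theta^n(x,\cdot))_{n}$ is Cauchy and converges in total variation to a limit $\pi_\theta$; this limit is independent of $x$ because $\normTV{Q_\theta^n(x,\cdot)-Q_\theta^n(x',\cdot)}\leq (1-\sigma^-)^n\to 0$, and it is invariant by passing to the limit in $Q_\theta^{n+1}(x,\cdot) = Q_\theta^n(x,\cdot)Q_\theta$. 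Uniqueness is immediate from the contraction. Applying the contraction $n$ times between the Dirac mass $\delta_x$ at $x$ and $\pi_\theta$ then yields $\normTV{Q_\theta^n(x,\cdot)-\pi_\theta} = \normTV{\delta_x Q_\theta^n - \pi_\theta Q_\theta^n} \leq (1-\sigma^-)^n\,\normTV{\delta_x - \pi_\theta} \leq (1-\sigma^-)^n$, using that any total-variation distance between probability measures lies in $[0,1]$.

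Finally I would compare the two rates. Integrating $q_\theta(x,\cdot)\leq\sigma^+$ against the probability measure $\lambda$ forces $1\leq\sigma^+$, hence $\sigma^-/\sigma^+ \leq \sigma^-$ and therefore $1-\sigma^- \leq 1 - \sigma^-/\sigma^+ = \rho$, which upgrades the bound to $(1-\sigma^-)^n \leq \rho^n$. I do not anticipate any real obstacle: the argument is the standard Doeblin contraction, and the only delicate point is the fixed-point step, which is exactly why I favour the explicit Cauchy-sequence route that avoids invoking completeness of $(\ProbaX,\normTV{\cdot})$.
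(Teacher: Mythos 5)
Your proof is correct, and it rests on exactly the same mechanism as the paper's: the uniform lower bound $q_\theta \geq \sigma^-$ together with $\lambda$ being a probability measure gives the uniform Doeblin minorization \eqref{eq_Doeblin_cond_Q_theta}, and a Doeblin condition with constant $\sigma^-$ yields a total-variation contraction with factor $1-\sigma^-$, uniformly in $\theta$. The difference is one of packaging: the paper delegates the contraction-to-ergodicity step entirely to the standard machinery it quotes from \cite{CappeHMM} --- the Dobrushin coefficient coupling bound (\Cref{lemma_coumpling_bound_Dobrushin}), its sub-multiplicativity (\Cref{lemma_Dobrushin_sub_multiplicative}), the Doeblin-to-Dobrushin bound (\Cref{lemma_Doeblin_Dobrushin}), and finally \Cref{lemma_exist_unif_geom_ergodic} --- whereas you re-prove that machinery in a self-contained way via the explicit residual-kernel splitting $Q_\theta(x,\cdot) = \sigma^-\lambda(\cdot) + (1-\sigma^-)R_\theta(x,\cdot)$ and a Cauchy-sequence construction of $\pi_\theta$. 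What your route buys is independence from the cited black box; what the paper's route buys is that the same lemmas (notably \Cref{lemma_Doeblin_Dobrushin} and \Cref{lemma_Dobrushin_sub_multiplicative}) are reused later for the non-homogeneous smoothing kernels in \Cref{lemme_exp_coupling_HMT} and \Cref{lemma_backward_mixing_bound}, so stating them once is economical. Two small remarks, neither a gap. First, your splitting needs $\sigma^- < 1$: the case $\sigma^- = 1$ forces $Q_\theta(x,\cdot) = \lambda$ for all $x$, where the statement is immediate, and it is in any case excluded by the paper's standing convention $\rho = 1 - \sigma^-/\sigma^+ \in (0,1)$. Second, your claim to ``sidestep any discussion of completeness'' is not quite accurate: concluding that the Cauchy sequence $(Q_\theta^n(x,\cdot))_{n\in\N}$ converges in total variation uses precisely the completeness of the space of probability measures under $\normTV{\cdot}$ (a closed subset of the Banach space of finite signed measures); what you avoid is only the fixed-point-theorem packaging, not completeness itself. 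Your closing comparison $1-\sigma^- \leq \rho$ via $\sigma^+ \geq 1$ matches the paper's observation stated just after \Cref{assump_HMM_2}.
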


Note that due to structure of the HMT, \Cref{lemma_Q_unif_geom_ergodic} extends naturally
to the transition kernel of the joint process $(X,Y)$
with the same mixing rate $\rho$.
Moreover, note that \Cref{assump_HMM_2}-\ref{assump_HMM_2:item1} also implies that
$\pi_\theta \ll \lambda$ with density $\frac{\drv \pi_\theta}{\drv \lambda}$ 
taking value in $[ \sigma^-, \sigma^+]$.

\Cref{lemma_Q_unif_geom_ergodic} is due to \Cref{assump_HMM_2}-\ref{assump_HMM_2:item1} implying the Doeblin condition:
\begin{equation}\label{eq_Doeblin_cond_Q_theta}
\forall \theta\in\Theta, \forall x\in\SpaceX \qquad \sigma^- \lambda(\cdot) \leq Q_\theta(x;\cdot).
\end{equation}
As we will reuse Doeblin conditions later, before proving \Cref{lemma_Q_unif_geom_ergodic},
we give a quick summary on results for the Doeblin condition.
For a transition kernel $K$ on a metric space $\SpaceX$ (to itself), we define its \emph{Dobrushin coefficient} $\delta(K)$ as:
\begin{equation}\label{eq_def_Dobrushin_coefficient}
\delta(K) =  \sup_{x,x' \in \SpaceX} \normTV{K(x;\cdot) - K(x';\cdot)} .
\end{equation}
The Dobrushin coefficient gives the following coupling bound in the total variation norm.
(Note that the definition of the total variation norm $\normTV{\cdot}$ used in \cite[Chapter 4]{CappeHMM}
	differs by a factor $2$ from ours, see \cite[Lemma~4.3.5]{CappeHMM}.)

\begin{lemma}[{\cite[Lemma~4.3.8]{CappeHMM}}]
	\label{lemma_coumpling_bound_Dobrushin}
Let $\mu_1, \mu_2$ be two probability measures on a metric space $\SpaceX$,
and let $K$ be a transition kernel on $\SpaceX$.
Then, we have:
\begin{equation*}
\normTV{(\mu_1 - \mu_2)K} \leq \delta(K) \normTV{\mu_1 - \mu_2} \leq \delta(K).
\end{equation*}
\end{lemma}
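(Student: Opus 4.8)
The plan is to prove Lemma~\ref{lemma_coumpling_bound_Dobrushin} by reducing the total variation norm to its supremum-over-test-functions representation and then exploiting the structure of the signed measure $(\mu_1-\mu_2)$, which has total mass zero. The second inequality $\delta(K)\normTV{\mu_1-\mu_2}\leq\delta(K)$ is immediate from the fact, already recorded in the excerpt, that $\normTV{\mu_1-\mu_2}\in[0,1]$, so the entire content is the first inequality.

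First I would use the identity $\normTV{\nu}=\tfrac{1}{2}\sup_{|f|\leq 1}|\nu(f)|$ (stated earlier) applied to $\nu=(\mu_1-\mu_2)K$, so that it suffices to bound $|(\mu_1-\mu_2)K(f)|$ uniformly over measurable $f$ with $|f|\leq 1$. Writing $Kf(x)=\int f(x')\,K(x;\drv x')$ for the action of the kernel on functions, we have $(\mu_1-\mu_2)K(f)=(\mu_1-\mu_2)(Kf)=\mu_1(Kf)-\mu_2(Kf)$. The key point is that because $\mu_1$ and $\mu_2$ are both probability measures, we may subtract any constant $c$ from $Kf$ without changing this difference: $(\mu_1-\mu_2)(Kf)=(\mu_1-\mu_2)(Kf-c)$. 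This is the standard ``centering'' trick, and choosing $c$ well is where the Dobrushin coefficient enters.

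The crux is to show that $Kf$ has oscillation controlled by $\delta(K)$. Concretely, for any $x,x'\in\SpaceX$, $Kf(x)-Kf(x')=K(x;\cdot)(f)-K(x';\cdot)(f)$, and since $|f|\leq 1$, the identity $\normTV{K(x;\cdot)-K(x';\cdot)}=\tfrac{1}{2}\sup_{|f|\leq1}|K(x;\cdot)(f)-K(x';\cdot)(f)|$ gives $|Kf(x)-Kf(x')|\leq 2\normTV{K(x;\cdot)-K(x';\cdot)}\leq 2\delta(K)$. Hence the oscillation of $Kf$ is at most $2\delta(K)$, so choosing $c=\tfrac12(\sup Kf+\inf Kf)$ yields $\norm{Kf-c}_\infty\leq\delta(K)$, where the function $Kf-c$ still satisfies the bound needed. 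Then $|(\mu_1-\mu_2)(Kf-c)|\leq\normTV{\mu_1-\mu_2}\cdot\sup_{|g|\leq\delta(K)}\bigl|(\mu_1-\mu_2)(g)\bigr|$; more cleanly, applying the test-function identity for $\normTV{\mu_1-\mu_2}$ to the function $(Kf-c)/\delta(K)$ (of sup-norm at most $1$) gives $|(\mu_1-\mu_2)(Kf-c)|\leq 2\,\delta(K)\,\normTV{\mu_1-\mu_2}$. Taking the supremum over $f$ and dividing by $2$ produces exactly $\normTV{(\mu_1-\mu_2)K}\leq\delta(K)\normTV{\mu_1-\mu_2}$.

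The main obstacle is bookkeeping the factors of $2$ consistently: the excerpt explicitly flags that \cite{CappeHMM} uses a total variation norm differing from this paper's by a factor $2$, so the constants in $\normTV{\cdot}=\tfrac12\sup_{|f|\leq1}|\mu_1(f)-\mu_2(f)|$ must be tracked with care to ensure the final coefficient is exactly $\delta(K)$ and not $2\delta(K)$ or $\tfrac12\delta(K)$. Since this lemma is quoted directly from \cite[Lemma~4.3.8]{CappeHMM}, the cleanest exposition would simply cite that reference and verify the normalization is compatible via the noted factor-$2$ correspondence, rather than reproving it from scratch; but the self-contained argument above is the route I would take if a proof were required.
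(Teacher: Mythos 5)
Your proof is correct. Note that the paper itself offers no proof of this lemma --- it is quoted verbatim from \cite[Lemma~4.3.8]{CappeHMM} --- and your centering/oscillation argument is precisely the standard one behind that reference, with the factor-$2$ bookkeeping between $\normTV{\cdot}$ and $\sup_{|f|\leq 1}|\cdot|$ handled correctly: the oscillation bound $|Kf(x)-Kf(x')|\leq 2\delta(K)$, the choice $c=\tfrac12(\sup Kf+\inf Kf)$ giving $\norm{Kf-c}_\infty\leq\delta(K)$, and the cancellation $(\mu_1-\mu_2)(c)=0$ all combine to the stated constant. The only cosmetic point is the division by $\delta(K)$ in your final step, which requires the (trivial) separate remark that if $\delta(K)=0$ then $Kf$ is constant and $(\mu_1-\mu_2)(Kf)=0$, so the inequality holds vacuously.
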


Moreover, the Dobrushin coefficient is sub-multiplicative.

\begin{lemma}[{\cite[Proposition~4.3.10]{CappeHMM}}]
	\label{lemma_Dobrushin_sub_multiplicative}
The Dobrushin coefficient is sub-multiplicative.
That is, if $K,R$ are two transition kernels on a metric space $\SpaceX$,
then we have $\delta(K R) \leq \delta(K) \delta(R)$.
\end{lemma}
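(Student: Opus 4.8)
The plan is to reduce the statement directly to the coupling bound of \Cref{lemma_coumpling_bound_Dobrushin}. First I would fix an arbitrary pair of points $x,x'\in\SpaceX$ and use the linearity of kernel composition to factorize the difference of the composed kernels evaluated at $x$ and $x'$ as
\begin{equation*}
(KR)(x;\cdot) - (KR)(x';\cdot) = \bigl(K(x;\cdot) - K(x';\cdot)\bigr)R,
\end{equation*}
where $K(x;\cdot)$ and $K(x';\cdot)$ are probability measures on $\SpaceX$.

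Next I would apply \Cref{lemma_coumpling_bound_Dobrushin} with $\mu_1 = K(x;\cdot)$, $\mu_2 = K(x';\cdot)$ and the kernel $R$, which gives
\begin{equation*}
\normTV{(KR)(x;\cdot) - (KR)(x';\cdot)}
	\leq \delta(R)\,\normTV{K(x;\cdot) - K(x';\cdot)}
	\leq \delta(R)\,\delta(K),
\end{equation*}
where the last inequality follows from the definition \eqref{eq_def_Dobrushin_coefficient} of the Dobrushin coefficient. Taking the supremum over all pairs $x,x'\in\SpaceX$ on the left-hand side then yields $\delta(KR) \leq \delta(K)\,\delta(R)$, which is the desired inequality.

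This proof presents essentially no obstacle, being a one-line consequence of the coupling bound already established in \Cref{lemma_coumpling_bound_Dobrushin}. The only point deserving a little care is the factorization step: one should verify that the composed kernel $KR$ is well defined (which requires the usual measurability of $z\mapsto R(z;A)$) and that writing the signed measure $K(x;\cdot) - K(x';\cdot)$ as the difference of the two probability measures $K(x;\cdot)$ and $K(x';\cdot)$ lets \Cref{lemma_coumpling_bound_Dobrushin} be applied verbatim. Since that lemma is stated for an arbitrary pair of probability measures, no further argument is needed.
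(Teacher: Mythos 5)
Your proof is correct, and it is essentially the same argument as the one the paper delegates to \cite[Proposition~4.3.10]{CappeHMM}: the paper states this lemma by citation only, and the cited proof likewise obtains sub-multiplicativity by factorizing $(KR)(x;\cdot)-(KR)(x';\cdot)$ as $\bigl(K(x;\cdot)-K(x';\cdot)\bigr)R$, applying the coupling bound of \Cref{lemma_coumpling_bound_Dobrushin}, and taking the supremum over $x,x'\in\SpaceX$. No gap to report.
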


We know define the Doeblin condition.

\begin{defin}[Doeblin condition, {\cite[Definition~4.3.12]{CappeHMM}}]
	\label{def_Doeblin_condition}
We say that a transition kernel $K$ on a metric space $\SpaceX$ satisfies a \emph{Doeblin condition}
if there exist $\eps >0$ and a probability measure $\nu$ on $\SpaceX$ such that
for all $x\in\SpaceX$ and measurable subset $A\subset \SpaceX$, we have:
\begin{equation*}
K(x;A) \geq \eps \nu(A).
\end{equation*}
\end{defin}

The Doeblin condition gives an upper bound on the Dobrushin coefficient.

\begin{lemma}[{\cite[Lemma~4.3.13]{CappeHMM}}]
	\label{lemma_Doeblin_Dobrushin}
Let $K$ be transition kernel (on a metric space $\SpaceX$) that satisfies a Doeblin condition with $(\eps,\nu)$.
Then, we have $\delta(K) \leq 1-\eps$.
\end{lemma}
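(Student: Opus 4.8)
The plan is to bound the total variation distance $\normTV{K(x;\cdot) - K(x';\cdot)}$ uniformly over $x,x'\in\SpaceX$, working directly from the definition $\normTV{\mu_1-\mu_2} = \sup_A |\mu_1(A)-\mu_2(A)|$ recalled in the notations. First I would fix two points $x,x'\in\SpaceX$ and an arbitrary measurable set $A\subset\SpaceX$, and estimate $K(x;A) - K(x';A)$ from above. For the subtracted term, the Doeblin condition applied at $x'$ gives the lower bound $K(x';A) \geq \eps\,\nu(A)$. For the leading term, I would pass to the complement: since $K(x;\cdot)$ is a probability measure, $K(x;A) = 1 - K(x;A^c)$, and applying the Doeblin condition at $x$ to the set $A^c$ yields $K(x;A^c) \geq \eps\,\nu(A^c) = \eps(1-\nu(A))$, hence $K(x;A) \leq 1 - \eps + \eps\,\nu(A)$.

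Combining these two estimates gives $K(x;A) - K(x';A) \leq (1 - \eps + \eps\,\nu(A)) - \eps\,\nu(A) = 1-\eps$, where the $\eps\,\nu(A)$ contributions cancel exactly. Since $A$ was arbitrary and the roles of $x$ and $x'$ are symmetric, interchanging them yields $K(x';A) - K(x;A) \leq 1-\eps$ as well, so that $|K(x;A) - K(x';A)| \leq 1-\eps$ for every measurable $A$. Taking the supremum over $A$ bounds $\normTV{K(x;\cdot) - K(x';\cdot)} \leq 1-\eps$, and then taking the supremum over $x,x'\in\SpaceX$ gives $\delta(K) \leq 1-\eps$ by the definition \eqref{eq_def_Dobrushin_coefficient} of the Dobrushin coefficient.

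Every step here is an elementary inequality, so there is no genuine obstacle; the only point requiring a little care is the passage to the complement, which is precisely what converts the one-sided Doeblin lower bound into the two-sided control on $|K(x;A)-K(x';A)|$, together with the bookkeeping ensuring that the $\nu(A)$-dependent terms cancel so that the resulting bound $1-\eps$ is uniform in $A$, $x$ and $x'$.
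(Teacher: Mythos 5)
Your proof is correct. Note that the paper does not actually prove this lemma: it is stated as a direct citation of \cite[Lemma~4.3.13]{CappeHMM}, so there is no in-paper argument to compare against. Your complement-passing computation is a valid elementary verification: the two one-sided bounds $K(x';A) \geq \eps\,\nu(A)$ and $K(x;A) \leq 1-\eps+\eps\,\nu(A)$ (the latter from applying the Doeblin condition to $A^c$, using that $\nu$ is a probability measure) combine with exact cancellation of the $\eps\,\nu(A)$ terms, and since the bound is uniform in $A$, $x$, $x'$, it passes to the suprema in \eqref{eq_def_Dobrushin_coefficient}. Your argument is also consistent with the paper's convention $\normTV{\mu_1-\mu_2}=\sup_A \vert \mu_1(A)-\mu_2(A)\vert$, which is the factor-of-two point the paper flags when citing \cite[Chapter~4]{CappeHMM}. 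For comparison, the standard textbook route is structural rather than computational: one writes $K(x;\cdot) = \eps\,\nu(\cdot) + (1-\eps)\,\bar K(x;\cdot)$, where $\bar K(x;\cdot) = (1-\eps)^{-1}\bigl( K(x;\cdot) - \eps\,\nu(\cdot) \bigr)$ is again a transition kernel (nonnegative precisely by the Doeblin condition), so that $K(x;\cdot)-K(x';\cdot) = (1-\eps)\bigl( \bar K(x;\cdot)-\bar K(x';\cdot) \bigr)$ and the bound follows because the total variation distance between two probability measures is at most $1$. That decomposition buys a reusable object (the residual kernel, which underlies coupling constructions such as those in Lemmas~\ref{lemme_exp_coupling_HMT} and \ref{lemma_coupling_time}), whereas your direct estimate is shorter and self-contained; both establish the lemma.
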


Lastly, the Doeblin condition implies the existence of a unique invariant probability measure,
as well as uniform geometric ergodicity.

\begin{lemma}[{\cite[Theorem~4.3.16]{CappeHMM}}]
	\label{lemma_exist_unif_geom_ergodic}
Let $K$ be a transition kernel on a metric space $\SpaceX$ that satisfies a Doeblin condition with $(\eps,\nu)$.
Then, $K$ admits a unique invariant probability measure $\pi$.
Moreover, for any probability measure $\zeta$ on $\SpaceX$, we have for all $n\in\N$:
\begin{equation*}
\normTV{\zeta K^n - \pi} \leq (1-\eps)^{n} \normTV{\zeta - \pi} .
\end{equation*}
\end{lemma}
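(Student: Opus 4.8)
The plan is to derive the whole statement from the three preceding lemmas on the Dobrushin coefficient, whose combination furnishes a uniform contraction. First I would record the basic estimate: by \Cref{lemma_Doeblin_Dobrushin}, the Doeblin condition with $(\eps,\nu)$ gives $\delta(K)\leq 1-\eps$ (and since $K(x;\SpaceX)=1\geq \eps\,\nu(\SpaceX)=\eps$, we have $\eps\in(0,1]$, so $1-\eps<1$). Sub-multiplicativity of the Dobrushin coefficient (\Cref{lemma_Dobrushin_sub_multiplicative}) then yields $\delta(K^n)\leq\delta(K)^n\leq(1-\eps)^n$ for every $n\in\N$, and the coupling bound of \Cref{lemma_coumpling_bound_Dobrushin} applied to the kernel $K^n$ gives, for any two probability measures $\mu_1,\mu_2$ on $\SpaceX$,
\[
\normTV{(\mu_1-\mu_2)K^n}\leq\delta(K^n)\,\normTV{\mu_1-\mu_2}\leq(1-\eps)^n\,\normTV{\mu_1-\mu_2}.
\]
This single inequality drives everything that follows.

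For existence, I would fix an arbitrary probability measure $\zeta$ and study the sequence $(\zeta K^n)_{n\in\N}$. For $m>n$, writing $\zeta K^m-\zeta K^n=(\zeta K^{m-n}-\zeta)K^n$ and applying the above estimate (both $\zeta K^{m-n}$ and $\zeta$ are probability measures) gives $\normTV{\zeta K^m-\zeta K^n}\leq(1-\eps)^n$, which tends to $0$ as $n\to\infty$. Hence $(\zeta K^n)_n$ is Cauchy. Working in the Banach space of finite signed measures under the full total variation norm — which, on differences of probability measures, equals twice $\normTV{\cdot}$ — and using that the set of probability measures is closed there (total mass and nonnegativity pass to the limit), the sequence converges in total variation to a probability measure $\pi$. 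Since $\mu\mapsto\mu K$ is $1$-Lipschitz for $\normTV{\cdot}$ by \Cref{lemma_coumpling_bound_Dobrushin}, passing to the limit in $\zeta K^{n+1}=(\zeta K^n)K$ yields $\pi K=\pi$, so $\pi$ is invariant.

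Uniqueness and the quantitative bound are then immediate. If $\pi_1,\pi_2$ are both invariant, the contraction estimate gives $\normTV{\pi_1-\pi_2}=\normTV{(\pi_1-\pi_2)K^n}\leq(1-\eps)^n\normTV{\pi_1-\pi_2}$ for all $n$, forcing $\normTV{\pi_1-\pi_2}=0$, i.e. $\pi_1=\pi_2$. Finally, since $\pi K^n=\pi$, the displayed inequality with $\mu_1=\zeta$ and $\mu_2=\pi$ reads
\[
\normTV{\zeta K^n-\pi}=\normTV{(\zeta-\pi)K^n}\leq(1-\eps)^n\,\normTV{\zeta-\pi},
\]
which is exactly the claim.

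The one genuinely non-formal point, and the step I expect to require the most care, is the existence argument: one must justify that a Cauchy sequence of probability measures converges in total variation to a probability measure. This rests on the completeness of the space of finite signed measures for the total variation norm and on the closedness of the probability simplex therein — both standard, but depending on the measurable-space structure rather than on any metric or topological features of $\SpaceX$. Everything else is a direct bookkeeping of the contraction supplied by the earlier lemmas, so the whole proof is short once this point is secured.
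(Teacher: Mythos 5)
Your proof is correct and takes essentially the intended approach: the paper itself gives no proof of this lemma (it is quoted from \cite[Theorem~4.3.16]{CappeHMM}), but it sets up Lemmas~\ref{lemma_coumpling_bound_Dobrushin}, \ref{lemma_Dobrushin_sub_multiplicative} and \ref{lemma_Doeblin_Dobrushin} precisely so that the result follows by the Dobrushin-contraction argument you give, which also matches the standard proof in the cited reference. You correctly identify and resolve the only non-formal step, namely that a total-variation Cauchy sequence of probability measures converges to a probability measure, via completeness of the space of finite signed measures and closedness of the probability measures therein.
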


\Cref{lemma_Q_unif_geom_ergodic} then follows immediately from \Cref{lemma_exist_unif_geom_ergodic}
and the uniform Doeblin condition \eqref{eq_Doeblin_cond_Q_theta}.

\begin{remark}[More properties of the transition kernel from the Doeblin condition]
For a transition kernel $K$ on a metric space $\SpaceX$,
the Doeblin condition also implies that $\SpaceX$ is an (accessible) $1$-small set.
In particular, we get that $K$ satisfies some extra classical properties (that we will not use here):
$K$
is positive (\ie irreducible and admits a unique invariant probability measure), strongly aperiodic and Harris recurrent
(see \cite[Chapter~9 and 10]{DoucMC} for definitions and details).
\end{remark}

\medskip

We will use the letter $p$ to denote (possibly conditional) probability density.
For instance, for any $\theta\in\Theta$, $y_{\T_n}\in\SpaceY^{\T_n}$ and $x_{\rooot}\in\SpaceX$, we denote by:
\begin{equation}\label{eq_def_exple_p_theta}
 p_\theta( y_{\T_n} \,\vert\, X_{\rooot} = x_{\rooot} ) 
	= g_\theta(x_{\rooot}, y_{\rooot})  \int_{\SpaceX^{ \T_n^* } } 
			\prod_{v\in\T_n^*} 
			q_\theta(x_{\parent(v)},x_v) g_\theta(x_v, y_v) \lambda(\drv x_v)	,
\end{equation}
the conditional density \wrt $\mu^{\otimes \T_n}$ under the parameter $\theta$ of $Y_{\T_n}=y_{\T_n}$
conditionally on $X_{\rooot}=x_{\rooot}$. 
Note that \Cref{assump_HMM_2} guarantees that $ p_\theta( y_{\T_n} \,\vert\, X_{\rooot} = x_{\rooot} ) $
is positive for all $y_{\T_n}\in\SpaceY^{\T_n}$ and $x_{\rooot}\in\SpaceX$.

We are now ready to define the log-likelihood.
As discussed in \Cref{section_intro_new_contribution},
we will analyze the log-likelihood of the observed process $(Y_u, u\in \T_n)$ up to generation $n$
	conditioned on the hidden value of the root $X_{\rooot} = x$ for some $x\in\SpaceX$. 
Thus, for any $x\in\SpaceX$, we define the log-likelihood function as:
\begin{equation}
	\label{eq_def_l_nx_theta}
\ell_{n,x}(\theta; y_{\T_n}) 
	:= \log \bigl( p_\theta( y_{\T_n} \,\vert\, X_{\rooot} = x ) 
		\bigr).
\end{equation}
We then define the log-likelihood that we will analyze as the following random variable
\begin{equation}\label{eq_def_l_nx_theta_2}
\ell_{n,x}(\theta) := \ell_{n,x}(\theta; Y_{\T_n}).
\end{equation}
For simplicity, we will write $\ell_{n,x}(\theta)$ instead of $\ell_{n,x}(\theta; Y_{\T_n})$
making the dependence on the observed variables $(Y_u , u\in\T_n)$ implicit.
We will keep this convention for all quantities considered in this article,
and only make the dependence explicit when necessary.
The MLE is then the maximizer over $\Theta$ of the log-likelihood $\ell_{n,x}$;
we post-pone the precise definition of the MLE to when we will first use it in \Cref{thm_Strong_consistency_MLE}.

\begin{remark}[On \Cref{assump_HMM_2}-\ref{assump_HMM_2:item3}]
Note that \Cref{assump_HMM_2}-\ref{assump_HMM_2:item3}
is due to our choice of conditioning on $X_\rooot=x$ for some $x\in\SpaceX$ in the log-likelihood $\ell_{n,x}(\theta)$ we analyse.
Indeed, without \Cref{assump_HMM_2}-\ref{assump_HMM_2:item3}, there could be a non-zero probability under $\Ptrue$
that $g_{\thetaTrue}(x,Y_{\rooot}) = 0$ for some $x\in\SpaceX$, implying $\ell_{n,x}(\thetaTrue) = -\infty$,
and thus preventing the MLE to converge to $\thetaTrue$.
Several modifications of the log-likelihood  $\ell_{n,x}(\theta)$
can be considered to get rid of \Cref{assump_HMM_2}-\ref{assump_HMM_2:item3}.
A first option would be to replace $p_\theta( y_{\T_n} \,\vert\, X_{\rooot} = x ) $
by $p_\theta( y_{\T_n^*} \,\vert\, X_{\rooot} = x ) $ in \eqref{eq_def_l_nx_theta}.
A second option would be to extend the tree $\T$ and the HMT $(X,Y)$ 
to add a parent vertex $\parent(\rooot)$ for the root vertex $\rooot$
(see \Cref{subsubsection_extension_tree_Tpast}),
and then replace $p_\theta( y_{\T_n} \,\vert\, X_{\rooot} = x ) $
by $p_\theta( y_{\T_n} \,\vert\, X_{\parent(\rooot)} = x ) $ in \eqref{eq_def_l_nx_theta}.
\end{remark}

\subsection{Ergodic theorems with neighborhood-dependent functions}
	\label{subsection_ergodic_theorem}

For all $u\in\T$ and $0 \leq k\leq \height{u}$, define the subtrees of $\T$:
\begin{equation*}
\Delta^*(u,k) = \{ v \in \T(\parent^k(u)) \,:\, v < u \} ,
\end{equation*}
and $\Delta(u,k) = \Delta^*(u,k) \cup \{ u \}$.
In particular, note that when $k=\height{u}$, we have that $\Delta^*(u) := \Delta^*(u,\height{u}) = \{ v \in \T \,:\, v < u \}$,
and we also write $\Delta(u) := \Delta(u,\height{u})$.
See \Cref{fig_Delta_u_k} for an illustration of those subtrees.
The subtree $\Delta^*(u)$ represents the past of the vertex $u$.	

\begin{figure}[t]
\centering

\begin{subfigure}[b]{0.45\textwidth}
         \centering
\includegraphics[width=4.4cm]{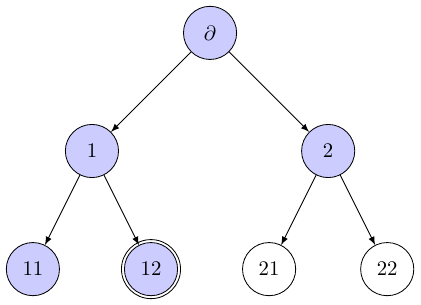}
\end{subfigure}
\begin{subfigure}[b]{0.45\textwidth}
         \centering
\includegraphics[width=4.4cm]{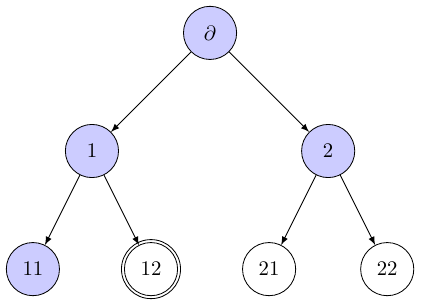}
\end{subfigure}\\
\vspace{0.6cm}
\begin{subfigure}[b]{0.45\textwidth}
         \centering
\includegraphics[width=4.4cm]{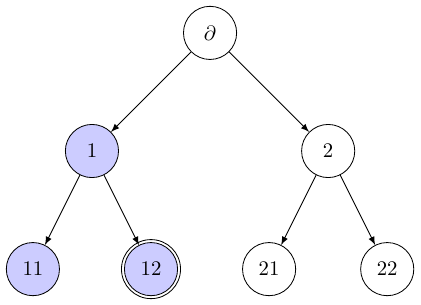}
\end{subfigure}
\begin{subfigure}[b]{0.45\textwidth}
         \centering
\includegraphics[width=4.4cm]{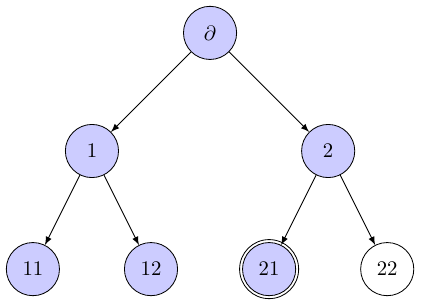}
\end{subfigure}

\caption{Illustration of the subtrees $\Delta(u,k)$ and $\Delta^*(u,k)$ where
	vertices in $\Delta(u,k)$ and $\Delta^*(u,k)$ are in blue
	and the vertex $u$ is inside a double circle.
	From left to right and top to bottom, using Neveu's notation, we have
	$\Delta(12)=\Delta(12,2)$, $\Delta^*(12)=\Delta^*(12,2)$, $\Delta(12,1)$ and $\Delta(21)=\Delta(21,2)$.
	Note that $\Delta(12)$ and $\Delta(21)$ have a different
	number of vertices,
	while vertices $12$ and $21$ are in the same generation.
	}
\label{fig_Delta_u_k}
\end{figure}

For the ergodic convergence results needed in this article,
we will need to consider different functions for each vertex $u\in\T$ depending on the ‘‘shape’’ of the subtree $\Delta(u,k)$
for some common $k\in\N$.
For $k\in\N$ and vertices $u,v\in\T$ both with height at least $k$,
	we say that $\Delta(u,k)$ and $\Delta(v,k)$ \emph{have the same shape}
	if they are equal up to translation, that is, if they are isomorphic as (finite) rooted plane trees.
For $k\in\N$ and any vertex $u\in\T$ with $\height{u}\geq k$,
there exists a unique $v_u\in\G_k$ such that $\Delta(u,k)$ and $\Delta(v_u)$ have the same shape,
and we thus define the \emph{shape} of $\Delta(u,k)$ as:
\begin{equation}\label{eq_def_shape_subtree}
\Shape\bigl( \Delta(u,k) \bigr) = \Delta(v_u).
\end{equation}
Note that as $\vert\Delta(v)\vert$ is different for each $v\in\G_k$,
	thus the shape of $\Delta(u,k)$ is characterized by its size.
For any $k\in\N$, we define the (finite) set $\ShapeSetValues_k$ of possible shapes for $\Delta(u,k)$ with $u\in\T$ as:
\begin{equation}\label{eq_def_set_possible_shapes}
\ShapeSetValues_k = \{ \Delta(v) \,:\, v\in\G_k \} .
\end{equation}

For any $k\in\N$, we define a collection of \emph{neighborhood-shape-dependent} functions as a collection of functions $(f_{\ShapeValue} : \SpaceZ^{\ShapeValue} \to \R)_{\ShapeValue\in\ShapeSetValues_k}$
where $\SpaceZ \in \{ \SpaceX, \SpaceY, \SpaceX\times\SpaceY \}$. For such a collection of functions, we will simply write $f_{\Delta(u,k)}$ 	instead of $f_{\Shape(\Delta(u,k))}$.  And we will also write $f_{\Delta(u,k)}(Y_{\Delta(u,k)})$ for the evaluation of $f_{\Delta(u,k)}$ on $Y_{\Delta(u,k)}$.
Note that indexing such a collection of functions with $\G_k$ or with $\ShapeSetValues_k$ 
is equivalent in light of \eqref{eq_def_set_possible_shapes}.

We prove the following ergodic convergence lemma for neighborhood-shape-dependent functions. 
The proof of this lemma relies on the theorems in \Cref{appendix_ergodic_theorems}.
Note that if $U_n$ is uniformly distributed over $\G_n$ with $n\geq k$,
then $\Shape(\Delta(u,k))$ is uniformly distributed over $\ShapeSetValues_k$.

\begin{lemme}[Ergodic theorem for neighborhood-dependent functions]
	\label{lemma_ergodic_convergence}
Assume that Assumptions~\ref{assump_HMM_0}--\ref{assump_HMM_2} hold.
Let $k\geq 0$.
Let $(f_{\ShapeValue} : \SpaceY^{\ShapeValue} \to \R)_{\ShapeValue\in\ShapeSetValues_k}$ 
be a collection of neighborhood-shape-dependent Borel functions that are in $L^2(\Ptrue)$.
Then, we have:
\begin{equation}\label{eq_ergodic_convergence_as_L2}
\lim_{n\to\infty}
\inv{\vert\T_n\vert} \sum_{u\in\T_n\setminus\T_{k-1}} 
	\!\! f_{\Delta(u,k)}(Y_{\Delta(u,k)})
= \Esp_{U_k} \otimes \Etrue\bigl[ f_{\Delta(U_k)}(Y_{\Delta(U_k)})   \bigr]
\quad \text{$\Ptrue$-\as and in $L^2(\Ptrue)$,}
\end{equation}
with the convention $\T_{-1} = \emptyset$,
and where $U_k$ is uniformly distributed over $\G_k$ and independent of the process $X$,
and $\Esp_{U_k} \otimes \Etrue$ denotes the joint expectation over $U_k$ and $X$ (under the true parameter $\thetaTrue$).

Moreover, there exist finite constants $C < \infty$ and $\alpha\in(0,1)$ such that:
\begin{equation}\label{eq_LLN_upper_bound_Esp_M_Gn_main_body}
\forall n\geq k, \quad \Esp\left[ \left( \inv{\vert\T_n\vert} \sum_{u\in\T_n\setminus\T_{k-1}} 
	f_{\Delta(u,k)}(Y_{\Delta(u,k)})
	- \Esp_{U_k} \otimes \Etrue\bigl[ f_{\Delta(U_k)}(Y_{\Delta(U_k)})   \bigr] \right)^2 \right] \leq C \alpha^n .
\end{equation}
\end{lemme}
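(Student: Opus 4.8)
The plan is to reduce this ergodic theorem to the underlying ergodic theorems for branching Markov chains with neighborhood-dependent functions proved in \Cref{appendix_ergodic_theorems}. The key observation is that the quantity $f_{\Delta(u,k)}(Y_{\Delta(u,k)})$ is a function of the observed variables $Y_v$ for $v$ in the finite neighborhood $\Delta(u,k)$ of $u$, and that this neighborhood has one of finitely many possible shapes indexed by $\ShapeSetValues_k$. First I would reorganize the sum $\sum_{u\in\T_n\setminus\T_{k-1}}$ by grouping vertices according to the shape of their neighborhood. Since the shape of $\Delta(u,k)$ is determined by which vertex $v_u\in\G_k$ it is isomorphic to (via the identification in \eqref{eq_def_shape_subtree}), and since $\Shape(\Delta(u,k))$ is uniformly distributed over $\ShapeSetValues_k$ when $u$ is uniform over a large generation, the empirical average over $u$ should converge to an average over shapes weighted uniformly, composed with the stationary expectation of the process.

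The central technical step is the $L^2$ bound \eqref{eq_LLN_upper_bound_Esp_M_Gn_main_body}, from which almost sure convergence follows by a Borel--Cantelli argument (the exponential decay $C\alpha^n$ is summable, so the deviations are summable along the sequence indexed by $n$, giving a.s.\ convergence). To obtain the $L^2$ bound, I would expand the square of the centered sum into a double sum over pairs $(u,u')$ and estimate the covariance of $f_{\Delta(u,k)}(Y_{\Delta(u,k)})$ and $f_{\Delta(u',k)}(Y_{\Delta(u',k)})$. The $L^2(\Ptrue)$ integrability hypothesis on the $f_\ShapeValue$ (rather than merely $L^1$) is exactly what is needed here, as flagged in the introduction: the stronger integrability accommodates the second-moment estimates. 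The covariance estimates should exploit the uniform geometric ergodicity of $Q_\theta$ from \Cref{lemma_Q_unif_geom_ergodic} (extended to the joint process $(X,Y)$ with mixing rate $\rho$), so that the covariance between the contributions of $u$ and $u'$ decays geometrically in the graph-distance $\dgr(u,u')$, or more precisely in the height of their most recent common ancestor $u\land u'$. Combined with the combinatorial fact that the number of pairs at a given common-ancestor height grows like a geometric factor, the two geometric factors compete and yield the overall bound $C\alpha^n$ after normalizing by $\vert\T_n\vert^2 \sim 2^{2n}$.

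The main obstacle I expect is handling the non-stationarity of the neighborhood shapes: because vertices are enumerated breadth-first and the subtrees $\Delta(u,k)$ genuinely differ in size and shape (as emphasized by the contrast between $\Delta(12)$ and $\Delta(21)$ in \Cref{fig_Delta_u_k}), one cannot directly invoke a single stationary ergodic theorem. The resolution is to treat the collection $(f_\ShapeValue)_{\ShapeValue\in\ShapeSetValues_k}$ as defining a single neighborhood-dependent function on the branching Markov chain $(X,Y)$, where the dependence reaches down $k$ levels and laterally within each neighborhood, and then apply the general ergodic theorems of \Cref{appendix_ergodic_theorems} that are designed precisely to allow such neighborhood-dependent and shape-varying summands. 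The identification of the limit as $\Esp_{U_k} \otimes \Etrue[f_{\Delta(U_k)}(Y_{\Delta(U_k)})]$ then follows by computing the limiting frequency of each shape (uniform over $\ShapeSetValues_k$, as noted before the statement) and the stationary expectation of $f_\ShapeValue(Y_\ShapeValue)$ under $\Etrue$ for each fixed shape; a careful check that the boundary terms from vertices in $\T_{k-1}$ and from the first few generations are asymptotically negligible after dividing by $\vert\T_n\vert$ completes the identification.
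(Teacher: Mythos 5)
Your proposal takes essentially the same route as the paper: the lemma is proved there precisely by observing that under Assumptions~\ref{assump_HMM_0}--\ref{assump_HMM_2} the joint process $(X,Y)$ is a stationary branching Markov chain whose kernel is uniformly geometrically ergodic (\Cref{lemma_Q_unif_geom_ergodic}), and then invoking Theorems~\ref{Ergodic_theorem_with_neighborhood_functions} and~\ref{Strong_LLN_with_neighborhood_functions} from the appendix, whose proofs follow exactly the second-moment/covariance expansion, shape-frequency identification of the limit, and Borel--Cantelli argument you sketch. One minor imprecision in your sketch of the covariance estimate: the geometric decay is in the distance $\tilde \dgr(u,v) = \max\bigl(\dgr(u\land v, u), \dgr(u\land v, v)\bigr)$ from the most recent common ancestor to the two vertices (which is comparable to $\dgr(u,v)$), not in the height of $u\land v$ itself.
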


Remark that in the left hand side of \eqref{eq_ergodic_convergence_as_L2} the subtrees $\Delta(u,k)$ are deterministic,
while the subtree $\Delta(U_k)$ is a random function of $U_k$.

\begin{proof}
Using \Cref{lemma_Q_unif_geom_ergodic}, remind 
that under Assumptions~\ref{assump_HMM_0}--\ref{assump_HMM_2},
the branching Markov process $(X,Y)$ is stationary 
and its transition kernel has a unique invariant probability 
and is uniformly geometrically ergodic. 
Hence, the lemma follows immediately from applying the ergodic
	Theorems~\ref{Ergodic_theorem_with_neighborhood_functions}
	and~\ref{Strong_LLN_with_neighborhood_functions} 
	for neighborhood-shape-dependent functions from the appendix.
\end{proof}

As $\T$ is a plane rooted tree, we can enumerate its vertices as a sequence $(v_j)_{j\in\N}$
in a breadth-first-search manner, that is, which is increasing for $<$
(note that $u_0 = \rooot$).
Note that if $V_n$ is uniformly distributed over 
$A_n := \{ v_j \,:\, \vert \T_{k-1} \vert < j \leq n \}
= \Delta(v_n) \setminus \T_{k-1}$,
then the distribution of $\Shape(\Delta(V_n,k))$ converges to the uniform distribution over $\ShapeSetValues_k$
as $n\to\infty$.
We will also need the following variant of \Cref{lemma_ergodic_convergence}
where $\T_n\setminus \T_{k-1}$ is replaced by $A_n$.

\begin{lemme}[Another ergodic theorem for neighborhood-dependent functions]
	\label{lemma_ergodic_convergence_2}
Assume that Assumptions~\ref{assump_HMM_0}--\ref{assump_HMM_2} hold.
Let $k\geq 0$.
Let $(f_{\ShapeValue} : \SpaceY^{\ShapeValue} \to \R)_{\ShapeValue\in\ShapeSetValues_k}$ 
be a collection of neighborhood-shape-dependent Borel functions that are in $L^2(\Ptrue)$.
Let $(v_j)_{j\in\N}$ be the sequence enumerating the vertices in $\T$ in a breadth-first-search manner.
For all $n> \vert\T_{k-1}\vert$, define $A_n = \Delta(v_n) \setminus \T_{k-1}$.
Then, we have:
\begin{equation*}
\lim_{n\to\infty}
\inv{n} \sum_{u\in A_n}
	f_{\Delta(u,k)}(Y_{\Delta(u,k)})
= \Esp_{U_k} \otimes \Etrue\bigl[ f_{\Delta(U_k)}(Y_{\Delta(U_k)})   \bigr]
\quad \text{in $L^2(\Ptrue)$,}
\end{equation*}
where $U_k$ is uniformly distributed over $\G_k$ and independent of the process $X$,
and $\Esp_{U_k} \otimes \Etrue$ denotes the joint expectation over $U_k$ and $X$ (under the true parameter $\thetaTrue$).
\end{lemme}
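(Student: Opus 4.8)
The plan is to reduce the statement to the complete-generation version already proved in \Cref{lemma_ergodic_convergence}, by peeling off the single incomplete top generation contained in $A_n$. Write $Z_u := f_{\Delta(u,k)}(Y_{\Delta(u,k)})$, $\mu_u := \Etrue[Z_u]$ and $L := \Esp_{U_k}\otimes\Etrue\bigl[f_{\Delta(U_k)}(Y_{\Delta(U_k)})\bigr]$. By stationarity (\Cref{assump_HMM_0}) the mean $\mu_u$ depends on $u$ only through the shape $\Shape(\Delta(u,k))$, and averaging uniformly over the $2^k$ shapes gives $L = 2^{-k}\sum_{v\in\G_k}\mu_v$. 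For $n$ large, let $m := \height{v_n}$ (so $m\to\infty$ with $n$) and split
\begin{equation*}
A_n = \bigl(\T_{m-1}\setminus\T_{k-1}\bigr) \sqcup B_n, \qquad B_n := \{\, u\in\G_m : u\leq v_n \,\},
\end{equation*}
where $B_n$ is an initial lexicographic segment of the top generation $\G_m$; note $\vert\T_{m-1}\vert + \vert B_n\vert = n + O(1)$.

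For the complete part I would write $\frac1n\sum_{u\in\T_{m-1}\setminus\T_{k-1}}Z_u = \frac{\vert\T_{m-1}\vert}{n}\,\widetilde S_{m-1}$ with $\widetilde S_{m-1} := \vert\T_{m-1}\vert^{-1}\sum_{u\in\T_{m-1}\setminus\T_{k-1}}Z_u$; \Cref{lemma_ergodic_convergence}, and in particular the rate bound \eqref{eq_LLN_upper_bound_Esp_M_Gn_main_body}, gives $\Vert\widetilde S_{m-1}-L\Vert_{L^2(\Ptrue)}\leq\sqrt C\,\alpha^{(m-1)/2}\to 0$, while the prefactor $\vert\T_{m-1}\vert/n$ stays in $[0,1]$. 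For the deterministic mean of the partial part, the shape $\Shape(\Delta(u,k))$ is periodic with period $2^k$ as $u$ ranges over $\G_m$ in lexicographic order (each shape occurs exactly once in every block of $2^k$ consecutive vertices sharing a common $k$-th ancestor), so each shape occurs in $B_n$ with frequency $\vert B_n\vert/2^k$ up to an additive error at most $1$; hence $\bigl\vert\sum_{u\in B_n}\mu_u - \vert B_n\vert\,L\bigr\vert \leq 2^k\max_{v\in\G_k}\vert\mu_v\vert$, a constant.

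The main obstacle is the centred fluctuation of the incomplete top generation, $\frac1n\sum_{u\in B_n}(Z_u-\mu_u)$, which is precisely the term not controlled by \Cref{lemma_ergodic_convergence}. I would estimate its second moment directly through
\begin{equation*}
\Etrue\Bigl[\Bigl(\tfrac1n\textstyle\sum_{u\in B_n}(Z_u-\mu_u)\Bigr)^{2}\Bigr]
= \frac{1}{n^2}\sum_{u,u'\in B_n}\Cov(Z_u,Z_{u'})
\leq \frac{1}{n^2}\sum_{u\in B_n}\ \sum_{u'\in\G_m}\bigl\vert\Cov(Z_u,Z_{u'})\bigr\vert ,
\end{equation*}
and feed in the covariance/coupling estimate underlying \Cref{lemma_ergodic_convergence} (proved in \Cref{appendix_ergodic_theorems}), namely $\vert\Cov(Z_u,Z_{u'})\vert \leq C'\rho^{(\dgr(u,u')-2k)_+}$. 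Since a fixed $u\in\G_m$ has exactly $2^{j-1}$ vertices $u'\in\G_m$ at graph distance $\dgr(u,u')=2j$, the inner sum is at most $C''\bigl(1+\sum_{j\geq1}(2\rho^2)^j\bigr)$: this is bounded when $\rho<1/\sqrt2$, and of order $(2\rho^2)^m$ otherwise. Using $\vert B_n\vert\leq 2^m$ together with the decisive lower bound $n\geq\vert\T_{m-1}\vert\geq 2^{m-1}$, the displayed quantity is $O(2^{-m})$ when $\rho<1/\sqrt2$ and $O(\rho^{2m})$ when $\rho\geq1/\sqrt2$, and therefore vanishes in both regimes. The key point is that the normalisation by $n\gtrsim 2^m$, rather than by the possibly much smaller $\vert B_n\vert$, absorbs the per-generation variance even when the latter grows, so that no restriction on $\rho$ beyond $\rho<1$ (automatic under \Cref{assump_HMM_2}) is required here.

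Finally, I would collect the pieces. Writing $S_n := \sum_{u\in A_n}Z_u$ and using $\widetilde S_{m-1}$ as above,
\begin{equation*}
\frac{S_n}{n} - L
= \frac{\vert\T_{m-1}\vert}{n}\bigl(\widetilde S_{m-1}-L\bigr)
+ \frac1n\sum_{u\in B_n}(Z_u-\mu_u)
+ r_n,\qquad
r_n := \frac{\vert\T_{m-1}\vert}{n}L + \frac1n\sum_{u\in B_n}\mu_u - L .
\end{equation*}
Since $\vert\T_{m-1}\vert + \vert B_n\vert = n + O(1)$ and $\sum_{u\in B_n}\mu_u = \vert B_n\vert L + O(1)$, the deterministic remainder satisfies $\vert r_n\vert = O(1/n)$. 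The first term is at most $\sqrt C\,\alpha^{(m-1)/2}$ in $L^2(\Ptrue)$ and the second tends to $0$ by the second-moment estimate above; the triangle inequality in $L^2(\Ptrue)$ then yields the claimed convergence. The only genuinely new ingredient beyond \Cref{lemma_ergodic_convergence} is this $L^2$ control of the incomplete top generation, which is where the covariance decay from \Cref{appendix_ergodic_theorems} and the favourable $n\gtrsim 2^m$ normalisation do the work.
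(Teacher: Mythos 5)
Your proof is correct, but it takes a genuinely different route from the paper. The paper's own proof of this lemma is a one-line application of \Cref{Ergodic_theorem_with_neighborhood_functions} from the appendix, which is stated and proved directly for the breadth-first-search averaging sets $A_n=\Delta(v_n)\setminus\T_{k-1}$: there the $L^2$ bound is obtained by splitting pairs $(u,v)$ according to whether two independent uniform vertices of $A_n$ are close or far, using a covariance decay in $\tilde d(u,v)=\max(d(u\land v,u),d(u\land v,v))$ together with the fact that $\Prb(d(U_n,V_n)\leq 4K)\to 0$ (borrowed from \cite[Lemma~3.1]{weibelErgodicTheoremBranching2024}). You instead reduce the statement to the complete-generation result \Cref{lemma_ergodic_convergence} via the exact decomposition $A_n=(\T_{m-1}\setminus\T_{k-1})\sqcup B_n$ with $m=\height{v_n}$, exploit the exponential rate \eqref{eq_LLN_upper_bound_Esp_M_Gn_main_body} for the complete part, handle the deterministic means of $B_n$ by the correct observation that the shape of $\Delta(u,k)$ is $2^k$-periodic along $\G_m$ in lexicographic order, and control the centred fluctuation of the incomplete top generation by a direct second-moment computation whose decisive ingredient is that one normalises by $n\geq\vert\T_{m-1}\vert\gtrsim 2^m$ rather than by the possibly much smaller $\vert B_n\vert$. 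Your approach buys a self-contained derivation from the already-stated \Cref{lemma_ergodic_convergence} plus a single covariance estimate, and makes transparent why no condition on $\rho$ beyond $\rho<1$ is needed; the paper's route buys generality, since the appendix theorem covers more general averaging sequences (and, as remarked there, more general neighbourhoods and trees) in one stroke. Two small caveats. First, the covariance bound you invoke is not isolated as a standalone statement in the paper: what the appendix actually establishes, inside the proof of \Cref{Ergodic_theorem_with_neighborhood_functions} via \eqref{eq_cov_Ou_Ov_upperbound_conv2} and \Cref{lemma_assump_Thm_ergo_Q_unif_geom_ergo}, is a bound of the form $C\alpha^{\tilde d(u,v)-k}$ for $d(u,v)>2k$, with $\alpha\in(0,1)$ the $L^2(\pi)$-ergodicity rate rather than $\rho$ itself; for same-generation pairs $\tilde d(u,u')=d(u,u')/2=j$, so the decay is $\alpha^{j-k}$ rather than your $\rho^{2j}$. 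Your argument is unaffected, since with $\vert B_n\vert\leq 2^m$ and $n\geq 2^{m-1}$ the total is at most of order $2^m(2\alpha)^m/2^{2m}=\alpha^m\to 0$ whatever the sign of $2\alpha-1$, but you should cite or re-derive the bound in that form. Second, the cardinality identities have harmless off-by-one slack ($\vert\Delta(v_n)\vert=n+1$, so $\vert\T_{m-1}\vert+\vert B_n\vert=n+1$ and the lemma's normalisation by $n$ versus $\vert A_n\vert$ differs only by a constant), which your $O(1/n)$ remainder already absorbs.
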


\begin{proof}
Using \Cref{lemma_Q_unif_geom_ergodic}, remind 
that under Assumptions~\ref{assump_HMM_0}--\ref{assump_HMM_2},
the branching Markov process $(X,Y)$ is stationary 
and its transition kernel has a unique invariant probability 
and is uniformly geometrically ergodic. 
Hence, the lemma follows immediately from applying the ergodic
	Theorem~\ref{Ergodic_theorem_with_neighborhood_functions}
	for neighborhood-shape-dependent functions from the appendix.
\end{proof}

\section{Strong consistency of the MLE}\label{section_strong_consistency}

In this section, we first define the extended tree $\Tpast$ to get an infinite past horizon
	and rewrite the log-likelihood as a sum of increments.
Then, we construct the log-likelihood increments with infinite past,
	which allows to define the contrast function.
We prove properties for this contrast function.
Finally, we prove the strong consistency of the MLE.

\subsection{Decomposition of the log-likelihood into increments}
	\label{subsection_decomp_likelihood_ordering}

\subsubsection{The extended tree \texorpdfstring{$\Tpast$}{} to get an infinite past horizon}
	\label{subsubsection_extension_tree_Tpast}

Remind that the subtree $\Delta^*(u)  = \Delta^*(u,\height{u})$ represents the past of the vertex $u$.

To get an infinite past horizon, we will consider an extended version of the tree $\T$.
Thus, we are going to define a random (countable) plane rooted tree $\Tpast$
that contains $\T$ as a subtree and is also rooted at $\rooot$ the root vertex of $\T$,
and where each vertex (including $\rooot$) has exactly one parent node and two children nodes.
To construct $\Tpast$, we start from $\T$ and add a line $L = \{ u_{-j} : j\in\N^*\}$ of ancestors for $\rooot$
(that is, $u_{-j} = \parent^j(\rooot)$ for $j\in\N$, where $u_0 = \rooot$),
and then for all $j\in\N^*$, we graft on $u_{-j}$ a copy $T^{(j)}$ of $\T$
(that is, $u_{-j}$ is the parent of the root vertex $\rooot^{(j)}$ of $\T^{(j)}$).
We extend the height function $\heightFunction$ from $\T$ to $\Tpast$ as follows:
for all $j\in\N^*$, we set $\height{u_{-j}} = -j$ and for all $u\in \T^{(j)}$,
we define $\height{u}$ as $-j$ plus the number of edges separating $u$ from $u_{-j}$.
For $u,v\in\Tpast$, denote by $u\land v$ their most recent common ancestor,
and by $\dgr(u,v) = \height{u} + \height{v} - 2 \height{u\land v}$ the graph distance between $u$ and $v$.
The definition of the subtrees $\Tpast(u)$ and $\Tpast(u,k)$ then naturally extend to $\Tpast$.

Thus, we have constructed the deterministic non-plane version of the tree $\Tpast$,
and we are left to define the random plane embedding of $\Tpast$.
That is, for each vertex $u\in\Tpast$, we have to define a possibly random ordering of its children.
As $\T$ is a plane rooted tree, note that if $u\in\T$ or $u\in\T^{(j)}$ for some $j\in\N^*$, then its children are already order deterministically.
Let $\cU = (\cU_{(j)})_{- \infty < j \leq 0}$ be a sequence of independent random variables with Bernoulli distribution of parameter $1/2$,
and which is independent of the HMT process $(X,Y)$.
For all $j\in\N$, we order the children of $u_{-j-1}$, that is $u_{-j}$ and $\delta^{(j+1)}$ (the root vertex of $\T^{(j)}$), as follows:
$u_{-j}$ is the left child of $u_{-j-1}$ if $\cU_{(-j)} = 0$, and is the right child otherwise.
Hence, we have constructed the random plane rooted tree $\Tpast$.
(Note that $\cU$ can be seen as the random shape of the backward spine of $\rooot$.)
See \Cref{fig_illustration_Tpast} for an illustration of 
the extended random plane rooted tree $\Tpast$. 
We denote by $\P_\cU$ the distribution of the random sequence $\cU$, and by $\Esp_\cU$ the corresponding expectation.

\begin{figure}[t]
\centering
\includegraphics[width=8cm]{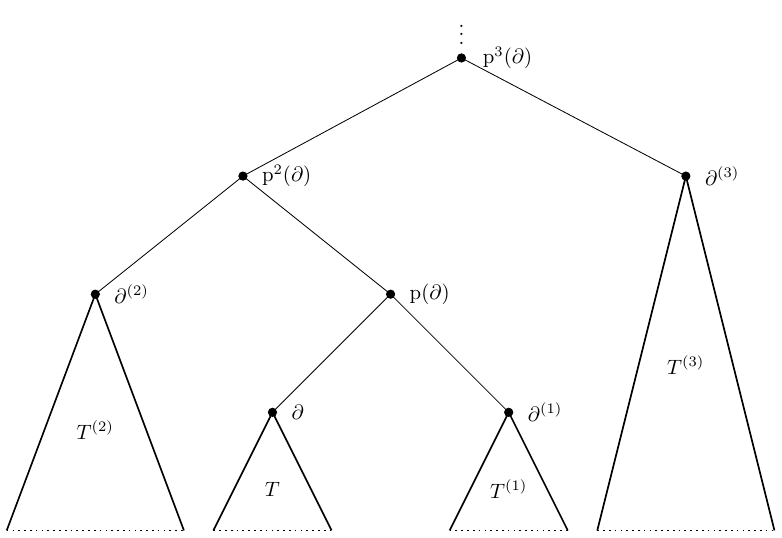}
\caption{Illustration of the construction of 
	the extended random plane rooted tree $\Tpast$
	(which is rooted at $\rooot$)
	for $\cU_{(-1)}=0$ and $\cU_{(-2)}=1$.
	}
\label{fig_illustration_Tpast}
\end{figure}

Note that the random plane embedding of $\Tpast$ allows to use Neveu's notation to represent the random path
between any vertex in the plane tree $\Tpast$ and one of its descendants
as a random sequence $\cU_{(k:n)}$ (which depends on $\cU)$ for some $k,n\in\Z$ with $k<n$.
The random breadth-first-search order relation $\leq := \leq_{\cU}$ can then be naturally extend from $\T$ to $\Tpast$
using the random plane embedding of $\Tpast$ (which depends on $\cU$):
we have $u \leq v$ for $u,v\in\Tpast$ if either $\height{u} < \height{v}$,
or $\height{u} = \height{v}$ and $U_{(k:n)} \leqlex V_{(k:n)}$ 
where $U_{(k:n)}$ (resp. $V_{(k:n)}$) is Neveu's notation for the random path (which depends on $\cU$)
from $u\land v$ to $u$ (resp. $v$)
with $k=\height{u\land v}+1$ and $n=\height{u}$.

Thanks to the stationarity assumption, for all $k\in\N$,
the HMT process $(X,Y)$ can be defined on the (rooted) tree $\Tpast(\parent^k(\rooot))$,
and thus by Kolmogorov's extension theorem,
the HMT process $(X,Y)$ can be defined on the whole tree $\Tpast$.
In particular, note that the stationarity assumption implies 
	that the distribution of the HMT process $(X,Y)$
	is invariant by translation on $\Tpast$,
	that is, is the same (up to translation) on $\T$ and on $\Tpast(u)$ for any $u\in\Tpast$.
Note that the extended process does not depend on $\cU$.
Thus, we will now assume that the HMT process $(X,Y)$ is defined on the whole tree $\Tpast$.

For all $u\in\Tpast$ and $k\in\N$,
define the subtrees (which are measurable functions of $\cU$):
\begin{equation*}
\DeltaR^*_{\cU}(u,k) = \{ v \in \Tpast(\parent^k(u)) \,:\, v <_{\cU} u \}, 
\end{equation*}
and $\DeltaR_{\cU}(u,k) = \DeltaR^*_{\cU}(u,k) \cup \{ u \}$.
For simplicity, we will write instead $\DeltaR^*(u,k)$ and $\DeltaR(u,k)$,
making the dependence on the random variable $\cU$ implicit,
and only make the dependence explicit when necessary.
The following fact illustrates that the shape and size of the $\DeltaR(u,k)$ do indeed depend on the value of $\cU$:
for $u=\rooot$ and $k=1$, note that $\DeltaR(\rooot,1)$ contains two vertices if $\cU_{(0)}=0$,
and contains three vertices if $\cU_{(0)}=1$.
\Cref{rem_rerooting_delta} below, which we shall reuse later, further illustrates the randomness of the set $\DeltaR(u,k)$.
However, for $u\in\T$ and $k\leq \height{u}$, we have that $\DeltaR(u,k) = \Delta(u,k)$ 
and $\DeltaR^*(u,k) = \Delta^*(u,k)$ are deterministic.
Also note that we have the following inclusions:
\begin{equation}\label{eq_inclusion_Delta_Tpast}
\Tpast(u,k-1) \subset \DeltaR(u,k) \subset \Tpast(u,k),
\end{equation}
where remind that the subtrees $\Tpast(u,k-1)$ and $\Tpast(u,k)$ are deterministic.

\begin{remark}
	\label{rem_rerooting_delta}
For a vertex $u=u_{(1:n)}$ in $\T$ with $\height{u}=n\geq k$, note that $\Delta(u,k)$, up to re-rooting (\ie up to translation),
	can be identified with $\DeltaR(\rooot,k)$ conditioned on $\cU_{(-k+1:0)} = u_{(n-k+1:n)}$.
In particular, when $U_n$ is a random vertex uniformly distributed over $\G_n$ for $n\geq k$,
we get the following equality between the distribution of the shapes
(that is, when the subtrees are seen up to translation / re-rooting)
for the subtrees $\DeltaR(\rooot,k)$, $\Delta(U_n,k)$ and $\Delta(U_k)$:
\begin{equation}\label{eq_equal_distrib_shapes}
\Shape\bigl( \DeltaR(\rooot, k) \bigr) 
\overset{\cL}{=} \Shape(\Delta(U_n,k))
\overset{\cL}{=} \Delta(U_k).
\end{equation}
Moreover, if $(f_{\ShapeValue} : \SpaceY^{\ShapeValue} \to \R)_{\ShapeValue\in\ShapeSetValues_k}$ 
is a collection of neighborhood-shape-dependent Borel functions that are in $L^2(\Ptrue)$
(as in Lemmas~\ref{lemma_ergodic_convergence} and \ref{lemma_ergodic_convergence_2}),
then we have:
\begin{equation}\label{eq_equality_expectation_root_and_U_k}
\Esp_\cU \otimes \Etrue \bigl[  f_{\DeltaR(\rooot,k)}(Y_{\DeltaR(\rooot,k)})   \bigr]
=  \Esp_{U_k} \otimes \Etrue\bigl[ f_{\Delta(U_k)}(Y_{\Delta(U_k)})   \bigr] ,
\end{equation}
where $\Esp_\cU \otimes \Etrue$ is the expectation corresponding to $\Prb_\cU \otimes \Ptrue$.
\end{remark}

\subsubsection{The log-likelihood as a sum of increments}
	\label{subsubsection_log_likelihood_sum_increments}

For any (possibly random) subtree $\Delta$ of $\Tpast$ with root vertex $w$, note that we have:
\begin{equation}\label{eq_def_p_theta_Y_Delta}
p_\theta(y_{\Delta} \,\vert\, X_w = x) = 
g_\theta(x, y_{\rooot}) \int_{\SpaceX^{\vert \Delta\vert -1}} 
			\prod_{v\in \Delta\setminus\{w\} } q_\theta(x_{\parent(v)},x_v) g_\theta(x_v, y_v) \lambda(\drv x_v).
\end{equation}
We will use the convention $p_\theta(Y_{\Delta} \,\vert\, X_w = x) = 1$ whenever $\Delta=\emptyset$ and $w$ is any vertex in $\Tpast$.
For all $u\in\T$, $k\in\N$, $x\in\SpaceX$ and $\theta\in\Theta$, using the conditional probabilities formula, define:
\begin{align}
\Hterm_{u,k,x}(\theta) & = 
\int_{\SpaceX} g_\theta(x_u, Y_u)  \, \Prb_\theta( X_u \in \drv x_u \,\vert\, Y_{\DeltaR^*(u,k)},  X_{\parent^k(u)}=x) \nonumber\\
& = \frac{p_\theta(Y_{\DeltaR(u,k)} \,\vert\, X_{\parent^k(u)} = x)}
	{p_\theta(Y_{\DeltaR^*(u,k)} \,\vert\, X_{\parent^k(u)} = x)} \cdot 	\label{eq_def_H_ukx}
\end{align}
We then define the log-likelihood contribution of node $u\in\T$ with past over $k\in\N$ generation as:
\begin{equation}\label{eq_def_h_ukx}
\h_{u,k,x}(\theta) = \log\bigl( \Hterm_{u,k,x}(\theta) \bigr)
	.
\end{equation}
Note that $\h_{u,k,x}(\theta)$ (resp. $\Hterm_{u,k,x}(\theta)$) is a random variable as a function of $Y_{\DeltaR(u,k)}$ 
with an implicit dependence on $\cU$ through $\DeltaR(u,k)$,
and that $\h_{u,k,x}(\theta)$ (resp. $\Hterm_{u,k,x}(\theta)$) does not depend on $\cU$ is $k\leq \height{u}$.

Hence, using \eqref{eq_def_l_nx_theta}, \eqref{eq_def_l_nx_theta_2}, \eqref{eq_def_H_ukx} 
	and  \eqref{eq_def_h_ukx} and a telescopic sum argument,
the log-likelihood of the observed variables $Y_{\T_n}$ can be rewritten as
the sum of the log-likelihood contributions defined in  \eqref{eq_def_h_ukx}:
\begin{equation}\label{eq_l_nx_as_sum_h_ukx}
\ell_{n,x}(\theta)  = \sum_{u\in\T_n} \h_{u, \height{u}, x}(\theta).
\end{equation}

\subsection{Construction of the log-likelihood increments with infinite past}

In this subsection, we construct the  log-likelihood increment functions with infinite past.

The following lemma states that, as the HMT is uniformly geometrically ergodic,
the tree forgets exponentially fast its starting state.
Recall the mixing ratio $\rho = 1 - \sigma^- / \sigma^+ \in (0,1)$
is defined just after \Cref{assump_HMM_2}.

\begin{lemme}[Exponential forgetting of the initial state]
	\label{lemme_exp_coupling_HMT}
Assume that Assumptions \ref{assump_HMM_1} and \ref{assump_HMM_2} hold.
We have for all $u\in\T$, $\theta\in\Theta$, $n\in\N$ and $y_{\T_n}\in \SpaceY^{ \T_n }$, 
and all initials distributions $\nu$ and $\nu'$ on $\SpaceX$,
that:
\begin{equation}\label{eq_exp_coupling_HMT}
\Bignorm{ \int_{\SpaceX} 
		\Prb_\theta\Bigl( X_u \in \cdot  \, \Bigm\vert \,  Y_{\T_n}=y_{\T_n}, X_{\rooot} = x  \Bigr) 
		[ \nu(\drv x) - \nu'(\drv x) ]
	}_{\mathrm{TV}}
\leq  \rho^{\height{u}} .
\end{equation}
\end{lemme}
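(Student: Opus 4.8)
The plan is to recognize the left-hand side of \eqref{eq_exp_coupling_HMT} as the total variation distance between two measures obtained by pushing $\nu$ and $\nu'$ through a single smoothing kernel, and then to bound the associated Dobrushin coefficient by factorizing that kernel along the path from $\rooot$ to $u$. Fix $y_{\T_n}$, write $h=\height{u}$, and define the Markov kernel $R(x,\cdot)=\Prb_\theta(X_u\in\cdot \mid Y_{\T_n}=y_{\T_n},\,X_\rooot=x)$ on $\SpaceX$. The quantity inside the norm in \eqref{eq_exp_coupling_HMT} is exactly $(\nu-\nu')R$, so by the coupling bound of \Cref{lemma_coumpling_bound_Dobrushin} together with $\normTV{\nu-\nu'}\le 1$ it suffices to prove $\delta(R)\le\rho^{h}$.

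Next I would factorize $R$ along the ancestral path. Set $u_j=\parent^{h-j}(u)$ for $0\le j\le h$, so that $u_0=\rooot$, $u_h=u$ and $u_{j-1}=\parent(u_j)$. Conditionally on $Y_{\T_n}=y_{\T_n}$, the sequence $(X_{u_j})_{0\le j\le h}$ is an inhomogeneous Markov chain: for each $j$, deleting $u_{j-1}$ from $\T$ separates $\{u_0,\dots,u_{j-2}\}$ from $u_j$, so the Markov property of the HMT (in the spirit of \eqref{eq_illustration_Markov_prop}) makes $X_{u_j}$ conditionally independent of $(X_{u_0},\dots,X_{u_{j-2}})$ given $X_{u_{j-1}}$ and $Y_{\T_n}$. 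Hence $R=F_1F_2\cdots F_h$, where $F_j(x',\cdot)=\Prb_\theta(X_{u_j}\in\cdot \mid X_{u_{j-1}}=x',\,Y_{\T_n}=y_{\T_n})$.

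It then remains to bound each factor. Once $X_{u_{j-1}}=x'$ is fixed, the only observations coupled to $X_{u_j}$ are those hanging below $u_j$, so $F_j$ has the reweighted form
\begin{equation*}
F_j(x',\drv x)=\frac{q_\theta(x',x)\,\beta_j(x)\,\lambda(\drv x)}{\int_{\SpaceX} q_\theta(x',x'')\,\beta_j(x'')\,\lambda(\drv x'')},
\qquad \beta_j(x):=p_\theta\bigl(y_{\T(u_j)\cap\T_n}\mid X_{u_j}=x\bigr)\ge 0.
\end{equation*}
Using $\sigma^-\le q_\theta\le\sigma^+$ to bound the numerator from below and the denominator from above yields, for every measurable $A\subset\SpaceX$,
\begin{equation*}
F_j(x',A)\ \ge\ \frac{\sigma^-}{\sigma^+}\,\frac{\int_A \beta_j\,\drv\lambda}{\int_{\SpaceX}\beta_j\,\drv\lambda}\ =\ \frac{\sigma^-}{\sigma^+}\,\nu_j(A),
\end{equation*}
which is a Doeblin condition with constant $\eps=\sigma^-/\sigma^+$ and probability measure $\nu_j$. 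By \Cref{lemma_Doeblin_Dobrushin}, $\delta(F_j)\le 1-\sigma^-/\sigma^+=\rho$, and the sub-multiplicativity of the Dobrushin coefficient (\Cref{lemma_Dobrushin_sub_multiplicative}) gives $\delta(R)\le\prod_{j=1}^{h}\delta(F_j)\le\rho^{h}$, as required.

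I expect the main obstacle to be the rigorous justification of the second step: the path $(X_{u_j})_j$ being Markov conditionally on the observations, and in particular the explicit reweighted form of $F_j$ with a factor $\beta_j$ depending on $x$ only through $X_{u_j}$. Because of the branching, the observation likelihood must be tracked through the lateral subtrees grafted along the path, and one has to check that the contributions of those siblings either cancel in the normalization of $F_j$ or enter only as constants independent of the variable being integrated. Once this bookkeeping is settled, the Doeblin/Dobrushin machinery (Lemmas~\ref{lemma_coumpling_bound_Dobrushin}, \ref{lemma_Dobrushin_sub_multiplicative} and~\ref{lemma_Doeblin_Dobrushin}) applies mechanically.
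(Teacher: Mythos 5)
Your proposal is correct and follows essentially the same route as the paper's proof: conditionally on $Y_{\T_n}=y_{\T_n}$, the ancestral sequence $(X_{u_j})_j$ is an inhomogeneous Markov chain whose forward smoothing kernels have exactly your reweighted form with $\beta_j(x)=p_\theta(y_{\T(u_j,n-j)}\mid X_{u_j}=x)$ (and $F_j=Q_\theta$ beyond generation $n$, which your convention $\beta_j\equiv 1$ recovers), and the paper concludes via the same Doeblin bound with $\eps=\sigma^-/\sigma^+$, \Cref{lemma_Doeblin_Dobrushin}, sub-multiplicativity, and \Cref{lemma_coumpling_bound_Dobrushin}. The only detail you leave implicit, which the paper flags, is that \Cref{assump_HMM_2}-\ref{assump_HMM_2:item2} guarantees $\int_{\SpaceX}\beta_j\,\drv\lambda>0$, so your normalizations are well defined.
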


For simplicity, \Cref{lemme_exp_coupling_HMT} is stated with $\rooot$ as the initial vertex,
but note that the results still holds when replacing $\rooot$ and $\T_n$
by $v$ and $\T(v,n)$ for any $v\in\Tpast$.
We shall reuse this fact later.

\begin{proof}
Fix some $u\in\T$, $\theta\in\Theta$, an integer $n$ and observables $y_{\T_n}\in \SpaceY^{ \T_n }$.
Denote by $u_0, \cdots, u_k$ with $k=\height{u}$ the vertices on the path from $\rooot$ to $u$.
The proof relies on the fact that conditionally on $Y_{\T_n} = y_{\T_n}$,
the sequence $(X_{u_j})_{0\leq j\leq k}$ is an inhomogeneous Markov chain
where 
for $1 \leq j \leq k$, the (forward smoothing) transition kernel $\text{F}_j$ from $X_{u_{j-1}}$ to $X_{u_j}$ 
is defined if $j \leq n$ as:
\begin{align*}
 \text{F}_j[y_{\T(u_j,  n-j)}](x_{u_{j-1}}; f) & = \Esp_\theta\bigl[ f(X_{u_j}) \,\bigm\vert\, Y_{\T(u_j,  n-j)} = y_{\T(u_j,  n-j)}, X_{u_{j-1}}=x_{u_{j-1}} \bigr] \\
& = \Esp_\theta\bigl[ f(X_{u_j}) \,\bigm\vert\, Y_{\T_n} = y_{\T_n}, X_{u_{j-1}}=x_{u_{j-1}} \bigr] \\
& = \frac{
	\int_{\SpaceX}   
		f(x_{u_{j}}) p_\theta( y_{\T(u_j,n-j)} \,\vert\, X_{u_j} = x_{u_j})
					q_\theta(x_{u_{j-1}}, x_{u_{j}})  \, \lambda(\drv x_{u_{j}}) 
}
{
	\int_{\SpaceX} p_\theta(  y_{\T(u_j,n-j)} \,\vert\, X_{u_j} = x_{u_j})
					q_\theta(x_{u_{j-1}}, x_{u_{j}})  \, \lambda(\drv x_{u_{j}}) 
} ,
\end{align*}
for any $x_{u_{j-1}}\in\SpaceX$ and any bounded Borel function $f$ on $\SpaceX$
(note that  in the second equality, we used the Markov property of the HMT process, see \eqref{eq_illustration_Markov_prop});
and is defined as $F_j = Q$ for $j>n$.
(Note that \Cref{assump_HMM_2}-\ref{assump_HMM_2:item2} is only used to insure that
$p_\theta( y_{\T(u_j,n-j)} \,\vert\, X_{u_j} = x_{u_j})$ is positive,
and thus the denominator in the last equality is also positive.)

Note that for all $1\leq j \leq k\land n$, using \Cref{assump_HMM_2}-\ref{assump_HMM_2:item1},
 the transition kernel $\text{F}_j$ satisfies the following Doeblin condition:
\begin{equation*}
\frac{\sigma^-}{\sigma^+}  \nu_{j}[y_{\T(u_j,  n-j)}] (f)
\leq \text{F}_{j}[y_{\T(u_j,  n-j)}] (x;f) ,
\end{equation*}
where for any bounded Borel function $f$ on $\SpaceX$, we have:
\begin{align*}
 \nu_{j}[y_{\T(u_j,  n-j)}] (f) 
 & = \Esp_\theta[ f(X_{u_j}) \,\vert\, Y_{\T(u_j,n-j)} = y_{\T(u_j,n-j)} ] \\
  & = \frac{
	\int_{\SpaceX} f(x_{u_{j}})  p_\theta(  y_{\T(u_j,n-j)} \,\vert\, X_{u_j} = x_{u_j})
		 \, \lambda(\drv x_{u_{j}}) 
}
{
	\int_{\SpaceX} p_\theta(  y_{\T(u_j,n-j)} \,\vert\, X_{u_j} = x_{u_j})
		 \, \lambda(\drv x_{u_{j}})
}  \cdot 
\end{align*}
Note that the difference between the definitions of $F_j$ and $\nu_j$ is that the term $q_\theta(x_{\parent(u_j)},x_{u_j})$
	has disappear from both the numerator and the denominator of $\nu_j$.
Remark that \eqref{eq_Doeblin_cond_Q_theta} 
also implies the Doeblin condition $\sigma^- \lambda(\cdot) \leq Q(x,\cdot)$ for the transition kernel $Q$.
Thus, \Cref{lemma_Doeblin_Dobrushin} 
	shows that the Dobrushin coefficient of each transition kernel $\text{F}_j$ for $1 \leq j \leq k$
	is upper bounded by $\rho = 1 - \sigma^- / \sigma^+$.
Therefore, as the Dobrushin coefficient is sub-multiplicative (see \Cref{lemma_Dobrushin_sub_multiplicative}), 
applying \Cref{lemma_coumpling_bound_Dobrushin},
we get that \eqref{eq_exp_coupling_HMT} holds.
This concludes the proof.
\end{proof}

To construct the limit of the functions $\h_{u,k,x}(\theta)$ we first prove the following
lemma which states some uniform bound about the asymptotic behavior of those
functions when $k \to \infty$.
For this lemma, we need the following assumption on the density function $g_\theta$
that strengthens \Cref{assump_HMM_2}-\ref{assump_HMM_2:item2}.
Remind that  $\Prb_\theta$ denotes the stationary probability distribution under the parameter $\theta\in\Theta$ of the HMT process $(X,Y)$,
and by $\Esp_\theta$ the corresponding expectation.

\begin{assumption}[$L^1$ regularity, {\cite[Assumption 12.3.1]{CappeHMM}}]\label{assump_HMM_3}
Assume that we have:
\begin{enumerate}[label=(\roman*)]
\item\label{assump_HMM_3:item1}
 $b^+ := 1\land  \sup_\theta \sup_{x,y} g_\theta(x,y) < \infty$.
 \item\label{assump_HMM_3:item2}
  $\Etrue \vert \log b^-(Y_{\rooot}) \vert < \infty$, 
where $b^-(y) := \inf_\theta \int_{\SpaceX} g_\theta(x,y) \lambda(\drv x)$.
\end{enumerate}
\end{assumption}
Note that $b^-(y) > 0$ for all $y\in\SpaceY$
by \Cref{assump_HMM_1}-\ref{assump_HMM_1:item2}.

\begin{lemme}[Uniform bounds for $\h_{u,k,x}(\theta)$]
	\label{lemma_h_ukx_unif_Cauchy_and_bounded}
Assume that Assumptions \ref{assump_HMM_1}--\ref{assump_HMM_2}
	and \ref{assump_HMM_3}-\ref{assump_HMM_3:item2} hold.
For all vertices $u\in\T$
and all integers $k,k' \in\N^*$,
the following assertions hold true:
\begin{equation}\label{eq_h_ukx_unif_Cauchy}
\sup_{\theta\in\Theta} \sup_{x,x'\in\SpaceX}
	\vert \h_{u,k,x}(\theta) - \h_{u,k',x'}(\theta) \vert
\leq \frac{\rho^{(k\land k') -1} }{1-\rho} ,
\end{equation}
\begin{equation}\label{eq_h_ukx_unif_bounded}
\sup_{\theta\in\Theta} \sup_{k\in\N^*} \sup_{x\in\SpaceX}
	\vert \h_{u,k,x}(\theta) \vert
\leq  \log b^+ 	\lor	\vert \log(\sigma^- b^-(Y_u) ) \vert .
\end{equation}
\end{lemme}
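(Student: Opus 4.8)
The plan is to rewrite the increment $\h_{u,k,x}(\theta)=\log\Hterm_{u,k,x}(\theta)$ so that a \emph{bounded} test function appears, and then run a forgetting argument on the conditional law of $X_{\parent(u)}$. Let $\nu_{k,x}$ denote the conditional law of $X_{\parent(u)}$ given $Y_{\DeltaR^*(u,k)}$ and $X_{\parent^k(u)}=x$, put $G_u=\int_\SpaceX g_\theta(x_u,Y_u)\,\lambda(\drv x_u)$, and set $\Psi_u(x_p)=\int_\SpaceX g_\theta(x_u,Y_u)\,q_\theta(x_p,x_u)\,\lambda(\drv x_u)$. Carrying out the one-step prediction from $X_{\parent(u)}$ to $X_u$ in \eqref{eq_def_H_ukx} (the past $\DeltaR^*(u,k)$ contains no descendant of $u$, so given $X_{\parent(u)}$ the variable $X_u$ only sees the transition $Q_\theta$) gives $\Hterm_{u,k,x}(\theta)=\nu_{k,x}(\Psi_u)$. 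The key point is that $\Psi_u$ is \emph{bounded}: since $\sigma^-\le q_\theta\le\sigma^+$ by \Cref{assump_HMM_2}-\ref{assump_HMM_2:item1}, we have $\sigma^- G_u\le\Psi_u(x_p)\le\sigma^+ G_u$ for all $x_p$, whence $\mathrm{osc}(\Psi_u):=\sup\Psi_u-\inf\Psi_u\le(\sigma^+-\sigma^-)G_u$ and $\nu_{k,x}(\Psi_u)\ge\sigma^- G_u>0$. This is what removes the difficulty that $g_\theta(\cdot,Y_u)$ itself need not be bounded, and makes a total-variation estimate usable.

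Next I would establish the forgetting bound $\normTV{\nu_{k,x}-\nu_{k',x'}}\le\rho^{(k\land k')-1}$, assuming $k\le k'$. First, conditioning on $X_{\parent^k(u)}$ and using the tree Markov property \eqref{eq_illustration_Markov_prop} (the extra past $\DeltaR^*(u,k')\setminus\DeltaR^*(u,k)$ lies outside $\Tpast(\parent^k(u))$), the law $\nu_{k',x'}$ is a mixture $\int\nu_{k,x''}\,\kappa(\drv x'')$ over the conditional law $\kappa$ of $X_{\parent^k(u)}$ given the longer past; hence $\normTV{\nu_{k,x}-\nu_{k',x'}}\le\sup_{x''}\normTV{\nu_{k,x}-\nu_{k,x''}}$ and it suffices to forget the initial condition at fixed past length $k$. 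Conditionally on $Y_{\DeltaR^*(u,k)}$ the spine $(X_{\parent^{k-j}(u)})_{0\le j\le k-1}$ is an inhomogeneous Markov chain whose forward smoothing kernels each satisfy the Doeblin minorization with constant $\sigma^-/\sigma^+$, exactly as in the proof of \Cref{lemme_exp_coupling_HMT}; by \Cref{lemma_Doeblin_Dobrushin} each has Dobrushin coefficient at most $\rho$, and sub-multiplicativity (\Cref{lemma_Dobrushin_sub_multiplicative}) over the $k-1$ steps from $\parent^k(u)$ to $\parent(u)$, together with \Cref{lemma_coumpling_bound_Dobrushin}, gives $\normTV{\nu_{k,x}-\nu_{k,x''}}\le\rho^{k-1}$.

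Combining the two ingredients proves \eqref{eq_h_ukx_unif_Cauchy}: since $\Psi_u$ is bounded,
\begin{equation*}
\bigl|\Hterm_{u,k,x}(\theta)-\Hterm_{u,k',x'}(\theta)\bigr|
= \bigl|\nu_{k,x}(\Psi_u)-\nu_{k',x'}(\Psi_u)\bigr|
\le \mathrm{osc}(\Psi_u)\,\normTV{\nu_{k,x}-\nu_{k',x'}}
\le (\sigma^+-\sigma^-)\,G_u\,\rho^{(k\land k')-1}.
\end{equation*}
Dividing by $\Hterm_{u,k',x'}(\theta)\ge\sigma^- G_u$ and using $(\sigma^+-\sigma^-)/\sigma^-=\rho/(1-\rho)$ shows that the ratio $\Hterm_{u,k,x}(\theta)/\Hterm_{u,k',x'}(\theta)$ lies in $[1-\rho,(1-\rho)^{-1}]$ and differs from $1$ by at most $\rho^{k\land k'}/(1-\rho)$; an elementary logarithm estimate then yields \eqref{eq_h_ukx_unif_Cauchy}, uniformly in $\theta,x,x'$ (the resulting $\rho^{k\land k'}/(1-\rho)$ is at most the stated $\rho^{(k\land k')-1}/(1-\rho)$). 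I expect the main obstacle to be precisely this step, where the two HMT-specific difficulties must be handled at once: the mismatch between the conditioning sets $\DeltaR^*(u,k)$ and $\DeltaR^*(u,k')$, resolved by the mixture/Markov reduction, and the unboundedness of $g_\theta$, resolved by passing to $\Psi_u=\nu_{k,x}$'s bounded integrand; one must also verify that the spine remains an inhomogeneous Markov chain with minorized kernels when conditioning on the laterally spread past $\DeltaR^*(u,k)$ rather than on a full subtree as in \Cref{lemme_exp_coupling_HMT}.

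Finally, the uniform bound \eqref{eq_h_ukx_unif_bounded} follows without any forgetting from two-sided control of $\Hterm_{u,k,x}(\theta)=\int_\SpaceX g_\theta(x_u,Y_u)\,\mu_{k,x}^{\mathrm{pred}}(\drv x_u)$, where $\mu_{k,x}^{\mathrm{pred}}$ is the predictive law of $X_u$ given $Y_{\DeltaR^*(u,k)}$ and $X_{\parent^k(u)}=x$, whose density with respect to $\lambda$ lies in $[\sigma^-,\sigma^+]$ (again one step of $Q_\theta$ with $\sigma^-\le q_\theta\le\sigma^+$). Indeed $\Hterm_{u,k,x}(\theta)\ge\sigma^-\int_\SpaceX g_\theta(x_u,Y_u)\,\lambda(\drv x_u)\ge\sigma^- b^-(Y_u)$, with $b^-(Y_u)>0$ by \Cref{assump_HMM_1}-\ref{assump_HMM_1:item2}, while $\Hterm_{u,k,x}(\theta)\le\sup_{x_u}g_\theta(x_u,Y_u)$, bounded above through \Cref{assump_HMM_3}-\ref{assump_HMM_3:item1}, which provides the $\log b^+$ contribution since $\mu_{k,x}^{\mathrm{pred}}$ is a probability measure. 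Taking logarithms and combining the two inequalities gives $\vert\h_{u,k,x}(\theta)\vert\le\log b^+\lor\vert\log(\sigma^- b^-(Y_u))\vert$, uniformly in $k$, $x$ and $\theta$, which is \eqref{eq_h_ukx_unif_bounded}.
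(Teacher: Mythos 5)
Your proof is correct and takes essentially the same route as the paper's: the paper likewise rewrites $\Hterm_{u,k,x}(\theta)$ and $\Hterm_{u,k',x'}(\theta)$ as integrals of the one-step predictive function $x_{\parent(u)}\mapsto\int_{\SpaceX} g_\theta(x_u,Y_u)\,q_\theta(x_{\parent(u)},x_u)\,\lambda(\drv x_u)$ against the conditional law of $X_{\parent(u)}$ (using the Markov property at $X_{\parent^k(u)}$ to get exactly your mixture representation), invokes the Doeblin--Dobrushin forgetting bound of \Cref{lemme_exp_coupling_HMT} --- whose proof, as you correctly note must be checked, extends verbatim to conditioning on the laterally spread past $\DeltaR^*(u,k)$, since each forward smoothing kernel retains the $\sigma^-/\sigma^+$ minorization regardless of the observation set --- and concludes with $\vert\log x-\log y\vert\le\vert x-y\vert/(x\land y)$ together with the two-sided bounds $\sigma^- b^-(Y_u)\le\Hterm_{u,k,x}(\theta)\le b^+$ for \eqref{eq_h_ukx_unif_bounded}. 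Your only deviation is cosmetic: bounding the integrand's oscillation by $(\sigma^+-\sigma^-)\int_{\SpaceX} g_\theta(x_u,Y_u)\,\lambda(\drv x_u)$ instead of the paper's supremum bound with factor $\sigma^+$, which yields the marginally sharper constant $\rho^{k\land k'}/(1-\rho)$ dominating the stated $\rho^{(k\land k')-1}/(1-\rho)$.
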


\begin{proof}
{[The proof is a straightforward adaptation of the proof of \cite[Lemma 12.3.2]{CappeHMM} 
	using \Cref{lemme_exp_coupling_HMT} for the coupling.]}
Remind the definition of $\Hterm_{u,k,x}(\theta)$ in \eqref{eq_def_H_ukx}.
Let $k' \geq k \geq 1$,
and write $w = \parent^k(u)$, $w' = \parent^{k'}(u)$.
Then, write:
\begin{align}
\Hterm_{u,k,x}(\theta)
 =  \int_{\SpaceX \times \SpaceX} & \left[ \int_{\SpaceX} g_\theta(x_u,Y_u) q_\theta(x_{\parent(u)}, x_u) \lambda(\drv x_u) \right] \nonumber\\
	&  \times \Prb_\theta( X_{\parent(u)} \in \drv x_{\parent(u)}
			\,\vert\,  Y_{\DeltaR^*(u,k)} , X_w =x_w )
	\times \delta_x(\drv x_w) , \label{eq_H_ukxpi_1}
\end{align}
and using the Markov property at $X_w$, write:
\begin{align}
\Hterm_{u,k',x'}(\theta)
=  \int_{\SpaceX \times \SpaceX} & \left[ \int_{\SpaceX} g_\theta(x_u,Y_u) q_\theta(x_{\parent(u)}, x_u) \lambda(\drv x_u) \right] \nonumber\\
	& \times \Prb_\theta( X_{\parent(u)} \in \drv x_{\parent(u)}
			\,\vert\,  Y_{\DeltaR^*(u,k)} , X_w =x_w ) \nonumber\\
	& \times \Prb_\theta( X_w \in \drv x_w
		\,\vert\,  Y_{\DeltaR^*(u,k')} , X_{w'} =x' ) . \label{eq_H_ukxpi_2}
\end{align}
Applying \Cref{lemme_exp_coupling_HMT}, we get 
(note that the integrands in \eqref{eq_H_ukxpi_1} and \eqref{eq_H_ukxpi_2} are non-negative):
\begin{align}
\vert \Hterm_{u,k,x}(\theta) - \Hterm_{u,k',x'}(\theta) \vert
& \leq  \rho^{k-1} \sup_{x_{\parent(u)}\in\SpaceX} 
		\int_{\SpaceX} g_\theta(x_u,Y_u) q_\theta(x_{\parent(u)}, x_u) \lambda(\drv x_u) \nonumber\\
& \leq \rho^{k-1} \sigma^+   \int_{\SpaceX} g_\theta(x_u,Y_u) \lambda(\drv x_u) . \label{eq_upper_bound_H_ukxpi}
\end{align}
The integral in \eqref{eq_H_ukxpi_1} can be lower bounded giving us:
\begin{equation}\label{eq_H_ukxpi_lower_bound}
\Hterm_{u,k,x}(\theta) \geq \sigma^- \int_{\SpaceX} g_\theta(x_u, Y_u) \lambda(\drv x_u) ,
\end{equation}
where the right hand side is positive by \Cref{assump_HMM_2}-\ref{assump_HMM_2:item2};
and similarly for \eqref{eq_H_ukxpi_2}.
Combining \eqref{eq_upper_bound_H_ukxpi} with \eqref{eq_H_ukxpi_lower_bound},
and with the inequality $\vert \log x - \log y \vert \leq \vert x-y \vert / (x\land y)$, we get the first assertion of the lemma:
\begin{equation*}
\vert \h_{u,k,x}(\theta) - \h_{u,k',x'}(\theta) \vert
\leq \frac{\sigma^+}{\sigma^-} \rho^{k-1} = \frac{\rho^{k-1}}{1-\rho} \cdot
\end{equation*}
Combining \eqref{eq_def_H_ukx} and \eqref{eq_H_ukxpi_lower_bound},
we get that
$
\sigma^- b^-(Y_u)  \leq  \Hterm_{u,k,x}(\theta)  \leq  b^+
$
(remind that $b^-(Y_u) > 0$ by \Cref{assump_HMM_2}-\ref{assump_HMM_2:item2}),
which yields the second assertion of the lemma.
\end{proof}

We are now ready to construct the limit of the functions $\h_{u,k,x}(\theta)$
and state some properties of this limit.
Note that this result is stated for every $u\in\T$, but we will only need it for $u=\rooot$.
Remind that we are in the stationary case, and that the HMT process $(X,Y)$ is defined on $\Tpast$.

\begin{prop}[Properties of the limit function $\h_{u,\infty}(\theta)$]
	\label{prop_existence_stationary_likelihood}
Assume that Assumptions \ref{assump_HMM_0}--\ref{assump_HMM_3} hold.
For every $u\in\T$ and $\theta\in\Theta$,
there exists $\h_{u,\infty}(\theta) \in L^1(\P_\cU \otimes \Ptrue)$
such that for all $x\in\SpaceX$, the sequence $(\h_{u,k,x}(\theta))_{k\in\N}$
converges $\P_\cU \otimes \Ptrue$-\as and in $L^1(\P_\cU \otimes \Ptrue)$ to $\h_{u,\infty}(\theta)$.

Furthermore, this convergence is uniform over $\theta\in\Theta$ and $x\in\SpaceX$, that is,
we have that $\lim_{k\to\infty} \sup_{\theta\in\Theta} \sup_{x\in\SpaceX} \vert \h_{u,k,x}(\theta) - \h_{u,\infty}(\theta)\vert = 0$
$\P_\cU \otimes \Ptrue$-\as and in $L^1(\P_\cU \otimes \Ptrue)$.
\end{prop}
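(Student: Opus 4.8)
The plan is to let the two uniform estimates of \Cref{lemma_h_ukx_unif_Cauchy_and_bounded} do essentially all the work. First I would note that the Cauchy bound \eqref{eq_h_ukx_unif_Cauchy} is entirely deterministic: its right-hand side $\rho^{(k\land k')-1}/(1-\rho)$ depends neither on the observations $Y$ nor on the random spine $\cU$. Fixing $u$, for every realization of $(Y,\cU)$ the family $(\h_{u,k,x}(\theta))_{k\geq 1}$ is therefore uniformly Cauchy in $k$, uniformly over $\theta\in\Theta$ and $x\in\SpaceX$: taking $x'=x$ in \eqref{eq_h_ukx_unif_Cauchy} gives $\sup_{\theta}\sup_{x}\vert \h_{u,k,x}(\theta)-\h_{u,k',x}(\theta)\vert \leq \rho^{k-1}/(1-\rho)$ for all $k'\geq k$, and since $\rho\in(0,1)$ the right-hand side vanishes. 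Consequently the pointwise limit $\lim_{k\to\infty}\h_{u,k,x}(\theta)$ exists everywhere, in particular $\P_\cU\otimes\Ptrue$-\as Next I would verify that this limit does not depend on $x$, so that it legitimately defines a single random variable $\h_{u,\infty}(\theta)$: applying \eqref{eq_h_ukx_unif_Cauchy} with $k'=k$ but $x\neq x'$ gives $\vert \h_{u,k,x}(\theta)-\h_{u,k,x'}(\theta)\vert\leq \rho^{k-1}/(1-\rho)\to 0$, so the limits for different starting states coincide. Measurability of $\h_{u,\infty}(\theta)$ in $(Y,\cU)$ is automatic, being a pointwise limit of measurable functions.

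For the uniform convergence, I would simply let $k'\to\infty$ in \eqref{eq_h_ukx_unif_Cauchy}, which yields the deterministic rate $\sup_{\theta\in\Theta}\sup_{x\in\SpaceX}\vert\h_{u,k,x}(\theta)-\h_{u,\infty}(\theta)\vert\leq \rho^{k-1}/(1-\rho)$. This is exactly the claimed uniform error; because its bound is deterministic and tends to $0$, the uniform error converges to $0$ both $\P_\cU\otimes\Ptrue$-\as and in $L^1(\P_\cU\otimes\Ptrue)$, the latter since its expectation is controlled by $\rho^{k-1}/(1-\rho)$. In particular the pointwise $L^1$ convergence of $\h_{u,k,x}(\theta)$ to $\h_{u,\infty}(\theta)$ is subsumed by this estimate.

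The only place where the integrability hypotheses enter is to show $\h_{u,\infty}(\theta)\in L^1(\P_\cU\otimes\Ptrue)$, and here I would invoke the uniform bound \eqref{eq_h_ukx_unif_bounded}, namely $\sup_{\theta}\sup_{k}\sup_{x}\vert\h_{u,k,x}(\theta)\vert\leq \log b^+\lor\vert\log(\sigma^- b^-(Y_u))\vert$. Since $b^+\in(0,\infty)$ with $b^+\leq 1$ by \Cref{assump_HMM_3}-\ref{assump_HMM_3:item1}, the constant $\log b^+$ is finite and nonpositive, so this dominating variable is at most $\vert\log b^+\vert+\vert\log\sigma^-\vert+\vert\log b^-(Y_u)\vert$. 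The first two terms are finite constants, and $\Etrue\vert\log b^-(Y_u)\vert=\Etrue\vert\log b^-(Y_\rooot)\vert<\infty$ by stationarity (\Cref{assump_HMM_0}) and \Cref{assump_HMM_3}-\ref{assump_HMM_3:item2}. Passing to the limit in the domination gives $\h_{u,\infty}(\theta)\in L^1(\P_\cU\otimes\Ptrue)$, which also justifies dominated convergence as an alternative route to the $L^1$ statement.

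There is no genuine analytic obstacle here; the argument is a soft application of the Cauchy criterion strengthened by the fact that the coupling bound is deterministic. The one point that demands care is bookkeeping the dependence on $\cU$: the functions $\h_{u,k,x}(\theta)$ depend on $\cU$ through $\DeltaR(u,k)$ as soon as $k>\height{u}$, so the limit $\h_{u,\infty}(\theta)$ is genuinely a function of the pair $(Y,\cU)$ and every almost-sure assertion must be read under $\P_\cU\otimes\Ptrue$ rather than under $\Ptrue$ alone. I would also be careful to record that $\log b^+$ is finite, so that the dominating variable is meaningful.
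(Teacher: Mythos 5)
Your proposal is correct and follows essentially the same route as the paper's own proof: both rest entirely on the two uniform bounds of \Cref{lemma_h_ukx_unif_Cauchy_and_bounded}, using \eqref{eq_h_ukx_unif_Cauchy} to get an $x$-independent limit with convergence uniform in $\theta$ and $x$, and \eqref{eq_h_ukx_unif_bounded} together with \Cref{assump_HMM_3}-\ref{assump_HMM_3:item2} and stationarity for the $L^1$ statements. If anything, your explicit observation that the Cauchy bound is deterministic, so that letting $k'\to\infty$ yields the rate $\sup_{\theta}\sup_{x}\vert\h_{u,k,x}(\theta)-\h_{u,\infty}(\theta)\vert\leq \rho^{k-1}/(1-\rho)$, makes the passage to $L^1$ convergence slightly more transparent than the paper's terse appeal to uniform $L^1$ boundedness.
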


The limit function $\h_{u,\infty}(\theta)$ can be interpreted as 
$\log p_\theta( Y_u \,\vert\, Y_{\DeltaR^*(u,\infty)} ) $,
where $\DeltaR^*(u,\infty) = \{ v\in\Tpast \,:\, v <_\cU u \}$ is a random subset of vertices.
Note that $\h_{u,\infty}(\theta)$ is a function 
of the random set of variables  $(Y_v, v\in \DeltaR(u,\infty))$,
where $\DeltaR(u,\infty) = \DeltaR^*(u,\infty) \cup \{u\}$,
and thus implicitly depend on $\cU$ trough $\DeltaR(u,\infty)$.

\begin{proof}
Fix some $u\in\T$. 
Note that \eqref{eq_h_ukx_unif_Cauchy} shows
that the sequence $( \h_{u,k,x}(\theta) )_{k\in\N}$ is Cauchy uniformly in $\theta$ and $x$,
and thus has $\P_\cU \otimes \Ptrue$-almost surely a limit when $k \to \infty$
which does not depend on $x$; we denote this limit by $\h_{u,\infty}(\theta)$.
Furthermore, we get from \eqref{eq_h_ukx_unif_bounded} that 
$( \h_{u,k,x}(\theta) )_{k\in\N}$ is uniformly bounded in $L^1(\P_\cU\otimes\Ptrue)$,
and thus $\h_{u,\infty}(\theta)$ is in $L^1(\P_\cU\otimes\Ptrue)$ and
the convergence also holds in $L^1(\P_\cU\otimes\Ptrue)$.
Finally, as the bound in \eqref{eq_h_ukx_unif_Cauchy} is uniform in $\theta$ and $x$,
we get that the convergence holds uniform over $\theta$ and $x$
both $\P_\cU \otimes \Ptrue$-almost surely and in $L^1(\P_\cU\otimes\Ptrue)$.
\end{proof}

\subsection{Properties of the contrast function}

As the functions $\h_{\rooot,\infty}(\theta)$ are in $L^1(\Prb_\cU\otimes\Ptrue)$ 
under the assumptions used in \Cref{prop_existence_stationary_likelihood},
we can now define the \emph{contrast function} $\ell$ 
(which is deterministic) 
as: 
\begin{equation}
	\label{eq_def_contrast_function}
\ell(\theta) 
= \Esp_\cU \otimes \Etrue \bigl[ \h_{\rooot,\infty}(\theta) \bigr] ,
\end{equation}
where remind  $\Esp_\cU \otimes \Etrue$ is the expectation corresponding to $\Prb_\cU\otimes\Ptrue$.

We prove under the following $L^2$ regularity assumption the
convergence of the normalized log-likelihood to the contrast function.
Remind that $b^-(y) = \inf_\theta \int_{\SpaceX} g_\theta(x,y) \lambda(\drv x)$.
Also remind that  $\Prb_\theta$ denotes the stationary probability distribution under the parameter $\theta\in\Theta$ of the HMT process $(X,Y)$,
and by $\Esp_\theta$ the corresponding expectation.

\begin{assumption}[$L^2$ regularity]
	\label{assump_HMM_3bis}
Assume that $\Etrue \bigl[ ( \log b^-(Y_{\rooot}) )^2 \bigr] < \infty$.
\end{assumption}

Remind that the log-likelihood $\ell_{n,x}$ is defined in \eqref{eq_def_l_nx_theta_2} on page~\pageref{eq_def_l_nx_theta_2}.

\begin{prop}[Ergodic convergence for the stationary log-likelihood]
	\label{prop_conv_likelihood_to_contrast_func}
Assume that Assumptions~\ref{assump_HMM_0}--\ref{assump_HMM_3bis} hold.
Then, for all $x\in\SpaceX$, the normalized log-likelihood $\vert \T_n \vert^{-1} \ell_{n,x}(\theta)$
converges $\Ptrue$-\as  to the contrast function $\ell(\theta)$ as $n\to \infty$.
\end{prop}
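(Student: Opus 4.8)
The plan is to combine the decomposition of the log-likelihood into increments from \eqref{eq_l_nx_as_sum_h_ukx} with the uniform convergence of these increments to their infinite-past limit (\Cref{prop_existence_stationary_likelihood}) and the ergodic theorem for neighborhood-shape-dependent functions (\Cref{lemma_ergodic_convergence}). Recall that we have the exact identity
\begin{equation*}
\vert\T_n\vert^{-1} \ell_{n,x}(\theta) = \vert\T_n\vert^{-1} \sum_{u\in\T_n} \h_{u,\height{u},x}(\theta),
\end{equation*}
so the main task is to understand the asymptotic average of the increments $\h_{u,\height{u},x}(\theta)$ over the tree. The key difficulty, compared to the analogous HMM argument, is that the increment $\h_{u,\height{u},x}(\theta)$ attached to vertex $u$ depends on the \emph{full} past $\DeltaR^*(u)=\Delta^*(u,\height{u})$, whose size grows with $\height{u}$, so the family $(\h_{u,\height{u},x}(\theta))_{u\in\T}$ is \emph{not} a neighborhood-shape-dependent family with a fixed neighborhood radius $k$, and \Cref{lemma_ergodic_convergence} does not apply directly.

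First I would fix $k\in\N^*$ and replace the true increment $\h_{u,\height{u},x}(\theta)$ by the truncated increment $\h_{u,k,x}(\theta)$, which depends only on $Y_{\Delta(u,k)}$ and is therefore a genuine neighborhood-shape-dependent function (of shape $\Shape(\Delta(u,k))$) for all $u$ with $\height{u}\geq k$. For such truncated increments, \Cref{lemma_ergodic_convergence} gives, $\Ptrue$-almost surely,
\begin{equation*}
\lim_{n\to\infty} \vert\T_n\vert^{-1} \sum_{u\in\T_n\setminus\T_{k-1}} \h_{u,k,x}(\theta)
= \Esp_{U_k}\otimes\Etrue\bigl[ \h_{U_k,k,x}(\theta) \bigr],
\end{equation*}
where here I interpret the collection $(\h_{u,k,x}(\theta))$ as the neighborhood-shape-dependent collection it defines. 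The finitely many boundary terms with $u\in\T_{k-1}$ contribute $O(\vert\T_{k-1}\vert/\vert\T_n\vert)\to 0$, using the uniform bound \eqref{eq_h_ukx_unif_bounded} together with \Cref{assump_HMM_3bis} to control their (integrable) size.

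Next I would pass from the truncated level $k$ to the full past by letting $k\to\infty$, controlling both the approximation error on the prelimit averages and the limit on the right. For the error term, the uniform Cauchy bound \eqref{eq_h_ukx_unif_Cauchy} gives, for every $u$ with $\height{u}\geq k$,
\begin{equation*}
\sup_{\theta\in\Theta}\sup_{x\in\SpaceX} \vert \h_{u,\height{u},x}(\theta) - \h_{u,k,x}(\theta) \vert \leq \frac{\rho^{k-1}}{1-\rho},
\end{equation*}
so the difference of the two normalized sums is bounded uniformly by $\rho^{k-1}/(1-\rho)$ plus a vanishing boundary contribution, which tends to $0$ as $k\to\infty$ uniformly in $n$. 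On the limit side, the re-rooting identity \eqref{eq_equality_expectation_root_and_U_k} combined with \Cref{prop_existence_stationary_likelihood} (convergence of $\h_{\rooot,k,x}(\theta)$ to $\h_{\rooot,\infty}(\theta)$ in $L^1(\P_\cU\otimes\Ptrue)$, uniformly in $x$) shows that $\Esp_{U_k}\otimes\Etrue[\h_{U_k,k,x}(\theta)]$ converges as $k\to\infty$ to $\Esp_\cU\otimes\Etrue[\h_{\rooot,\infty}(\theta)] = \ell(\theta)$, the contrast function.

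Finally I would assemble these pieces with a standard double-limit (diagonal) argument: since the approximation error is uniform in $n$ and vanishes as $k\to\infty$, and for each fixed $k$ the $n$-limit exists $\Ptrue$-almost surely and converges to $\ell(\theta)$ as $k\to\infty$, the limit in $n$ of $\vert\T_n\vert^{-1}\ell_{n,x}(\theta)$ exists $\Ptrue$-almost surely and equals $\ell(\theta)$. I expect the interchange of the $n\to\infty$ and $k\to\infty$ limits to be the main obstacle, and the crucial ingredient that makes it work is precisely that the Cauchy bound \eqref{eq_h_ukx_unif_Cauchy} is geometric in $k$ \emph{and uniform in $u$, $\theta$ and $x$}, which gives control of the truncation error uniformly in $n$ and thereby legitimizes the exchange of limits.
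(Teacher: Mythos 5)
Your proposal is correct and follows essentially the same route as the paper's proof: the same decomposition \eqref{eq_l_nx_as_sum_h_ukx}, the same truncation of increments to a fixed past depth $k$, the same application of \Cref{lemma_ergodic_convergence} to the resulting neighborhood-shape-dependent family (with the re-rooting identity \eqref{eq_equality_expectation_root_and_U_k}), the same uniform geometric bound \eqref{eq_h_ukx_unif_Cauchy} controlling the truncation error uniformly in $n$, and the same $k\to\infty$ limit interchange. The only small detail: for the root boundary term $\h_{\rooot,0,x}(\theta)=\log g_\theta(x,Y_{\rooot})$ the bound \eqref{eq_h_ukx_unif_bounded} does not apply (it requires $k\geq 1$), and the paper invokes \Cref{assump_HMM_2}-\ref{assump_HMM_2:item3} to guarantee its almost sure finiteness.
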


\begin{proof}
Let $\theta\in\Theta$ be some parameter.
Fix some $k\in\N^*$ and $x\in\SpaceX$.
Remind that $\ell_{n,x}(\theta) = \sum_{u\in\T_n} \h_{u,\height{u},x}(\theta)$.
Applying \eqref{eq_h_ukx_unif_Cauchy} for each vertex $u\in\T_n\setminus\T_{k-1}$, we get:
\begin{equation}\label{eq_bound_likelihood_and_approx}
\inv{\vert \T_n \vert}
	\left\vert
		\ell_{n,x}(\theta) - \sum_{u\in\T_n\setminus\T_{k-1}} \h_{u,k,x}(\theta)
	\right\vert
\leq  \frac{\rho^{k-1}}{1-\rho}
	+ \inv{\vert \T_n \vert} \sum_{u\in\T_{k-1}} \vert \h_{u,\height{u},x}(\theta) \vert
.
\end{equation}
Note that by \eqref{eq_h_ukx_unif_bounded}, we have that $\vert \h_{u,\height{u},x}(\theta) \vert < \infty$ $\Ptrue$-\as
	for all $u\in\T\setminus\{ \rooot\}$.
For $u=\rooot$, we have $\h_{\rooot,0,x}(\theta) = \log g_\theta(x,Y_{\rooot})$ which is finite $\Ptrue$-\as by \Cref{assump_HMM_2}-\ref{assump_HMM_2:item3}.

For a vertex $u$ in $\T\setminus \T_{k-1}$, 
let $v_u\in\G_k$ be the unique vertex that satisfies \eqref{eq_def_shape_subtree} (on page \pageref{eq_def_shape_subtree}),
then we have:
\begin{equation}\label{eq_equal_h_ukx_up_to_shape}
\h_{u,k,x}(\theta; Y_{\Delta(u,k)}=y_{\Delta(u,k)}) 
	=  \h_{v_u,k,x}(\theta; Y_{\Delta(v_u)}=y_{\Delta(u,k)}) .
\end{equation}
Moreover, using \eqref{eq_h_ukx_unif_bounded} together with \Cref{assump_HMM_3bis},
	we get for every $u\in\T\setminus \T_{k-1}$ that the random variable $\h_{u,k,x}(\theta; Y_{\Delta(u,k)})$ is in $L^2(\Ptrue)$.
Hence, applying \Cref{lemma_ergodic_convergence}
to the collection of neighborhood-shape-dependent functions $( \h_{v,k,x}(\theta; Y_{\Delta(v)}=\cdot) )_{v\in\G_k}$
(remind that indexing functions with $\G_k$ or with $\ShapeSetValues_k$ is equivalent by \eqref{eq_def_set_possible_shapes}),
and using \eqref{eq_equal_h_ukx_up_to_shape} and \eqref{eq_equality_expectation_root_and_U_k} (in \Cref{rem_rerooting_delta}),
we get:
\begin{equation}
	\label{eq_conv_ergo_likelihood}
\vert \T_n \vert^{-1} \sum_{u\in\T_n\setminus\T_{k-1}} \h_{u,k,x}(\theta)
	\underset{n\to\infty}{\longrightarrow}
	\Esp_\cU \otimes \Etrue \bigl[ \h_{\rooot,k,x}(\theta) \bigr]
\qquad \text{$\Ptrue$-\as (and in $L^2(\Ptrue))$}  .
\end{equation}

Using \eqref{eq_h_ukx_unif_Cauchy} with \Cref{prop_existence_stationary_likelihood},
	we get:
\begin{equation*}
\bigl\vert \Esp_\cU \otimes \Etrue \bigl[ \h_{\rooot,k,x}(\theta) \bigr] 
		- \Esp_\cU \otimes \Etrue \bigl[ \h_{\rooot,\infty}(\theta) \bigr] \bigr\vert
		\leq  \frac{\rho^{k-1}}{1-\rho} \cdot
\end{equation*}
Thus, combining this bound with \eqref{eq_bound_likelihood_and_approx} and \eqref{eq_conv_ergo_likelihood},
we get $\Ptrue$-\as that:
\begin{equation*}
\limsup_{n\to\infty} \Bigl\vert
		\vert \T_n \vert^{-1} \ell_{n,x}(\theta) 	- \Esp_\cU \otimes \Etrue \bigl[ \h_{\rooot,\infty}(\theta) \bigr]
	\Bigr\vert
\leq 2\, \frac{\rho^{k-1}}{1-\rho} \cdot
\end{equation*}
As the left hand side does not depend on $k$,
letting $k\to\infty$, we get that $\vert \T_n \vert^{-1} \ell_{n,x}(\theta)$ converges $\Ptrue$-\as
to $\ell(\theta)$ as $n\to\infty$.
This concludes the proof.
\end{proof}

We are going to prove that this convergence holds uniformly in $\theta$.
First, we need to prove that the contrast function is continuous
and has a unique global maximum at $\thetaTrue$.
In order to get those results,
we need a natural continuity assumption on the transition functions.

\begin{assumption}[Continuity, {\cite[Assumption 12.3.5]{CappeHMM}}]\label{assump_HMM_4}
For all $(x,x')\in \SpaceX\times\SpaceX$ and $y\in\SpaceY$,
the functions $\theta \mapsto q_\theta(x',x)$ and $\theta \mapsto g_\theta(x,y)$ defined on $\Theta\subset \R^\dimTheta$
are continuous.
\end{assumption}

We denote by $\norm{\cdot}$ the euclidean norm on $\R^{\dimTheta}$.

\begin{prop}[$\ell$ is continuous]\label{prop_l_continuous_complete}
Assume that Assumptions~\ref{assump_HMM_0}--\ref{assump_HMM_3}
and \ref{assump_HMM_4} hold.
Then, for any $n\in\N$ and $x\in\SpaceX$, the log-likelihood function $\theta \mapsto \ell_{n,x}(\theta)$ is $\Ptrue$-\as
continuous on $\Theta$.

Moreover, for any $\theta\in\Theta$, we have:
\begin{equation*} 
\Esp_\cU \otimes \Etrue \left[ 	\sup_{\theta' \in\Theta : \Vert \theta - \theta' \Vert \leq \delta}
		\bigl\vert \h_{\rooot,\infty}(\theta') - \h_{\rooot,\infty}(\theta) \bigr\vert 	\right]
	\to 0 \quad \text{ as } \delta \to 0 ,
\end{equation*}
and the contrast function $\theta \mapsto \ell(\theta)$ is continuous on $\Theta$.

\end{prop}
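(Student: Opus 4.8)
The plan is to prove \Cref{prop_l_continuous_complete} in three stages, establishing in order: continuity of $\ell_{n,x}$, an equicontinuity-type control on $\h_{\rooot,\infty}$, and finally continuity of the contrast function $\ell$. The first claim is the easiest. Fixing $n\in\N$ and $x\in\SpaceX$, we recall from \eqref{eq_def_l_nx_theta} and \eqref{eq_def_exple_p_theta} that $\ell_{n,x}(\theta)$ is the logarithm of a finite integral (over $\SpaceX^{\T_n^*}$, a fixed finite product space) of products of the densities $q_\theta$ and $g_\theta$. By \Cref{assump_HMM_4} the integrand is continuous in $\theta$ for each fixed set of integration variables, and by \Cref{assump_HMM_2} the integrand is bounded uniformly (above by powers of $\sigma^+$ and $b^+$, using \Cref{assump_HMM_3}-\ref{assump_HMM_3:item1}). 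Hence dominated convergence gives continuity of the integral in $\theta$, and since the integral is bounded below by a positive quantity (using \Cref{assump_HMM_2}-\ref{assump_HMM_2:item3} and the lower bounds $\sigma^-$, $b^-$) the logarithm is continuous as well, $\Ptrue$-almost surely.

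For the second claim I would first establish the analogue for the finite-past increments $\h_{\rooot,k,x}(\theta)$, and then transfer it to the infinite-past limit $\h_{\rooot,\infty}(\theta)$ using the uniform convergence from \Cref{prop_existence_stationary_likelihood}. For fixed $k$, $\h_{\rooot,k,x}(\theta)$ is again (by \eqref{eq_def_H_ukx} and \eqref{eq_def_p_theta_Y_Delta}) the logarithm of a ratio of finite integrals of products of $q_\theta,g_\theta$ over the finite tree $\DeltaR(\rooot,k)$, so the same dominated-convergence argument shows it is continuous in $\theta$, $\P_\cU\otimes\Ptrue$-almost surely. The key point is then to upgrade pointwise continuity to the integrated modulus-of-continuity statement. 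Writing
\begin{equation*}
\sup_{\theta':\norm{\theta-\theta'}\leq\delta} \bigl\vert \h_{\rooot,\infty}(\theta')-\h_{\rooot,\infty}(\theta)\bigr\vert
\leq 2\sup_{\theta''}\sup_{k}\sup_{x'}\vert \h_{\rooot,k,x'}(\theta'')-\h_{\rooot,\infty}(\theta'')\vert
+ \sup_{\theta':\norm{\theta-\theta'}\leq\delta}\vert \h_{\rooot,k,x}(\theta')-\h_{\rooot,k,x}(\theta)\vert,
\end{equation*}
the first term on the right is controlled, uniformly, by the bound $\rho^{k-1}/(1-\rho)$ from \eqref{eq_h_ukx_unif_Cauchy} (it is $O(\rho^k)$ and independent of $\delta$), while the second term goes to $0$ as $\delta\to0$ by continuity of $\h_{\rooot,k,x}$ in $\theta$. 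Taking $k$ large to kill the first term and then $\delta\to0$ gives the pointwise-in-$Y$ result; the integrated statement follows by dominated convergence, with the dominating function supplied by the uniform bound \eqref{eq_h_ukx_unif_bounded}, which is in $L^1(\P_\cU\otimes\Ptrue)$ precisely by \Cref{assump_HMM_3}.

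The third claim is then immediate: by definition \eqref{eq_def_contrast_function}, $\ell(\theta)=\Esp_\cU\otimes\Etrue[\h_{\rooot,\infty}(\theta)]$, so
\begin{equation*}
\vert \ell(\theta')-\ell(\theta)\vert \leq \Esp_\cU\otimes\Etrue\bigl[\vert\h_{\rooot,\infty}(\theta')-\h_{\rooot,\infty}(\theta)\vert\bigr]
\leq \Esp_\cU\otimes\Etrue\Bigl[\sup_{\theta'':\norm{\theta-\theta''}\leq\delta}\vert\h_{\rooot,\infty}(\theta'')-\h_{\rooot,\infty}(\theta)\vert\Bigr]
\end{equation*}
for $\norm{\theta'-\theta}\leq\delta$, and the right-hand side tends to $0$ as $\delta\to0$ by the second claim. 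I expect the \textbf{main obstacle} to be the careful handling of the interchange of supremum over $\theta'$ (in a $\delta$-ball) with the expectation in the second claim: one must guarantee that the $\delta$-ball supremum of $\h_{\rooot,\infty}$ is measurable and dominated, and that continuity of each finite-$k$ increment really propagates to the supremum over the ball uniformly enough to apply dominated convergence. This is where separability of $\Theta$ (so the sup can be taken over a countable dense subset, preserving measurability) and the uniform domination from \eqref{eq_h_ukx_unif_bounded} together with \Cref{assump_HMM_3} do the essential work.
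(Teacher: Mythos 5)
Your proposal is correct and follows essentially the same route as the paper: continuity of $\ell_{n,x}$ and of each finite-past increment $\h_{\rooot,k,x}$ via dominated convergence with the bounds involving $\sigma^\pm$ and $b^\pm$, transfer to $\h_{\rooot,\infty}$ via the uniform geometric bound \eqref{eq_h_ukx_unif_Cauchy} (the paper invokes the uniform convergence of \Cref{prop_existence_stationary_likelihood}, which packages that same estimate), domination by \eqref{eq_h_ukx_unif_bounded} together with \Cref{assump_HMM_3}-\ref{assump_HMM_3:item2} for the integrated statement, and the one-line deduction of continuity of $\ell$. One cosmetic slip: in your displayed decomposition the approximation term should be taken at a fixed $k$ (delete the $\sup_k$, since the supremum over $k$ of the error is only of order $1/(1-\rho)$ rather than $O(\rho^k)$), but your subsequent sentence makes clear you intend fixed $k$ followed by $k\to\infty$, which is exactly what the argument requires.
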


\begin{proof}
This proof is a straightforward adaptation from the proof of \cite[Proposition 12.3.6]{CappeHMM}.

Recall that $\h_{\rooot,\infty}(\theta)$ is the limit of $\h_{\rooot,k,x}(\theta)$ as $k\to\infty$.
We first prove that, for every $x\in\SpaceX$ and $k\geq 0$,
	$\h_{\rooot,k,x}(\theta)$ is a continuous function of $\theta$,
	and then use this to show continuity of the limit.
Recall from \eqref{eq_def_H_ukx} the second equality defining $\Hterm_{u,k,x}(\theta)$,
which we remind for convenience
for any $u\in\T$ and $x\in\SpaceX$:
\begin{equation*}
\Hterm_{u,k,x}(\theta)
= \frac{p_\theta(Y_{\DeltaR(u,k)} \,\vert\, X_{\parent^k(u)} = x)}
	{p_\theta(Y_{\DeltaR^*(u,k)} \,\vert\, X_{\parent^k(u)} = x)} \cdot 
\end{equation*}

Recall from \eqref{eq_def_p_theta_Y_Delta} the definition of $p_\theta(Y_{\Delta} \,\vert\, X_{\parent^k(u)} = x)$
where here the possibly random subtree $\Delta$ is either $\DeltaR(u,k)$ or $\DeltaR^*(u,k)$.
First note that the integrand in \eqref{eq_def_p_theta_Y_Delta} 
is by assumption continuous \wrt $\theta$
and upper bounded by $(1\lor \sigma^+ b^+)^{\vert \Delta \vert}$.
Thus, dominated convergence shows that $p_\theta(Y_{\Delta} \,\vert\, X_{\parent^k(u)} = x)$ 
is continuous
\wrt to $\theta$ (remind that $\lambda$, defined in \Cref{assump_HMM_1}, is finite).
Moreover, note that 
$p_\theta(Y_{\DeltaR^*(u,k)} \,\vert\, X_{\parent^k(u)} = x)$ is lower bounded by
$\prod_{v\in \DeltaR^*(u,k)\setminus\{\parent^k(u)\}} \sigma^- b^-(Y_v) $ 
which is positive
$\Prb_\cU\otimes \Ptrue$-\as (by \Cref{assump_HMM_2}).
Thus, $\Hterm_{u,k,x}(\theta)$ and $\h_{u,k,x}(\theta) = \log \Hterm_{u,k,x}(\theta)$ (remind \eqref{eq_def_h_ukx})
are continuous \wrt $\theta$ $\Prb_\cU\otimes \Ptrue$-\as as well.
Hence, using \eqref{eq_def_l_nx_theta}, for all $n\in\N$ and $x\in\SpaceX$,
we get that $\ell_{n,x}(\theta)$ is also continuous \wrt $\theta$ $\Ptrue$-\as

Remind from Proposition~\ref{prop_existence_stationary_likelihood} that
$(\h_{u,k,x}(\theta))_{k\in\N}$ converges to $\h_{u,\infty}(\theta)$ uniformly in $\theta$
$\Prb_\cU\otimes\Ptrue$-\as
Thus, the function $\theta \mapsto \h_{u,\infty}(\theta)$ is continuous $\Prb_\cU\otimes \Ptrue$-\as
Using the uniform bound~\eqref{eq_h_ukx_unif_bounded}, \Cref{assump_HMM_3}-\ref{assump_HMM_3:item2} 
and dominated convergence, we obtain the first part of the proposition.

We deduce the second part from the first one, as:
\begin{align*}
\sup_{\theta' \in\Theta  : \Vert \theta' - \theta \Vert \leq \delta}  \vert \ell(\theta') - \ell(\theta) \vert
&= \sup_{\theta' \in\Theta : \Vert \theta' - \theta \Vert \leq \delta}  
		\bigl\vert \Esp_\cU \otimes \Etrue[ \h_{\rooot,\infty}(\theta') - \h_{\rooot,\infty}(\theta) ] \bigr\vert \\
& \leq \Esp_\cU \otimes \Etrue \left[ 	\sup_{\theta' \in\Theta : \Vert \theta - \theta' \Vert \leq \delta}
		\vert \h_{\rooot,\infty}(\theta') - \h_{\rooot,\infty}(\theta) \vert 	\right] .
\end{align*}
This concludes the proof.
\end{proof}

We are now ready to state and prove that the convergence to the contrast function $\ell$ 
	holds uniformly in $\theta$.

\begin{prop}[Uniform convergence to $\ell$]
	\label{prop_unif_cv_l}
Assume that Assumptions~\ref{assump_HMM_0}--\ref{assump_HMM_4} hold and $\Theta$ is compact.
Then, we have:
\begin{equation*}
\lim_{n\to\infty} \sup_{\theta\in\Theta} \Bigl\vert \vert \T_n \vert^{-1} \ell_{n,x}(\theta) - \ell(\theta) \Bigr\vert = 0
\quad \Ptrue\text{-\as}
\end{equation*}
\end{prop}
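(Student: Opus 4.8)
The plan is to upgrade the pointwise convergence from \Cref{prop_conv_likelihood_to_contrast_func} to a uniform-in-$\theta$ convergence by combining it with an equicontinuity argument and a standard compactness covering. The overall strategy follows the classical scheme for uniform laws of large numbers: from \Cref{prop_conv_likelihood_to_contrast_func} we already have that, for each fixed $\theta\in\Theta$, $\vert\T_n\vert^{-1}\ell_{n,x}(\theta)\to\ell(\theta)$ $\Ptrue$-\as; from \Cref{prop_l_continuous_complete} we have continuity of the limit $\ell$ together with the crucial uniform-modulus-of-continuity estimate $\Esp_\cU\otimes\Etrue[\sup_{\theta':\Vert\theta-\theta'\Vert\leq\delta}\vert\h_{\rooot,\infty}(\theta')-\h_{\rooot,\infty}(\theta)\vert]\to0$ as $\delta\to0$. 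The idea is to control the oscillation of the \emph{random} normalized log-likelihood over small balls uniformly, reduce the supremum over the compact $\Theta$ to a supremum over finitely many centers, and then use the pointwise convergence at those finitely many centers.

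First I would introduce, for each $\theta\in\Theta$ and $\delta>0$, the local oscillation function $w_{k,x}(\theta,\delta)=\sup_{\theta':\Vert\theta'-\theta\Vert\leq\delta}\vert\h_{\rooot,k,x}(\theta')-\h_{\rooot,k,x}(\theta)\vert$ for the increment at the root, and more generally the neighborhood-shape-dependent oscillation collection $(\sup_{\theta':\Vert\theta'-\theta\Vert\leq\delta}\vert\h_{v,k,x}(\theta';Y_{\Delta(v)}=\cdot)-\h_{v,k,x}(\theta;Y_{\Delta(v)}=\cdot)\vert)_{v\in\G_k}$. Thanks to \Cref{assump_HMM_3bis} and the uniform bound \eqref{eq_h_ukx_unif_bounded}, these oscillation functions are in $L^2(\Ptrue)$, so I can apply \Cref{lemma_ergodic_convergence} to them. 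This gives, $\Ptrue$-\as, that $\vert\T_n\vert^{-1}\sum_{u\in\T_n\setminus\T_{k-1}}w_{u,k,x}(\theta,\delta)$ converges to $\Esp_\cU\otimes\Etrue[w_{\rooot,k,x}(\theta,\delta)]$. Combining this with the truncation bound \eqref{eq_bound_likelihood_and_approx} (which replaces $\ell_{n,x}$ by the sum of $\h_{u,k,x}$ up to an error $\rho^{k-1}/(1-\rho)$), and then letting $k\to\infty$ using \Cref{prop_existence_stationary_likelihood}, I would obtain a bound of the form
\begin{equation*}
\limsup_{n\to\infty}\sup_{\theta':\Vert\theta'-\theta\Vert\leq\delta}\Bigl\vert\vert\T_n\vert^{-1}\ell_{n,x}(\theta')-\ell(\theta)\Bigr\vert
\leq 2\,\frac{\rho^{k-1}}{1-\rho}+\Esp_\cU\otimes\Etrue\bigl[w_{\rooot,\infty}(\theta,\delta)\bigr]
\end{equation*}
for each $k$, hence (taking $k\to\infty$) bounded by the modulus-of-continuity term alone.

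Next I would exploit compactness. Fix $\eps>0$. The modulus-of-continuity estimate from \Cref{prop_l_continuous_complete} lets me choose, for each $\theta\in\Theta$, a radius $\delta_\theta>0$ so that $\Esp_\cU\otimes\Etrue[w_{\rooot,\infty}(\theta,\delta_\theta)]<\eps$. The open balls $B(\theta,\delta_\theta)$ cover the compact set $\Theta$, so finitely many of them $B(\theta_1,\delta_{\theta_1}),\dots,B(\theta_m,\delta_{\theta_m})$ suffice. On each such ball the displayed $\limsup$ bound shows that the local supremum of $\vert\vert\T_n\vert^{-1}\ell_{n,x}(\theta')-\ell(\theta_i)\vert$ is eventually at most $\eps$ (on a $\Ptrue$-full event), and continuity of $\ell$ lets me absorb $\vert\ell(\theta')-\ell(\theta_i)\vert$ into the error on each ball. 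Taking the maximum over the finite set of centers $\theta_1,\dots,\theta_m$ (a finite intersection of $\Ptrue$-full events is $\Ptrue$-full) then yields $\limsup_n\sup_{\theta\in\Theta}\vert\vert\T_n\vert^{-1}\ell_{n,x}(\theta)-\ell(\theta)\vert\leq C\eps$ $\Ptrue$-\as for some absolute constant $C$. Since $\eps>0$ was arbitrary, the supremum tends to zero $\Ptrue$-\as, as claimed.

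\textbf{The main obstacle} I anticipate is the measurability and $L^2$-integrability of the oscillation suprema $w_{\rooot,k,x}(\theta,\delta)$: taking an uncountable supremum over $\theta'$ inside the expectation must be justified, which relies on the $\Ptrue$-\as continuity of $\theta\mapsto\h_{\rooot,k,x}(\theta)$ established in \Cref{prop_l_continuous_complete} (so the supremum reduces to one over a countable dense subset) together with the uniform bound \eqref{eq_h_ukx_unif_bounded} to guarantee it lies in $L^2(\Ptrue)$ via \Cref{assump_HMM_3bis}. The second delicate point is the order of limits — one must send $n\to\infty$ first (invoking \Cref{lemma_ergodic_convergence} on the oscillation collection for each fixed $k$ and $\delta$), then $k\to\infty$, and only at the end let $\delta\to0$ via the compactness covering — so that the $\Ptrue$-almost-sure exceptional null sets are taken over countably many choices and can be safely unioned. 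Everything else is routine bookkeeping on top of the results already proved.
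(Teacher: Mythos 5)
Your proposal is correct and follows essentially the same route as the paper's proof: truncate the log-likelihood increments at a common past depth $k$ via \eqref{eq_bound_likelihood_and_approx}, apply the ergodic \Cref{lemma_ergodic_convergence} to the neighborhood-shape-dependent oscillation functions, pass to the infinite past with \eqref{eq_h_ukx_unif_Cauchy} and \Cref{prop_existence_stationary_likelihood}, and conclude with the modulus-of-continuity estimate of \Cref{prop_l_continuous_complete} together with a finite-subcover argument over the compact $\Theta$. The only differences are cosmetic — the paper performs the covering reduction first and its constant is $4\rho^{k-1}/(1-\rho)$ rather than your $2\rho^{k-1}/(1-\rho)$, which is immaterial — and your stated limit ordering ($n\to\infty$, then $k\to\infty$, then $\delta\to 0$) matches the paper's.
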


\begin{proof}{[We mimic the proof of {\cite[Proposition 12.3.7]{CappeHMM}}.]}
As $\Theta$ is compact,
it is sufficient to prove that for every $\theta\in\Theta$:
\begin{equation}\label{eq_l_cv_unif_local_like_complete}
\limsup_{\delta\to 0} \limsup_{n\to \infty} \sup_{\theta' \in\Theta : \Vert \theta' - \theta \Vert \leq \delta}
	\Bigl\vert \vert  \T_n \vert^{-1} \ell_{n,x}(\theta') - \ell(\theta) \Bigr\vert 
	= 0 \quad \Ptrue\text{-\as}
\end{equation}
As this claim is not proven in the proof of {\cite[Proposition 12.3.7]{CappeHMM}},
we give a short proof.
Indeed, assume that \eqref{eq_l_cv_unif_local_like_complete} holds for all $\theta\in\Theta$.
Let $\eps>0$.
By \Cref{prop_l_continuous_complete}, the function $\ell$ is continuous, 
	and thus uniformly continuous as $\Theta$ is compact.
In particular, there exists $\delta>0$ such that for all $\theta,\theta'\in\Theta$,
	we have that $\Vert \theta - \theta' \Vert \leq \delta$ implies $\vert \ell(\theta) - \ell(\theta') \vert \leq \eps$.
For every $\theta\in\Theta$, let $\delta_\theta \in (0,\delta)$ be such that 
	$\limsup_{n\to \infty} \sup_{\theta' \in\Theta : \Vert \theta' - \theta \Vert \leq \delta_\theta}
	\Bigl\vert \vert  \T_n \vert^{-1} \ell_{n,x}(\theta') - \ell(\theta) \Bigr\vert 
	< \eps$.
As $\cup_{\theta\in\Theta} \{ \theta' : \Vert \theta' - \theta \Vert \leq \delta_\theta \}$ is an open cover of $\Theta$
and as $\Theta$ is compact, there exists a finite subset $\{ \theta_j : 1\leq j \leq m \}$ of $\Theta$ with $m\geq 1$
such that $\Theta = \cup_{j=1}^m \{ \theta' : \Vert \theta' - \theta_j \Vert \leq \delta_{\theta_j} \}$.
Note that for $n$ large enough, for all $1\leq j \leq m$, we have that
$\sup_{\theta' \in\Theta : \Vert \theta' - \theta_j \Vert \leq \delta_{\theta_j}}
	\Bigl\vert \vert  \T_n \vert^{-1} \ell_{n,x}(\theta') - \ell(\theta_j) \Bigr\vert 
	< \eps$.
Thus, for $n$ large enough, we have:
\begin{align*}
\sup_{\theta\in\Theta} \Bigl\vert \vert \T_n \vert^{-1} \ell_{n,x}(\theta) - \ell(\theta) \Bigr\vert
 \leq \eps + \max_{1\leq j \leq m} \sup_{\theta' \in\Theta : \Vert \theta' - \theta_j \Vert \leq \delta_{\theta_j}} 
	\Bigl\vert \vert  \T_n \vert^{-1} \ell_{n,x}(\theta') - \ell(\theta_j) \Bigr\vert 
\leq 2 \eps.
\end{align*}
This being true for all $\eps>0$, we get that the statement in the proposition holds.
\medskip

We now prove \eqref{eq_l_cv_unif_local_like_complete}.
Fix some $\theta\in\Theta$.
Remind that by \Cref{prop_conv_likelihood_to_contrast_func},
we have that $\lim_{n\to\infty} \vert  \T_n \vert^{-1} \ell_{n}(\theta) = \ell(\theta)$ $\Ptrue$-\as
Using this fact, we get:
\begin{align}
 \limsup_{n\to \infty}  & \sup_{\theta' \in\Theta :  \Vert \theta' - \theta \Vert \leq \delta}
	 \Bigl\vert \vert  \T_n \vert^{-1} \ell_{n,x}(\theta') - \ell(\theta) \Bigr\vert \nonumber\\
 & =  \limsup_{n\to \infty} \sup_{\theta' \in\Theta : \Vert \theta' - \theta \Vert \leq \delta}
	\Bigl\vert \vert  \T_n \vert^{-1} \ell_{n,x}(\theta') - \vert  \T_n \vert^{-1} \ell_{n,x}(\theta) \Bigr\vert  .
	\label{eq_local_uniform_conv_ell}
\end{align}
Using  \eqref{eq_bound_likelihood_and_approx}, for any $k\geq 1$,  we get that \eqref{eq_local_uniform_conv_ell} is $\Ptrue$-\as bounded by:
\begin{multline}
2 \limsup_{n\to \infty}  \sup_{\theta' \in\Theta : \Vert \theta' - \theta \Vert \leq \delta} \vert  \T_n \vert^{-1} 
	\Bigl\vert  \ell_{n,x}(\theta') - \sum_{u\in\T_n\setminus\T_{k-1}} \h_{u,k,x}(\theta') \Bigr\vert \\
+ \limsup_{n\to \infty}
	\vert  \T_n \vert^{-1} 	\sum_{u\in\T_n\setminus\T_{k-1}}
		\sup_{\theta' \in\Theta : \Vert \theta' - \theta \Vert \leq \delta}
		\Bigl\vert	\h_{u,k,x}(\theta') 
			-  \h_{u,k,x}(\theta)	\Bigr\vert \\
\begin{aligned}
& \leq \	2 \frac{\rho^{k-1}}{1-\rho}		+ 
	 \limsup_{n\to \infty}
	\vert  \T_n \vert^{-1} 	\sum_{u\in\T_n\setminus\T_{k-1}}
		\sup_{\theta' \in\Theta : \Vert \theta' - \theta \Vert \leq \delta}
		\Bigl\vert	\h_{u,k,x}(\theta') 
			-  \h_{u,k,x}(\theta)	\Bigr\vert \\
& = \  	2 \frac{\rho^{k-1}}{1-\rho}		+
	  \Esp_\cU \otimes \Etrue \left[ 
	\sup_{\theta' \in\Theta : \Vert \theta' - \theta \Vert \leq \delta}
		\Bigl\vert	\h_{\rooot,k,x}(\theta') 
			-  \h_{\rooot,k,x}(\theta)	\Bigr\vert	 \right] \\
& \leq \  	4 \frac{\rho^{k-1}}{1-\rho}		+
	  \Esp_\cU \otimes \Etrue \left[  
	\sup_{\theta' \in\Theta : \Vert \theta' - \theta \Vert \leq \delta}
		\Bigl\vert	\h_{\rooot,\infty}(\theta') 
			-  \h_{\rooot,\infty}(\theta)	\Bigr\vert	 \right] ,
			\label{eq_upper_bound_local_unif_cv_ell}
\end{aligned}
\end{multline}
where
we used \Cref{lemma_ergodic_convergence} for ergodic convergence 
	(with $L^2(\Ptrue)$ boundedness given by \eqref{eq_h_ukx_unif_bounded})
	in the equality,
and we used \eqref{eq_h_ukx_unif_Cauchy} (with \Cref{prop_existence_stationary_likelihood}) in the second inequality.
Then, letting $k\to\infty$ in the upper bound of \eqref{eq_upper_bound_local_unif_cv_ell}
	(note that \eqref{eq_local_uniform_conv_ell} does not depend on $k$),
	and then letting $\delta\to0$ with \Cref{prop_l_continuous_complete},
we get that \eqref{eq_l_cv_unif_local_like_complete} holds.
This concludes the proof.
\end{proof}

\begin{remark}[Uniform convergence for the log-likelihood with general initial condition]
Let $\nu$ be a probability distribution on $\SpaceX$ such that
	$\sup_\theta \vert \int g_\theta(x, Y_{\rooot}) \nu(\drv x) \vert$ is finite $\Ptrue$-\as
The uniform convergence of $\vert \T_n \vert^{-1} \ell_{n,x}(\theta)$ to $\ell(\theta)$
still holds when modifying the definition of the log-likelihood $\ell_{n,x}(\theta)$ of the HMT
to replace the Dirac mass $\delta_x$ by $\nu$ for the distribution
of the root hidden variable $X_{\rooot}$.
When $\nu$ is the stationary distribution $\pi_\theta$ associated to $q_\theta$,
uniform convergence holds without this extra regularity assumption
by conditioning on the state of the root's parent $X_{\parent(\rooot)}$ instead
(which allows to replace $\h_{\rooot,0,x}(\theta) = g_\theta(x,Y_\rooot)$ in \eqref{eq_bound_likelihood_and_approx}
	by $\h_{\rooot,1,\nu}(\theta) := \log \int_{\SpaceX} \Hterm_{\rooot,1,x}(\theta) \, \nu(\drv x)$
	for which $\sup_\theta \vert \h_{\rooot,1,\nu}(\theta) \vert$
	 is finite by an immediate adaptation of \eqref{eq_h_ukx_unif_bounded}).
\end{remark}

\subsection{Identifiability and strong consistency}

In this subsection, we prove the strong consistency of the MLE.
We must first study the identifiability of the parameter of the HMT model.
We start with a definition of equivalent parameters.

\begin{defin}[Equivalent parameters]
We say that two parameters $\theta, \theta'\in \Theta$ are \emph{equivalent}
if they define the same distribution for the process $(Y_u , u\in\T)$,
\ie $\Prb_\theta(Y\in \cdot) = \Prb_{\theta'}(Y\in \cdot)$.
\end{defin}

Note that by Kolmogorov's extension theorem,
$\theta$ and $\theta'$ are equivalent if and only if they define the same law
on every finite tree $\T_n$, \ie for $(Y_u , u\in\T_n)$.

The following proposition characterizes global maxima of the contrast function $\ell$.

\begin{prop}[Global maxima of the contrast function $\ell$]
\label{prop_global_max_l}
Assume that Assumptions~\ref{assump_HMM_0}--\ref{assump_HMM_3bis} hold.
Then a parameter $\theta\in\Theta$ is a global maximum of $\ell$
if and only if $\theta$ is equivalent to $\thetaTrue$.
\end{prop}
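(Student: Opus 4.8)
The plan is to identify the gap $\ell(\thetaTrue) - \ell(\theta)$ with an expected conditional Kullback--Leibler divergence and to exploit its non-negativity. Using the interpretation recorded just after \Cref{prop_existence_stationary_likelihood}, namely $\h_{\rooot,\infty}(\theta) = \log p_\theta(Y_\rooot \mid Y_{\DeltaR^*(\rooot,\infty)})$, together with $\ell(\theta) = \Esp_\cU \otimes \Etrue[\h_{\rooot,\infty}(\theta)]$, I would first write
\begin{equation*}
\ell(\thetaTrue) - \ell(\theta)
= \Esp_\cU \otimes \Etrue\left[ \log \frac{p_{\thetaTrue}(Y_\rooot \mid Y_{\DeltaR^*(\rooot,\infty)})}{p_\theta(Y_\rooot \mid Y_{\DeltaR^*(\rooot,\infty)})} \right],
\end{equation*}
which is legitimate since $\h_{\rooot,\infty} \in L^1(\Prb_\cU \otimes \Ptrue)$. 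Conditioning on $\sigma(\cU, Y_{\DeltaR^*(\rooot,\infty)})$ and using that, under $\Ptrue$, the conditional law of $Y_\rooot$ given its past has density $p_{\thetaTrue}(\cdot \mid Y_{\DeltaR^*(\rooot,\infty)})$, the inner conditional expectation equals $D_{\mathrm{KL}}\bigl( p_{\thetaTrue}(\cdot \mid Y_{\DeltaR^*(\rooot,\infty)}) \,\|\, p_\theta(\cdot \mid Y_{\DeltaR^*(\rooot,\infty)}) \bigr) \geq 0$ by Gibbs' inequality. Hence $\ell(\thetaTrue) \geq \ell(\theta)$ for every $\theta$, so $\thetaTrue$ is a global maximizer, and the global maxima are exactly those $\theta$ for which this expected divergence vanishes, equivalently for which the infinite-past predictive densities agree, $p_\theta(\cdot \mid Y_{\DeltaR^*(\rooot,\infty)}) = p_{\thetaTrue}(\cdot \mid Y_{\DeltaR^*(\rooot,\infty)})$ $\Ptrue$-almost surely.

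For the implication ``equivalent $\Rightarrow$ global maximum'', I would use that $\h_{\rooot,\infty}(\theta)$ is the conditional log-density of $Y_\rooot$ given its infinite observed past under $\Prb_\theta$, hence a functional of the law of $(Y_u, u \in \T)$ under $\theta$ alone. If $\theta$ is equivalent to $\thetaTrue$ this observed law coincides with the true one, so $\h_{\rooot,\infty}(\theta) = \h_{\rooot,\infty}(\thetaTrue)$ holds $\Ptrue$-almost surely, giving $\ell(\theta) = \ell(\thetaTrue)$; combined with the global-maximum property of $\thetaTrue$ from the first step, $\theta$ is then a global maximum.

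For the converse, a global maximum satisfies $\ell(\theta) = \ell(\thetaTrue)$, so the expected conditional divergence above is zero; being the expectation of a non-negative quantity, it forces the predictive densities to agree $\Ptrue$-almost surely, and by stationarity this identity holds at every vertex. The main obstacle is then to upgrade this almost-sure agreement of the infinite-past predictive kernels into genuine equality of the finite-dimensional observation laws $\Prb_\theta(Y_{\T_n} \in \cdot)$ and $\Prb_{\thetaTrue}(Y_{\T_n} \in \cdot)$: agreement of predictives on the true support does not by itself force equality of laws, since a long-memory process could share its predictive kernel yet differ in law. Here the exponential forgetting of \Cref{lemme_exp_coupling_HMT} is essential, as it guarantees that the observation law is the \emph{unique} stationary law compatible with the common predictive kernel, the influence of the remote past decaying geometrically. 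I would combine this with the positivity in \Cref{assump_HMM_2}, which makes the finite-dimensional laws mutually absolutely continuous with everywhere-positive densities, so that the $\Ptrue$-almost-sure identity of the kernels can be invoked in a reconstruction of $p_\theta(y_{\T_n})$ carried out along the breadth-first order; controlling the truncation of the conditioning past by forgetting then yields $\Prb_\theta(Y_{\T_n} \in \cdot) = \Prb_{\thetaTrue}(Y_{\T_n} \in \cdot)$ for every $n$, that is, equivalence.

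I expect this last passage --- turning the almost-sure predictive identity into exact equality of the observation laws via forgetting and mutual absolute continuity --- to be the technically delicate point; the non-negativity and the forward direction are routine once the divergence representation of $\ell(\thetaTrue)-\ell(\theta)$ is in place.
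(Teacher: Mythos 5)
Your first two steps track the paper's own argument: the paper likewise identifies $\ell(\thetaTrue)-\ell(\theta)$ with a limit of expected conditional Kullback--Leibler divergences, and it obtains the ``equivalent $\Rightarrow$ global maximum'' direction by swapping the roles of $\theta$ and $\thetaTrue$ using equality of the $Y$-laws. One caveat: the paper performs the KL identification at finite $k$, where $\Hterm_{\rooot,k,x}(\theta)$ \emph{is} rigorously the conditional density of $Y_{\rooot}$ given $Y_{\DeltaR^*(\rooot,k)}$ and $X_{\parent^k(\rooot)}=x$ (see \eqref{eq_KL_expression_H_ukx}), and then passes to the limit of the expectations via \Cref{prop_existence_stationary_likelihood}; your version works directly at $k=\infty$ and rests on reading $\h_{\rooot,\infty}(\theta)$ as $\log p_\theta(Y_{\rooot}\mid Y_{\DeltaR^*(\rooot,\infty)})$, which the paper offers only as an interpretation and never proves. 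That is repairable and secondary.

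The genuine gap is in the converse, and it sits exactly where you flag it. From $\ell(\theta)=\ell(\thetaTrue)$ you extract $\Ptrue$-a.s.\ equality of the infinite-past predictive densities and propose to reconstruct $p_\theta(y_{\T_n})$ by chaining predictives along the breadth-first order, ``controlling the truncation of the conditioning past by forgetting.'' This does not go through as stated. The a.s.\ identity of kernels holds only on $\Ptrue$-typical past trajectories, while reconstructing the $\theta$-law of $Y_{\T_n}$ requires the kernel on $\Prb_\theta$-typical pasts; \Cref{assump_HMM_2} gives mutual absolute continuity of the \emph{finite-dimensional} laws, but the laws of the infinite pasts under $\theta$ and $\thetaTrue$ are generically mutually singular, so the a.s.\ identity cannot be transported. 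Nor can you decondition by forgetting directly: \Cref{lemme_exp_coupling_HMT} replaces a conditional density by the unconditional one, uniformly over the conditioning values, only when there is a \emph{gap} between the target variables and the conditioning set, whereas your one-vertex predictives condition on the immediate neighbors of each vertex; and taking $\Etrue$-conditional expectation given only the far past turns the $\theta$-conditional density into a mixture under the wrong measure, not into $p_\theta(Y_{\T_n}\mid \text{far past})$. Your appeal to ``uniqueness of the stationary law compatible with the common predictive kernel'' is a restatement of the difficulty, not a proof.

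The paper circumvents all of this by never passing through pointwise kernel identities. It redevelops the increments-with-infinite-past machinery for block subtrees $\T(u,m)$ (yielding \eqref{eq_lim_Y_Tm_cond_past}, proved in the appendix), splits the block KL divergence and \emph{drops a non-negative conditional-KL term} to isolate the density of a sub-block separated from the conditioning variables by a spine of length $m$ (see \eqref{eq_bound_KL_div_p_theta}), recenters by stationarity of the HMT, spine and shape processes to reach \eqref{eq_identifiability_tree_gap_variables}, and only then lets the gap $m\to\infty$: the uniform bounds \eqref{eq_p_Y_Tn_upper_bound} and \eqref{eq_p_Y_Tn_lower_bound}, geometric ergodicity along the spine giving \eqref{eq_proof_indentifiability_upper_bound_densities}, and dominated convergence produce $\Etrue\bigl[\log\bigl(p_{\thetaTrue}(Y_{\T_n})/p_{\theta}(Y_{\T_n})\bigr)\bigr]=0$, whence equality of the $\T_n$-marginals by the information inequality. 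This gap-insertion via a discarded KL term, carried out at the level of expectations rather than of kernels, is the engine of the proof and is precisely the ingredient missing from your sketch.
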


We get as an immediate corollary that $\thetaTrue$ is a global maximum of $\ell$.

The proof of \Cref{prop_global_max_l},
which is postponed to the end of this section,
is an adaptation of the proof of \cite[Theorem 12.4.2]{CappeHMM}.
This adaptation comes from the difference of topology between the tree and the line.

\medskip

Remind that the log-likelihood function $\theta \mapsto \ell_{n,x}(\theta)$ is continuous $\Ptrue$-\as
	under Assumptions~\ref{assump_HMM_0}-\ref{assump_HMM_3} and~\ref{assump_HMM_4}.
Thus, when we further assume that $\Theta$ is compact,
we get that the argmax set $\argmax_{\theta\in\Theta} \ell_{n,x}(\theta)$ is non-empty.
The maximum likelihood estimator (MLE) is then defined as
the maximizer over $\Theta$ of the log-likelihood $\ell_{n,x}$, 
that is as the following random variable (which depends on $Y_{\T_n}$):
\begin{equation}
	\label{eq_def_MLE_hat_theta_nx}
\hat \theta_{n,x} = \hat \theta_{n,x}(Y_{\T_n}) \in \argmax_{\theta\in\Theta} \ell_{n,x}(\theta).
\end{equation}
Note that the argmax set in \eqref{eq_def_MLE_hat_theta_nx} is not necessarily unique,
in which case we select one parameter $\theta$ from the argmax set in a measurable manner
(which is possible, see \cite[Proposition~7.33]{bertsekasStochasticOptimalControl1996}).

We are now ready to prove the following theorem that states the strong consistency of the MLE for the HMT model
in the stationary case.

\begin{theo}[Strong consistency of the MLE]
	\label{thm_Strong_consistency_MLE}
Assume that Assumptions~\ref{assump_HMM_0}--\ref{assump_HMM_4} hold,
the contrast function $\ell$ has a unique maximum
	(which is then located at $\thetaTrue\in\Theta$ by \Cref{prop_global_max_l}) 
	and $\Theta$ is compact.
Then, for any $x\in \SpaceX$, the MLE
	$\hat \theta_{n,x}$ (defined in \eqref{eq_def_MLE_hat_theta_nx})
	converges $\Ptrue$-\as as $n\to\infty$
	to the true parameter $\thetaTrue\in\Theta$,
\ie the MLE is strongly consistent.
\end{theo}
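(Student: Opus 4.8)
The plan is to deduce the theorem from the three structural results already in hand, via the classical argmax (M-estimation) argument; all of the analytic difficulty has been absorbed into those propositions, so this final step is a soft topological one. Concretely, I would invoke: uniform convergence of the normalized log-likelihood to the contrast function (\Cref{prop_unif_cv_l}), continuity of $\ell$ on the compact set $\Theta$ (\Cref{prop_l_continuous_complete}), and the identification of the global maximizers of $\ell$ with the class of $\thetaTrue$ (\Cref{prop_global_max_l}), which under the uniqueness-of-maximum hypothesis pins the maximizer down to the single point $\thetaTrue$.

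First I would fix $x\in\SpaceX$ and work on the $\Ptrue$-almost sure event $\Omega_0$ on which the uniform convergence of \Cref{prop_unif_cv_l} holds, writing $M_n := \vert\T_n\vert^{-1}\ell_{n,x}$ and $M := \ell$ for brevity, so that $\sup_{\theta\in\Theta}\vert M_n(\theta)-M(\theta)\vert \to 0$ on $\Omega_0$. By compactness of $\Theta$, from any subsequence of $(\hat\theta_{n,x})_n$ I can extract a further subsequence $(\hat\theta_{n_j,x})_j$ converging to some $\theta_\star\in\Theta$. Using that $\hat\theta_{n_j,x}$ maximizes $M_{n_j}$, I would chain
\[
M(\theta_\star) = \lim_{j\to\infty} M_{n_j}(\hat\theta_{n_j,x}) \geq \lim_{j\to\infty} M_{n_j}(\thetaTrue) = M(\thetaTrue),
\]
where the first equality combines the uniform convergence $\sup_\theta\vert M_{n_j}-M\vert\to0$ with continuity of $M$ at $\theta_\star$ (to pass from $M_{n_j}(\hat\theta_{n_j,x})$ to $M(\theta_\star)$), the inequality is the maximizing property, and the last equality is the uniform convergence evaluated at the fixed point $\thetaTrue$. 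Since $\thetaTrue$ is the unique global maximizer of $M=\ell$, the inequality $M(\theta_\star)\geq M(\thetaTrue)$ forces $\theta_\star=\thetaTrue$. As every subsequence of $(\hat\theta_{n,x})_n$ therefore has a sub-subsequence converging to $\thetaTrue$, the full sequence converges to $\thetaTrue$ on $\Omega_0$, which is exactly the asserted $\Ptrue$-almost sure consistency.

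I do not expect a genuine obstacle in this step. The measurability of the selection $\hat\theta_{n,x}$ is already taken care of by the measurable-selection result cited at \eqref{eq_def_MLE_hat_theta_nx}, so $(\hat\theta_{n,x})_n$ is a bona fide sequence of random variables. The one point requiring care is that the identification of the limit point is driven by \emph{uniqueness} of the maximizer and not by mere maximality: this is precisely where the hypothesis that $\ell$ has a unique maximum is used, and through \Cref{prop_global_max_l} it is equivalent to $\thetaTrue$ admitting no non-trivial equivalent parameter. All the real work lies upstream, in establishing the locally uniform convergence to the contrast function and in characterizing its global maxima; the present argument merely packages those facts.
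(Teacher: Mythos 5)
Your proposal is correct and is essentially the paper's own argument: the paper likewise deduces consistency from \Cref{prop_unif_cv_l}, the continuity of $\ell$ (\Cref{prop_l_continuous_complete}), the uniqueness of the maximum located at $\thetaTrue$ via \Cref{prop_global_max_l}, and compactness of $\Theta$. The only difference is presentational — the paper first bounds $0 \leq \ell(\thetaTrue) - \ell(\hat\theta_n) \leq 2\sup_{\theta\in\Theta}\vert \ell(\theta) - \vert\T_n\vert^{-1}\ell_{n,x}(\theta)\vert$ to get $\ell(\hat\theta_n)\to\ell(\thetaTrue)$ and then concludes, whereas you run the equivalent subsequence-extraction version of the same Wald-type argmax argument directly on the estimator.
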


\begin{proof}
{[The proof is a straightforward adaptation of an argument for HMMs in \cite[Section~12.1]{CappeHMM},
	which itself adapts an argument that goes back to \cite{waldNoteConsistencyMaximum1949}.]}

By definition of $\hat \theta_n$, we have that 
	$\ell_{n,x}(\hat \theta_n) \geq \ell_{n,x}(\theta)$
	for every $\theta\in\Theta$.
As the contrast function $\ell$ has a unique maximum located at $\thetaTrue$,
we have that $\ell(\thetaTrue) \geq \ell(\theta)$ for every $\theta\in\Theta$,
and in particular, $\ell(\thetaTrue) \geq \ell(\hat\theta_n)$ for every $n\in\N$.
Combining those two bounds, we get that:
\begin{align*}
0 & \leq  \ell(\thetaTrue) - \ell(\hat\theta_n) \\
& \leq   \ell(\thetaTrue) - \vert \T_n\vert^{-1} \ell_{n,x}(\thetaTrue)
+ \vert \T_n\vert^{-1} \ell_{n,x}(\thetaTrue)  -  \vert \T_n\vert^{-1} \ell_{n,x}(\hat\theta_n)
+  \vert \T_n\vert^{-1} \ell_{n,x}(\hat\theta_n)  -  \ell(\hat\theta_n) \\
& \leq 2 \sup_{\theta\in\Theta} \Bigl\vert \ell(\theta) 
		-  \vert \T_n\vert^{-1} \ell_{n,x}(\theta) \Bigr\vert ,
\end{align*}
where the upper bound in the last line goes to zero $\Ptrue$-\as
as $n\to \infty$ by Proposition~\ref{prop_unif_cv_l} as $\Theta$ is compact.
Hence, we get that $\ell(\hat\theta_n) \to \ell(\thetaTrue)$ $\Ptrue$-\as as $n\to \infty$.
Consequently, as $\ell$ is continuous (by Proposition~\ref{prop_l_continuous_complete})
and has a unique global maximum located at $\thetaTrue$,
	and as $\Theta$ is compact,
we get that $\hat \theta_n$ converges $\Ptrue$-\as to $\thetaTrue$ as $n\to \infty$.
\end{proof}

We now prove \Cref{prop_global_max_l}.

\begin{proof}[Proof of \Cref{prop_global_max_l}]
Remind that $\h_{u,k,x}(\theta)$ is defined in \eqref{eq_def_h_ukx}.
By definition of $\ell(\theta)$ (see \eqref{eq_def_contrast_function}) 
and using the $L^1(\P_{\cU}\otimes\Ptrue)$ convergence of $(\h_{\rooot,k,x}(\theta))_{k\in\N}$ to $\h_{\rooot,\infty}(\theta)$
(remind Proposition~\ref{prop_existence_stationary_likelihood}),
we have:
\begin{align*}
\ell(\thetaTrue) - \ell(\theta)
& = \Esp_\cU \otimes \Etrue \bigl[  \h_{\rooot,\infty}(\thetaTrue) - \h_{\rooot,\infty}(\theta) \bigr] \\
& =  \Esp_\cU \otimes \Etrue \bigl[  \lim_{k\to\infty} \bigl( \h_{\rooot,k,x}(\thetaTrue) - \h_{\rooot,k,x}(\theta) \bigr)  \bigr] \\
& = \lim_{k\to\infty} \Esp_\cU \otimes \Etrue \bigl[    \h_{\rooot,k,x}(\thetaTrue) - \h_{\rooot,k,x}(\theta)   \bigr] .
\end{align*}
Remind that $\Hterm_{u,k,x}(\theta)$ is defined in \eqref{eq_def_H_ukx}.
Then, write:
\begin{equation}\label{eq_KL_expression_H_ukx}
\Etrue[ \h_{\rooot,k,x}(\thetaTrue) - \h_{\rooot,k,x}(\theta) ]
 = \Esp_\cU \left[ \Etrue \left[  \Etrue \left[
		\log \frac{\Hterm_{\rooot,k,x}(\thetaTrue)}{\Hterm_{\rooot,k,x}(\theta)}
 \,\middle\vert\, Y_{\DeltaR^*(\rooot,k)}, X_{\parent^k(\rooot)}=x \right]  \right] \right] ,
\end{equation}
where the inner expectation is on $Y_{\rooot}$
conditionally on $X_{\parent^k(\rooot)}=x$
and $Y_{\DeltaR^*(\rooot,k)}$
(and thus also implicitly on $\cU$ as $\DeltaR^*(\rooot,k) = \DeltaR^*_\cU(\rooot,k)$).
Recalling from \eqref{eq_def_H_ukx} that $\Hterm_{\rooot,k,x}(\theta)$ is the conditional density of
	$Y_{\rooot}$ given $Y_{\DeltaR^*(\rooot,k)}$ 
		and $X_{\parent^k(\rooot)}=x$,
we see that the inner (conditional) expectation in the right hand side is a
Kullback-Leibler divergence and thus is non-negative.
Hence, the two outer expectations and the limit $\ell(\thetaTrue) - \ell(\theta)$ as $k\to\infty$
	are non-negative as well, and thus $\thetaTrue$ is a global maximum of $\ell$.
	
Remark that if $\theta$ is equivalent to $\thetaTrue$, then as the process $(Y_u, u\in\T)$ is stationary
	and has same law under both parameters, 
	the roles of $\thetaTrue$ and $\theta$ can be swapped in the argument above,
	and thus we get $\ell(\theta)=\ell(\thetaTrue)$.
Hence, any $\theta$ equivalent to $\thetaTrue$ is a global maximum of $\ell$.
\medskip

We now turn to prove that any global maximum $\theta\in\Theta$ of $\ell$ is equivalent to $\thetaTrue$.

Remind that we use the letter $p$
to denote (possibly conditional) densities of random variables,
\eg $p_\theta(Y_u \,\vert\, Y_{\DeltaR^*(u,k)}, X_{\parent^k(u)}=x)$
denotes the \emph{conditional density} (\wrt the measure $\mu$ defined in \Cref{assump_HMM_1}-\ref{assump_HMM_1:item1}) 
under the parameter $\theta$ of $Y_u$
	conditionally on $Y_{\DeltaR^*(u,k)}$ and $X_{\parent^k(u)}=x$.
Note that $\Prb_\theta(Y_u \in \cdot \,\vert\, Y_{\DeltaR^*(u,k)}, X_{\parent^k(u)}=x)$
denotes the \emph{distribution}
under the parameter $\theta$  of $Y_u$
	conditionally on $Y_{\DeltaR^*(u,k)}$ and $X_{\parent^k(u)}=x$.

We first need a variant of the convergence in \Cref{prop_existence_stationary_likelihood}
where instead of considering one vertex $u$ as in $\h_{u,k,x}(\theta)$
we consider a whole subtree $\Tpast(u,m)$ for any $m\geq 1$
(this can be seen as a convergence by block).
To this end, we need to define an analogue of the breadth-first-search order relation $<$ on $\Tpast$ 
	for subtree blocks of the form $\Tpast(u,m)$.
Let $m\geq 1$ be fixed.
For $u,v\in\Tpast$ with $\height{u} \equiv \height{v}  \mod m+1$,
we write $\Tpast(u,m) < \Tpast(v,m)$ if $u < v$
(informally, ‘‘$\Tpast(u,m)$ is above or on the left of $\Tpast(v,m)$’’).
Note that the modulo congruence is there to insure the collection of block subtrees $\Tpast(u,m)$
	with $\height{u} \equiv \height{\rooot}  \mod m+1$
	form a partition (\ie a cover with non-overlapping subsets) of $\Tpast$
	(this still holds for any other class of congruence $\mod m+1$).
Also note that in this congruence we have $m+1$ and not $m$,
because any subtree $\Tpast(u,m)$ (\eg $\T_m = \Tpast(\rooot,m)$) 
spans over $m+1$ different generations (remind that $\height{\rooot}=0$).
We can then define the analogue of the subset $\DeltaR^*(u,k)$ for subtree blocks,
that is, for all $u\in\Tpast$ and $k\in\N$, we define: 

\begin{figure}[t]
\centering
\includegraphics[height=5cm]{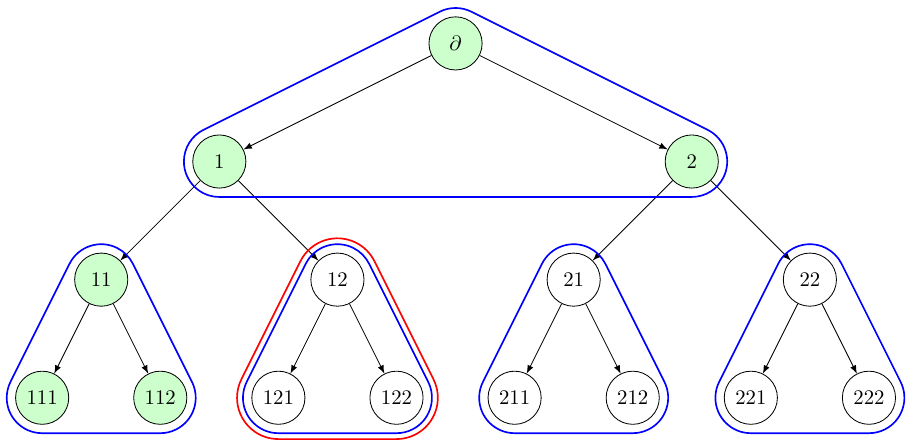}
\caption{Illustration of the ‘‘past’’ subtree $\DeltaR^*(\T(u,m),k)$
	of the block subtree $\T(u,m)$
	for $m=1$, $u=12$ and $k=1$.
	The block subtrees are circled with blue lines,
	and the block subtree $\T(12,1)$ is circled a
		second time with a red line.
	The vertices in green are those in $\DeltaR^*(\T(12,1),1)$.
	Note the difference with $\Delta(u',k')$, e.g.\
	vertex $111$ is in $\DeltaR^*(\T(12,1),1)$ 
		but not in $\DeltaR^*(12,2)$,
	and vertex $21$ is in $\DeltaR^*(121,3)$
		but not in $\DeltaR^*(\T(12,1),1)$.
	}
\label{fig_illustration_Delta_T_u_k}
\end{figure}

\begin{equation*}
\DeltaR^*(\T(u,m),k) = \bigcup \bigl\{ T(v,m) : v \in \DeltaR^*(u, k(m+1)) \text{ such that }  \height{v}\equiv \height{u} \mod m+1 \bigr\}.
\end{equation*}
See \Cref{fig_illustration_Delta_T_u_k} for an illustration
of the ‘‘past’’ subtree $\DeltaR^*(\T(u,m),k)$ of the block subtree $\T(u,m)$.
(Informally, ‘‘the subset $\DeltaR^*(\T(u,m),k)$ is the union of the subtree blocks (with height $m$)
		above and on the left of $\T(u,m)$ up to $k$ block generations’’.
Note that we will not need to understand in details the geometry of the subset $\DeltaR^*(\T(u,m),k)$,
	we only need to remember that all its vertices are upstream of the edge $(\parent(u),u)$,
	and we will then use the Markov property.)
Remind that $\Tpast(\rooot,m) = \T_m$.
Then, a straightforward adaptation of Lemma~\ref{lemma_h_ukx_unif_Cauchy_and_bounded}, 
	and Propositions~\ref{prop_existence_stationary_likelihood} and~\ref{prop_conv_likelihood_to_contrast_func}
	to a decomposition of the log-likelihood into non-overlapping subtrees of height $m$ instead of single vertices
	(see \Cref{section_convergence_Y_triangles} for detailed proofs of those adaptations)
	give us for all $\theta\in\Theta$, $x\in\SpaceX$ and $m\in\N^*$:
\begin{equation}
	\label{eq_lim_Y_Tm_cond_past}
\lim_{k\to\infty} \Esp_\cU \otimes \Etrue \left[
	\log p_\theta(Y_{\T_m} \,\vert\, Y_{\DeltaR^*(\T_m,k)} , X_{\parent^{k(m+1)}(\rooot)} =x )
	\right] 
= \vert \T_m \vert \, \ell(\theta) .
\end{equation}

Let $\cU^+ = (\cU_{(j)})_{1\leq j < \infty}$ be a sequence of independent random variables with Bernoulli distribution of parameter $1/2$
	(note that $\cU^+$ can be seen as a random forward spine),
	which is independent of $\cU$ and of the HMT process $(X,Y)$.
For all $n\in\N^*$, define the random vertex $U_n$ as the unique vertex in $\G_n$
	whose path from $\rooot$ is encoded by $\cU_{(1:n)}$ in Neveu's notation.
For all $n\in\N$, define the deterministic vertex $U_{-n} = \parent^n(\rooot)$.
Note that $\rooot = U_0$ and that $U_{n-1}$ is the parent vertex of $U_n$ for all $n\in\Z$.
Moreover, using a similar argument as in \Cref{rem_rerooting_delta}, 
note that for any $m,k\in\N$,
the sequence of random shapes $(\Shape(\DeltaR(\Tpast(U_n,m),k)))_{n\in\Z}$ is stationary.

Now, pick $\theta\in\Theta$ such that $\ell(\theta) = \ell(\thetaTrue)$.
Thus for any positive integer $n < m$, we have:
\begin{align}
0 & = \vert \T_m \vert \, (\ell(\thetaTrue) - \ell(\theta) ) \nonumber\\
& = \lim_{k\to\infty} \Esp_\cU \otimes \Etrue \left[   \log \frac
	{p_{\thetaTrue}(Y_{\T_m} \,\vert\, Y_{\DeltaR^*(\T(\rooot,m),k)} , X_{\parent^{k(m+1)}(\rooot)} =x)}
	{p_\theta(Y_{\T_m} \,\vert\, Y_{\DeltaR^*(\T(\rooot,m),k)} , X_{\parent^{k(m+1)}(\rooot)} =x)}
	\right]  \nonumber\\
& = \lim_{k\to\infty} \left\{ \Esp_\cU \otimes \Etrue \left[  \log \frac
{p_{\thetaTrue}(Y_{\T(U_{m-n},n)} \,\vert\, Y_{\DeltaR^*(\T(\rooot,m),k)} , X_{\parent^{k(m+1)}(\rooot)} =x)}
{p_\theta(Y_{\T(U_{m-n},n)} \,\vert\, Y_{\DeltaR^*(\T(\rooot,m),k)} , X_{\parent^{k(m+1)}(\rooot)} =x)}
	\right]  \right. \nonumber\\
& \qquad\qquad \left. + \Esp_\cU \otimes \Etrue \left[   \log \frac
{p_{\thetaTrue}(Y_{\T_m \setminus\T(U_{m-n},n)}
	\,\vert\, Y_{\DeltaR^*(\T(\rooot,m),k) \cup \T(U_{m-n},n)} , X_{\parent^{k(m+1)}(\rooot)} =x)}
{p_\theta(Y_{\T_m \setminus\T(U_{m-n},n)}
	\,\vert\, Y_{\DeltaR^*(\T(\rooot,m),k) \cup \T(U_{m-n},n)} , X_{\parent^{k(m+1)}(\rooot)} =x)}
	\right]  \right\} \nonumber\\
& \geq \limsup_{k\to\infty} \Esp_\cU \otimes \Etrue \left[  \log \frac
{p_{\thetaTrue}(Y_{\T(U_{m-n},n)} \,\vert\, Y_{\DeltaR^*(\T(\rooot,m),k)} , X_{\parent^{k(m+1)}(\rooot)} =x)}
{p_\theta(Y_{\T(U_{m-n},n)} \,\vert\, Y_{\DeltaR^*(\T(\rooot,m),k)} , X_{\parent^{k(m+1)}(\rooot)} =x)}
	\right] \nonumber\\
& = \limsup_{k\to\infty} \Esp_\cU \otimes \Etrue \left[  \log \frac
{p_{\thetaTrue}(Y_{\T_n}
	\,\vert\, Y_{\DeltaR^*(\T(U_{-m+n},m),k)} , X_{\parent^{k(m+1)}(U_{-m+n})} =x)}
{p_\theta(Y_{\T_n}
	\,\vert\, Y_{\DeltaR^*(\T(U_{-m+n},m),k)} , X_{\parent^{k(m+1)}(U_{-m+n})} =x)}
	 \right], 		\label{eq_bound_KL_div_p_theta}
\end{align}
where the inequality follows by 
	noting that the second term is non-negative as an expectation of a
	(conditional) Kullback-Leibler divergence
	(using an argument similar as for \eqref{eq_KL_expression_H_ukx} above),
and the last equality follows by using stationarity
	of the HMT process $(X,Y)$, of the spinal process $(U_n)_{n\in\Z}$,
	and of the shape process $(\Shape(\DeltaR(\Tpast(U_n,m),k)))_{n\in\Z}$.
Note that the term in the lower bound is also non-negative as an expectation of a
	(conditional) Kullback-Leibler divergence.

Let $n\in\N$ be fixed.	
Now, we define for all $\theta\in\Theta$ and $m,k\in\N^*$:
\begin{align}
 W_{m,k}(\theta) & = \log p_\theta(Y_{\T_n}
	\,\vert\, Y_{\DeltaR^*(\T(U_{-m},m+n),k)}, X_{\parent^{k(m+n+1)}(U_{-m})} = x), \nonumber\\
\text{ and } \qquad W(\theta) & = \log p_\theta(Y_{\T_n} ). \label{eq_def_W_mk_and_W}
\end{align}
Note that $\log p_\theta(Y_{\T_n} )$ is well defined
using an integral expression similar to \eqref{eq_def_exple_p_theta}
along Assumptions~\ref{assump_HMM_1} and~\ref{assump_HMM_2} 
and the comment on $\pi_\theta$ after \Cref{lemma_Q_unif_geom_ergodic}.
From \eqref{eq_bound_KL_div_p_theta}, we deduce that
(where $m$ in \eqref{eq_def_W_mk_and_W} and \eqref{eq_identifiability_tree_gap_variables} below
corresponds to $m-n$ in \eqref{eq_bound_KL_div_p_theta}):
\begin{equation}
\forall m\in\N^*,\qquad
\lim_{k\to\infty} \Esp_\cU \otimes \Etrue \bigl[   
		W_{m,k}(\thetaTrue) - W_{m,k}(\theta)
	\bigr] 
= 0
.	\label{eq_identifiability_tree_gap_variables}
\end{equation}
Hence, we have managed to insert a gap between the variables $(Y_v, v\in\T_n)$
whose density we examine and the variables $(Y_v, v\in \DeltaR^*(\T(U_{-m},m+n),k))$
and $X_{\parent^{k(m+n+1)}(U_{-m})}$ that appear in the conditioning.
Remark that the following fact illustrates the gap between the variables:
	if $u\in\T_n$ and $v\in\DeltaR^*(\T(U_{-m},m+n),k)$,
	then the most recent common ancestor $u\land v$ of $u$ and $v$
	has height $\height{u\land v} < -m$,
	that is $u\land v$ is an ancestor of $U_{-m}$.
	See \Cref{fig_tree_gap_illustration} for a graphical illustration of this gap.

\begin{figure}[t]
\centering
\includegraphics[width=8cm]{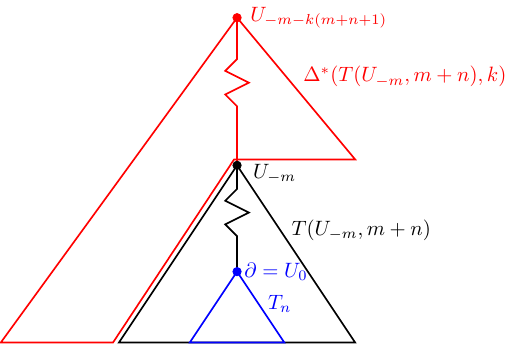}
\caption{Illustration of the gap in \eqref{eq_identifiability_tree_gap_variables} 
	between the variables $(Y_v, v\in\T_n)$
	(bottom triangle in blue)
	and the variables $(Y_v, v\in \DeltaR^*(\T(U_{-m},m+n),k))$
	and $X_{\parent^{k(m++n1)}(U_{-m})}$ that appear in the conditioning (top partial triangle in red).
	Note that the two groups of variables are separated by the path from $U_{-m-1} = \parent(U_{-m})$ to $\partial =U_0$,
	which is of length $m+1$.
	}
\label{fig_tree_gap_illustration}
\end{figure}

The idea is now to let this gap tend to infinity to show that in the limit the
	conditioning has no effect.
Our next goal is thus to prove that:
\begin{align}
\lim_{m\to\infty} \sup_{k\in\N} \Bigl\vert
	\Esp_\cU \otimes \Etrue \bigl[ 
		W_{m,k}(\thetaTrue) - W_{m,k}(\theta)
	\bigr] 
-   \Etrue \bigl[  
	W(\thetaTrue) - W(\theta) \bigr] 
\Bigr\vert  & = 0 . \label{eq_proof_identifiability_infinite_gap_mixing}
\end{align}
Combining \eqref{eq_proof_identifiability_infinite_gap_mixing}
with \eqref{eq_identifiability_tree_gap_variables}, it is clear that if $\theta\in\Theta$ is such that $\ell(\theta) = \ell(\thetaTrue)$,
then we have that $\Etrue[ \log[ p_{\thetaTrue}(Y_{\T_n}) / p_{\theta}(Y_{\T_n}) ]] = 0$,
that is, the Kullback-Leibler divergence between the $\vert \T_n \vert$-dimensional densities
$p_{\thetaTrue}(Y_{\T_n})$ and $p_{\theta}(Y_{\T_n})$ is null.
This implies, by the information inequality, that these densities coincide except
on a set with $\mu^{\otimes \vert \T_n \vert}$-measure zero,
so that the $\T_n$-marginal laws of $\Ptrue$ and $\Prb_\theta$ agree.
Because $n$ was arbitrary, we find that $\thetaTrue$ and $\theta$ are equivalent.
\medskip

What remains to do to complete the proof is thus to prove \eqref{eq_proof_identifiability_infinite_gap_mixing}.
Remind the definition of $W_{m,k}(\theta)$ and $W(\theta)$ in \eqref{eq_def_W_mk_and_W}.
Obviously, it is enough to prove that for all $\theta\in\Theta$, we have:
\begin{equation}\label{eq_proof_identifiability_uniform_limit_W_km}
\lim_{m\to\infty} \Esp_\cU \otimes \Etrue \left[  
	\sup_{k\in\N} \vert W_{m,k}(\theta) - W(\theta) \vert
\right]  = 0 .
\end{equation}
Let $\theta\in\Theta$ be fixed.
To prove that \eqref{eq_proof_identifiability_uniform_limit_W_km} holds for $\theta$, 
we write (remind the discussion above on the gap between variables):
\begin{align*}
\exp( W_{m,k}(\theta))  = & \
p_\theta(Y_{\T_n} 
	 \,\vert\, Y_{\DeltaR^*(\T(U_{-m},m+n),k)}, X_{\parent^{k(m+n+1)}(U_{-m})} = x) \\
 = & \int_{\SpaceX\times\SpaceX} p_\theta( Y_{\T_n} \,\vert\, X_{\parent(\rooot)} = x_{\parent(\rooot)}) 
 		\, Q_\theta^{m-1}(x_{U_{-m}}; \drv x_{\parent(\rooot)}) \\
	& \qquad \times \Prb_\theta( X_{U_{-m}} \in \drv x_{U_{-m}} 
		\,\vert\, Y_{\DeltaR^*(\T(U_{-m},m+n),k)}, X_{\parent^{k(m+n+1)}(U_{-m})} = x),
\end{align*}
and
\begin{align*}
\exp(W(\theta)) =
p_\theta(Y_{\T_n} )
= \int_{\SpaceX\times\SpaceX} p_\theta( Y_{\T_n} \,\vert \,X_{\parent(\rooot)} = x_{\parent(\rooot)}) 
		\, Q_\theta^{m-1}(x_{U_{-m}}; \drv x_{\parent(\rooot)})
	\pi_\theta(\drv x_{U_{-m}}) ,
\end{align*}
where remind from \Cref{lemma_Q_unif_geom_ergodic} that $\pi_\theta$ is the stationary distribution of the process $(X_u , u\in\Tpast)$
with transition kernel $Q_\theta$ (that is, under the distribution $\Prb_\theta$).
Note that we have the upper bound
(remind that $b^+$ is defined in \Cref{assump_HMM_3}):
\begin{align}
p_\theta( Y_{\T_n} \,\vert\, X_{\parent(\rooot)} = x_{\parent(\rooot)})
 = \int_{\SpaceX^{\T_n}} 
	\prod_{u\in\T_n} q_\theta(x_{\parent(u)},x_u) g_\theta(x_u, Y_u) \, \lambda(\drv x_u) 
 \leq  (b^+)^{\vert \T_n\vert} .
 \label{eq_p_Y_Tn_upper_bound}
\end{align}
Thus, as Assumptions~\ref{assump_HMM_1} and~\ref{assump_HMM_2} hold, 
applying the uniform geometric bound from \Cref{lemma_Q_unif_geom_ergodic}
to the Markov chain $(X_{U_{j}})_{j \in\Z}$ with transition kernel $Q_\theta$,
we obtain $\Ptrue$-\as:
\begin{equation}
\sup_{k\in\N} \Bigl\vert p_\theta(Y_{\T_n}
	\,\vert\, Y_{\DeltaR^*(\T(U_{-m},m+n),k)}, X_{\parent^{k(m+n+1)}(U_{-m})} = x)
	- p_\theta(Y_{\T_n} ) \Bigr\vert 
\leq (b^+)^{\vert \T_n\vert} (1 - \sigma^-)^{m-1}  .
\label{eq_proof_indentifiability_upper_bound_densities}
\end{equation}
Moreover, as we have the lower bound:
\begin{align}
p_\theta( Y_{\T_n} \,\vert\, X_{\parent(\rooot)} = x_{\parent(\rooot)})
& = \int_{\SpaceX^{\T_n}} 
	\prod_{u\in\T_n} q_\theta(x_{\parent(u)},x_u) g_\theta(x_u, Y_u) \lambda(\drv x_u) \nonumber\\
& \geq \prod_{u \in\T_n} \sigma^- b^-(Y_u),
	\label{eq_p_Y_Tn_lower_bound}
\end{align}
this implies that $p_\theta(Y_{\T_n} 
	\,\vert\, Y_{\DeltaR^*(\T(U_{-m},m+n),k)}, X_{\parent^{k(m+n+1)}(U_{-m})} = x)$
and $p_\theta(Y_{\T_n} )$ both obey the same lower bound.
This lower bound combined with the observation that 
$b^-(Y_u) > 0$ for all $u\in\T_n$ 
(which follows from Assumption~\ref{assump_HMM_2}-\ref{assump_HMM_2:item2}),
and the bound $\vert \log(x) - \log(y) \vert \leq \vert x-y \vert / x\land y$,
\eqref{eq_proof_indentifiability_upper_bound_densities} shows that:
\begin{equation*}
\Prb_\cU\otimes \Ptrue\text{-\as} \qquad
\lim_{m\to\infty}	\sup_{k\in\N} \vert W_{m,k}(\theta) - W(\theta) \vert
= 0 .
\end{equation*}
Using the bounds \eqref{eq_p_Y_Tn_upper_bound} and \eqref{eq_p_Y_Tn_lower_bound}
with Assumptions~\ref{assump_HMM_2} and ~\ref{assump_HMM_3}-\ref{assump_HMM_3:item2}, we get:
\begin{equation*}
\Esp_\cU \otimes \Etrue \left[  
	\sup_{m\in\N^*} \sup_{k\in\N} \vert W_{m,k}(\theta) \vert
 \right] < \infty .
\end{equation*}
Hence, as this expectation is finite,
\eqref{eq_proof_identifiability_uniform_limit_W_km} follows from dominated convergence.
This concludes the proof.
\end{proof}

\section{Asymptotic normality of the MLE}
	\label{section_asymptotic_normality}

In this section, we prove that the MLE for the HMT has asymptotic normal fluctuations.
We keep the assumptions used in \Cref{section_strong_consistency}.
This section is divided in two parts: we first prove the asymptotic normality of the score,
	and then we prove a strong law of large numbers for the observed information.
Together with the strong consistency, those two results imply the asymptotic normality of the MLE.
\medskip

We will need the following assumption for existence and regularity of the gradient and Hessian
of the transition kernels.
Remind that  $\Prb_\theta$ denotes the stationary probability distribution under the parameter $\theta\in\Theta$ of the HMT process $(X,Y)$,
and by $\Esp_\theta$ the corresponding expectation.
Also remind that the measures $\lambda$ and $\mu$ are defined in \Cref{assump_HMM_1}.
We denote by $\nabla_\theta$ and $\nabla_\theta^2$, respectively, the gradient and Hessian operator \wrt the parameter $\theta\in\Theta$.
With a slight abuse of notations, we denote by $\norm{\cdot}$ the euclidean norm 
on either $\R^{\dimTheta}$ or $\R^{\dimTheta\times \dimTheta}$.

\begin{assumption}[Regularity of the gradient, {\cite[Assumption 12.5.1]{CappeHMM}}]\label{assump_HMM_grad_1}
There exists an open (for the trace topology on $\Theta\subset\R^{\dimTheta}$)  neighborhood 
$\ThetaNeighborhood = \{ \theta\in\Theta : \Vert \theta - \thetaTrue \Vert < \delta_0 \}$
of $\thetaTrue$ such that the following hold.
\begin{enumerate}[label=(\roman*)]

\item\label{assump_HMM_grad_1:item1}
For all $(x,x')\in \SpaceX\times\SpaceX$ and all $y\in\SpaceY$,
the functions $\theta \mapsto q_\theta(x,x')$ and $\theta \mapsto g_\theta(x,y)$
are twice continuously differentiable on $\ThetaNeighborhood$.

\item\label{assump_HMM_grad_1:item2}
We have:
\begin{equation*}
\sup_{\theta\in\ThetaNeighborhood} \sup_{x,x'} \Vert \nabla_\theta \log q_\theta(x,x') \Vert < \infty,
\qquad \text{and} \qquad
\sup_{\theta\in\ThetaNeighborhood} \sup_{x,x'} \Vert \nabla_\theta^2 \log q_\theta(x,x') \Vert < \infty .
\end{equation*}

\item\label{assump_HMM_grad_1:item3}
We have:
\begin{equation*}
\Etrue \left[
	\sup_{\theta\in\ThetaNeighborhood} \sup_{x} \Vert \nabla_\theta \log g_\theta(x,Y_{\rooot}) \Vert^2 
	\right] < \infty,
\qquad\! \text{and} \qquad\!
\Etrue \left[
	\sup_{\theta\in\ThetaNeighborhood} \sup_{x} \Vert \nabla_\theta^2 \log g_\theta(x,Y_{\rooot}) \Vert 
	\right] < \infty .
\end{equation*}

\item\label{assump_HMM_grad_1:item4}
For $\mu$-almost all $y\in\SpaceY$, there exists a function $f_y : \SpaceX \to \R_+$
in $L^1(\lambda)$ such that we have $\sup_{\theta\in\ThetaNeighborhood} g_\theta(x,y) \leq f_y(x)$.

\item\label{assump_HMM_grad_1:item5}
For $\lambda$-almost all $x\in\SpaceX$, there exist functions $f_x^1 : \SpaceX \to \R_+$
and $f_x^2 : \SpaceX \to \R_+$ in $L^1(\mu)$ such that
$\sup_{\theta\in\ThetaNeighborhood} \Vert \nabla_\theta g_\theta(x,y) \Vert \leq f_x^1(y)$
and $\sup_{\theta\in\ThetaNeighborhood} \Vert \nabla_\theta^2 g_\theta(x,y) \Vert \leq f_x^2(y)$.

\end{enumerate}
\end{assumption}

These assumptions insures that the log-likelihood $\ell_{n,x}$
is twice continuously differentiable,
and that the \emph{score function} $\nabla_\theta \ell_{n,x}(\theta)$ 
and the \emph{observed information} $-\nabla_\theta^2 \ell_{n,x}(\theta)$ 
exist and are in $L^2(\Ptrue)$ and $L^1(\Ptrue)$, respectively.

\subsection{Asymptotic normality of the score}
	\label{subsection_asymptotic_normality_score}

In this subsection, we prove the asymptotic normality of the score under the true parameter $\thetaTrue$.
Note that the score function can be written for all $n\in\N$ and $x\in\SpaceX$ as:
\begin{align*}
\nabla_\theta \ell_{n,x}(\theta)
& = \sum_{u\in\T_n} \nabla_\theta \log \left[
		\int g_\theta(X_u,Y_u)\, \Prb_\theta(X_u\in\drv x_u 
			\,\vert\, Y_{\Delta^*(u,\height{u})}, X_{\rooot} = x)
	\right] ,
\end{align*}
and $\nabla_\theta \ell_{n,x}(\theta)$ is implicitly a function of $Y_{\T_n}$.

\subsubsection{Decomposition of the score as a sum of increments}

Remind that for $u\in\T$, the subtrees $\Delta^*(u,k)$ and $\Delta(u,k)$ are defined in \Cref{subsection_ergodic_theorem} for $k\leq \height{u}$
(with $\Delta^*(u) = \Delta^*(u,\height{u})$ and $\Delta(u) = \Delta(u,\height{u})$)
and the random subtrees $\Delta^*(u,k)$ and $\Delta(u,k)$ are defined in \Cref{subsection_decomp_likelihood_ordering} for $k > \height{u}$.
Also remind that we use the letter $p$ to denote (possibly conditional) probability density,
and in particular remind that $p_\theta( Y_{\Delta} \,\vert\, X_{\rooot} = x_{\rooot} ) $
for any subtree $\Delta\subset\T$ with root $\rooot$
is defined in \eqref{eq_def_p_theta_Y_Delta} in \Cref{subsubsection_log_likelihood_sum_increments}
(with the convention $p_\theta( Y_{\emptyset} \,\vert\, X_{\rooot} = x_{\rooot} ) = 1$). Using \eqref{eq_def_H_ukx} and \eqref{eq_def_h_ukx}  in \Cref{subsubsection_log_likelihood_sum_increments},
note that for any $u\in\T$  and $x\in\SpaceX$, we have:
\begin{equation}\label{eq_h_ukx_as_diff_log_p}
 \h_{u,\height{u},x}(\theta) =
	\log p_\theta( Y_{\Delta(u)} \,\vert\, X_{\rooot} = x ) 
		- \log p_\theta( Y_{\Delta^*(u)} \,\vert\, X_{\rooot} = x ) .
\end{equation}

Using elementary computation along with permutations of the integral and the gradient operator
which are valid under \Cref{assump_HMM_grad_1}
(note that this result is also known as \emph{Fisher identity}, see \cite[Proposition 10.1.6]{CappeHMM}),
we get:
\begin{align}
\nabla_\theta \log p_\theta&( Y_{\Delta(u)} \,\vert\, X_{\rooot} = x ) \nonumber\\
& = \nabla_\theta \log g_\theta(x, Y_{\rooot})
	+ \Esp_\theta \left[ 
		\sum_{v\in \Delta(u)\setminus\{\rooot\} }  
			\phi_\theta( X_{\parent(v)}, X_{v}, Y_{v} )
		\,\middle\vert\, Y_{\Delta(u)}, X_{\rooot}=x
	\right] ,  \label{eq_Fisher_indentity_log_p}
\end{align}
where 
\begin{equation}\label{eq_def_phi}
\phi_\theta(x',x,y) = \nabla_\theta \log\bigl[ q_\theta(x',x) g_\theta(x,y) \bigr] .
\end{equation}
Note that under \Cref{assump_HMM_grad_1}, 
	$\norm{ \phi_\theta( X_{\parent(v)}, X_{v}, Y_{v} ) }$ is upper bounded by 
		a square integrable function of $Y_v$
		(which does not depend on $\theta$), 	and $\phi_\theta( X_{\parent(v)}, X_{v}, Y_{v} ) $ is thus integrable conditionally on $Y_{\Delta(u)}$ and $X_{\rooot}=x$.
Also note that $\nabla_\theta \log g_\theta(x, Y_{\rooot})$ is $\Ptrue$-\as finite 
by \Cref{assump_HMM_grad_1}-\ref{assump_HMM_grad_1:item3}.
	
Combining those two equations with \eqref{eq_l_nx_as_sum_h_ukx} in \Cref{subsubsection_log_likelihood_sum_increments}, 
we can express the score function as:
\begin{align*}
\nabla_\theta \ell_{n,x} (\theta)
= \nabla_\theta \log g_\theta(x, Y_{\rooot})
	&+ \sum_{u\in\T_n^*} \Esp_\theta \left[ 
		\sum_{\Delta(u)\setminus\{\rooot\}}  
			\phi_\theta( X_{\parent(v)}, X_{v}, Y_{v} )
		\middle\vert Y_{\Delta(u)}, X_{\rooot}=x
		\right] \\
	&-  \sum_{u\in\T_n^*} \Esp_\theta \left[ 
		\sum_{\Delta^*(u)\setminus\{\rooot\}}  
			\phi_\theta( X_{\parent(v)}, X_{v}, Y_{v} )
		\middle\vert Y_{\Delta^*(u)}, X_{\rooot}=x
		\right] .
\end{align*}

We want to express the score function $\nabla_\theta \ell_{n,x} (\theta)$ as a sum of increments (conditional scores)
in order to apply a convergence result for the normalized score.
To this end, define for every $u\in\T$, $k\in\N$ and $x\in\SpaceX$,
the function $\dot \h_{u,k,x}(\theta)$
by $\dot \h_{u,0,x}(\theta) = \nabla_\theta \log g_\theta(x,Y_u)$ if $k=0$,
and otherwise by:
\begin{align*}
\dot \h_{u,k,x}(\theta)
& =  \Esp_\theta \left[ 
		\sum_{v\in\DeltaR(u,k)\setminus\{\parent^k(u)\}}  
			\phi_\theta( X_{\parent(v)}, X_{v}, Y_{v} )
		\,\middle\vert\, Y_{\DeltaR(u,k)}, X_{\parent^k(u)}=x
		\right] \\
& \quad - \Esp_\theta \left[ 
		\sum_{v\in\DeltaR^*(u,k)\setminus\{\parent^k(u)\}}  
			\phi_\theta( X_{\parent(v)}, X_{v}, Y_{v} )
		\,\middle\vert\, Y_{\DeltaR^*(u,k)}, X_{\parent^k(u)}=x
		\right] .
\end{align*}
Note that $\dot \h_{u,k,x}(\theta)$ 
	is well defined as $\DeltaR(u,k)$ is finite 
	and as 
	$\phi_\theta( X_{\parent(v)}, X_{v}, Y_{v} )$ 	is integrable conditionally on $Y_{\DeltaR^*(u,k)}$ and $X_{\parent^k(u)}=x$
	under \Cref{assump_HMM_grad_1}
	(see the comment after \eqref{eq_def_phi}).
Also note that $\dot \h_{u,k,x}(\theta)$ is the gradient \wrt $\theta$ of $\h_{u,k,x}(\theta)$ defined in \eqref{eq_def_h_ukx}
	(see \eqref{eq_h_ukx_as_diff_log_p} and \eqref{eq_Fisher_indentity_log_p} for the case $k=\height{u}$).
Furthermore, note that $\dot\h_{u,k,x}(\theta)$ is a function of $Y_{\DeltaR(u,k)}$ 
with an implicit dependence on $\cU$ through $\DeltaR(u,k)$,
and that $\dot\h_{u,k,x}(\theta)$ does not depend on $\cU$ is $k\leq \height{u}$.

Using the increment functions $\dot \h_{u,k,x}(\theta)$, we can rewrite the score function as:
\begin{equation}\label{eq_equal_score_sum_increments}
\nabla_\theta \ell_{n,x}(\theta)
= \sum_{u\in\T_n} \dot \h_{u,\height{u},x}(\theta).
\end{equation}

\subsubsection{Construction of score increments with infinite past}

Our goal is to let $k\to\infty$ as before to get a limit function $\dot \h_{u,\infty}$.
We now proceed to construct $\dot \h_{u,\infty}$.
First, we rewrite $\dot \h_{u,k,x}(\theta)$ (which is in $L^2(\Prb_\cU\otimes \Ptrue)$ by \Cref{assump_HMM_grad_1}), as:
\begin{align}
\dot \h_{u,k,x}(\theta)
= & \ \Esp_\theta [  \phi_\theta( X_{\parent(u)}, X_{u}, Y_{u} )
		\,\vert\, Y_{\DeltaR(u,k)}, X_{\parent^k(u)}=x  ]  
			\nonumber\\
  & + \sum_{v\in\DeltaR^*(u,k)\setminus\{\parent^k(u)\}}  \Bigl(
	\Esp_\theta [ \phi_\theta( X_{\parent(v)}, X_{v}, Y_{v} )
		\,\vert\, Y_{\DeltaR(u,k)}, X_{\parent^k(u)}=x ] 
		\Bigr.	\nonumber\\
& \qquad \qquad \qquad \qquad	 \qquad \Bigl.
	- \Esp_\theta [ \phi_\theta( X_{\parent(v)}, X_{v}, Y_{v} )
		\,\vert\, Y_{\DeltaR^*(u,k)}, X_{\parent^k(u)}=x ] \Bigr) .
			\label{eq_sum_incr_score_rearranged}
\end{align}

We will need
the following lemma that states a coupling bound that works ‘‘backwards in time’’, or rather
	along the path between a vertex $v$ and the newly observed vertex $u$.
Remind from \eqref{eq_inclusion_Delta_Tpast} on page~\pageref{eq_inclusion_Delta_Tpast}
that $\DeltaR(u,k)$ is a random subtree of the deterministic subtree $\Tpast(\parent^k(u),k)$.

\begin{lemme}[Total variation bound ‘‘backwards in time’’]
	\label{lemma_backward_mixing_bound}
Assume that Assumptions~\ref{assump_HMM_1}--\ref{assump_HMM_2} hold.
Let $k\in\N^*$, $x\in\SpaceX$ and $u\in \T$,
and let $v\in \Tpast(\parent^k(u),k) \setminus\{u\}$. Then, we have: 
\begin{align*}
\bigl\Vert  \Prb_\theta (  X_{v} \in \cdot
		\,\vert\, Y_{\DeltaR(u,k)}, X_{\parent^k(u)}=x ) 
 -  \Prb_\theta (  X_{v} \in \cdot
		\,\vert\, Y_{\DeltaR^*(u,k)}, X_{\parent^k(u)}=x )
	\bigr\Vert_{\text{TV}} 
 \leq  \rho^{d(u,v)-1}.
\end{align*}
\end{lemme}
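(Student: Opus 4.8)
The plan is to reduce the statement to a single Dobrushin‑coefficient bound and then to propagate it along the geodesic joining $u$ and $v$. Write $w=\parent^k(u)$, and let $\mathcal{C}$ denote the conditioning on $Y_{\DeltaR^*(u,k)}$ together with $X_w=x$; the two laws in the statement differ only in that the first one also conditions on $Y_u$ (recall $\DeltaR(u,k)=\DeltaR^*(u,k)\cup\{u\}$). Since conditionally on $X_u$ the variable $Y_u$ depends only on $X_u$, the Markov property \eqref{eq_illustration_Markov_prop} gives, for every $x_u\in\SpaceX$, the identity $\Prb_\theta(X_v\in\cdot\mid X_u=x_u,\mathcal{C},Y_u)=\Prb_\theta(X_v\in\cdot\mid X_u=x_u,\mathcal{C})=:K(x_u;\cdot)$. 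Setting $\mu_1=\Prb_\theta(X_u\in\cdot\mid\mathcal{C},Y_u)$ and $\mu_2=\Prb_\theta(X_u\in\cdot\mid\mathcal{C})$, integrating out $X_u$ yields $\Prb_\theta(X_v\in\cdot\mid Y_{\DeltaR(u,k)},X_w=x)=\mu_1 K$ and $\Prb_\theta(X_v\in\cdot\mid Y_{\DeltaR^*(u,k)},X_w=x)=\mu_2 K$. By \Cref{lemma_coumpling_bound_Dobrushin} it then suffices to bound $\normTV{\mu_1 K-\mu_2 K}\leq\delta(K)\normTV{\mu_1-\mu_2}\leq\delta(K)$, so the whole problem becomes showing $\delta(K)\leq\rho^{\dgr(u,v)-1}$.

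To bound $\delta(K)$, I would factorize $K$ along the path from $u$ to $v$. Let $m=u\land v$ and write the geodesic $u=z_0,z_1,\dots,z_{\dgr(u,v)}=v$, which climbs from $u$ up to $m$ and then descends to $v$. Conditionally on $\mathcal{C}$, the hidden variables $(X_{z_i})_i$ form an inhomogeneous Markov chain: for each $i$ the node $z_{i-1}$ separates $z_i$ from $\{z_0,\dots,z_{i-2}\}$ in $\Tpast$, so conditioning on $X_{z_{i-1}}$ makes $X_{z_i}$ independent of the earlier variables together with all observations on their side of the cut. Writing $K_i$ for the one‑step conditional kernel from $X_{z_{i-1}}$ to $X_{z_i}$, this gives $K=K_1 K_2\cdots K_{\dgr(u,v)}$, and by sub‑multiplicativity (\Cref{lemma_Dobrushin_sub_multiplicative}) it is enough to prove $\delta(K_i)\leq\rho$ for every $i$.

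The key step, and the main technical point, is the Doeblin minorization of every $K_i$, including the ascending (child‑to‑parent) ones. After cutting the tree at $z_{i-1}$, each kernel has the form $K_i(x;\drv x')\propto q^{(i)}(x,x')\,m_i(\drv x')$, where $m_i$ is a measure carrying the observation factors attached to the component of $z_i$ and is \emph{independent of} $x$, while $q^{(i)}(x,x')=q_\theta(x,x')$ for a descending step and $q^{(i)}(x,x')=q_\theta(x',x)$ for an ascending step; in both cases $\sigma^-\leq q^{(i)}\leq\sigma^+$ by \Cref{assump_HMM_2}-\ref{assump_HMM_2:item1} (for descending steps this is exactly the forward smoothing kernel already analysed in the proof of \Cref{lemme_exp_coupling_HMT}). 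Bounding numerator and denominator separately then gives $K_i(x;A)\geq(\sigma^-/\sigma^+)\,\bar m_i(A)$, where $\bar m_i$ is the normalization of $m_i$, i.e.\ a Doeblin condition with constant $\sigma^-/\sigma^+$, so that \Cref{lemma_Doeblin_Dobrushin} yields $\delta(K_i)\leq 1-\sigma^-/\sigma^+=\rho$. Combining, $\delta(K)\leq\rho^{\dgr(u,v)}\leq\rho^{\dgr(u,v)-1}$, which is the claimed bound (the argument in fact produces the slightly sharper exponent $\dgr(u,v)$). The delicate point to check carefully is precisely that, after conditioning on $X_{z_{i-1}}$, the measure $m_i$ does not depend on the incoming state, so that the minorization constant is uniform in $x$; this is where the global Markov structure of the HMT on $\Tpast$ is genuinely used, and it is what makes the ascending steps contract just as the descending ones do.
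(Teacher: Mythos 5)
Your proof is correct and follows essentially the same route as the paper's: both control the conditional law of the hidden states along the geodesic from $u$ to $v$ by a Doeblin minorization with constant $\sigma^-/\sigma^+$ of each one-step (ascending or descending) smoothing kernel, obtained exactly as for the kernels $\mathrm{B}_{x,j}$ and $\mathrm{F}_j$, and then apply sub-multiplicativity of the Dobrushin coefficient together with \Cref{lemma_coumpling_bound_Dobrushin} (your single factorization of $K$ subsumes the paper's two cases, $v$ an ancestor of $u$ versus the general backward-then-forward path). The only substantive difference is bookkeeping: the paper absorbs the discrepancy between the two conditionings into the two initial laws of $X_{\parent(u)}$ and contracts over $d(u,v)-1$ steps, whereas by isolating the extra conditioning on $Y_u$ in the law of $X_u$ itself you contract over all $d(u,v)$ steps and legitimately obtain the slightly sharper exponent $\rho^{d(u,v)}$.
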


The proof of \Cref{lemma_backward_mixing_bound}, which is postponed to \Cref{appendix_proof_backward_coupling_lemma},
relies on a ‘‘backward in time’’ bound from $\parent(u)$ to $u\land v$
and then a ‘‘forward in time’’ bound from $u\land v$ to $v$,
and using the initial distributions $\Prb_\theta(X_{\parent(u)}\in\cdot \,\vert\, Y_{\Delta}, X_{\parent^k(u)}=x )$
with $\Delta$ equal to $\DeltaR(u,k)$ and $\DeltaR^*(u,k)$, respectively.
Note that this proof is similar to the proofs for \Cref{lemme_exp_coupling_HMT} and \cite[Proposition~12.5.4]{CappeHMM}.

\medskip

The following lemma gives an $L^2$-bound on the difference
between $\dot \h_{u,k,x}(\theta)$ and $\dot \h_{u,k',x'}(\theta)$
with a geometric decay.
As we will reuse this result later with different functions,
we state a more general version.

Note that the condition $\rho < 1/\sqrt{2}$ on the mixing rate $\rho$ of the HMT process $(X,Y)$
is due to the coupling bounds and the grouping of terms used in the proof of \Cref{lemma_score_incr_L2_bound}
(the upper bounds at the end of the proof only add a constant multiplicative factor).
See the discussion in \Cref{rem_rho_smaller_than_2}.

\begin{lemme}\label{lemma_score_incr_L2_bound}
Assume that Assumptions~\ref{assump_HMM_0}--\ref{assump_HMM_3}.
Further assume that $\rho < 1/\sqrt{2}$.

Let $\Theta_0$ be a closed ball in $\Theta$, 
and let $\psi$ be a Borel function 
from $\Theta_0 \times \SpaceX^2 \times \SpaceY$ to $\R^{\dimTheta}$ for some $\dimTheta\in\N$
such that for all $x,x'\in\SpaceX$ and $y\in\SpaceY$, 
$\theta \mapsto \psi(\theta, x, x', y) = \psi_\theta(x,x',y)$ is a continuous function on $\Theta_0$.
Furthermore, assume that there exists $b\in[1,+\infty)$ such that:
\begin{equation*}
\Etrue\left[
		\sup_{\theta\in\Theta_0} \sup_{x,x'\in\SpaceX} \Vert \psi_\theta(x,x',Y_{\rooot}) \Vert^b
	\right] < \infty.
\end{equation*}
Let $\xi_{u,k,x}(\theta)$ be defined as in \eqref{eq_sum_incr_score_rearranged}
	(with $\dot\h_{u,k,x}(\theta)$ and $\phi_\theta$ replaced by $\xi_{u,k,x}(\theta)$ and $\psi_\theta$, respectively),
and note that it is in $L^b(\Prb_\cU \otimes \Ptrue)$.
Then, there exists a finite constant $C<\infty$ such that for all $u\in\T$ 
and $k' \geq k \geq 1$, we have:
\begin{multline*}
\left( \Esp_\cU \otimes \Etrue  \left[ \sup_{\theta\in\Theta_0}  \sup_{x,x' \in \SpaceX} \norm{ \xi_{u,k,x}(\theta)  - \xi_{u,k',x'}(\theta) }^b \right] \right)^{1/b} \\
 \leq C \left( \Etrue \left[ 
		\sup_{\theta\in\Theta_0} \sup_{x,x' \in \SpaceX} \norm{\psi_\theta(x,x',Y_{\rooot})}^b
	\right] \right)^{1/b}
	\, k \, \bigl(\max(\rho, 2\rho^2)\bigr)^{k/2} .
\end{multline*}
\end{lemme}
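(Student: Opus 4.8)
The plan is to adapt the proof of the corresponding HMM estimate in \cite[Section~12.5]{CappeHMM}, replacing the line-forgetting bounds used there by the tree coupling bounds available here: the ``backward in time'' total variation bound \Cref{lemma_backward_mixing_bound} and the initial-state forgetting bound \Cref{lemme_exp_coupling_HMT}. A preliminary reduction simplifies matters considerably. All the total variation bounds above are uniform in $\theta\in\Theta_0$ and in the conditioning state, since they depend only on the mixing rate $\rho$; hence the suprema $\sup_{\theta}\sup_{x,x'}$ can be carried inside all estimates from the outset. Moreover, by the stationarity (translation invariance) of $(X,Y)$ on $\Tpast$, every past vertex $v$ contributes the same factor $\bigl(\Etrue[\sup_\theta\sup_{x,x'}\norm{\psi_\theta(x,x',Y_\rooot)}^b]\bigr)^{1/b}$ once its $\psi$-weight is pulled out. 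The problem thus reduces to a purely combinatorial coupling estimate over the tree-structured past $\DeltaR(u,k)$.

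First I would decompose $\xi_{u,k,x}(\theta)-\xi_{u,k',x'}(\theta)$ according to the rearrangement \eqref{eq_sum_incr_score_rearranged} into three groups: the ``fresh'' term at $v=u$; the double-difference terms indexed by vertices $v$ of the common past $\DeltaR^*(u,k)$, each being the difference between the two conditioning levels of the ``influence of $Y_u$'' bracket $\Esp_\theta[\psi_v\mid Y_{\DeltaR(u,k)},\cdots]-\Esp_\theta[\psi_v\mid Y_{\DeltaR^*(u,k)},\cdots]$; and the terms from the extended past $\DeltaR^*(u,k')\setminus\DeltaR^*(u,k)$. The fresh term and the extended-past terms are controlled directly by \Cref{lemme_exp_coupling_HMT} and \Cref{lemma_backward_mixing_bound}, respectively: both involve data or conditioning lying at tree-distance at least of order $k$ from $u$, and are therefore $O(\rho^{k})$. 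The decay in $k$ of the whole difference originates here: the two conditionings $(k,x)$ and $(k',x')$ agree in a neighbourhood of $u$ and differ only beyond the $k$-th ancestor $\parent^k(u)$.

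The heart of the estimate is the common-past sum, and this is where the condition $\rho<1/\sqrt2$ enters. For a fixed past vertex $v$, the bracket is small by \Cref{lemma_backward_mixing_bound} (the influence of the newly revealed $Y_u$ reaching $v$ decays as $\rho^{\dgr(u,v)}$), and its sensitivity to switching the conditioning from level $k$ to level $k'$ supplies a second geometric factor through forgetting from the top of the past. I would group these doubly small terms by the tree-distance to $u$ and estimate the block $L^b$ norms. Here the branching Markov structure is essential: the pairwise covariances of the summands at tree-distance $m$ decay like $\rho^{2m}$, while there are of order $2^{m}$ vertices at tree-distance $m$ in the past. This is precisely the competition between the branching rate $2$ and the \emph{squared} mixing rate $\rho^2$ recalled in \Cref{rem_rho_smaller_than_2}: the resulting branching-process series $\sum_m (2\rho^2)^{m}$ converges exactly when $2\rho^2<1$, i.e.\ $\rho<1/\sqrt2$. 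Summing over the $O(k)$ relevant levels then produces both the polynomial prefactor $k$ and the rate $\bigl(\max(\rho,2\rho^2)\bigr)^{k/2}$, the maximum covering the two regimes $\rho\le1/2$ and $1/2<\rho<1/\sqrt2$. For a general exponent $b$ rather than $b=2$, the same square-root gain from the conditional independence of distinct branches is obtained through a Rosenthal or Marcinkiewicz--Zygmund type inequality, at the cost of a larger constant $C$.

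I expect this last combinatorial bookkeeping to be the main obstacle. One must simultaneously track, for each past vertex, the tree-distance to $u$ (governing the influence of $Y_u$) and the distance to $\parent^k(u)$ (governing the forgetting of the conditioning level), and organize the doubly small terms so that the branch-wise conditional independence can be exploited; it is exactly this grouping that forces the threshold $2\rho^2<1$ rather than the cruder $2\rho<1$ that a naive termwise application of Minkowski's inequality together with \Cref{lemma_backward_mixing_bound} would require. The remaining steps --- recombining the three groups and absorbing universal constants --- only affect the value of $C$.
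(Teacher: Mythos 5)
Your first two paragraphs match the paper's strategy: the same decomposition of $\xi_{u,k,x}(\theta)-\xi_{u,k',x'}(\theta)$ into two kinds of matched pairs, controlled respectively by the forward forgetting bound of \Cref{lemme_exp_coupling_HMT} (giving $\rho^{d(v,\parent^k(u))-1}$) and the backward bound of \Cref{lemma_backward_mixing_bound} (giving $\rho^{d(u,v)-2}$), with uniformity in $\theta,x,x'$ and stationarity pulling out the single moment factor. The genuine gap is in your ``heart of the estimate''. You assert that a naive termwise application of Minkowski's inequality could only yield the threshold $2\rho<1$, and that reaching $2\rho^2<1$ requires an $L^2$-orthogonality mechanism: covariance decay $\rho^{2m}$ between summands, branch-wise conditional independence, and a Rosenthal or Marcinkiewicz--Zygmund inequality for general $b$. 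This misdiagnoses the actual mechanism. The paper's proof \emph{is} a termwise Minkowski bound: each term is bounded almost surely by $\norm{\psi}$-weighted total variation factors, and one takes the \emph{minimum} of the forward and backward bounds, using the forward bound when the branch point $u\land v$ satisfies $d(u\land v,u)\leq k/2$ and the backward bound otherwise. Reindexing by $j=d(u,u\land v)$ and $q=d(u\land v,v)$ with $q\leq j$ and $2^{q-1}$ vertices per class, the two resulting sums $A_1,A_2$ are geometric series whose worst behavior at the crossover $j\approx k/2$ is $\rho^{k/2}(2\rho)^{k/2}=(2\rho^2)^{k/2}$; the factor $2\rho^2$ comes from the lateral vertex count $2^q$ paired with $\rho^q$ \emph{and} the residual depth factor $\rho^{k-j}$ or $\rho^j$ --- pure counting, no variance computation. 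This is exactly what \Cref{rem_rho_smaller_than_2} means by ``coupling bounds and the grouping of terms''.

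Moreover, your alternative route as sketched would not go through. First, you want each common-past term to be ``doubly small'', carrying the product of the backward factor $\rho^{d(u,v)}$ and a forgetting factor from the top; but the total variation lemmas deliver each bound separately, hence only their minimum, not their product --- no lemma in the paper (nor a standard argument) multiplies the two couplings for a single conditional-expectation difference. Second, the covariance decay between the bracket terms at distinct past vertices $v,v'$ (all functionals of the same field $Y_{\DeltaR(u,k)}$) is established nowhere and would require new work; the covariance bounds that do exist (\Cref{lemma_bound_covar_terms_Gamma}) concern conditional covariances of $\phi_{\theta,v},\phi_{\theta,w}$ under a single conditioning, appear only later for the $\Gamma$-analysis, and are not of the form you need. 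Third, and most structurally, the lemma bounds $\Esp_\cU\otimes\Etrue[\sup_{\theta\in\Theta_0}\sup_{x,x'}\norm{\cdot}^b]$ with the suprema \emph{inside} the expectation; a second-moment orthogonality computation controls the $L^2$ norm at each fixed $(\theta,x,x')$, and the supremum cannot be carried through it --- your ``preliminary reduction'' is valid precisely for the paper's termwise almost-sure bounds, which are uniform in $(\theta,x,x')$ before any expectation is taken, but not for block variance estimates. The fix is to drop the orthogonality/Rosenthal machinery entirely and carry out the $\min$-of-bounds grouping with the $(j,q)$ reindexing described above.
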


As a consequence of \Cref{lemma_score_incr_L2_bound},
for all $u\in\T$ and $x\in\SpaceX$, the sequence of function $(\xi_{u,k,x}(\theta))_{n\in\N}$
converges in $L^b(\Prb_\cU \otimes \Ptrue)$ to some limit function $\xi_{u,\infty}(\theta)$
which does not depend on $x$.
Moreover, the bound in \Cref{lemma_score_incr_L2_bound} still holds when
$\xi_{u,k',x'}(\theta)$ is replaced by $\xi_{u,\infty}(\theta)$.

For the particular choice of $\psi_\theta = \phi_\theta$, 
under Assumptions-\ref{assump_HMM_0}-\ref{assump_HMM_3} and~\ref{assump_HMM_grad_1},
for all $u\in\T$, we denote by $\dot\h_{u,\infty}(\theta)$ the limit function of the sequence $(\dot\h_{u,k,x}(\theta))_{n\in\N}$
(for all $x\in\SpaceX$) which is in $L^2(\Prb_\cU \otimes \Ptrue)$.

\medskip

As an immediate corollary of \Cref{lemma_score_incr_L2_bound},
there exists a finite constant $C'<\infty$ such that for all $x\in\SpaceX$, $u\in\T^*$ and $k \geq 1$, we have that
$( \Esp_\cU \otimes \Etrue[ \sup_{x\in\SpaceX} \norm{ \xi_{u,k,x}(\theta) }^b ] )^{1/b} \leq \Etrue[ \sup_{x\in\SpaceX} \norm{ \xi_{u,1,x}(\theta) }^b ] )^{1/b} + C' < \infty$,
(note that by stationarity, $\Etrue[ \sup_{x\in\SpaceX} \norm{ \xi_{u,1,x}(\theta) }^b ] )^{1/b} = \Etrue[ \sup_{x\in\SpaceX} \norm{ \xi_{v,1,x}(\theta) }^b ] )^{1/b}$
	for any other $v\in\T^*$).
Hence, we get:
\begin{equation}\label{eq_unif_bound_dot_hukx}
\sup_{\theta\in\ThetaNeighborhood}\sup_{u\in\T} \sup_{k\geq 1} \
\left( \Esp_\cU \otimes \Etrue \left[ \sup_{x\in\SpaceX} \norm{ \xi_{u,k,x}(\theta) }^b \right] \right)^{1/b}
< \infty.
\end{equation}

\begin{proof}
We mimic the scheme of the proof of \cite[Lemma 12.5.3]{CappeHMM}.

Let $u\in\T$ and $k'\geq k \geq 1$ be fixed.
The idea of the proof is to match, for each vertex index $v$ of the sums expressing
$\xi_{u,k,x}(\theta)$ and $\xi_{u,\infty}(\theta)$,
pairs of terms that are close.
To be more precise, we match:
\begin{enumerate}
\item
For $v$ close to $u$, \begin{equation*}
\Esp_\theta [ \psi_\theta( X_{\parent(v)}, X_{v}, Y_{v} )
		\,\vert\, Y_{\DeltaR(u,k)}, X_{\parent^k(u)}=x ] 
\end{equation*}
and
\begin{equation*}
\Esp_\theta [ \psi_\theta( X_{\parent(v)}, X_{v}, Y_{v} )
		\,\vert\, Y_{\DeltaR(u,k')}, X_{\parent^{k'}(u)}=x' ], \end{equation*}
and similarly for the corresponding terms with $\DeltaR(u,k)$ and $\DeltaR(u,k')$ replaced by
$\DeltaR^*(u,k)$ and $\DeltaR^*(u,k')$, respectively;
\item
For $v$ far from $u$,
\begin{equation*}
\Esp_\theta [ \psi_\theta( X_{\parent(v)}, X_{v}, Y_{v} )
		\,\vert\, Y_{\DeltaR(u,k)}, X_{\parent^k(u)}=x ] 
\end{equation*}
and
\begin{equation*}
\Esp_\theta [ \psi_\theta( X_{\parent(v)}, X_{v}, Y_{v} )
		\,\vert\, Y_{\DeltaR^*(u,k)}, X_{\parent^k(u)}=x ] ,
\end{equation*}
and similarly for the corresponding terms with $k$ and $x$ replaced by $k'$ and $x'$, respectively.
\end{enumerate}

Remind from \eqref{eq_inclusion_Delta_Tpast} on page~\pageref{eq_inclusion_Delta_Tpast} 
that $\DeltaR(u,k) \subset \Tpast(\parent^k(u),k)$
and that the subtree $\DeltaR(u,k)$ is random
while the subtree $\Tpast(\parent^k(u),k)$ is deterministic.
Let $(x,x')\in \SpaceX\times\SpaceX$ and let $v\in\Tpast(\parent^k(u),k)\setminus\{ \parent^k(u) \}$,
which implies that $\parent(v)\in\Delta(u,k)$.

We start with the first kind of matches.
Using the Markov property (remind \eqref{eq_illustration_Markov_prop}), we have:
\begin{align}
\Bigl\Vert \Esp_\theta [ \psi_\theta( X_{\parent(v)}, X_{v}, &Y_{v} )
		\,\vert\, Y_{\DeltaR(u,k)}, X_{\parent^k(u)}=x ] 
	- \Esp_\theta [ \psi_\theta( X_{\parent(v)}, X_{v}, Y_{v} )
		\,\vert\, Y_{\DeltaR(u,k')}, X_{\parent^{k'}(u)}=x' ] 
	\Bigr\Vert \nonumber\\
&= \biggl\Vert \int_{\SpaceX^3}   
	\psi_\theta( x_{\parent(v)}, x_{v}, Y_{v} ) \, 
		\Prb_\theta(X_v \in \drv x_v \,\vert\, Y_{\DeltaR(u,k)\cap\T(v)}, X_{\parent(v)}=x_{\parent(v)}) 
			\biggr. \nonumber\\
		& \qquad \qquad \qquad \left.
	\times \Prb_\theta( X_{\parent(v)} \in \drv x_{\parent(v)}
		\,\vert\, Y_{\DeltaR(u,k)}, 
					X_{\parent^k(u)}=x_{\parent^k(u)} )
			\right. \nonumber\\
		& \qquad \qquad \qquad \biggl.
	\times \bigl[ \delta_x(\drv x_{\parent^k(u)})
		-  \Prb_\theta( X_{\parent^k(u)} \in \drv x_{\parent^k(u)}
			\,\vert\, Y_{\DeltaR(u,k')}, 
					X_{\parent^{k'}(u)}=x'  ) \bigr]
	\biggr\Vert \nonumber\\
& \leq 2 \sup_{x_1,x_2\in\SpaceX} \norm{\psi_\theta(x_1,x_2,Y_v)} \,  \rho^{d(v,\parent^k(u)) -1} ,
			\label{eq_score_incr_forward_mixing}
\end{align}
where the inequality is obtained using Lemma~\ref{lemme_exp_coupling_HMT}
(note that $d(\parent(v),\parent^k(u)) = d(v,\parent^k(u)) -1$).
Note that this upper bound is \as finite as $\sup_{x_1,x_2\in\SpaceX} \norm{\phi_\theta(x_1,x_2,Y_v)}$
is in $L^b(\Prb_\cU \otimes \Ptrue)$ by assumption 
(remind that the HMT process $(X,Y)$ is stationary by \Cref{assump_HMM_0}).
For $v\neq u$, note that this bound remains valid if $\DeltaR(u,k)$ and $\DeltaR(u,k')$
are replaced by $\DeltaR^*(u,k)$ and $\DeltaR^*(u,k')$, respectively.
Obviously, this bound is small if $v$ is far away from $\parent^k(u)$
(remind that $k$ is fixed).

\medskip

We now give a bound for the second kind of matches.
Assume that $v\neq u$.
If $v$ is not an ancestor of $u$ (then $\dgr(u,v) = \dgr(u,\parent(v)) +1$), 
using the Markov property  (remind \eqref{eq_illustration_Markov_prop}) and \Cref{lemma_backward_mixing_bound},
we get:
\begin{align*}
\bigl\Vert & \Esp_\theta [  \psi_\theta( X_{\parent(v)}, X_{v}, Y_{v} )
		\,\vert\,  Y_{\DeltaR(u,k)},\ X_{\parent^k(u)}=x ]  
-   \Esp_\theta [ \psi_\theta( X_{\parent(v)}, X_{v}, Y_{v} )
		 \,\vert\, Y_{\DeltaR^*(u,k)}, X_{\parent^k(u)}=x ] \bigr\Vert \\
&= \biggl\Vert \int_{\SpaceX^3}   
	\psi_\theta( x_{\parent(v)}, x_{v}, Y_{v} ) \, 
		\Prb_\theta(X_v \in \drv x_v \,\vert\, Y_{\DeltaR(u,k)\cap\T(v)}, X_{\parent(v)}=x_{\parent(v)}) 
			\biggr. \nonumber\\
		& \qquad \quad  \biggl.
	\times \bigl[ \Prb_\theta( X_{\parent(v)} \in \drv x_{\parent(v)}
			\,\vert\, Y_{\DeltaR(u,k)}, 
					X_{\parent^{k}(u)}=x  )
		-  \Prb_\theta( X_{\parent(v)} \in \drv x_{\parent(v)}
			\,\vert\, Y_{\DeltaR^*(u,k)}, 
					X_{\parent^{k}(u)}=x  ) \bigr]
	\biggr\Vert \nonumber\\
& \leq 2 \sup_{\theta\in\Theta_0} \sup_{x_1,x_2\in\SpaceX} \norm{\psi_\theta(x_1,x_2,Y_v)} \, \rho^{d(u,v)-2} .
\end{align*}
If $v$ is an ancestor of $u$ (then $\dgr(u,\parent(v)) = \dgr(u,v) +1$), 
using the Markov property  (remind \eqref{eq_illustration_Markov_prop}) and \Cref{lemma_backward_mixing_bound},
we get:
\begin{align*}
\bigl\Vert \Esp_\theta [  \psi_\theta( &X_{\parent(v)}, X_{v}, Y_{v} )
		\,\vert\,  Y_{\DeltaR(u,k)},\ X_{\parent^k(u)}=x ]  
-   \Esp_\theta [ \psi_\theta( X_{\parent(v)}, X_{v}, Y_{v} )
		 \,\vert\, Y_{\DeltaR^*(u,k)}, X_{\parent^k(u)}=x ] \bigr\Vert \\
&= \biggl\Vert \int_{\SpaceX^3}   
	\psi_\theta( x_{\parent(v)}, x_{v}, Y_{v} ) \, 
		\Prb_\theta(X_{\parent(v)} \in \drv x_{\parent(v)} \,\vert\, Y_{\DeltaR(u,k)\setminus\T(v)}, X_{v}=x_{v}) 
			\biggr. \nonumber\\
		& \qquad \qquad  \biggl.
	\times \bigl[ \Prb_\theta( X_{v} \in \drv x_{v}
			\,\vert\, Y_{\DeltaR(u,k)}, 
					X_{\parent^{k}(u)}=x  )
		-  \Prb_\theta( X_{v} \in \drv x_{v}
			\,\vert\, Y_{\DeltaR^*(u,k)}, 
					X_{\parent^{k}(u)}=x  ) \bigr]
	\biggr\Vert \nonumber\\
& \leq 2 \sup_{\theta\in\Theta_0} \sup_{x_1,x_2\in\SpaceX} \norm{\psi_\theta(x_1,x_2,Y_v)} \, \rho^{d(u,v)-1} .
\end{align*}
In both cases, we get:
\begin{multline}
\bigl\Vert \Esp_\theta [  \psi_\theta( X_{\parent(v)}, X_{v}, Y_{v} )
		\,\vert\,  Y_{\DeltaR(u,k)},\ X_{\parent^k(u)}=x ]  
-   \Esp_\theta [ \psi_\theta( X_{\parent(v)}, X_{v}, Y_{v} )
		 \,\vert\, Y_{\DeltaR^*(u,k)}, X_{\parent^k(u)}=x ] \bigr\Vert \\
 \leq 2 \sup_{\theta\in\Theta_0} \sup_{x_1,x_2\in\SpaceX} \norm{\psi_\theta(x_1,x_2,Y_v)} \, \rho^{d(u,v)-2} .
\label{eq_score_incr_backward_mixing}
\end{multline}
Note that the same bound remain valid
for the corresponding terms with $k$ and $x$ replaced by $k'$ and $x'$, respectively,
and with $v\in\Delta(u,k')\setminus\{\parent^{k'}(u)\}$ instead of $v\in\Delta(u,k)\setminus\{\parent^{k}(u)\}$.
This bound is small if $v$ is far away from $u$.

\medskip

Remind from \eqref{eq_inclusion_Delta_Tpast} on page~\pageref{eq_inclusion_Delta_Tpast} that $\DeltaR(u,k) \subset \Tpast(\parent^k(u),k)$
and  as $k'\geq k$ note that:
\begin{equation*}
\DeltaR(u,k') \setminus \DeltaR(u,k) \subset \Tpast(\parent^{k'}(u),k') \setminus \Tpast(\parent^k(u),k) .
\end{equation*}
For a vertex $v\in\Tpast(\parent^k(u),k)\setminus\{\parent^k(u)\}$
(note that $u\land v \in \Tpast(\parent^k(u))$),
note that the term $\rho^{d(v,\parent^k(u))-1}$ is smaller than $\rho^{d(u,v) - 2}$
whenever $d(v,\parent^k(u)) > d(u,v) -1$, that is when 
$d(u\land v, \parent^k(u)) \geq d(u\land v, u)$,
that is when $d(u\land v, u) \leq k/2$.

\newcommand{\kHalf}{\floor{k/2}}

Combining those facts with the bounds \eqref{eq_score_incr_forward_mixing} and \eqref{eq_score_incr_backward_mixing}, 
and using Minkowski's inequality for the $L^b$-norm, we find that 
$( \Esp_\cU \otimes \Etrue [ \sup_{\theta\in\Theta_0} \sup _{x,x'\in\SpaceX} \norm{ \xi_{u,k,x}(\theta)  - \xi_{u,k',x'}(\theta) }^b ] )^{1/b}$
is upper bounded by:
\begin{align}
4 \sum_{v\in\Tpast(\parent^{\kHalf}(u),\kHalf)}
	\rho^{d(v,\parent^k(u))-1} 
+ 4 \sum_{v\in\Tpast(\parent^{k'}(u),k')\setminus\Tpast(\parent^{\kHalf}(u),\kHalf)}
	\rho^{d(u,v) -2} ,
		\label{eq_bound_score_increments}
\end{align}
up to the factor $\bigl( \Etrue \left[ \sup_{\theta\in\Theta_0} \sup_{x_1,x_2\in\SpaceX} \norm{\psi_\theta(x_1,x_2,Y_v)}^b  \right] \bigr)^{1/b}$
(remind that the process $(Y_u, u\in\Tpast)$ is stationary under \Cref{assump_HMM_0}).
Denote by $A_1$ and $A_2$ respectively the first and second terms in \eqref{eq_bound_score_increments}.
We are going to reindex those sums by $j := \dgr(u,u\land v)$ and $q := \dgr(u\land v, v)$ with $q \leq j$.
Note that if $q>0$, then the first vertex after $u\land v = \parent^j(u)$ on the path from $u$ to $v$
	cannot be $\parent^{j-1}(u)$ and must be the other children of $\parent^j(u)$.
Thus, there are $2^{q-1}$ choices of $v$ with the same coding $(j,q)$ with $0<q\leq j$.
Hence, we get:
\begin{equation*}
A_1 
= 4 \sum_{j=0}^{\kHalf} \rho^{k-j -1} 
	\left(
		1 + \sum_{q=1}^{j} 2^{q-1} \rho^q
	\right)
\quad\text{and}\quad
A_2
= 4 \sum_{j=\lfloor  k/2 \rfloor +1}^{k'} \rho^{j -2} 
	\left(
		1 + \sum_{q=1}^{j} 2^{q-1} \rho^q
	\right).
\end{equation*}
Remark that there exists a finite constant $C < \infty$ (which depends on the value of $\rho$) such that for all $j\in\N^*$ we have
$(1 + \sum_{q=1}^{j} 2^{q-1} \rho^q ) \leq C \max( j, (2\rho)^{j})$
and $\sum_{q=j}^{\infty} q \rho^q \leq C \rho^j$.
Hence, there exists a finite constant $C'<\infty$ (which depends only on the value of $\rho$) such that
(remind that $\rho< 1 / \sqrt{2}$):
\begin{equation}\label{eq_upper_bound_score_A1_A2}
A_1 \leq C' \left( k \rho^{k/2} + (2 \rho^2)^{k/2} \right)
\quad\text{and}\quad
A_2 \leq C' \left( \rho^{k/2} + (2 \rho^2)^{k/2} \right).
\end{equation}
Combining \eqref{eq_bound_score_increments} and \eqref{eq_upper_bound_score_A1_A2},
we get that the bound in the lemma holds.
This concludes the proof of the lemma.
\end{proof}

\subsubsection{Asymptotic normality of the score}

Define the limiting Fisher information as:
\begin{equation}\label{eq_def_Fisher_information}
\cI(\thetaTrue)
= \Esp_\cU \otimes \Etrue \Bigl[  \dot\h_{\rooot,\infty}(\thetaTrue) \dot\h_{\rooot,\infty}(\thetaTrue)^t \Bigr] ,
\end{equation}
where we see $\dot\h_{\rooot,\infty}(\thetaTrue)$ as a column vector.

For the asymptotic normality of the score, we need the following extra regularity assumption of the gradient.
\begin{assumption}[$L^4$ gradient regularity]	\label{assump_HMM_grad_2}
In addition to \Cref{assump_HMM_grad_1}, we have:
\begin{equation*}
\Etrue \left[
	\sup_{\theta\in\ThetaNeighborhood} \sup_{x\in\SpaceX} \Vert \nabla_\theta \log g_\theta(x,Y_{\rooot}) \Vert^4
	\right] < \infty.
\end{equation*}
\end{assumption}

We are now ready to prove the following theorem stating the asymptotic normality of the normalized score
towards a centered Gaussian random variable whose variance is the limiting Fisher information.
Note that the condition $\rho < 1/\sqrt{2}$ on the mixing rate $\rho$ of the HMT process $(X,Y)$
comes from the use of \Cref{lemma_score_incr_L2_bound} in the proof of this theorem.
See the discussion in \Cref{rem_rho_smaller_than_2}
for comments on this condition on $\rho$.

\begin{theo}[Asymptotic normality of the normalized score]
	\label{thm_convergence_score}
Assume that Assumptions~\ref{assump_HMM_0}--\ref{assump_HMM_3} and \ref{assump_HMM_grad_1}--\ref{assump_HMM_grad_2} hold
with $\thetaTrue\in\Theta$ given.
Further assume that $\rho < 1/\sqrt{2}$.
Then, for all $x\in\SpaceX$, we have:
\begin{equation*}
\vert \T_n \vert^{-1/2} \, \nabla_\theta \ell_{n,x}(\thetaTrue) 
	\underset{n\to\infty}{\overset{(d)}{\longrightarrow}}
\cN(0, \cI(\thetaTrue))
\quad \text{under $\Ptrue$,}
\end{equation*}
where $\cN(0,M)$ denotes the centered Gaussian distribution with covariance matrix $M$,
and $\cI(\thetaTrue)$ is the limiting Fisher information defined in \eqref{eq_def_Fisher_information}.
\end{theo}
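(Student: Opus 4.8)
The plan is to decompose the score into a martingale plus a negligible remainder and then apply a central limit theorem for martingale difference arrays. I would work throughout under $\Prb_\cU \otimes \Ptrue$, exploiting that $\nabla_\theta \ell_{n,x}(\thetaTrue)$ does not depend on the spine variable $\cU$, so that its law under $\Prb_\cU \otimes \Ptrue$ coincides with its law under $\Ptrue$; the final statement will then follow from the convergence established under $\Prb_\cU \otimes \Ptrue$. First, order the vertices of $\Tpast$ by the breadth-first-search order $<_\cU$ and set $\cF_u = \sigma(\cU, Y_v : v \leq_\cU u)$, writing $u^-$ for the $<_\cU$-predecessor of $u$. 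The key structural fact is that the limit increments $\dot\h_{u,\infty}(\thetaTrue)$ form a martingale difference array: each $\dot\h_{u,\infty}(\thetaTrue)$ is $\cF_u$-measurable, and by the Fisher identity together with the fact that the infinite-past conditional density interpreted as $\log p_{\thetaTrue}(Y_u \mid Y_{\DeltaR^*(u,\infty)})$ is the true stationary conditional law of $Y_u$ given its past (a consequence of the exponential forgetting in \Cref{lemme_exp_coupling_HMT}), one obtains $\Esp_\cU \otimes \Etrue[\dot\h_{u,\infty}(\thetaTrue) \mid \cF_{u^-}] = 0$. Hence $M_n := \sum_{u\in\T_n} \dot\h_{u,\infty}(\thetaTrue)$ is a martingale.

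For the martingale approximation I would write the remainder as $\sum_{u\in\T_n} D_u$ with $D_u = \dot\h_{u,\height{u},x}(\thetaTrue) - \dot\h_{u,\infty}(\thetaTrue)$ and split $D_u = (D_u - \Esp[D_u\mid\cF_{u^-}]) + \Esp[D_u\mid\cF_{u^-}]$. The first part is a sum of orthogonal martingale differences, so its squared $L^2(\Prb_\cU\otimes\Ptrue)$-norm equals $\sum_{u\in\T_n}\Vert D_u - \Esp[D_u\mid\cF_{u^-}]\Vert_{L^2}^2 \leq \sum_{h=0}^n 2^h \bigl(C\,h\,(\max(\rho,2\rho^2))^{h/2}\bigr)^2$ by \Cref{lemma_score_incr_L2_bound} applied with $\psi_\theta = \phi_\theta$ and $b=2$; since $\rho < 1/\sqrt2$ forces $\max(\rho,2\rho^2)<1$, this is $o(\vert\T_n\vert)$, so after normalization it tends to $0$. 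The compensator part $\sum_{u\in\T_n}\Esp[D_u\mid\cF_{u^-}]$ is handled by Minkowski's inequality combined with the forgetting bound of \Cref{lemme_exp_coupling_HMT}, and is again $o(\vert\T_n\vert^{1/2})$ precisely because $\rho<1/\sqrt2$. This yields $\vert\T_n\vert^{-1/2}\bigl(\nabla_\theta\ell_{n,x}(\thetaTrue) - M_n\bigr) \to 0$ in $L^2(\Prb_\cU\otimes\Ptrue)$.

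It then remains to prove $\vert\T_n\vert^{-1/2} M_n \to \cN(0,\cI(\thetaTrue))$. By the Cramér–Wold device I fix $t\in\R^\dimTheta$ and apply a central limit theorem for martingale difference arrays to $\xi_{n,u} := \vert\T_n\vert^{-1/2}\langle t, \dot\h_{u,\infty}(\thetaTrue)\rangle$. For the quadratic characteristic I would show $\sum_{u\in\T_n}\xi_{n,u}^2 = \vert\T_n\vert^{-1}\sum_{u\in\T_n}\langle t,\dot\h_{u,\infty}(\thetaTrue)\rangle^2 \to t^t\cI(\thetaTrue)\,t$ in probability, by approximating $\dot\h_{u,\infty}$ by the neighborhood-shape-dependent function $\dot\h_{u,k,x}$, which depends on $Y_{\Delta(u,k)}$ and lies in $L^4(\Ptrue)$ under \Cref{assump_HMM_grad_2} and \Cref{lemma_score_incr_L2_bound} with $b=4$; applying the ergodic \Cref{lemma_ergodic_convergence} for fixed $k$ and then letting $k\to\infty$ (using the uniform $L^4$ approximation) identifies the limit as $\Esp_\cU\otimes\Etrue[\langle t,\dot\h_{\rooot,\infty}(\thetaTrue)\rangle^2] = t^t\cI(\thetaTrue)\,t$ via \eqref{eq_def_Fisher_information}. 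Since this limit is deterministic, the limit law is Gaussian. Lindeberg's condition follows from the Lyapunov bound $\sum_{u\in\T_n}\Esp[\xi_{n,u}^4] \leq \vert\T_n\vert^{-1}\sup_u\Esp\Vert\dot\h_{u,\infty}(\thetaTrue)\Vert^4 \to 0$, where the uniform fourth moment is finite by \eqref{eq_unif_bound_dot_hukx}. Combining this CLT with the approximation of the previous paragraph and Slutsky's lemma gives the claimed convergence under $\Ptrue$.

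The main obstacle is the quadratic-variation convergence in the last step: because the martingale increments $\dot\h_{u,\infty}(\thetaTrue)$ carry different shapes $\Shape(\Delta(u,k))$ and therefore form a genuinely non-stationary array, one cannot invoke the stationary ergodic martingale CLT used in the HMM setting, and the limit of the normalized quadratic variation must instead be forced to equal $\cI(\thetaTrue)$ through the bespoke ergodic theorem for neighborhood-shape-dependent functions (\Cref{lemma_ergodic_convergence}) together with the delicate finite-past/infinite-past $L^4$ approximation. The second technical crux is the bookkeeping of the coupling bounds in the martingale approximation, which is exactly where the restriction $\rho < 1/\sqrt2$ enters, as discussed in \Cref{rem_rho_smaller_than_2}.
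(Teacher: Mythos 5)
Your overall architecture (martingale decomposition, CLT for martingale difference arrays, ergodic identification of the quadratic variation through \Cref{lemma_ergodic_convergence} with a finite-past truncation, Lindeberg via fourth moments from \Cref{assump_HMM_grad_2}) is the same as in the paper's Steps~2--5, but your martingale approximation step contains a genuine gap, and it sits exactly where the paper's proof does something different. You build the martingale from the infinite-past increments, $M_n=\sum_{u\in\T_n}\dot\h_{u,\infty}(\thetaTrue)$, and must control the remainder $\sum_{u\in\T_n}D_u$ with $D_u=\dot\h_{u,\height{u},x}(\thetaTrue)-\dot\h_{u,\infty}(\thetaTrue)$. The orthogonal part is fine for $\rho<1/\sqrt{2}$, as you computed. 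But for the compensator $\sum_{u\in\T_n}\Esp[D_u\mid\cF_{u^-}]$, the tools you invoke (Minkowski plus the forgetting bound, which quantitatively means \Cref{lemma_score_incr_L2_bound}) only give $\Vert\Esp[D_u\mid\cF_{u^-}]\Vert_{L^2}\leq\Vert D_u\Vert_{L^2}\leq C\,h\,\bigl(\max(\rho,2\rho^2)\bigr)^{h/2}$ with $h=\height{u}$, hence a total of order $\sum_{h\leq n}2^h\,h\,\bigl(\max(\rho,2\rho^2)\bigr)^{h/2}$. Compared with $\vert\T_n\vert^{1/2}\asymp 2^{n/2}$, this geometric sum has base $2\sqrt{\max(\rho,2\rho^2)}$ and is $o(2^{n/2})$ only when $4\max(\rho,2\rho^2)<2$, i.e.\ $\rho<1/2$; for $\rho\in[1/2,1/\sqrt{2})$ it diverges relative to $2^{n/2}$ (for $\rho$ near $1/\sqrt{2}$ it is of order $n2^n$), so your argument does not cover the claimed range. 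Relatedly, the exact martingale difference property $\Esp_\cU\otimes\Etrue[\dot\h_{u,\infty}(\thetaTrue)\mid\cF_{u^-}]=0$ is asserted rather than proved: the Fisher identity gives zero conditional mean for $\dot\h_{u,k,x}(\thetaTrue)$ only under the law conditioned on $X_{\parent^k(u)}=x$, not under $\Ptrue$ given $\cF_{u^-}$, so a limit-interchange argument is still owed.

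The paper sidesteps both issues by a different reduction (its Step~1): it integrates the Dirac initial condition against the invariant law, setting $\dot\h_{u,k,\pi_{\thetaTrue}}(\theta)=\int_{\SpaceX}\dot\h_{u,k,x}(\theta)\,\pi_{\thetaTrue}(\drv x)$, and verifies by direct computation (tower property, Markov property, and $\Etrue[\phi_{\thetaTrue}\mid X_{\parent(u_j)}]=0$) that the \emph{finite-past} stationary increments $\dot\h_{u_j,\height{u_j},\pi_{\thetaTrue}}(\thetaTrue)$ are exactly martingale increments for $\cF_j=\sigma(Y_{\Delta(u_j)})$, so no infinite-past objects and no compensator ever appear in the martingale step. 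The cost of passing from $\delta_x$ to $\pi_{\thetaTrue}$ is the estimate $\vert\T_n\vert^{-1/2}\Vert\nabla_\theta\ell_{n,x}(\thetaTrue)-\nabla_\theta\ell_{n,\pi_{\thetaTrue}}(\thetaTrue)\Vert_{L^2}\lesssim \vert\T_n\vert^{-1/2}\sum_{k\leq n}2^k\rho^{k-1}\to 0$, where each vertex contributes the single forgetting factor $\rho^{\height{u}-1}$ of \eqref{eq_score_incr_forward_mixing} rather than the square-rooted rate $\max(\rho,2\rho^2)^{h/2}$; this is precisely what makes the full range $\rho<1/\sqrt{2}$ go through. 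To salvage your infinite-past route you would need a cancellation-exploiting bound of order $\rho^{\height{u}}$ on $\Esp[D_u\mid\cF_{u^-}]$, which your proposal does not supply; alternatively, adopt the paper's $\pi_{\thetaTrue}$-integration, after which your Steps on the quadratic variation and Lindeberg carry over essentially unchanged.
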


\begin{proof}
\textbf{Step~1: Approximation of the score by the stationary score.}

Remind from \Cref{lemma_Q_unif_geom_ergodic}
that $\pi_{\thetaTrue}$ denotes the invariant distribution
for the hidden process $X$ associated with $Q_{\thetaTrue}$.
Define the stationary score $\nabla_{\theta} \ell_{n,\pi_{\thetaTrue}}(\theta)$ as:
\begin{equation*}
\nabla_{\theta} \ell_{n,\pi_{\thetaTrue}}(\theta)
:= \int_{\SpaceX} \nabla_{\theta} \ell_{n,x}(\theta) \, \pi_{\thetaTrue}(\drv x).
\end{equation*}

First, for all $x,x'\in\SpaceX$ and $\theta\in\ThetaNeighborhood$, write:
\begin{equation*}
\nabla_{\theta} \ell_{n,x}(\theta) 
	- \nabla_{\theta} \ell_{n,x'}(\theta) 
= \sum_{u\in\T_n^*} \Phi(\theta; u, x,x') ,
\end{equation*}
where:
\begin{equation*}
\Phi(\theta; u, x,x')
= \Esp_{\theta}[ \phi_{\theta}(X_{\parent(u)},X_u,Y_u)
			\,\vert\, Y_{\T_n}, X_{\rooot}=x ]
		- \Esp_{\theta}[ \phi_{\theta}(X_{\parent(u)},X_u,Y_u)
			\,\vert\, Y_{\T_n}, X_{\rooot}=x' ].
\end{equation*}
Using Minkowski's inequality 
and the upper bound \eqref{eq_score_incr_forward_mixing}
from the proof of \Cref{lemma_score_incr_L2_bound},
we get:
\begin{multline*}
\left( \Etrue \left[ \sup_{x,x' \in \SpaceX}   \inv{\vert \T_n\vert}
	\BigNorm{ \nabla_{\theta} \ell_{n,x}(\theta) 
	- \nabla_{\theta} \ell_{n,x'}(\theta) }^2 \right] \right)^{1/2} \\
\begin{aligned}
& \leq  \inv{\vert \T_n\vert^{1/2}}
	\sum_{u\in\T_n^*} \left(  \Etrue  \left[  \sup_{x,x' \in \SpaceX} 
	\BigNorm{  \Phi(\theta; u,x, x')  }^2 
\right] \right)^{1/2} \\ 
& \leq 2\, \left( \Etrue \left[ 
		\sup_{\theta\in\Theta_0} \sup_{x,x' \in \SpaceX} \norm{\phi_\theta(x,x',Y_{\rooot})}^2
	\right] \right)^{1/2}
	\inv{\vert \T_n\vert^{1/2}}
	\sum_{k=1}^n 2^k \rho^{k-1} \\
& \leq C \max(n2^{-n}, (2\rho^2)^{n/2}) ,
\end{aligned}
\end{multline*}
where $C<\infty$ is some finite constant.
Thus (remind that $\rho < 1/\sqrt{2}$), for any $x\in\SpaceX$, we have:
\begin{equation}\label{eq_L2_conv_diff_score_and_stationary_score}
\lim_{n\to\infty} \inv{\vert \T_n\vert^{1/2}}
	\Bigl( \nabla_{\theta} \ell_{n,x}(\thetaTrue) 
	- \nabla_{\theta} \ell_{n,\pi_{\thetaTrue}}(\thetaTrue)  \Bigr)
= 0
\qquad \text{in $L^2(\Ptrue)$.}
\end{equation}
In particular, 
to prove asymptotic normality for
the score $\nabla_{\theta} \ell_{n,x}(\thetaTrue)$
for any $x\in\SpaceX$,
it is enough to prove asymptotic normality for
the stationary score $\nabla_{\theta} \ell_{n,\pi_{\thetaTrue}}(\thetaTrue)$ 
(see for instance \cite[Theorem~3.1]{billingsleyConvergenceProbabilityMeasures1999}).

For any $u\in\T$ and $k\in\N$ and $\theta\in\ThetaNeighborhood$, define:
\begin{equation}\label{eq_def_score_stationary_increments}
\dot \h_{u,k,\pi_{\thetaTrue}}(\theta)
:= \int_{\SpaceX} \dot \h_{u,k,x}(\theta) \, \pi_{\thetaTrue}(\drv x) .
\end{equation}
In particular, note that, as the bound in \Cref{lemma_score_incr_L2_bound}
is uniform in $x\in\SpaceX$,
this bound still holds with $\dot \h_{u,k,x}(\theta)$ replaced by
$\dot \h_{u,k,\pi_{\thetaTrue}}(\theta)$.
Using \eqref{eq_equal_score_sum_increments}, note that we have:
\begin{equation*}
\nabla_\theta \ell_{n,\pi_{\thetaTrue}}(\theta)
= \sum_{u\in\T_n} \dot \h_{u,\height{u},\pi_{\thetaTrue}}(\theta).
\end{equation*}
Moreover, remark that for $\theta=\thetaTrue$ 
and for any $u\in\T$ and $k\in\N^*$,
we have:
\begin{align}
\dot \h_{u,k,\pi_{\thetaTrue}}(\thetaTrue)
= & \ \Etrue [  \phi_{\thetaTrue}( X_{\parent(u)}, X_{u}, Y_{u} )
		\,\vert\, Y_{\DeltaR(u,k)}  ]  
			\nonumber\\
  & + \sum_{v\in\DeltaR^*(u,k)\setminus\{\parent^k(u)\}}  \Bigl(
	\Etrue [ \phi_{\thetaTrue}( X_{\parent(v)}, X_{v}, Y_{v} )
		\,\vert\, Y_{\DeltaR(u,k)} ] 
		\Bigr.	\nonumber\\
& \qquad \qquad \qquad \qquad	 \qquad \Bigl.
	- \Etrue [ \phi_{\thetaTrue}( X_{\parent(v)}, X_{v}, Y_{v} )
		\,\vert\, Y_{\DeltaR^*(u,k)} ] \Bigr) .
			\label{eq_sum_incr_score_rearranged_stationary}
\end{align}

\medskip

\textbf{Step~2: The stationary score is a sum of martingale increments.}

As $\T$ is a plane rooted tree, we can enumerate its vertices in a breadth-first-search manner,
that is, as a sequence $(u_j)_{j\in\N}$ which is increasing for $<$.
(Note that $u_0 = \delta$.)
Remind that $\Delta(u_{j-1}) = \Delta^*(u_j)$ for all $j\geq 1$.
Define the filtration $\cF$ by $\cF_j = \sigma( Y_{v} : v\in\T,\, v \leq u_j) = \sigma( Y_{\Delta(u_j)} )$ for all $j\in\N$,
and note that $\cF_j \subset \sigma(Y_{\T})$.
Let $j\in\N^*$, $1\leq k \leq \height{u_j}$, $x\in\SpaceX$ and $v\in Y_{\Delta^*(u_j,k)}$.
Note that we have:
\begin{align*}
 \Etrue\Bigl[ 
	\Etrue[ \phi_{\thetaTrue}(X_{\parent(v)},X_v,Y_v) \,\vert\,  Y_{\Delta(u_j,k)} ]
	\,\Bigm\vert\,
	\cF_{j-1} \Bigr] 
 \ = \ \Etrue[ \phi_{\thetaTrue}(X_{\parent(v)},X_v,Y_v) \,\vert\,  Y_{\Delta^*(u_j,k)} ] .
\end{align*}
Also note that \Cref{assump_HMM_grad_1} (on page~\pageref{assump_HMM_grad_1}) implies that:
\begin{multline*}
\Etrue[ \phi_{\thetaTrue} (X_{\parent(u_j)}, X_{u_j}, Y_{u_j}) \,\vert\,  X_{\parent(u_j)} ] \\
\begin{aligned}
& = \int_{\SpaceX \times \SpaceY} 
	\nabla_\theta \log[ q_\theta(X_{\parent(u_j)},x) g_\theta(x,y) ]\,  q_\theta(X_{\parent(u_j)},x) g_\theta(x,y) \, \lambda(\drv x) \mu(\drv y) \\
& = \int_{\SpaceX \times \SpaceY} 
	\nabla_\theta [ q_\theta(X_{\parent(u_j)},x) g_\theta(x,y) ]  \, \lambda(\drv x) \mu(\drv y) \\
& = \nabla_\theta \left[ \int_{\SpaceX \times \SpaceY} 
	 q_\theta(X_{\parent(u_j)},x) g_\theta(x,y)  \, \lambda(\drv x) \mu(\drv y) \right] \\
& = 0 .
\end{aligned}
\end{multline*}
Thus, we have:
\begin{multline*}
 \Etrue\Bigl[
	\Etrue[ \phi_{\thetaTrue}(X_{\parent(u_j)}, X_{u_j}, Y_{u_j}) \,\vert\,  
					Y_{\Delta(u_j,k)} ]
	\, \Bigm\vert \,
	\cF_{j-1} \Bigr] \nonumber\\
\begin{aligned}
& = \Etrue[ \phi_{\thetaTrue}(X_{\parent(u_j)}, X_{u_j}, Y_{u_j}) \,\vert\,  
					Y_{\Delta^*(u_j,k)} ] 
	\nonumber\\
& = \Etrue\Bigl[
	\Etrue[ \phi_{\thetaTrue}(X_{\parent(u_j)}, X_{u_j}, Y_{u_j}) \,\vert\,  
					Y_{\Delta^*(u_j,k)}, X_{\parent(u_j)} ]
	\, \Bigm\vert \,
	Y_{\DeltaR^*(u_j,k)} \Bigr] \nonumber\\
& = \Etrue\Bigl[
	\Etrue[ \phi_{\thetaTrue}(X_{\parent(u_j)}, X_{u_j}, Y_{u_j}) \,\vert\,  X_{\parent(u_j)} ]
	 \Bigm\vert \,
	Y_{\Delta^*(u_j,k)} \Bigr] \nonumber\\
& = 0 ,
\end{aligned}
\end{multline*}
where we used the Markov property for the inner expectation in the third equality.
Moreover, it is immediate that $\dot\h_{u_j, k, \pi_{\thetaTrue}}(\thetaTrue)$ is $\cF_{j}$-measurable for all $j\in\N^*$ and $1\leq k\leq \height{j}$.
Hence, we get that the sequence $\bigl( \dot\h_{u_j, \height{u_j}, \pi_{\thetaTrue}}(\thetaTrue) \bigr)_{j\in\N^*}$
	is a $\Ptrue$-martingale increment sequence adapted to the filtration $\cF=(\cF_j)_{j\in\N}$
	in $L^2(\Ptrue)$ (thanks to \Cref{assump_HMM_grad_1}).
We are going to apply a central limit theorem for martingales (see \cite[Corollary~2.1.10]{dufloRandomIterativeModels2011}).
For all $n\in\N$, define $M_n = \sum_{j=0}^n \dot\h_{u_j, \height{u_j}, \pi_{\thetaTrue}}(\thetaTrue)$.
Note that $M_0 = \dot\h_{\rooot,0,\pi_{\thetaTrue}}(\thetaTrue) 
	= \int_{\SpaceX} \nabla_\theta \log g_{\thetaTrue}(x,Y_\rooot)
		\, \pi_{\thetaTrue}(\drv x)$ is in $L^2(\Ptrue)$
	by \Cref{assump_HMM_grad_1}.
Hence, the sequence $(M_n)_{n\in\N}$ is a $\Ptrue$-martingale sequence adapted to the filtration $\cF=(\cF_j)_{j\in\N}$ in $L^2(\Ptrue)$,
and whose quadratic variation is: \begin{equation*}
\langle M \rangle_n 
= \sum_{j=1}^n 
	\Etrue\Bigl[ \dot\h_{u_j, \height{u_j}, \pi_{\thetaTrue}}(\thetaTrue) \dot\h_{u_j, \height{u_j}, \pi_{\thetaTrue}}(\thetaTrue)^t
		\, \Bigm\vert \, \cF_{j-1} \Bigr] ,
\end{equation*}
where, as in \eqref{eq_def_Fisher_information}, 
we see $\dot\h_{u_j, \height{u_j}, \pi_{\thetaTrue}}(\thetaTrue)$ as a column vector.
Note that for all $n\in\N$, $M_n$ and $\langle M \rangle_n$ do not depend on $\cU$.

\textbf{Step~3: Convergence of the quadratic variation.}
Before applying the central limit theorem for martingales,
we first need to prove that $\lim_{n\to\infty} n^{-1} \langle M \rangle_n = \cI(\thetaTrue)$ in $\Ptrue$-probability.
Indeed, we will prove that this convergence holds in $L^2(\Ptrue)$.
Let $k\in\N^*$ and $x\in\SpaceX$.
Note that for $u_j \in \T\setminus \T_{k-1}$ is equivalent to $j \geq \vert \T_{k-1} \vert$
(remind that $u_0 = \rooot$).
Using \eqref{eq_sum_incr_score_rearranged} along with \Cref{assump_HMM_grad_2}, 
	we get that $\sup_{x\in\SpaceX} \dot\h_{u,k,x}(\thetaTrue, Y_{\Delta(u,k)})$ is in $L^4(\Ptrue)$,
	and thus the random variable $\sup_{x\in\SpaceX} \dot\h_{u,k,x}(\thetaTrue, Y_{\Delta(u,k)}) \dot\h_{u,k,x}(\thetaTrue, Y_{\Delta(u,k)})^t$
is in $L^2(\Ptrue)$ for every $u\in\T\setminus \T_{k-1}$.
Thus, using \eqref{eq_def_score_stationary_increments} and 
\Cref{lemma_score_incr_L2_bound} (remind that $\rho<1/\sqrt{2}$) for the first moment ($b=2$), 
there exists a finite constant $C>0$ and $\alpha\in (0,1)$ such that we have
(remind \eqref{eq_unif_bound_dot_hukx}):
\begin{equation}\label{eq_approx_hook_Mn}
\Etrue\left[
\left\Vert n^{-1} \langle M \rangle_n -
\inv{n} \sum_{j=\vert \T_{k-1} \vert}^n 
	\Etrue\Bigl[ \dot\h_{u_j,k, x}(\thetaTrue) \dot\h_{u_j, k, x}(\thetaTrue)^t
		\, \Bigm\vert \, \cF_{j-1} \Bigr]
\right\Vert \right]
\leq C  \alpha^{k} + \frac{C \vert \T_{k-1}\vert}{n} ,
\end{equation}
where remind that $\norm{\cdot}$ denotes the euclidean norm for $d\times d$ matrices
	(or any other norm as they are all equivalent in finite dimension).
To prove that the second term inside the expectation in the left hand side of \eqref{eq_approx_hook_Mn} 
	converges in $L^2(\Ptrue)$ as $n\to\infty$,
we are going to apply the ergodic convergence \Cref{lemma_ergodic_convergence_2}
	where the averages are done on the vertex subset $\{ u_j \,:\, \vert T_{k-1} \vert \leq j \leq n \}$. Note that this lemma is stated for scalar-valued functions, but we can apply it individually for each of the matrix coefficients
	to get the equivalent for matrix-valued functions.

For all $u\in\T\setminus \T_{k-1}$, define the function:
\begin{equation*}
\Psi_{u,k,x} : y_{\Delta^*(u,k)} \in \SpaceY^{\Delta^*(u,k)} \mapsto 
\Etrue\Bigl[
	\dot\h_{u,k,x}(\thetaTrue; Y_{\Delta(u,k)}) \dot\h_{u,k,x}(\thetaTrue; Y_{\Delta(u,k)})^t 
	\,\Bigm\vert\, Y_{\Delta^*(u,k)} = y_{\Delta^*(u,k)}
\Bigr] .
\end{equation*}
For a vertex $u$ in $\T\setminus \T_{k-1}$, 
let $v_u\in\G_k$ be the unique vertex that satisfies the shape equality constraint 
\eqref{eq_def_shape_subtree} (on page \pageref{eq_def_shape_subtree}),
then we have the equality between functions:
\begin{equation}\label{eq_equal_Psi_ukx_up_to_shape}
\Psi_{u,k,x} = \Psi_{v_u,k,x} .
\end{equation}
Moreover, using \eqref{eq_sum_incr_score_rearranged} along with \Cref{assump_HMM_grad_2}, 
	we get that $\dot\h_{u,k,x}(\thetaTrue, Y_{\Delta(u,k)})$ is in $L^4(\Ptrue)$,
	and thus the random variable $\Psi_{u,k,x}(Y_{\Delta^*(u,k)})$
is in $L^2(\Ptrue)$ for every $u\in\T\setminus \T_{k-1}$.
Hence, applying \Cref{lemma_ergodic_convergence_2}
to the collection of neighborhood-shape-dependent functions 
$( \Psi_{v,k,x} )_{v\in\G_k}$
(remind that indexing functions with $\G_k$ or with $\ShapeSetValues_k$ is equivalent by \eqref{eq_def_set_possible_shapes}),
and using \eqref{eq_equal_Psi_ukx_up_to_shape} and \eqref{eq_equality_expectation_root_and_U_k} (in \Cref{rem_rerooting_delta}),
we get that the second term inside the expectation in the left hand side of \eqref{eq_approx_hook_Mn}
converges in $L^2(\Ptrue)$ to 
	$\Esp_\cU \otimes \Etrue \bigl[   \dot\h_{\partial,k, x}(\thetaTrue) \dot\h_{\partial, k, x}(\thetaTrue)^t \bigr]$ as $n\to\infty$.
Using \Cref{lemma_score_incr_L2_bound}, we have that
$\lim_{k\to\infty} \Esp_\cU \otimes \Etrue \bigl[   \dot\h_{\partial,k, x}(\thetaTrue) \dot\h_{\partial, k, x}(\thetaTrue)^t \bigr]
	= \cI(\thetaTrue)$.
Combining those facts with \eqref{eq_approx_hook_Mn}, we get that 
	$\lim_{n\to\infty} n^{-1} \langle M \rangle_n = \cI(\thetaTrue)$ in $L^2(\Ptrue)$.

\medskip

\newcommand{\indLarger}[1]{\ind_{\{ #1 \}}}

\textbf{Step~4: Lindeberg's condition holds.}
We now need to verify that Lindeberg's condition holds (see \cite[Corollary~2.1.10]{dufloRandomIterativeModels2011}),
that is, to prove for all $\eps>0$ that $\lim_{n\to\infty} F_n(\eps \sqrt{n}) = 0$ in $\Ptrue$-probability where for all $n\in\N^*$ and $A\in\R_+$:
\begin{equation}
F_n(A) = \inv{n} \sum_{j=1}^n 
	\Etrue\Bigl[ \BigNorm{ \dot\h_{u_j,\height{u_j}, \pi_{\thetaTrue}}(\thetaTrue) }^2 
			\indLarger{ \norm{ \dot\h_{u_j,\height{u_j}, \pi_{\thetaTrue}}(\thetaTrue) } \geq A }
		\, \Bigm\vert \, \cF_{j-1} \Bigr] .
\end{equation}

Remind that 
by \Cref{assump_HMM_grad_2} and \Cref{lemma_score_incr_L2_bound} (remind that $\rho<1/\sqrt{2}$) for the fourth moment ($b=4$),
we have:
\begin{equation*}
C := \sup_{u\in\T} \sup_{k\in\N^*} \ \Esp_\cU \otimes \Etrue \left[ \sup_{x\in\SpaceX}
 			\BigNorm{ \dot\h_{u,k, x}(\thetaTrue) }^4 \right] < \infty  .
\end{equation*}
Using Cauchy-Schwarz inequality and Markov inequality, we get:
\begin{align*}
\Etrue [ F_n(A) ]
 \leq \inv{n} \sum_{j=1}^n  \frac{\Etrue\Bigl[ \sup_{x\in\SpaceX} \BigNorm{ \dot\h_{u_j,\height{u_j}, x}(\thetaTrue) }^4 \Bigr]}{A^2} 
 \leq \frac{C}{A^2}.
\end{align*}
Let $\eps>0$. Then, setting $A_n = \eps \sqrt{n}$ for all $n\in\N^*$, we get
that $\lim_{n\to\infty} F_n(\eps \sqrt{n}) =0$ in $L^1(\Ptrue)$, and thus in $\Ptrue$-probability.
Hence, we get that Lindeberg's condition holds.

\medskip

\textbf{Step~5: Applying the central limit theorem for martingales.}
Hence, we can apply the central limit theorem for martingales (see \cite[Corollary~2.1.10]{dufloRandomIterativeModels2011}),
which gives us that $\Ptrue$-\as $\lim_{n\to\infty} n^{-1} M_n = 0$
and that the sequence $(n^{-1/2} M_n)_{n\in\N^*}$ converges in $\Ptrue$-distribution towards a centered Gaussian distribution $\cN(0, \cI(\thetaTrue))$
whose covariance matrix is $\cI(\thetaTrue)$.
In particular, using \eqref{eq_L2_conv_diff_score_and_stationary_score}, 
we get that $\Ptrue$-\as $\lim_{n\to\infty} \vert \T_n \vert^{-1} \nabla_\theta \ell_{n,x}(\thetaTrue) = 0$
and that:
\begin{equation*}
\vert \T_n \vert^{-1/2} \, \nabla_\theta \ell_{n,x}(\thetaTrue) 
	\underset{n\to\infty}{\overset{(d)}{\longrightarrow}}
\cN(0, \cI(\thetaTrue))
\quad \text{under $\Ptrue$.}
\end{equation*}
This concludes the proof of the theorem.
\end{proof}

\subsection{Law of large number for the normalized observed information}

In this subsection, we prove that for all possibly random sequence $(\theta_n)_{n\in\N}$
such that $\lim_{n\to\infty} \theta_n = \thetaTrue$ $\Ptrue$-\as,
then the normalized observed information $-n^{-1} \nabla_\theta^2 \ell_{n,x}(\theta_n)$ converges $\Ptrue$-\as as $n\to\infty$
to the limiting Fisher information matrix $\cI(\thetaTrue)$ which is defined in \eqref{eq_def_Fisher_information}.

Remind the definition of the log-likelihood $\ell_{n,x}(\theta)$ in \eqref{eq_def_l_nx_theta_2} on page~\pageref{eq_def_l_nx_theta_2}.
We start by decomposing the Hessian of the log-likelihood $\ell_{n,x}(\theta)$ as a sum of increment indexed by the tree $\T$.
Using elementary computation along with permutations of the integral and the gradient operator
which are valid under \Cref{assump_HMM_grad_1}
(note that this result is also known as \emph{Louis missing information principle}, see \cite[Proposition~10.1.6]{CappeHMM}),
we get for all $\theta\in\ThetaNeighborhood$ and $x\in\SpaceX$:
\begin{align*}
\nabla_\theta^2 \ell_{n,x}(\theta)
 = \nabla_\theta^2 \log( g_\theta(X_{\rooot}, Y_{\rooot})) 
	& + \Esp_\theta \left[  
		\sum_{u\in\T_n^* } \varphi_\theta(X_{\parent(u)}, X_u, Y_u)
		\,\middle\vert\,  Y_{\T_n}, X_{\rooot}=x
	\right] \nonumber\\
&  + \Var_\theta \left[
		\sum_{u\in\T_n^* } \phi_\theta(X_{\parent(u)}, X_u, Y_u)
		\,\middle\vert\,  Y_{\T_n}, X_{\rooot}=x
	\right] , \end{align*}
where remind that $\phi_\theta$ is defined in \eqref{eq_def_phi} on page~\pageref{eq_def_phi},
and $\varphi_\theta$ is defined as:
\begin{equation}
\varphi_\theta(x', x, y) = \nabla_\theta^2 \log( q_\theta(x',x) g_\theta(x,y) ) .
\end{equation}
Note that similarly to the case of $\phi_\theta$,
the random variale $\varphi_\theta(X_{\parent(u)}, X_u, Y_u)$ is 
integrable conditionally on $Y_{\Delta(u)}$ and $X_{\rooot}=x$
(see the discussion after \eqref{eq_def_phi}).
Also note that $\nabla_\theta \log g_\theta(x, Y_{\rooot})$ is $\Ptrue$-\as finite 
by \Cref{assump_HMM_grad_1}-\ref{assump_HMM_grad_1:item3}.

For all $u\in\T$, $k\in\N^*$ and $x\in\SpaceX$, we define:
\begin{align}
\Lambda_{u,k,x}(\theta) & = 
\Esp_\theta\left[
				\sum_{v\in\DeltaR(u,k)\setminus\{\parent^k(u)\}} \varphi_\theta(X_{\parent(v)}, X_v, Y_v)
			\,\middle\vert\,  Y_{\DeltaR(u,k)}, X_{\rooot} = x_{\rooot}
			\right]
	 \nonumber\\*
&  \qquad \qquad
		- \Esp_\theta\left[
				\sum_{v\in\DeltaR^*(u,k)\setminus\{\parent^k(u)\}} \varphi_\theta(X_{\parent(v)}, X_v, Y_v)
			\,\middle\vert\,  Y_{\DeltaR^*(u,k)}, X_{\rooot} = x_{\rooot}
			\right], \label{eq_def_Lambda_ukx}
\end{align}
and:
\begin{align}
\Gamma_{u,k,x}(\theta) & = 
\Var_\theta\left[
				\sum_{v\in\DeltaR(u,k)\setminus\{\parent^k(u)\}} \phi_\theta(X_{\parent(v)}, X_v, Y_v)
			\,\middle\vert\,  Y_{\DeltaR(u,k)}, X_{\rooot} = x_{\rooot}
			\right]
	 \nonumber\\*
&  \qquad \qquad 
		- \Var_\theta\left[
				\sum_{v\in\DeltaR^*(u,k)\setminus\{\parent^k(u)\}} \phi_\theta(X_{\parent(v)}, X_v, Y_v)
			\,\middle\vert\,  Y_{\DeltaR^*(u,k)}, X_{\rooot} = x_{\rooot}
			\right] , \label{eq_def_Gamma_ukx}
\end{align}
where $\Var_\theta$ (resp. $\Cov_\theta$) denotes the (possibly conditional) variance (resp. covariance) 
corresponding to $\Prb_\cU \otimes \Prb_\theta$.
Note that $\Lambda_{u,k,x}(\theta)$ and $\Gamma_{u,k,x}(\theta)$ are random variables
which depend on $Y_{\DeltaR(u,k)}$ with an implicit dependence on $\cU$,
and that they do not depend on $\cU$ if $k\leq\height{u}$.

Then, using telescopic sums involving the quantities
defined in \eqref{eq_def_Lambda_ukx} and \eqref{eq_def_Gamma_ukx},
the Hessian of the log-likelihood $\ell_{n,x}(\theta)$ can be rewritten 
for all $\theta\in\ThetaNeighborhood$ and $x\in\SpaceX$ as:
\begin{equation}\label{eq_Hessian_likelihood}
\nabla_\theta^2 \ell_{n,x}(\theta)
 = \nabla_\theta^2 \log( g_\theta(X_{\rooot}, Y_{\rooot})) 
 + \sum_{u\in\T_n^*} \Lambda_{u,\height{u},x}(\theta) 
 + \sum_{u\in\T_n^*} \Gamma_{u,\height{u},x}(\theta) . 
\end{equation}

To prove the convergence of the two sums in the right hand side of \eqref{eq_Hessian_likelihood}, 
and thus get the convergence of the normalized observed information
$- n^{-1}\nabla_\theta^2 \ell_{n,x}(\theta)$,
we will need the following $L^2$ regularity assumption on the Hessian of the transition kernel $g_\theta$ of the HMT.

\begin{assumption}[$L^2$ Hessian regularity]
	\label{assump_HMM_grad_3}
In addition to \Cref{assump_HMM_grad_1}, assume that we have:
\begin{equation*}
\Etrue \left[
	\sup_{\theta\in\ThetaNeighborhood} \sup_{x} \Vert \nabla_\theta^2 \log g_\theta(x,Y_{\rooot}) \Vert^2
	\right] < \infty .
\end{equation*}
\end{assumption}

Propositions~\ref{prop_conv_Lambda_term} and~\ref{prop_conv_Gamma_term} below 
	(whose proofs are given in Sections~\ref{section_proof_prop_conv_Lambda_term} 
	and \ref{section_proof_prop_conv_Gamma_term}, respectively)
	state that $\Lambda_{u,k,x}(\theta)$ and $\Gamma_{u,k,x}(\theta)$ 
	both have limits $\Ptrue$-\as and in $L^2(\Ptrue)$ when $k\to\infty$.
Denote those limits by $\Lambda_{u,\infty}(\theta)$ and $\Gamma_{u,\infty}(\theta)$, respectively.
Furthermore, Propositions~\ref{prop_conv_Lambda_term} and~\ref{prop_conv_Gamma_term} also state that
	the two sums in the right hand side of \eqref{eq_Hessian_likelihood} converge
	to $\Esp_\cU \otimes \Etrue[  \Lambda_{\rooot,\infty}(\theta) ]$ 
	and $\Esp_\cU \otimes \Etrue[  \Gamma_{\rooot,\infty}(\theta) ]$, respectively,
	with some uniformity in $\theta$ near $\thetaTrue$.

We start with the proposition for the terms $\Lambda_{u,k,x}(\theta)$.
Note that the condition $\rho<1/\sqrt{2}$ on the mixing rate $\rho$ of the HMT process $(X,Y)$ is due
to the use of \Cref{lemma_score_incr_L2_bound} in the proof of \Cref{prop_conv_Lambda_term}.
See the discussion in \Cref{rem_rho_smaller_than_2}
for comments on this condition on $\rho$.

\begin{prop}[Convergence for averages of $\Lambda_{u,k,x}(\theta)$]
	\label{prop_conv_Lambda_term}
Assume that Assumptions~\ref{assump_HMM_0}--\ref{assump_HMM_3}, \ref{assump_HMM_4}-\ref{assump_HMM_grad_1}
and~\ref{assump_HMM_grad_3} hold.
Assume that $\rho<1/\sqrt{2}$.
Then, for each $\theta\in\ThetaNeighborhood$, we have that $\Lambda_{u,k,x}(\theta)$ converges in $L^2(\Prb_\cU\otimes\Ptrue)$ to some limit $\Lambda_{u,\infty}(\theta)$ (that does not depend on $x$)
as $k\to\infty$.
Moreover, we have:
\begin{equation}\label{eq_conv_as_sum_Lambda_ukx}
\lim_{n\to\infty}
\Etrue\left[  \sup_{x\in\SpaceX} \left\vert
	\inv{\vert \T_n \vert} \sum_{u\in\T_n^*} \Lambda_{u,\height{u},x}(\thetaTrue) 
	- \Esp_\cU \otimes \Etrue \bigl[  \Lambda_{\rooot,\infty}(\thetaTrue) \bigr]
\right\vert \right]
= 0.
\end{equation}
Furthermore, the function $\theta \mapsto \Esp_\cU  \otimes \Etrue [ \Lambda_{\rooot,\infty}(\theta) ]$ is continuous on $\ThetaNeighborhood$,
and for all $x\in\SpaceX$ and $\theta\in\ThetaNeighborhood$, we have:
\begin{align*}
\lim_{\delta\to 0} \lim_{n\to\infty}  \sup_{\theta' \in\ThetaNeighborhood : \Vert \theta' - \theta \Vert \leq \delta} 
	\left\vert
		\vert\T_n\vert^{-1}   \sum_{u\in\T_n^*} \Lambda_{u,\height{u},x}(\theta')		
	 - \Esp_\cU \otimes \Etrue [ \Lambda_{\rooot,\infty}(\theta) ]
	\right\vert 
 = 0, \quad \Ptrue\text{-\as}
\end{align*} 
\end{prop}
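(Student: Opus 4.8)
The plan is to recognise $\Lambda_{u,k,x}(\theta)$ as a special case of the generic increment $\xi_{u,k,x}(\theta)$ of \Cref{lemma_score_incr_L2_bound}, taking there $\psi_\theta=\varphi_\theta$. Since $\varphi_\theta(x',x,y)=\nabla_\theta^2\log q_\theta(x',x)+\nabla_\theta^2\log g_\theta(x,y)$, the integrability hypothesis of that lemma holds with $b=2$: the first summand is bounded uniformly in $(x,x',\theta)$ by \Cref{assump_HMM_grad_1}-\ref{assump_HMM_grad_1:item2}, while $\Etrue[\sup_\theta\sup_x\Vert\nabla_\theta^2\log g_\theta(x,Y_\rooot)\Vert^2]<\infty$ by \Cref{assump_HMM_grad_3}; continuity of $\theta\mapsto\varphi_\theta$ comes from \Cref{assump_HMM_grad_1}-\ref{assump_HMM_grad_1:item1}. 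Writing $\beta=\max(\rho,2\rho^2)\in(0,1)$ (here the hypothesis $\rho<1/\sqrt{2}$ enters), \Cref{lemma_score_incr_L2_bound} gives, for $k'\geq k\geq1$, a bound of the form $(\Esp_\cU\otimes\Etrue[\sup_\theta\sup_{x,x'}\Vert\Lambda_{u,k,x}(\theta)-\Lambda_{u,k',x'}(\theta)\Vert^2])^{1/2}\leq C\,k\,\beta^{k/2}$. Hence $(\Lambda_{u,k,x}(\theta))_k$ is Cauchy in $L^2(\Prb_\cU\otimes\Ptrue)$, uniformly in $x$ and $\theta$, which produces the limit $\Lambda_{u,\infty}(\theta)$ (independent of $x$) together with the same geometric bound for $\Vert\Lambda_{u,k,x}(\theta)-\Lambda_{u,\infty}(\theta)\Vert$, settling the first assertion.

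For \eqref{eq_conv_as_sum_Lambda_ukx} I would follow the scheme of \Cref{prop_conv_likelihood_to_contrast_func}. Fix $k\geq1$ and split the normalized sum into the bulk $u\in\T_n\setminus\T_{k-1}$ and the boundary $u\in\T_{k-1}^*$. Replacing $\Lambda_{u,\height{u},x}$ (past $\height{u}\geq k$) by $\Lambda_{u,k,x}$ costs, after averaging and taking $\Etrue[\sup_x\cdot]$, at most $C\,k\,\beta^{k/2}$ by the bound above, while the boundary is a fixed finite sum divided by $\vert\T_n\vert\to\infty$. Each $\Lambda_{u,k,x}$ is a neighborhood-shape-dependent $L^2$ function of $Y_{\Delta(u,k)}$, so \Cref{lemma_ergodic_convergence} makes its average converge to $\Esp_\cU\otimes\Etrue[\Lambda_{\rooot,k,x}]$ through \eqref{eq_equality_expectation_root_and_U_k}. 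The outer $\sup_x$ is absorbed by fixing a reference $x_0\in\SpaceX$ and invoking the case $k'=k$, $x\neq x'$ of \Cref{lemma_score_incr_L2_bound}, which bounds $\Esp_\cU\otimes\Etrue[\sup_x\Vert\Lambda_{\rooot,k,x}-\Lambda_{\rooot,k,x_0}\Vert]$ by $C\,k\,\beta^{k/2}$ and so reduces the $\sup_x$-average to the single point $x_0$ up to an error $C\,k\,\beta^{k/2}$. Combined with $\Vert\Esp[\Lambda_{\rooot,k,x}]-\Esp[\Lambda_{\rooot,\infty}]\Vert\leq C\,k\,\beta^{k/2}$ from the first step, this yields $\limsup_n\Etrue[\sup_x\vert\cdots\vert]\leq C'\,k\,\beta^{k/2}$; letting $k\to\infty$ proves \eqref{eq_conv_as_sum_Lambda_ukx}.

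Continuity of $\theta\mapsto\Esp_\cU\otimes\Etrue[\Lambda_{\rooot,\infty}(\theta)]$ I would obtain as a uniform limit of continuous functions. For each fixed $k$ and $x$, $\Lambda_{\rooot,k,x}(\theta)$ is $\Prb_\cU\otimes\Ptrue$-a.s.\ continuous in $\theta$, since the conditional expectations defining it run over the finite tree $\DeltaR(\rooot,k)$, $\varphi_\theta$ is continuous in $\theta$, and it is dominated using \Cref{assump_HMM_grad_1}-\ref{assump_HMM_grad_1:item2} and \Cref{assump_HMM_grad_3}; dominated convergence then makes $\theta\mapsto\Esp_\cU\otimes\Etrue[\Lambda_{\rooot,k,x}(\theta)]$ continuous. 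Because the bound of \Cref{lemma_score_incr_L2_bound} is uniform in $\theta$ on the closed ball, $\Esp[\Lambda_{\rooot,k,x}(\theta)]\to\Esp[\Lambda_{\rooot,\infty}(\theta)]$ uniformly in $\theta$, so the limit is continuous.

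The local uniform almost sure statement is the main obstacle. Unlike the log-likelihood increments $\h_{u,k,x}$, which satisfied the deterministic uniform Cauchy bound \eqref{eq_h_ukx_unif_Cauchy}, the increments $\Lambda_{u,k,x}$ are controlled only in $L^2$, so the truncation error cannot be bounded pathwise and a plain $\limsup_n$ (reverse Fatou) argument breaks down. The plan is to promote the $L^2$ control to almost sure convergence by a Borel--Cantelli argument with a truncation level $k=k(n)$ that grows with $n$: combining the geometric-in-$n$ ergodic rate \eqref{eq_LLN_upper_bound_Esp_M_Gn_main_body} for the $k$-truncated averages (with constants uniform in $k$, thanks to the uniform $L^2$ bound on $\Lambda_{\cdot,k,x}$) with the geometric-in-$k$ truncation bound from \Cref{lemma_score_incr_L2_bound}, and choosing $k(n)=\lfloor\eta n\rfloor$ for small $\eta>0$, makes the $L^2$ error of the full average $O(\gamma^n)$ for some $\gamma<1$, hence summable and almost surely convergent at each fixed $\theta$. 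For the local uniformity I would, as in \Cref{prop_unif_cv_l}, control the oscillation $\sup_{\theta':\Vert\theta'-\theta\Vert\leq\delta}\vert\cdots\vert$ by applying the same scheme to the neighborhood-shape-dependent functions $\sup_{\theta'}\Vert\Lambda_{v,k,x}(\theta')-\Lambda_{v,k,x}(\theta)\Vert$, and then use the continuity proved above together with dominated convergence to send the oscillation to $0$ as $\delta\to0$; the order $\lim_\delta\lim_n$ in the statement is precisely what lets the $k$-dependent remainders be sent to $0$ at the very end.
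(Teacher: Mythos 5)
Your proposal is correct and follows essentially the same route as the paper's proof: specializing \Cref{lemma_score_incr_L2_bound} with $\psi_\theta=\varphi_\theta$ and $b=2$ to get the uniform $L^2$ Cauchy bound and the limit $\Lambda_{u,\infty}(\theta)$ (the paper's \Cref{lemma_Lambda_incr_L2_bound}), combining the truncation bound with the quantitative ergodic rate \eqref{eq_LLN_upper_bound_Esp_M_Gn_main_body} and a growing truncation level to get exponential $L^2$ decay and then Borel--Cantelli for the almost sure statements (the paper's \Cref{lemma_Lambda_conv_unif_x} and \Cref{corol_Lambda_conv_unif_x}, which take $k=\ceil{n/2}$ where you take $k=\floor{\eta n}$), dominated convergence for continuity at finite $k$ plus uniform approximation for the limit (\Cref{lemma_Lambda_root_continuous} and \Cref{corol_Lambda_root_infinite_continuous}), and the same oscillation-function trick for local uniformity (\Cref{lemma_conv_unif_sum_Lambda_k}). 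You also correctly identified the key structural point, emphasized in \Cref{rem_intro_diff_HMM_HMT}, that the pathwise Cauchy bound \eqref{eq_h_ukx_unif_Cauchy} available for $\h_{u,k,x}$ has no analogue here, forcing the $L^2$-rate-plus-Borel--Cantelli detour.
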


The following proposition is the equivalent of \Cref{prop_conv_Lambda_term} for the terms $\Gamma_{u,k,x}(\theta)$.
Note that the condition $\rho<1/2$ on the mixing rate $\rho$
of the HMT process $(X,Y)$
is due to the use of \Cref{lemma_Gamma_incr_L2_bound}
in the proof of \Cref{prop_conv_Gamma_term}.
See the discussion in \Cref{rem_rho_smaller_than_2}
for comments on this condition on $\rho$.

\begin{prop}[Convergence for the averages of $\Gamma_{u,k,x}(\theta)$]
	\label{prop_conv_Gamma_term}
Assume that Assumptions~\ref{assump_HMM_0}--\ref{assump_HMM_3} and \ref{assump_HMM_4}-\ref{assump_HMM_grad_2} hold.
Assume that $\rho<1/2$.
Then, for each $\theta\in\ThetaNeighborhood$, we have that $\Gamma_{u,k,x}(\theta)$ converges in $L^2(\Prb_\cU\otimes\Ptrue)$ to some limit $\Gamma_{u,\infty}(\theta)$ (that does not depend on $x$)
as $k\to\infty$.
Moreover, we have:
\begin{equation}\label{eq_conv_as_sum_Gamma_ukx}
\lim_{n\to\infty}
\Etrue\left[  \sup_{x\in\SpaceX} \left\vert
	\inv{\vert \T_n \vert} \sum_{u\in\T_n^*} \Gamma_{u,\height{u},x}(\thetaTrue) 
	- \Esp_\cU \otimes \Etrue \bigl[ \Gamma_{\rooot,\infty}(\thetaTrue) \bigr]
\right\vert \right]
= 0.
\end{equation}
Furthermore, the function $\theta \mapsto \Esp_\cU \otimes \Etrue [  \Gamma_{\rooot,\infty}(\theta) ]$ is continuous on $\ThetaNeighborhood$,
and for all $x\in\SpaceX$ and $\theta\in\ThetaNeighborhood$, we have:
\begin{align*}
\lim_{\delta\to 0} \lim_{n\to\infty} \sup_{\theta' \in\ThetaNeighborhood : \Vert \theta' - \theta \Vert \leq \delta}
	\left\vert
		\vert\T_n\vert^{-1} \sum_{u\in\T_n^*} \Gamma_{u,\height{u},x}(\theta')
		- \Esp_\cU \otimes \Etrue [  \Gamma_{\rooot,\infty}(\theta) ]
	\right\vert 
= 0, \quad \Ptrue\text{-\as}
\end{align*} 
\end{prop}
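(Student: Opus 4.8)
The plan is to mirror the three-part argument already used for \Cref{prop_conv_Lambda_term}, substituting the second-moment increment bound \Cref{lemma_Gamma_incr_L2_bound} for \Cref{lemma_score_incr_L2_bound}. The essential new feature is that $\Gamma_{u,k,x}(\theta)$ is built from conditional \emph{variances}, hence from conditional covariances $\Cov_\theta$ between \emph{pairs} of vertices; this is exactly why the hypothesis strengthens to $\rho<1/2$. Throughout I work at a fixed value of $\theta$ (or at $\thetaTrue$) and only pass to uniformity in $\theta$ at the very end.

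First I would establish $L^2(\Prb_\cU\otimes\Ptrue)$ convergence of the increments. Expanding each conditional variance in \eqref{eq_def_Gamma_ukx} as a double sum of conditional covariances over pairs $(v,w)$, and feeding the covariance coupling estimates of \Cref{lemma_bound_covar_terms_Gamma} into \Cref{lemma_Gamma_incr_L2_bound}, one obtains a geometric $L^2$ bound on $\Gamma_{u,k,x}(\theta)-\Gamma_{u,k',x'}(\theta)$ that is uniform in $x,x'$. Since $\phi_\theta(X_{\parent(v)},X_v,Y_v)$ is in $L^4(\Ptrue)$ under \Cref{assump_HMM_grad_2}, the covariances are controlled in $L^2$, so $(\Gamma_{u,k,x}(\theta))_{k}$ is Cauchy and converges to a limit $\Gamma_{u,\infty}(\theta)$ independent of $x$, with the bound persisting when $k'=\infty$.

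Next I would pass to the average exactly as in Step~3 of the proof of \Cref{thm_convergence_score}. For fixed $k$, the map $u\mapsto\Gamma_{u,k,x}(\thetaTrue)$ is a neighborhood-shape-dependent function of $Y_{\Delta(u,k)}$ lying in $L^2(\Ptrue)$ (via the shape equality as in \eqref{eq_equal_Psi_ukx_up_to_shape}), so \Cref{lemma_ergodic_convergence} gives the convergence of $|\T_n|^{-1}\sum_{u\in\T_n\setminus\T_{k-1}}\Gamma_{u,k,x}(\thetaTrue)$ towards $\Esp_\cU\otimes\Etrue[\Gamma_{\rooot,k,x}(\thetaTrue)]$ in $L^1$; replacing $\Gamma_{u,\height{u},x}$ by $\Gamma_{u,k,x}$ and discarding the $|\T_{k-1}|$ low-generation terms costs a geometrically small error by the first-step bound, and letting $k\to\infty$ yields \eqref{eq_conv_as_sum_Gamma_ukx}. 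Continuity of $\theta\mapsto\Esp_\cU\otimes\Etrue[\Gamma_{\rooot,\infty}(\theta)]$ then follows from continuity of $\Gamma_{\rooot,k,x}(\theta)$ (dominated convergence inside the conditional variance, using \Cref{assump_HMM_4} and \Cref{assump_HMM_grad_1}) together with the uniform-in-$k$ convergence from the first step; the locally uniform statement is obtained by inserting the suprema over $\{\theta':\norm{\theta'-\theta}\le\delta\}$ and arguing as in the proof of \Cref{prop_unif_cv_l}, letting $\delta\to0$ at the end.

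The main obstacle is the increment bound \Cref{lemma_Gamma_incr_L2_bound} itself. Each covariance between vertices $v,w$ contributes two coupling factors coming from \Cref{lemma_backward_mixing_bound}, and summing over the roughly $2^{q}$ descendants at graph-depth $q$ along the tree produces geometric series whose ratios are of order $2\rho^{2}$ rather than the $\rho$ governing the score; convergence of these series is precisely what forces $\rho<1/2$. Carrying this out while keeping the bound uniform in $x,x'$ and $\theta$, and recovering a clean geometric rate after the grouping of terms, is the delicate computation.
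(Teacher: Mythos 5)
Your plan reproduces the paper's proof essentially step for step: the increment bound \Cref{lemma_Gamma_incr_L2_bound} (fed by the covariance estimates of \Cref{lemma_bound_covar_terms_Gamma}) gives the uniform Cauchy property and the limit $\Gamma_{u,\infty}(\theta)$ independent of $x$; the neighborhood-shape ergodic theorem combined with a four-term Minkowski decomposition (discarding $\T_{k-1}$, swapping $\Gamma_{u,\height{u},x}$ for $\Gamma_{u,k,x'}$, averaging at fixed $k$, then letting $k\to\infty$) yields \eqref{eq_conv_as_sum_Gamma_ukx} uniformly in $x$, which is exactly the paper's \Cref{lemma_Gamma_conv_unif_x} and \Cref{corol_Gamma_conv_unif_x}; and the continuity and locally uniform statements are obtained as you describe, mirroring \Cref{prop_conv_Lambda_term}.

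Two quantitative claims in your closing paragraph are inaccurate, though they do not break the plan since you invoke the stated lemmas as black boxes. First, controlling the difference of conditional covariances requires couplings of the \emph{joint} law of the pair $(X_v,X_w)$; two applications of the single-vertex bound \Cref{lemma_backward_mixing_bound}, as you suggest, do not suffice, and the paper must prove the dedicated two-vertex couplings \Cref{lemma_forward_coupling_two_vertices,lemma_backward_coupling_two_vertices}, built by branching the coupling at $u\land v$ (respectively at the vertex of the path from $v$ to $w$ closest to $u$). Second, your heuristic for $\rho<1/2$ has the wrong exponent: a geometric series with ratio $2\rho^2$ converges precisely for $\rho<1/\sqrt{2}$, and $\bigl(\max(\rho,2\rho^2)\bigr)^{k/2}$ is exactly what governs the \emph{score} increments in \Cref{lemma_score_incr_L2_bound}, not $\Gamma$. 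For $\Gamma$, the double sums over pairs $(v,w)$ weigh the $2^{a+b}$ vertex counts against single powers of $\rho$ in the exponents, and after the grouping in the proof of \Cref{lemma_Gamma_incr_L2_bound} the dominant contributions are of the form $k\,(2\rho)^{\floor{k/3}}$ and $k^2 (2\rho)^{\floor{k/2}}$: it is the ratio $2\rho$, not $2\rho^2$, that forces $\rho<1/2$. Had you carried out the ``delicate computation'' expecting ratio $2\rho^2$, you would have concluded that $\rho<1/\sqrt{2}$ suffices, contradicting the hypothesis you correctly state.
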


With Propositions~\ref{prop_conv_Lambda_term} and~\ref{prop_conv_Gamma_term},
we are now ready to prove the following theorem which states that
the normalized observed information $-\vert\T_n\vert^{-1} \nabla_\theta^2 \ell_{n,x}(\theta_n)$
converges $\Ptrue$-\as locally uniformly to the limiting Fisher information $\cI(\thetaTrue)$ 
(which is defined in \eqref{eq_def_Fisher_information}).
Note that the condition $\rho<1/2$ on the mixing rate $\rho$
of the HMT process $(X,Y)$
is inherited from \Cref{prop_conv_Gamma_term}.
See the discussion in \Cref{rem_rho_smaller_than_2}
for comments on this condition on $\rho$.

\begin{theo}[Convergence of the normalized observed information]
	\label{thm_LLN_observed_information}
Assume that Assumptions~\ref{assump_HMM_0}--\ref{assump_HMM_3}
	and \ref{assump_HMM_4}--\ref{assump_HMM_grad_3} hold.
Assume that $\rho<1/2$.
Assume that $\Theta$ is compact.
Then, for all $x\in\SpaceX$, we have:
\begin{equation}\label{eq_unif_conv_observed_information}
\lim_{\delta\to 0} \lim_{n\to\infty} \sup_{\theta\in\ThetaNeighborhood \,:\, \Vert \theta - \thetaTrue \Vert \leq \delta}
	\ \Bigl\Vert{-\vert\T_n\vert^{-1} \nabla_\theta^2 \ell_{n,x}(\theta) - \cI(\thetaTrue)}\Bigr\Vert = 0
\quad \text{$\Ptrue$-\as}
\end{equation}
\end{theo}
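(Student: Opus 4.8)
The plan is to read off the limit of $-\vert\T_n\vert^{-1}\nabla_\theta^2\ell_{n,x}(\theta)$ from the Hessian decomposition \eqref{eq_Hessian_likelihood} together with the two convergence results Propositions~\ref{prop_conv_Lambda_term} and~\ref{prop_conv_Gamma_term}, and then to identify the resulting deterministic matrix with the Fisher information $\cI(\thetaTrue)$. First I would dispose of the boundary term in \eqref{eq_Hessian_likelihood}: by \Cref{assump_HMM_grad_3} the random variable $\sup_{\theta\in\ThetaNeighborhood}\sup_{x}\Vert\nabla_\theta^2\log g_\theta(x,Y_{\rooot})\Vert$ is $\Ptrue$-almost surely finite and does not depend on $n$, so dividing by $\vert\T_n\vert\to\infty$ gives $\sup_{\theta\in\ThetaNeighborhood}\vert\T_n\vert^{-1}\Vert\nabla_\theta^2\log g_\theta(x,Y_{\rooot})\Vert\to0$ $\Ptrue$-almost surely, uniformly in $\theta$.

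For the two remaining sums I would apply Propositions~\ref{prop_conv_Lambda_term} and~\ref{prop_conv_Gamma_term} at the point $\theta=\thetaTrue$, which give $\Ptrue$-almost surely
\[
\lim_{\delta\to0}\lim_{n\to\infty}\sup_{\theta:\,\Vert\theta-\thetaTrue\Vert\le\delta}\Bigl\vert \vert\T_n\vert^{-1}\sum_{u\in\T_n^*}\Lambda_{u,\height{u},x}(\theta) - \Esp_\cU\otimes\Etrue[\Lambda_{\rooot,\infty}(\thetaTrue)]\Bigr\vert = 0 ,
\]
and the analogue for $\Gamma$. Combining these with the boundary estimate via the triangle inequality yields that $\lim_{\delta\to0}\lim_{n\to\infty}\sup_{\Vert\theta-\thetaTrue\Vert\le\delta}\Vert -\vert\T_n\vert^{-1}\nabla_\theta^2\ell_{n,x}(\theta) - \Sigma\Vert = 0$ $\Ptrue$-almost surely, where $\Sigma := -\Esp_\cU\otimes\Etrue[\Lambda_{\rooot,\infty}(\thetaTrue)] - \Esp_\cU\otimes\Etrue[\Gamma_{\rooot,\infty}(\thetaTrue)]$ is deterministic; here the continuity of the two limiting functions, asserted in the propositions, is exactly what collapses the inner $\sup$ over a shrinking ball onto the value at $\thetaTrue$.

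It then remains to prove $\Sigma=\cI(\thetaTrue)$, which is the heart of the argument. Comparing \eqref{eq_Hessian_likelihood} with the increment decomposition $\ell_{n,x}=\sum_u\h_{u,\height{u},x}$ shows that $\Lambda_{u,k,x}(\theta)+\Gamma_{u,k,x}(\theta)=\nabla_\theta^2\h_{u,k,x}(\theta)$, i.e. the $\Lambda$/$\Gamma$ split is Louis' missing-information decomposition of the Hessian of the conditional log-density $\h_{u,k,x}(\theta)=\log p_\theta(Y_u\mid Y_{\DeltaR^*(u,k)},X_{\parent^k(u)}=x)$ (recall \eqref{eq_def_H_ukx}--\eqref{eq_def_h_ukx}). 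Hence $\Sigma=-\Esp_\cU\otimes\Etrue[\nabla_\theta^2\h_{\rooot,\infty}(\thetaTrue)]$, and the desired equality with $\cI(\thetaTrue)=\Esp_\cU\otimes\Etrue[\dot\h_{\rooot,\infty}(\thetaTrue)\dot\h_{\rooot,\infty}(\thetaTrue)^t]$ from \eqref{eq_def_Fisher_information} is precisely the second Bartlett (information-matrix) identity for the limiting one-step predictor of $Y_\rooot$ from its infinite past. I would prove it first at finite $k$: since $\int\Hterm_{\rooot,k,x}(\theta)\,\mu(\drv Y_\rooot)=1$, differentiating twice under the integral sign (licit under \Cref{assump_HMM_grad_1}) gives an exact identity tying $-\nabla_\theta^2\h_{\rooot,k,x}(\thetaTrue)$ to $\dot\h_{\rooot,k,x}(\thetaTrue)\dot\h_{\rooot,k,x}(\thetaTrue)^t$ after conditional integration against the density $\Hterm_{\rooot,k,x}(\thetaTrue)$, and then pass to $k\to\infty$ using the $L^2$ convergence supplied by \Cref{lemma_score_incr_L2_bound} and Propositions~\ref{prop_conv_Lambda_term}--\ref{prop_conv_Gamma_term}. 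The main obstacle is exactly this passage to the limit: the finite-$k$ identity is weighted by the law conditioning on $X_{\parent^k(\rooot)}=x$, which is not the stationary law $\Ptrue$ used to define $\cI(\thetaTrue)$, so I must invoke the exponential forgetting of the conditioning state (so that the fixed-$x$, the $\pi_{\thetaTrue}$-averaged, and the genuine stationary conditional densities all share the common limit $\h_{\rooot,\infty}$) to reconcile the two laws and interchange the limit with integration, the uniform $L^2$ bounds and geometric decay already at hand providing the required domination.
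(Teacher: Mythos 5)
Your proposal is correct, and its first half coincides with the paper's proof: the paper likewise reads the locally uniform $\Ptrue$-\as limit off the decomposition \eqref{eq_Hessian_likelihood} via Propositions~\ref{prop_conv_Lambda_term} and~\ref{prop_conv_Gamma_term} (the boundary term $\nabla_\theta^2\log g_\theta(x,Y_\rooot)$ being killed by the normalization exactly as you say), reducing everything to the identification $\cI(\thetaTrue) = -\Esp_\cU\otimes\Etrue[\Lambda_{\rooot,\infty}(\thetaTrue)+\Gamma_{\rooot,\infty}(\thetaTrue)]$. Where you genuinely diverge is in how this identification is proved. The paper works \emph{globally}: it invokes the finite-$n$ information-matrix identity \eqref{eq_Fisher_information_principle} for the full conditional log-likelihood $\ell_{n,X_\rooot}$, lets the right-hand side converge via the two propositions, and computes the limit of the normalized score outer product on the left-hand side through the $L^1$ approximation \eqref{eq_upper_bound_approx_score_matrix} (\Cref{lemma_score_incr_L2_bound} with $b=1$) followed by the ergodic-averaging argument of Step~3 of \Cref{thm_convergence_score} and a final $k\to\infty$. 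You instead work \emph{locally}: you observe that $\Lambda_{\rooot,k,x}(\theta)+\Gamma_{\rooot,k,x}(\theta)=\nabla_\theta^2\h_{\rooot,k,x}(\theta)$ (correct — the two Louis decompositions of $\log p_\theta(Y_{\DeltaR(\rooot,k)}\,\vert\,\cdot)$ and $\log p_\theta(Y_{\DeltaR^*(\rooot,k)}\,\vert\,\cdot)$ differ by exactly these terms, the boundary contributions cancelling), and differentiate the normalization $\int\Hterm_{\rooot,k,x}(\theta)\,\mu(\drv y_\rooot)=1$ twice to get the second Bartlett identity per increment at finite $k$, then send $k\to\infty$. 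This works, and the law-reconciliation obstacle you flag has an exact (not merely asymptotic) fix: average the fixed-$x$ identity against $x\sim\pi_{\thetaTrue}$ — by invariance the $\pi_{\thetaTrue}$-mixture of the laws conditioned on $X_{\parent^k(\rooot)}=x$ \emph{is} the stationary law, so the finite-$k$ identity holds under $\Prb_\cU\otimes\Ptrue$ with $x$ replaced by $X_{\parent^k(\rooot)}$, and the uniform-in-$x$ bounds of Lemmas~\ref{lemma_score_incr_L2_bound}, \ref{lemma_Lambda_incr_L2_bound} and~\ref{lemma_Gamma_incr_L2_bound} (with \Cref{assump_HMM_grad_2} making $\dot\h_{\rooot,k,x}\dot\h_{\rooot,k,x}^t$ converge in $L^1$) then justify the passage to the limit, no exponential-forgetting estimate being needed at this stage. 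The trade-off: your route dispenses with the second ergodic-averaging computation for the score outer product, at the cost of proving and justifying (differentiation under the integral, via \Cref{assump_HMM_grad_1}) a new per-increment identity; the paper's route reuses the standard full-likelihood identity but must redo the Step-3-style approximation and ergodic convergence. Both routes consume the same hypotheses, in particular $\rho<1/2$ through \Cref{prop_conv_Gamma_term}.
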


As an immediate corollary, for any possibly random sequence  $(\theta_n)_{n\in\N}$ 
	such that $\lim_{n\to\infty} \theta_n = \thetaTrue$ $\Ptrue$-\as
and any $x\in\SpaceX$, we get that $\Ptrue$-\as
	$\lim_{n\to\infty} -\vert\T_n\vert^{-1} \nabla_\theta^2 \ell_{n,x}(\theta_n) = \cI(\thetaTrue)$.
In particular, choosing $\theta_n = \hat{\theta}_{n,x}$ for all $n\in\N$
	(remind that the MLE $\hat{\theta}_{n,x}$ is defined in \eqref{eq_def_MLE_hat_theta_nx} on page~\pageref{eq_def_MLE_hat_theta_nx}),
and combining Theorems~\ref{thm_Strong_consistency_MLE} 
	and~\ref{thm_LLN_observed_information},
we get that the normalized observed information
$-\vert\T_n\vert^{-1} \nabla_\theta^2 \ell_{n,x}(\hat\theta_{n,x})$
at the MLE $\hat\theta_{n,x}$ is a strongly consistent estimator
of the Fisher information matrix $\cI(\thetaTrue)$.

\begin{proof}
Using \eqref{eq_Hessian_likelihood} and Propositions~\ref{prop_conv_Lambda_term} and~\ref{prop_conv_Gamma_term},
we get that \eqref{eq_unif_conv_observed_information} holds
with $\cI(\thetaTrue)$ replaced by $- \Esp_\cU \otimes \Etrue [ \Lambda_{\rooot,\infty}(\thetaTrue) + \Gamma_{\rooot,\infty}(\thetaTrue) ]$.
Thus, it remains to prove that this latter quantity is equal to $\cI(\thetaTrue)$.

Using elementary computation along with permutations of the integral and the gradient operator
which are valid under \Cref{assump_HMM_grad_1}
(note that this result is also known as \emph{Fisher information matrix identity}, 
see \cite[p.21]{raschMathematicalStatistics2018} or \cite[p.355]{CappeHMM}),
we get for all $\theta\in\ThetaNeighborhood$ and $x\in\SpaceX$:
\begin{equation*}
\vert \T_n \vert^{-1}\, \Esp_{\theta}\bigl[ \nabla_{\theta} \ell_{n,x}(\theta)  \nabla_{\theta} \ell_{n,x}(\theta)^t  \,\bigm\vert\, X_{\rooot} =x \bigr]
= - \vert \T_n \vert^{-1}\, \Esp_{\theta}\bigl[ \nabla_{\theta}^2 \ell_{n,x}(\theta)  \,\bigm\vert\, X_{\rooot} =x \bigr].
\end{equation*}
Setting $\theta=\thetaTrue$ and taking the expectation over $X_\rooot$, we get:
\begin{equation}\label{eq_Fisher_information_principle}
\vert \T_n \vert^{-1}\, \Etrue\bigl[ \nabla_{\theta} \ell_{n,X_\rooot}(\thetaTrue)  \nabla_{\theta} \ell_{n,X_\rooot}(\thetaTrue)^t  \bigr]
= - \vert \T_n \vert^{-1}\, \Etrue\bigl[ \nabla_{\theta}^2 \ell_{n,X_\rooot}(\thetaTrue)  \bigr].
\end{equation}

Using \eqref{eq_Hessian_likelihood} on page~\pageref{eq_Hessian_likelihood},
Propositions~\ref{prop_conv_Lambda_term} and~\ref{prop_conv_Gamma_term}
give us that the right hand side of \eqref{eq_Fisher_information_principle}
converges as $n\to\infty$ to $-\Esp_\cU \otimes \Etrue [ \Lambda_{\rooot,\infty}(\thetaTrue) + \Gamma_{\rooot,\infty}(\thetaTrue) ]$.

Remind that using \eqref{eq_sum_incr_score_rearranged} along with \Cref{assump_HMM_grad_2}, 
	we get that $\dot\h_{u,k,x}(\thetaTrue, Y_{\Delta(u,k)})$ is in $L^4(\Ptrue)$,
	and thus the random variable $\dot\h_{u,k,x}(\thetaTrue, Y_{\Delta(u,k)}) \dot\h_{u,k,x}(\thetaTrue, Y_{\Delta(u,k)})^t$
is in $L^2(\Ptrue)$ for every $u\in\T\setminus \T_{k-1}$.
Thus, using \Cref{lemma_score_incr_L2_bound} for the first moment ($b=1$), 
there exists a finite constant $C>0$ and $\alpha\in (0,1)$ such that
for any $k\in\N^*$ and $x\in\SpaceX$, we have:
\begin{align}
 \Etrue\left[ \inv{\vert \T_n\vert} \left\Vert 
	\nabla_{\theta} \ell_{n,X_\rooot}(\thetaTrue)  \nabla_{\theta} \ell_{n,X_\rooot}(\thetaTrue)^t
	- \!\!\! \sum_{u\in\T_n\setminus\T_{k-1}} \!\!\! \dot\h_{u,k,x}(\thetaTrue)  \dot\h_{u,k,x}(\thetaTrue)^t
\right\Vert \right] 
 \leq C \alpha^{k} + \frac{C \vert \T_{k-1} \vert}{\vert \T_n\vert} ,
	\label{eq_upper_bound_approx_score_matrix}
\end{align}
where remind that we see $\dot\h_{u,k,x}(\thetaTrue)$ as a column vector.
Then, using an ergodic convergence argument similar to the one used in Step~3 in the proof of \Cref{thm_convergence_score},
we get:
\begin{equation*}
\lim_{n\to\infty}
\inv{\vert \T_n\vert} \sum_{u\in\T_n\setminus\T_{k-1}} \!\!\! \dot\h_{u,k,x}(\thetaTrue)  \dot\h_{u,k,x}(\thetaTrue)^t
= \Esp_\cU \otimes \Etrue \bigl[   \dot\h_{\partial,k, x}(\thetaTrue) \dot\h_{\partial, k, x}(\thetaTrue)^t \bigr]
\qquad \text{in $L^2(\Ptrue)$}.
\end{equation*}
Using \Cref{lemma_score_incr_L2_bound}, we have that
$\lim_{k\to\infty} \Esp_\cU \otimes \Etrue \bigl[   \dot\h_{\partial,k, x}(\thetaTrue) \dot\h_{\partial, k, x}(\thetaTrue)^t \bigr]
	= \cI(\thetaTrue)$.
Combining those facts with \eqref{eq_upper_bound_approx_score_matrix}, 
we get that the left hand side in \eqref{eq_Fisher_information_principle} converges to $\cI(\thetaTrue)$ as $n\to\infty$.

Hence, we get $\cI(\thetaTrue) = - \Esp_\cU \otimes \Etrue [ \Lambda_{\rooot,\infty}(\thetaTrue) + \Gamma_{\rooot,\infty}(\thetaTrue) ] $,
which concludes the proof.
\end{proof}

Using Theorems~\ref{thm_Strong_consistency_MLE}, \ref{thm_convergence_score} and \ref{thm_LLN_observed_information},
we can prove the following theorem which states that the MLE has asymptotic normal fluctuations
with covariance matrix $\cI(\thetaTrue)^{-1}$ where the Fisher information matrix $\cI(\thetaTrue)$ is defined in
\eqref{eq_def_Fisher_information} on page~\pageref{eq_def_Fisher_information}.
Recall that the contrast function $\ell$ is defined in \eqref{eq_def_contrast_function} on page~\pageref{eq_def_contrast_function},
that the MLE $\hat{\theta}_{n,x}$ is defined in \eqref{eq_def_MLE_hat_theta_nx}  on page~\pageref{eq_def_MLE_hat_theta_nx},
and that the mixing rate $\rho$ of the HMT process $(X,Y)$ is defined after \Cref{assump_HMM_2}
on page~\pageref{assump_HMM_2}.

Note that the condition $\rho<1/2$ on the mixing rate $\rho$
of the HMT process $(X,Y)$
is inherited from \Cref{thm_LLN_observed_information},
and thus from \Cref{prop_conv_Gamma_term}.
See the discussion in \Cref{rem_rho_smaller_than_2}
for comments on this condition on $\rho$.

\begin{theo}[Asymptotic normality of the MLE]
	\label{thm_asymptotic_normality}
Assume that Assumptions~\ref{assump_HMM_0}--\ref{assump_HMM_grad_3} hold.
Assume that $\rho<1/2$.
Further assume that the contrast function $\ell$ has a unique maximum
	(which is then located at $\thetaTrue\in\Theta$ by \Cref{prop_global_max_l}) 
	and that $\Theta$ is compact, $\thetaTrue$ is an interior point of $\Theta$,
	and the limiting Fisher information matrix $\cI(\thetaTrue)$ (which is defined in \eqref{eq_def_Fisher_information}) is non-singular.
Then, for all $x\in\SpaceX$, we have:
\begin{align*}
\vert \T_n \vert^{1/2} \bigl( \hat{\theta}_{n,x} - \thetaTrue \bigr)
	\underset{n\to\infty}{\overset{(d)}{\longrightarrow}}
\cN(0, \cI(\thetaTrue)^{-1})
\quad \text{under $\Ptrue$,}
\end{align*}
where $\cN(0,M)$ denotes the centered Gaussian distribution with covariance matrix $M$.
\end{theo}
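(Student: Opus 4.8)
The plan is to deduce the asymptotic normality of the MLE from the three ingredients already established: the strong consistency of $\hat\theta_{n,x}$ (\Cref{thm_Strong_consistency_MLE}), the asymptotic normality of the normalized score (\Cref{thm_convergence_score}), and the locally uniform convergence of the normalized observed information to the Fisher information (\Cref{thm_LLN_observed_information}). This is the classical ``delta-method'' / Taylor-expansion argument for $M$-estimators, and since all the hard analytic work is contained in the three cited results, the proof here is essentially a self-contained assembly of them.

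\begin{proof}
Fix $x\in\SpaceX$. Since $\thetaTrue$ is an interior point of $\Theta$ and, by \Cref{thm_Strong_consistency_MLE}, $\hat\theta_{n,x} \to \thetaTrue$ $\Ptrue$-\as, for $n$ large enough (depending on $\omega$) the MLE $\hat\theta_{n,x}$ lies in the interior of $\Theta$. As $\ell_{n,x}$ is twice continuously differentiable on $\ThetaNeighborhood$ by \Cref{assump_HMM_grad_1}, the first-order optimality condition gives $\nabla_\theta \ell_{n,x}(\hat\theta_{n,x}) = 0$ for all such $n$. A first-order Taylor expansion of each coordinate of the score between $\thetaTrue$ and $\hat\theta_{n,x}$ then yields
\begin{equation*}
0 = \nabla_\theta \ell_{n,x}(\hat\theta_{n,x})
= \nabla_\theta \ell_{n,x}(\thetaTrue)
 + \left( \int_0^1 \nabla_\theta^2 \ell_{n,x}\bigl( \thetaTrue + t(\hat\theta_{n,x} - \thetaTrue) \bigr) \, \drv t \right) ( \hat\theta_{n,x} - \thetaTrue ) ,
\end{equation*}
where the integrated Hessian is evaluated coordinatewise along the segment joining $\thetaTrue$ to $\hat\theta_{n,x}$.

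Next I would analyze the matrix $B_n := -\vert\T_n\vert^{-1} \int_0^1 \nabla_\theta^2 \ell_{n,x}( \thetaTrue + t(\hat\theta_{n,x} - \thetaTrue) ) \, \drv t$. Because $\hat\theta_{n,x} \to \thetaTrue$ $\Ptrue$-\as, for any $\delta>0$ the whole segment $\{ \thetaTrue + t(\hat\theta_{n,x} - \thetaTrue) : t\in[0,1] \}$ eventually lies in the ball $\{ \theta : \Vert \theta - \thetaTrue \Vert \leq \delta \}$. The locally uniform convergence in \Cref{thm_LLN_observed_information} then forces $B_n \to \cI(\thetaTrue)$ $\Ptrue$-\as; indeed, letting first $n\to\infty$ and then $\delta\to 0$ controls the supremum of $\Vert -\vert\T_n\vert^{-1}\nabla_\theta^2\ell_{n,x}(\theta) - \cI(\thetaTrue) \Vert$ over the ball, and the integral over $t\in[0,1]$ is dominated by this supremum. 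Since $\cI(\thetaTrue)$ is non-singular by assumption, $B_n$ is invertible for $n$ large with $B_n^{-1} \to \cI(\thetaTrue)^{-1}$ $\Ptrue$-\as. Rearranging the Taylor identity gives
\begin{equation*}
\vert \T_n \vert^{1/2} \bigl( \hat\theta_{n,x} - \thetaTrue \bigr)
 = B_n^{-1} \, \bigl( \vert \T_n \vert^{-1/2} \nabla_\theta \ell_{n,x}(\thetaTrue) \bigr) .
\end{equation*}

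Finally I would combine the two factors. By \Cref{thm_convergence_score}, $\vert\T_n\vert^{-1/2}\nabla_\theta\ell_{n,x}(\thetaTrue) \overset{(d)}{\to} \cN(0,\cI(\thetaTrue))$ under $\Ptrue$, while $B_n^{-1} \to \cI(\thetaTrue)^{-1}$ in probability. By Slutsky's theorem the product converges in distribution to $\cI(\thetaTrue)^{-1} Z$ where $Z \sim \cN(0,\cI(\thetaTrue))$, and a linear-transformation computation gives $\cI(\thetaTrue)^{-1} Z \sim \cN(0,\, \cI(\thetaTrue)^{-1} \cI(\thetaTrue) \cI(\thetaTrue)^{-1}) = \cN(0,\cI(\thetaTrue)^{-1})$. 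This is the claimed convergence. The joint statement announced after the theorem follows by writing $\vert\T_n\vert^{-1/2}\nabla_\theta\ell_{n,x}(\thetaTrue) \overset{(d)}{\to} \cI(\thetaTrue)^{1/2} G$ and reading off $\vert\T_n\vert^{1/2}(\hat\theta_{n,x}-\thetaTrue) \overset{(d)}{\to} \cI(\thetaTrue)^{-1/2} G$ from the same Slutsky argument, where $G\sim\cN(0,I_{\dimTheta})$.
\end{proof}

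The \textbf{main obstacle} I anticipate is not any single hard estimate---those are all delegated to the earlier theorems---but rather the care needed to justify the almost-sure invertibility and convergence of the averaged Hessian $B_n$ along the \emph{random} segment joining $\thetaTrue$ to $\hat\theta_{n,x}$. One must interchange the ``$\lim_{\delta\to0}\lim_{n\to\infty}$'' structure of \Cref{thm_LLN_observed_information} with the integral over $t$ and with the random shrinking of the segment; this requires arguing on a single $\Ptrue$-full-measure event where simultaneously the MLE converges and the uniform Hessian convergence holds, and then passing the supremum bound through the integral. The subtlety is purely in the order of limits and the handling of the random evaluation point, which is standard but must be stated cleanly to avoid a circular dependence on $\delta$.
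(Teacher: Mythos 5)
Your proposal is correct and takes essentially the same route as the paper's proof: the first-order condition $\nabla_\theta\ell_{n,x}(\hat\theta_{n,x})=0$, the Taylor expansion with the integrated Hessian, almost-sure convergence and invertibility of that Hessian via \Cref{thm_Strong_consistency_MLE} and \Cref{thm_LLN_observed_information}, and Cram\'er--Slutsky combined with \Cref{thm_convergence_score}. Your added care in justifying the vanishing gradient (interiority of $\thetaTrue$ plus consistency) and in handling the order of limits along the random segment merely makes explicit what the paper leaves implicit.
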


\begin{proof}
The proof is a standard argument and is similar to the proof of \cite[Theorem 1]{bickelAsymptoticNormalityMaximumlikelihood1998}.
Remind that the gradient of $\ell_{n,x}$ vanishes at the MLE $\hat{\theta}_{n,x}$ by definition.
Thus, using a Taylor expansion for $\nabla_\theta \ell_{n,x}$ around $\thetaTrue$, we get:
\begin{equation*}
0 = \nabla_\theta \ell_{n,x}(\hat{\theta}_{n,x})
= \nabla_\theta \ell_{n,x}(\thetaTrue) 
	+ \left( \int_0^1 \nabla_\theta^2 \ell_{n,x}( \thetaTrue + t (\hat{\theta}_{n,x} - \thetaTrue))\, \drv t \right)
		(\hat{\theta}_{n,x} - \thetaTrue) ,
\end{equation*}
where we see $\hat{\theta}_{n,x}$ and $\thetaTrue$ as column vectors.
As $\cI(\thetaTrue)$ is non-singular (indeed definite positive),
remark that Theorems~\ref{thm_Strong_consistency_MLE} and~\ref{thm_LLN_observed_information}
insure that $\Ptrue$-\as for $n$ large enough the integrand in the integral of the above formula is 
	non-singular (indeed definite positive) for all values of $t$,
	and thus the matrix-valued integral is non-singular.
Thus, from the above equation, we obtain $\Ptrue$-\as for $n$ large enough:
\begin{equation*}
\vert \T_n \vert^{1/2} \bigl( \hat{\theta}_{n,x} - \thetaTrue \bigr)
= \left( - \vert \T_n\vert^{-1} \int_0^1 \nabla_\theta^2 \ell_{n,x}( \thetaTrue + t (\hat{\theta}_{n,x} - \thetaTrue))\, \drv t \right)^{-1}
	\vert \T_n \vert^{-1/2} \nabla_\theta \ell_{n,x}(\thetaTrue).
\end{equation*}
As by \Cref{thm_Strong_consistency_MLE}, we have that $\Ptrue$-\as $\lim_{n\to\infty} \hat{\theta}_{n,x} = \thetaTrue$,
	using \Cref{thm_LLN_observed_information}, we get that the first factor in the right hand side
	$\Ptrue$-\as converges to $\cI(\thetaTrue)$ as $n\to\infty$.
Using \Cref{thm_convergence_score}, we get that the second factor in the right hand side
	converges $\Ptrue$-weakly as $n\to\infty$ to the Gaussian random distribution $\cN(0, \cI(\thetaTrue))$.
Hence, using Cram\'er-Slutsky’s theorem, we get that $\vert \T_n \vert^{1/2} \bigl( \hat{\theta}_{n,x} - \thetaTrue \bigr)$
converges $\Ptrue$-weakly as $n\to\infty$ to the Gaussian random distribution $\cN(0, \cI(\thetaTrue)^{-1})$.
This concludes the proof.
\end{proof}

\subsubsection{Proof of Proposition~\ref{prop_conv_Lambda_term}}
	\label{section_proof_prop_conv_Lambda_term}

We are going to prove a version of Proposition~\ref{prop_conv_Lambda_term}
where the functions $\varphi_\theta$ used in \eqref{eq_def_Lambda_ukx} to define $\Lambda_{u,k,x}(\theta)$
are replaced by scalar-valued functions, still denoted by $\varphi_\theta$, under more general assumptions.
The extension to matrix-valued functions is then straightforward
	by applying the result coordinate-wise.

Let $\Theta_0$ be a compact subset of $\Theta$, 
Let $\Theta_0$ be a closed ball in $\Theta$, 
and let $\varphi : \Theta_0 \times \SpaceX^2 \times \SpaceY \to \R$
be a Borel function such that for all $x',x\in\SpaceX$ and $y\in\SpaceY$, 
$\theta \mapsto \varphi(\theta, x', x, y) = \varphi_\theta(x',x,y)$ is a continuous function on $\Theta_0$,
and such that:
\begin{equation*}
\Etrue\left[
		\sup_{\theta\in\Theta_0} \sup_{x,x'\in\SpaceX} \vert \varphi_\theta(x,x',Y_{\rooot}) \vert^2
	\right] < \infty.
\end{equation*}
Let $\Lambda_{u,k,x}(\theta)$ be defined as in \eqref{eq_def_Lambda_ukx} on page~\pageref{eq_def_Lambda_ukx}
and note that it is in $L^2(\Prb_\cU \otimes \Ptrue)$.

The proof of \Cref{prop_conv_Lambda_term} is decomposed into several lemmas.

\medskip

We start with the following lemma, 
stating a uniform $L^2(\Prb_\cU\otimes\Ptrue)$ approximation bound on the quantities $\Lambda_{u,k,x}(\theta)$, 
and the existence of a limit function $\Lambda_{u,\infty}(\theta)$ which does not depend on $x$.
This lemma is an immediate consequence of \Cref{lemma_score_incr_L2_bound}
(remind that $\rho < 1/\sqrt{2}$ under the assumptions of \Cref{prop_conv_Lambda_term})
for the second moment ($b=2$)
with $\psi_\theta = \varphi_\theta$,
see also the discussion after \Cref{lemma_score_incr_L2_bound}
for the existence of the limit function.

\begin{lemme}\label{lemma_Lambda_incr_L2_bound}
Under the assumptions of \Cref{prop_conv_Lambda_term},
there exist finite constants $C < \infty$ and $\alpha\in(0,1)$ such that
for all $u\in\T$ there exists some function $\Lambda_{u,\infty}(\theta)$ in $L^2(\Prb_\cU\otimes\Ptrue)$
such that for all $k\in\N^*$, we have:
\begin{align*}
 \left( \Esp_\cU \otimes \Etrue \left[ \sup_{\theta\in\Theta_0} \sup_{x,\in\SpaceX}  \vert \Lambda_{u,k,x}(\theta) - \Lambda_{u,\infty}(\theta) \vert^2 \right] \right)^{1/2} \leq C \alpha^k .
\end{align*}
\end{lemme}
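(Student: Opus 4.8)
The plan is to recognise $\Lambda_{u,k,x}(\theta)$ as a special case of the generic increment $\xi_{u,k,x}(\theta)$ appearing in \Cref{lemma_score_incr_L2_bound}, so that the whole statement follows from that lemma applied with $b=2$. First I would check that the defining formula \eqref{eq_def_Lambda_ukx} has exactly the structure of the rearranged increment \eqref{eq_sum_incr_score_rearranged}: indeed $\Lambda_{u,k,x}(\theta)$ is the difference of the conditional expectation of $\sum_{v\in\DeltaR(u,k)\setminus\{\parent^k(u)\}} \varphi_\theta(X_{\parent(v)},X_v,Y_v)$ given $(Y_{\DeltaR(u,k)},X_{\parent^k(u)}=x)$ and of the analogous sum over $\DeltaR^*(u,k)\setminus\{\parent^k(u)\}$ given $(Y_{\DeltaR^*(u,k)},X_{\parent^k(u)}=x)$. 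Since $\DeltaR(u,k)=\DeltaR^*(u,k)\cup\{u\}$, isolating the term $v=u$ and pairing the remaining terms is precisely the rearrangement \eqref{eq_sum_incr_score_rearranged} with $\phi_\theta$ replaced by $\varphi_\theta$. Thus $\Lambda_{u,k,x}(\theta)=\xi_{u,k,x}(\theta)$ for the scalar-valued choice $\psi_\theta=\varphi_\theta$ (so $\dimTheta=1$, or one argues coordinate-wise in the matrix-valued version).

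It then remains to verify that $\psi_\theta=\varphi_\theta$ satisfies the hypotheses of \Cref{lemma_score_incr_L2_bound}. The continuity of $\theta\mapsto\varphi_\theta(x',x,y)$ on $\Theta_0$ and the second-moment bound $\Etrue[\sup_{\theta\in\Theta_0}\sup_{x,x'}\vert\varphi_\theta(x,x',Y_{\rooot})\vert^2]<\infty$ are exactly the standing hypotheses imposed on $\varphi_\theta$ in the scalar reformulation preceding this lemma, which supplies the case $b=2$. The structural Assumptions~\ref{assump_HMM_0}--\ref{assump_HMM_3} and the condition $\rho<1/\sqrt{2}$ required by \Cref{lemma_score_incr_L2_bound} are all part of the assumptions of \Cref{prop_conv_Lambda_term}. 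Hence \Cref{lemma_score_incr_L2_bound} applies, and the consequence recorded immediately after it yields a limit function $\Lambda_{u,\infty}(\theta):=\xi_{u,\infty}(\theta)\in L^2(\Prb_\cU\otimes\Ptrue)$, independent of $x$, for which the increment bound still holds with $\xi_{u,k',x'}$ replaced by the limit.

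Applying that bound with $b=2$ gives a finite constant $C'$, uniform in $u\in\T$, with
\begin{equation*}
\left( \Esp_\cU \otimes \Etrue \left[ \sup_{\theta\in\Theta_0} \sup_{x\in\SpaceX}  \vert \Lambda_{u,k,x}(\theta) - \Lambda_{u,\infty}(\theta) \vert^2 \right] \right)^{1/2} \leq C' \, k \, \bigl(\max(\rho,2\rho^2)\bigr)^{k/2}.
\end{equation*}
The last step is to absorb the polynomial prefactor into a purely geometric rate: writing $\beta:=\max(\rho,2\rho^2)$, the assumption $\rho<1/\sqrt{2}$ forces $2\rho^2<1$ and $\rho<1$, hence $\beta\in(0,1)$; for any fixed $\alpha\in(\sqrt{\beta},1)$ the sequence $k(\sqrt{\beta}/\alpha)^k$ is bounded by some constant, so $k\beta^{k/2}\leq C''\alpha^k$. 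Taking $C:=C'C''$ yields the claimed bound. There is essentially no analytic obstacle here, since every estimate is inherited from \Cref{lemma_score_incr_L2_bound}; the only points requiring care are the verification that \eqref{eq_def_Lambda_ukx} genuinely matches the rearranged form \eqref{eq_sum_incr_score_rearranged}, the automatic uniformity in $x$ (the bound of \Cref{lemma_score_incr_L2_bound} is already a supremum over $x,x'$), and the routine passage from the polynomial-times-geometric bound to a clean geometric one.
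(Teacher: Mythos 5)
Your proof is correct and follows essentially the same route as the paper, which likewise obtains Lemma~\ref{lemma_Lambda_incr_L2_bound} as an immediate consequence of \Cref{lemma_score_incr_L2_bound} applied with $b=2$ and $\psi_\theta=\varphi_\theta$, with the limit function $\Lambda_{u,\infty}(\theta)$ supplied by the discussion following that lemma. The details you spell out --- verifying that \eqref{eq_def_Lambda_ukx} matches the rearranged form \eqref{eq_sum_incr_score_rearranged}, noting the uniformity in $x$ already built into the bound, and absorbing the polynomial factor $k$ into a geometric rate $\alpha^k$ with $\alpha\in(\sqrt{\max(\rho,2\rho^2)},1)$ --- are exactly the steps the paper leaves implicit.
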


In particular, for all $u\in\T$, $\theta\in\Theta_0$ and $x\in\SpaceX$, 
the sequence $(\Lambda_{u,k,x}(\theta))_{k\in\N^*}$
converges in $L^2(\Prb_\cU\otimes\Ptrue)$ to $\Lambda_{u,\infty}(\theta)$ which does not depend on $x$.

\medskip

The following lemma gives an exponential bound on the $L^2(\Ptrue)$ norm
uniformly in $x\in\SpaceX$ for the the average of the quantities $\Lambda_{u,\height{u},x}(\thetaTrue)$
over $u\in\T_n^*$.

\begin{lemme}
	\label{lemma_Lambda_conv_unif_x}	
Under the assumptions of \Cref{prop_conv_Lambda_term}, for all $x\in\SpaceX$ and $\theta\in\Theta_0$,
	there exist finite constants $C<\infty$ and $\alpha \in (0,1)$ such that for all $n\in\N^*$
	we have:
\begin{equation*}\label{eq_Lambda_exp_decay_unif_x}
\Etrue\left[  \sup_{x\in\SpaceX} \left\vert
	\inv{\vert \T_n \vert} \sum_{u\in\T_n^*} \Lambda_{u,\height{u},x}(\theta) 
	- \Esp_\cU \otimes \Etrue\bigl[  \Lambda_{\rooot,\infty}(\theta) \bigr]
\right\vert^2 \right]^{1/2}
\leq C \alpha^n .
\end{equation*}
\end{lemme}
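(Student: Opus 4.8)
The plan is to follow the scheme of the proof of \Cref{prop_conv_likelihood_to_contrast_func}, replacing the variable-depth increments $\Lambda_{u,\height{u},x}(\theta)$ by fixed-depth increments evaluated at a single reference state, and absorbing the dependence on the conditioning state $x$ through the uniform forgetting bound of \Cref{lemma_Lambda_incr_L2_bound}. Fix $\theta\in\Theta_0$, an arbitrary reference point $x_0\in\SpaceX$, and an auxiliary truncation level $k\in\N^*$ to be chosen later as a function of $n$. I would write the quantity to be estimated as a telescoping sum
\begin{align*}
\inv{\vert \T_n \vert} \sum_{u\in\T_n^*} \Lambda_{u,\height{u},x}(\theta)
	- \Esp_\cU \otimes \Etrue[\Lambda_{\rooot,\infty}(\theta)]
& = \Bigl( \inv{\vert \T_n \vert} \sum_{u\in\T_n^*} \Lambda_{u,\height{u},x}(\theta)
	- \inv{\vert \T_n \vert} \sum_{u\in\T_n\setminus\T_{k-1}} \Lambda_{u,k,x_0}(\theta) \Bigr) \\
& \quad + \Bigl( \inv{\vert \T_n \vert} \sum_{u\in\T_n\setminus\T_{k-1}} \Lambda_{u,k,x_0}(\theta)
	- \Esp_\cU \otimes \Etrue[\Lambda_{\rooot,k,x_0}(\theta)] \Bigr) \\
& \quad + \Bigl( \Esp_\cU \otimes \Etrue[\Lambda_{\rooot,k,x_0}(\theta)]
	- \Esp_\cU \otimes \Etrue[\Lambda_{\rooot,\infty}(\theta)] \Bigr) ,
\end{align*}
and denote the three bracketed terms by $(A)$, $(B)$, $(C)$. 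The claimed bound then follows by estimating the $L^2(\Ptrue)$-norm of $\sup_{x\in\SpaceX}$ of each term.

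The term $(C)$ is deterministic and free of $x$: by \Cref{lemma_Lambda_incr_L2_bound} and Jensen's inequality, $|(C)| \le \Esp_\cU \otimes \Etrue[\sup_{x'\in\SpaceX}|\Lambda_{\rooot,k,x'}(\theta)-\Lambda_{\rooot,\infty}(\theta)|] \le C\alpha^k$. The term $(B)$ also does not depend on $x$, since $x_0$ is fixed: for $u\in\T\setminus\T_{k-1}$ the subtree $\DeltaR(u,k)=\Delta(u,k)$ is deterministic and $\Lambda_{u,k,x_0}(\theta)$ is a neighborhood-shape-dependent function of $Y_{\Delta(u,k)}$, equal up to re-rooting to $\Lambda_{v_u,k,x_0}(\theta)$ for the unique $v_u\in\G_k$ with $\Shape(\Delta(u,k))=\Delta(v_u)$, exactly as in \eqref{eq_equal_h_ukx_up_to_shape}. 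These functions lie in $L^2(\Ptrue)$ with norms bounded uniformly in $k$ by \Cref{lemma_Lambda_incr_L2_bound}, so the ergodic bound \eqref{eq_LLN_upper_bound_Esp_M_Gn_main_body} of \Cref{lemma_ergodic_convergence} applied to the collection $(\Lambda_{v,k,x_0}(\theta;\cdot))_{v\in\G_k}$, together with the identification of the limit through \eqref{eq_equality_expectation_root_and_U_k} of \Cref{rem_rerooting_delta}, gives $\Etrue[(B)^2]^{1/2}\le \sqrt{C_k\,\alpha^n}$ for $n\ge k$, where $C_k$ depends on $k$ only through the size of the neighborhoods.

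The term $(A)$ is where the supremum over $x$ must be handled. Splitting the sum, the low vertices $u\in\T_{k-1}^*$ contribute at most $\inv{\vert\T_n\vert}\sum_{u\in\T_{k-1}^*}\sup_{x}|\Lambda_{u,\height{u},x}(\theta)|$, whose $L^2$-norm is $\le C\vert\T_{k-1}\vert/\vert\T_n\vert$ by the uniform $L^2$ bound contained in \Cref{lemma_Lambda_incr_L2_bound}; for $u\in\T_n\setminus\T_{k-1}$ (so $\height{u}\ge k$) the triangle inequality through the $x$-free limit $\Lambda_{u,\infty}(\theta)$ yields $\sup_x|\Lambda_{u,\height{u},x}(\theta)-\Lambda_{u,k,x_0}(\theta)| \le \sup_x|\Lambda_{u,\height{u},x}(\theta)-\Lambda_{u,\infty}(\theta)| + |\Lambda_{u,\infty}(\theta)-\Lambda_{u,k,x_0}(\theta)|$, each summand having $L^2$-norm $\le C\alpha^k$ by \Cref{lemma_Lambda_incr_L2_bound}. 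Minkowski's inequality then gives $\Etrue[\sup_x|(A)|^2]^{1/2}\le C\alpha^k + C\vert\T_{k-1}\vert/\vert\T_n\vert$. Collecting the three estimates and choosing $k=\lfloor\beta n\rfloor$ with $\beta\in(0,1)$ small enough that the growth of $C_k$ and the factor $2^{\beta n}$ are dominated by the decay $\alpha^n$ (so that $\alpha^{\beta n}$, $2^{(\beta-1)n}$ and $\sqrt{C_{\lfloor\beta n\rfloor}\alpha^n}$ all decay geometrically) produces the claimed bound $C\tilde\alpha^n$.

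The main obstacle is precisely the supremum over $x\in\SpaceX$ sitting inside the expectation: the ergodic theorem \Cref{lemma_ergodic_convergence} applies to a fixed collection of neighborhood-dependent functions, whereas $\SpaceX$ is a general, possibly non-compact metric space over which no equicontinuity or finite-covering argument is available. The decisive idea that unlocks the proof is that the uniform exponential forgetting of \Cref{lemma_Lambda_incr_L2_bound} lets me replace the $x$-dependent increments $\Lambda_{u,k,x}$ by the single fixed-reference increments $\Lambda_{u,k,x_0}$ at a cost of order $\alpha^k$ that is uniform in $x$; the ergodic averaging is then carried out only at $x_0$, and the supremum over $x$ survives only in the easily controlled remainder $(A)$ and the deterministic term $(C)$. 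A secondary point requiring care is tracking the $k$-dependence of the constant in \eqref{eq_LLN_upper_bound_Esp_M_Gn_main_body} when optimising $k=k(n)$, which forces $k$ to grow only slowly (linearly with a small slope) in $n$.
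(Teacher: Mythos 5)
Your proof is correct and follows essentially the same route as the paper's: the same truncation at a common depth $k$ with a fixed reference state, the same use of \Cref{lemma_Lambda_incr_L2_bound} (plus the uniform $L^2$ bound) for the forgetting and bias terms, the same application of \eqref{eq_LLN_upper_bound_Esp_M_Gn_main_body} via the shape identifications of \Cref{rem_rerooting_delta}, and a final choice $k\propto n$ (the paper takes $k=\lceil n/2\rceil$). Merging the paper's first two terms into your $(A)$ and explicitly tracking the $k$-dependence of the ergodic constant $C_k$ are only cosmetic refinements, the latter if anything slightly more careful than the paper's statement that the constants are independent of $k$.
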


\begin{proof}
Let $x'\in\SpaceX$ and $\theta\in\Theta_0$.
Using Minkowski's inequality and Jensen's inequality, for all $n,k\in\N^*$, we get:
\begin{multline}
\Etrue\left[  \sup_{x\in\SpaceX} \left\vert
		\inv{\vert \T_n \vert} \sum_{u\in\T_n^*} \Lambda_{u,\height{u},x}(\theta) 
		- \Esp_\cU \otimes \Etrue \bigl[  \Lambda_{\rooot,\infty}(\theta) \bigr]
	\right\vert^2 \right]^{1/2} \displaybreak[3] \\
\begin{aligned} 
& \leq  \Etrue\left[  \sup_{x,x'\in\SpaceX} \left\vert  \inv{\vert \T_n \vert}
	 		 \sum_{u\in\T_{k-1}^*} \Lambda_{u,\height{u},x}(\theta) 
		\right\vert^2 \right]^{1/2} \\*
& \qquad +  \Etrue\left[  \sup_{x,x'\in\SpaceX}  \left\vert \inv{\vert \T_n \vert}
	 		 \sum_{u\in\T_n\setminus\T_{k-1}} \Lambda_{u,\height{u},x}(\theta)  - \Lambda_{u,k,x'}(\theta) 
		\right\vert^2 \right]^{1/2} \\*
& \qquad + \Etrue\left[   \left\vert
		\inv{\vert \T_n \vert} \sum_{u\in\T_n\setminus\T_{k-1}} \Lambda_{u,k,x'}(\theta) 
		- \Esp_\cU \otimes \Etrue \bigl[  \Lambda_{\rooot,k,x'}(\theta) \bigr]
	\right\vert^2 \right]^{1/2}  \\* 
& \qquad + \Esp_\cU \otimes \Etrue \bigl[  
		\vert \Lambda_{\rooot,k,x'}(\theta) - \Lambda_{\rooot,\infty}(\theta) \vert^2
	\bigr]^{1/2} 
.	
\end{aligned}\label{eq_sum_Lambda_conv_L2_approx}
\end{multline}
Using \Cref{lemma_Lambda_incr_L2_bound} 
together with \eqref{eq_unif_bound_dot_hukx} on page~\pageref{eq_unif_bound_dot_hukx}
(which, remind, are both immediate consequences of \Cref{lemma_score_incr_L2_bound}), 
there exists a finite constant $C<\infty$  and $\beta\in (0,1)$ such that
the first term in the right hand side of \eqref{eq_sum_Lambda_conv_L2_approx}
is upper bounded by $C 2^{-(n-k)}$
(note that $\frac{\vert T_{k-1} \vert}{\vert \T_n \vert} \leq 2^{-(n-k)}$),
and the second and fourth terms in the right hand side of \eqref{eq_sum_Lambda_conv_L2_approx}
are both upper bounded by $C \beta^{k/2}$.

We now give an upper bound for the second term in the right hand side of \eqref{eq_sum_Lambda_conv_L2_approx}.
For a vertex $u$ in $\T\setminus \T_{k-1}$, 
let $v_u\in\G_k$ be the unique vertex that satisfies the shape equality constraint \eqref{eq_def_shape_subtree} (on page \pageref{eq_def_shape_subtree}),
then we have:
\begin{equation}\label{eq_equal_Lambda_ukx_up_to_shape}
\Lambda_{u,k,x'}(\theta; Y_{\Delta(u,k)}=y_{\Delta(u,k)}) 
	=  \Lambda_{v_u,k,x'}(\theta; Y_{\Delta(v_u)}=y_{\Delta(u,k)}) .
\end{equation}
Moreover, using the definition of $\Lambda_{u,k,x}(\theta)$ in \eqref{eq_def_Lambda_ukx} 
	together with the assumption on $\varphi_\theta$ in \Cref{prop_conv_Lambda_term},
	we get that the random variable $\Lambda_{u,k,x'}(\theta; Y_{\Delta(u,k)}=y_{\Delta(u,k)})$ 
	is in $L^2(\Ptrue)$ for every $u\in\T\setminus \T_{k-1}$.
Thus, we can apply \Cref{lemma_ergodic_convergence} (see in particular \eqref{eq_LLN_upper_bound_Esp_M_Gn_main_body})
to the collection of neighborhood-shape-dependent functions $( \Lambda_{v_u,k,x'}(\theta; Y_{\Delta(v)}=\cdot) )_{v\in\G_k}$
(remind that indexing functions with $\G_k$ or with $\ShapeSetValues_k$ is equivalent by \eqref{eq_def_set_possible_shapes}).
Using \eqref{eq_LLN_upper_bound_Esp_M_Gn_main_body} in \Cref{lemma_ergodic_convergence}
together with \eqref{eq_equal_h_ukx_up_to_shape} and \eqref{eq_equality_expectation_root_and_U_k} in \Cref{rem_rerooting_delta},
	we get that
	there exist $\gamma\in (0,1)$ and a finite constant $C'<\infty$
	(note that they both do not depend on $k$ and $n$) such that for all $n,k\in\N^*$ with $n\geq k$,
the second term in the right hand side of \eqref{eq_sum_Lambda_conv_L2_approx} 
	is upper bounded by $C' \gamma^{n-k}$.

Hence, taking $k = \ceil{n / 2}$, we get that the left hand side of \eqref{eq_sum_Lambda_conv_L2_approx} 
is upper bounded by $2 C \beta^{n/4}  + C' \alpha^{n/2} + C 2^{-n/2+1}$,
and thus decays at exponential rate as desired.
This concludes the proof.
\end{proof}

\Cref{lemma_Lambda_conv_unif_x} implies as a corollary
the convergence $\Ptrue$-\as and in $L^2(\Ptrue)$ uniformly in $x\in\SpaceX$
for the the sum of the quantities $\Lambda_{u,\height{u},x}(\thetaTrue)$ over $u\in\T_n^*$.

\begin{corol}
	\label{corol_Lambda_conv_unif_x}	
Under the assumptions of \Cref{prop_conv_Lambda_term}, for all $x\in\SpaceX$ and $\theta\in\Theta_0$, 
	we have:
\begin{equation*}\lim_{n\to\infty}
  \sup_{x\in\SpaceX} \left\vert
	\inv{\vert \T_n \vert} \sum_{u\in\T_n^*} \Lambda_{u,\height{u},x}(\theta) 
	- \Esp_\cU \otimes \Etrue \bigl[  \Lambda_{\rooot,\infty}(\theta) \bigr]
\right\vert
= 0
\quad \text{$\Ptrue$-\as and in $L^2(\Ptrue)$.}
\end{equation*}
\end{corol}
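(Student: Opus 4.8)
The plan is to derive this corollary directly from the exponential $L^2$-bound just established in \Cref{lemma_Lambda_conv_unif_x}. Abbreviate the random quantity to be controlled by
\[
Z_n := \sup_{x\in\SpaceX} \left\vert \inv{\vert \T_n \vert} \sum_{u\in\T_n^*} \Lambda_{u,\height{u},x}(\theta) - \Esp_\cU \otimes \Etrue \bigl[ \Lambda_{\rooot,\infty}(\theta) \bigr] \right\vert,
\]
so that \Cref{lemma_Lambda_conv_unif_x} provides finite constants $C<\infty$ and $\alpha\in(0,1)$ with $\Etrue[Z_n^2]^{1/2} \leq C\alpha^n$ for all $n\in\N^*$. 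The convergence in $L^2(\Ptrue)$ is then immediate, since $\Etrue[Z_n^2] \leq C^2 \alpha^{2n} \to 0$ as $n\to\infty$.

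For the almost sure convergence, I would use a standard Borel--Cantelli argument made possible by the geometric decay. Fix $\eps>0$. By Markov's inequality, for every $n\in\N^*$ we have $\Ptrue(Z_n > \eps) \leq \eps^{-2}\, \Etrue[Z_n^2] \leq \eps^{-2} C^2 \alpha^{2n}$. Since $\alpha\in(0,1)$, the series $\sum_{n\geq 1} \alpha^{2n}$ converges, and hence $\sum_{n\geq 1} \Ptrue(Z_n > \eps) < \infty$. By the Borel--Cantelli lemma, $\Ptrue(Z_n > \eps \text{ infinitely often}) = 0$, that is, $\Ptrue$-\as we have $Z_n \leq \eps$ for all $n$ large enough. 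Taking a countable sequence $\eps = 1/m$ with $m\to\infty$ and intersecting the corresponding almost sure events yields $\lim_{n\to\infty} Z_n = 0$ $\Ptrue$-\as, which is exactly the claimed statement.

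There is essentially no genuine obstacle here: the entire difficulty was already absorbed into \Cref{lemma_Lambda_conv_unif_x}, whose exponential rate $C\alpha^n$ is precisely what makes the summability hypothesis of Borel--Cantelli hold and thereby upgrades the $L^2$ convergence to an almost sure one. The only point requiring a small amount of care is that $Z_n$ is defined with a supremum over $x\in\SpaceX$ already baked into its definition, so both the $L^2$ and almost sure conclusions are automatically uniform in $x$, with no additional covering or equicontinuity argument needed.
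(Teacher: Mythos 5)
Your proof is correct and follows essentially the same route as the paper: both deduce the $L^2(\Ptrue)$ convergence directly from the exponential bound in \Cref{lemma_Lambda_conv_unif_x}, and both upgrade it to $\Ptrue$-\as convergence via the summability of $\Etrue[Z_n^2]$ combined with Markov's inequality and the Borel--Cantelli lemma. Your write-up merely makes explicit the intersection over $\eps = 1/m$, which the paper leaves implicit.
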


\begin{proof}
The convergence in $L^2(\Ptrue)$ follows immediately from \Cref{lemma_Lambda_conv_unif_x}.
Moreover, using again \Cref{lemma_Lambda_conv_unif_x}, we have:
\begin{equation*}
\sum_{n\in\N^*} \Etrue\left[ \sup_{x\in\SpaceX} \left\vert
		\inv{\vert\T_n\vert} \sum_{u\in\T_n^*} \Lambda_{u,\infty}(\theta)
		- \Esp_\cU \otimes \Etrue [  \Lambda_{\rooot,\infty}(\theta) ]
	\right\vert^2 \right] < \infty .
\end{equation*}
Hence, Borel-Cantelli lemma and Markov's inequality imply that the convergence 
in the lemma also holds $\Ptrue$-\as
\end{proof}

The following lemma gives some continuity properties of the function $\theta \mapsto \Lambda_{\rooot,k,x}(\theta)$.

\begin{lemme}\label{lemma_Lambda_root_continuous}
Under the assumptions of \Cref{prop_conv_Lambda_term}, for all $x\in\SpaceX$ and $k\in\N$,
the random function $\theta \mapsto \Lambda_{\rooot,k,x}(\theta)$ is $\Prb_\cU \otimes \Ptrue$-\as continuous on $\Theta_0$.
Moreover, for all $\theta\in\Theta_0$, we have:
\begin{equation*}
\lim_{\delta\to 0} \Esp_\cU \otimes \Etrue\left[
		\sup_{\theta' \in\Theta_0 : \Vert \theta' - \theta \Vert \leq \delta} 
			\vert \Lambda_{\rooot,k,x}(\theta') - \Lambda_{\rooot,k,x}(\theta) \vert^2
	\right] = 0.
\end{equation*}
\end{lemme}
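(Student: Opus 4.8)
The plan is to follow the dominated-convergence argument from the proof of Proposition~\ref{prop_l_continuous_complete}, using that for $u=\rooot$ and a fixed $k$ the subtree $\DeltaR(\rooot,k)$ is finite, so that $\Lambda_{\rooot,k,x}(\theta)$ is built from finitely many finite-dimensional integrals. First I would fix a realization of $\cU$, making $\DeltaR(\rooot,k)$ a deterministic finite subtree with root $w:=\parent^k(\rooot)$, and write $\Lambda_{\rooot,k,x}(\theta)=A_\theta-B_\theta$ for the two conditional expectations appearing in \eqref{eq_def_Lambda_ukx} (over $\DeltaR(\rooot,k)$ and $\DeltaR^*(\rooot,k)$, respectively). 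Using \eqref{eq_def_p_theta_Y_Delta}, each of $A_\theta$ and $B_\theta$ is a ratio of two integrals over a power of $\SpaceX$: the denominator is $\int \prod_{v'} q_\theta(x_{\parent(v')},x_{v'})\,g_\theta(x_{v'},Y_{v'})\,\lambda(\drv x_{v'})$, and the numerator is the same integrand weighted by $\sum_v \varphi_\theta(x_{\parent(v)},x_v,Y_v)$.

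For the continuity (first part), the integrands are continuous in $\theta$ on $\Theta_0$ by the continuity assumptions on $q_\theta$ and $g_\theta$ (Assumption~\ref{assump_HMM_4}) and on $\varphi_\theta$. I would dominate them uniformly in $\theta$ using the bounds $q_\theta\le\sigma^+$ and $g_\theta\le b^+$ from Assumptions~\ref{assump_HMM_2} and~\ref{assump_HMM_3}, together with the envelope $M_v:=\sup_{\theta\in\Theta_0}\sup_{x,x'\in\SpaceX}\vert\varphi_\theta(x,x',Y_v)\vert$ for the $\varphi_\theta$ factor; since $\lambda$ is a probability measure, these dominating functions are integrable. Dominated convergence then yields continuity of numerator and denominator in $\theta$, and the denominator is bounded below by $\prod_{v}\sigma^- b^-(Y_v)$, which is positive $\Ptrue$-\as by Assumption~\ref{assump_HMM_2}. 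Hence each ratio, and therefore $\theta\mapsto\Lambda_{\rooot,k,x}(\theta)$, is $\Prb_\cU\otimes\Ptrue$-\as continuous on $\Theta_0$.

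For the local $L^2$ modulus of continuity (second part), I would combine the \as continuity just obtained with dominated convergence at the level of the expectation: as $\delta\to0$ the integrand $\sup_{\theta':\Vert\theta'-\theta\Vert\le\delta}\vert\Lambda_{\rooot,k,x}(\theta')-\Lambda_{\rooot,k,x}(\theta)\vert^2$ tends to $0$ $\Prb_\cU\otimes\Ptrue$-\as, so it remains to exhibit an integrable envelope. Bounding a conditional expectation of a sum by the sum of suprema gives $\sup_{\theta'\in\Theta_0}\vert\Lambda_{\rooot,k,x}(\theta')\vert\le 2S$ with $S:=\sum_{v\in\DeltaR(\rooot,k)\setminus\{w\}} M_v$, whence the integrand is dominated by $16\,S^2$; by Cauchy--Schwarz, stationarity (Assumption~\ref{assump_HMM_0}), the bound $\Etrue[M_\rooot^2]<\infty$ on the $\varphi_\theta$-envelope, and the finiteness of $\vert\DeltaR(\rooot,k)\vert\le\vert\Tpast(\rooot,k)\vert$, one gets $\Esp_\cU\otimes\Etrue[S^2]<\infty$. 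Dominated convergence then delivers the claimed limit. The only genuine obstacle is this envelope bookkeeping, namely producing an integrable bound uniform in $\theta'$ while controlling the $\cU$-dependence of the random subtree size; once the boundedness of $q_\theta,g_\theta$ and the square-integrability of the $\varphi_\theta$-envelope are invoked, everything reduces to the routine finite-dimensional dominated-convergence argument.
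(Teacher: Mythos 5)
Your proof is correct and takes essentially the same route as the paper's: both write the conditional expectations as ratios of finite-dimensional integrals over the finite subtree, get $\Prb_\cU\otimes\Ptrue$-almost sure continuity in $\theta$ by dominated convergence using the bounds $q_\theta\le\sigma^+$, $g_\theta\le b^+$, the envelope $\sup_{\theta'\in\Theta_0}\sup_{x,x'}\vert\varphi_{\theta'}(x',x,Y_v)\vert$, and the lower bound $\prod_v \sigma^- b^-(Y_v)>0$ on the denominator, and then deduce the $L^2$ statement by dominated convergence with the square-integrable envelope $2\sum_v \sup_{\theta'}\sup_{x,x'}\vert\varphi_{\theta'}(x',x,Y_v)\vert$. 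The only cosmetic differences are that you treat the two conditional expectations of the full sums at once and sum the envelope over the random subtree $\DeltaR(\rooot,k)$ (using independence of $\cU$ and the deterministic size bound via $\Tpast(\parent^k(\rooot),k)$), whereas the paper reduces to continuity of each single-vertex term and dominates by the sum over the deterministic supertree directly.
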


\begin{proof}
We mimic the proof of \cite[Lemma 14]{doucAsymptoticPropertiesMaximum2004}.

For all $v\in\Tpast$, define the random variable 
$\norm{\varphi^v}_\infty = \sup_{\theta'\in\Theta_0} \sup_{x,x'\in\SpaceX} \vert \varphi_{\theta'}(x',x,Y_v) \vert$.
Remind that under the assumptions of \Cref{prop_conv_Lambda_term},
the HMT process $(X,Y)$ is stationary and the random variable $\norm{\varphi^{\rooot}}_\infty$ is in $L^2(\Ptrue)$.
Thus, for all $v\in\Tpast$, the random variable $\norm{\varphi^v}_\infty$ is in $L^2(\Ptrue)$.
Remind from \eqref{eq_inclusion_Delta_Tpast} on page~\pageref{eq_inclusion_Delta_Tpast}
that $\DeltaR(\rooot,k)$ is a random subtree of the deterministic subtree $\Tpast(\parent^k(u),k)$.
Then, note that we have: 
\begin{equation*}
\sup_{\theta\in\Theta_0} \vert\Lambda_{\rooot,k,x}(\theta)\vert \leq 2 \sum_{v\in\Tpast(\parent^k(\rooot),k)} \norm{\varphi^v}_\infty ,
\end{equation*}
where the upper bound is a random variable in $L^2(\Ptrue)$ (and thus in $L^2(\Prb_\cU\otimes\Ptrue)$)
which depends only on $Y_{\Tpast(\parent^k(u),k)}$ but not on $\cU$.
Hence, to prove the lemma, it suffices to prove that for all $v\in \Tpast(\parent^k(u),k)\setminus\{\parent^k(\rooot)\}$ we have:
\begin{align*}
\lim_{\delta\to 0} \sup_{\theta' \in\Theta_0 : \Vert \theta' - \theta \Vert \leq \delta} \
	\Bigl\vert \Bigr.& \Esp_{\theta'}[ \varphi_{\theta'}(X_{\parent(v)},X_v,Y_v) \,\vert\, Y_{\DeltaR(\rooot,k)}, X_{\parent^k(\rooot)}=x ] \\
	& \Bigl. - \Esp_{\theta}[ \varphi_{\theta}(X_{\parent(v)},X_v,Y_v) \,\vert\, Y_{\DeltaR(\rooot,k)}, X_{\parent^k(\rooot)}=x ]
	\Bigr\vert
= 0, \qquad \Prb_\cU \otimes \Ptrue\text{-\as}
\end{align*}

Denote $x_{\parent^k(\rooot)}=x$, and write:
\begin{multline}
\Esp_{\theta}[ \varphi_{\theta}(X_{\parent(v)},X_v,Y_v) \,\vert\, Y_{\DeltaR(\rooot,k)}, X_{\parent^k(\rooot)}=x ] \\
 = \frac{
		\int_{\SpaceX^{\DeltaR(\rooot,k)\setminus\{\parent^k(\rooot)\}}} \varphi_\theta(x_{\parent(v)},x_v,Y_v)
		\prod_{w\in\DeltaR(\rooot,k)\setminus\{\parent^k(\rooot)\}} q_\theta(x_{\parent(w)},x_w) g_\theta(x_w,Y_w)
			\lambda(\drv x_w)
	}
	{
		\int_{\SpaceX^{\DeltaR(\rooot,k)\setminus\{\parent^k(\rooot)\}}}
		\prod_{w\in\DeltaR(\rooot,k)\setminus\{\parent^k(\rooot)\}} q_\theta(x_{\parent(w)},x_w) g_\theta(x_w,Y_w)
			\lambda(\drv x_w)
	}\cdot 	\label{eq_control_varphi_frac}
\end{multline}
Using Assumptions~\ref{assump_HMM_1}-\ref{assump_HMM_3} 
(which are part of the assumptions in \Cref{prop_conv_Lambda_term}),
we know that the integrand in the numerator of the right hand side of \eqref{eq_control_varphi_frac}
is continuous \wrt $\theta$ and is upper bounded by the random variable 
$\norm{\varphi^v}_\infty (\sigma^+ b^+)^{\vert \Tpast(\parent^k(u),k)\vert-1}$ (remind that $\sigma^+\geq 1$ and $b^+\geq 1$).
And similarly, the denominator is continuous \wrt $\theta$, 
and, using \Cref{assump_HMM_2}-\ref{assump_HMM_2:item2}, is lower bounded by the random variable:
\begin{equation*}
\prod_{w\in\DeltaR(\rooot,k)\setminus\{\parent^k(\rooot)\}} \sigma^- \inf_{\theta'\in\Theta}\int g_{\theta'}(x_w,Y_w) \lambda(\drv x_w) > 0.
\end{equation*}
Hence, using dominated convergence, we conclude that $\Prb_\cU\otimes\Ptrue$-\as the left hand side of \eqref{eq_control_varphi_frac}
is continuous \wrt $\theta$.
This concludes the proof.
\end{proof}

As a corollary of \Cref{lemma_Lambda_root_continuous},
we get that the function $\theta \mapsto \Lambda_{\rooot,\infty}(\theta)$ is continuous in $L^2(\Ptrue)$.

\begin{corol}\label{corol_Lambda_root_infinite_continuous}
Under the assumptions of \Cref{prop_conv_Lambda_term}, for all $\theta\in\Theta_0$, we have:
\begin{equation*}
\lim_{\delta\to 0} \Esp_\cU \otimes \Etrue \left[ 
		\sup_{\theta' \in\Theta_0 : \Vert \theta' - \theta \Vert \leq \delta} 
			\vert \Lambda_{\rooot,\infty}(\theta') - \Lambda_{\rooot,\infty}(\theta) \vert^2
	\right]  = 0.
\end{equation*}
In particular, the function $\theta \mapsto \Esp_\cU  \otimes \Etrue [ \Lambda_{\rooot,\infty}(\theta) ]$ is continuous on $\Theta_0$.
\end{corol}

\begin{proof}
Using Minkowski's inequality and \Cref{lemma_Lambda_incr_L2_bound}, 
there exist a finite constant $C<\infty$ and $\beta\in (0,1)$ such that for all $x\in\SpaceX$ and $k\in\N^*$, we have:
\begin{multline}
 \Esp_\cU \otimes \Etrue \left[ 
		\sup_{\theta' \in\Theta_0 : \Vert \theta' - \theta \Vert \leq \delta} 
			\vert \Lambda_{\rooot,\infty}(\theta') - \Lambda_{\rooot,\infty}(\theta) \vert^2
	 \right]^{1/2} \\
 \leq 	2 C \beta^{k/2}
	+ \Esp_\cU \otimes \Etrue \left[ 
		\sup_{\theta' \in\Theta_0 : \Vert \theta' - \theta \Vert \leq \delta} 
			\vert \Lambda_{\rooot,k,x}(\theta') - \Lambda_{\rooot,k,x}(\theta) \vert^2
	\right]^{1/2} . \label{eq_bound_cont_Lambda_root}
\end{multline}
Using \Cref{lemma_Lambda_root_continuous}, we get:
\begin{equation*}
\limsup_{\delta\to 0}
\Esp_\cU \otimes \Etrue \left[ 
		\sup_{\theta' \in\Theta_0 : \Vert \theta' - \theta \Vert \leq \delta} 
			\vert \Lambda_{\rooot,\infty}(\theta') - \Lambda_{\rooot,\infty}(\theta) \vert^2
	 \right]^{1/2}
\leq 2 C \beta^{k/2},
\end{equation*}
and taking $k\to\infty$, the upper bound vanishes.
This concludes the proof.
\end{proof}

We now prove a locally uniform law of large numbers for the quantities $\Lambda_{u,k,x}(\theta)$.

\begin{lemme}\label{lemma_conv_unif_sum_Lambda_k}
Under the assumptions of \Cref{prop_conv_Lambda_term}, for all $x\in\SpaceX$, we have:
\begin{align*}
\lim_{\delta\to 0} \lim_{n\to\infty} \sup_{\theta'\in\Theta_0 \,:\, \Vert\theta'-\theta\Vert \leq \delta}
	\left\vert
		\inv{\vert\T_n\vert} \sum_{u\in\T_n^*} \Lambda_{u,\height{u},x}(\theta')
		- \Esp_\cU \otimes \Etrue [  \Lambda_{\rooot,\infty}(\theta) ] 
	\right\vert
= 0,
\quad \Ptrue\text{-\as}
\end{align*}
\end{lemme}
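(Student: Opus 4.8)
The plan is to mimic the structure of the proof of \Cref{prop_unif_cv_l}, splitting the quantity into the pointwise (in $\theta$) convergence already furnished by \Cref{corol_Lambda_conv_unif_x} and a control of the oscillation of the averages over the ball $\{\theta'\in\Theta_0:\Vert\theta'-\theta\Vert\le\delta\}$. Fix $\theta\in\Theta_0$, $x\in\SpaceX$ and $\delta>0$. Adding and subtracting $\vert\T_n\vert^{-1}\sum_{u\in\T_n^*}\Lambda_{u,\height{u},x}(\theta)$, the triangle inequality gives
\begin{align*}
\sup_{\theta':\Vert\theta'-\theta\Vert\le\delta}\left\vert \inv{\vert\T_n\vert}\sum_{u\in\T_n^*}\Lambda_{u,\height{u},x}(\theta') - \Esp_\cU\otimes\Etrue[\Lambda_{\rooot,\infty}(\theta)]\right\vert \ \le\ S_n + T_n,
\end{align*}
where $T_n = \bigl\vert \vert\T_n\vert^{-1}\sum_{u\in\T_n^*}\Lambda_{u,\height{u},x}(\theta) - \Esp_\cU\otimes\Etrue[\Lambda_{\rooot,\infty}(\theta)]\bigr\vert$ and $S_n = \vert\T_n\vert^{-1}\sum_{u\in\T_n^*}\sup_{\theta':\Vert\theta'-\theta\Vert\le\delta}\vert\Lambda_{u,\height{u},x}(\theta')-\Lambda_{u,\height{u},x}(\theta)\vert$. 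By \Cref{corol_Lambda_conv_unif_x}, $T_n\to 0$ $\Ptrue$-almost surely, so the whole difficulty reduces to bounding $\limsup_{n\to\infty} S_n$.

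To control $S_n$ I trim the past to a common depth $k$. For $u\in\T_n\setminus\T_{k-1}$ (so $\height{u}\ge k$), the triangle inequality gives $\sup_{\theta'}\vert\Lambda_{u,\height{u},x}(\theta')-\Lambda_{u,\height{u},x}(\theta)\vert \le \sup_{\theta'}\vert\Lambda_{u,k,x}(\theta')-\Lambda_{u,k,x}(\theta)\vert + 2\sup_{\theta'}\vert\Lambda_{u,\height{u},x}(\theta')-\Lambda_{u,k,x}(\theta')\vert$, while the terms $u\in\T_{k-1}^*$ contribute a factor $\vert\T_{k-1}\vert/\vert\T_n\vert$. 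The trimmed piece $u\mapsto \sup_{\theta'}\vert\Lambda_{u,k,x}(\theta')-\Lambda_{u,k,x}(\theta)\vert$ is, thanks to \eqref{eq_equal_Lambda_ukx_up_to_shape}, the continuity from \Cref{lemma_Lambda_root_continuous} and the $L^2$ domination of $\Lambda_{u,k,x}$, a neighborhood-shape-dependent function of $Y_{\Delta(u,k)}$ in $L^2(\Ptrue)$; hence \Cref{lemma_ergodic_convergence}, via \eqref{eq_equal_Lambda_ukx_up_to_shape} and \eqref{eq_equality_expectation_root_and_U_k}, identifies the limit of its normalized average as $R_k(\delta):=\Esp_\cU\otimes\Etrue[\sup_{\theta':\Vert\theta'-\theta\Vert\le\delta}\vert\Lambda_{\rooot,k,x}(\theta')-\Lambda_{\rooot,k,x}(\theta)\vert]$ and supplies the geometric $L^2$ rate \eqref{eq_LLN_upper_bound_Esp_M_Gn_main_body}. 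The second, trimming piece has $L^2$-norm after normalized summation (Minkowski) at most $C\alpha^k$ by \Cref{lemma_Lambda_incr_L2_bound}. Choosing $k=k_n=\lceil n/2\rceil$ exactly as in \Cref{lemma_Lambda_conv_unif_x} makes all contributions decay geometrically in $n$, so that $\bigl(\Etrue[(S_n - R_{k_n}(\delta))^2]\bigr)^{1/2}\le C\beta^n$ for some $\beta\in(0,1)$. A Borel--Cantelli argument (as in \Cref{corol_Lambda_conv_unif_x}) upgrades this to $S_n - R_{k_n}(\delta)\to 0$ $\Ptrue$-almost surely. Since $\sup_{\theta'}\vert\Lambda_{\rooot,k,x}(\theta')-\Lambda_{\rooot,k,x}(\theta)\vert\to\sup_{\theta'}\vert\Lambda_{\rooot,\infty}(\theta')-\Lambda_{\rooot,\infty}(\theta)\vert$ in $L^1(\Prb_\cU\otimes\Ptrue)$ (uniform-in-$\theta'$ $L^2$ convergence from \Cref{lemma_Lambda_incr_L2_bound}), the centers satisfy $R_{k_n}(\delta)\to R(\delta):=\Esp_\cU\otimes\Etrue[\sup_{\theta':\Vert\theta'-\theta\Vert\le\delta}\vert\Lambda_{\rooot,\infty}(\theta')-\Lambda_{\rooot,\infty}(\theta)\vert]$, whence $\limsup_{n\to\infty} S_n = R(\delta)$ $\Ptrue$-almost surely.

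Combining the two estimates, for each fixed $\delta>0$ one gets $\limsup_{n\to\infty}\sup_{\theta':\Vert\theta'-\theta\Vert\le\delta}\vert\cdots\vert \le R(\delta)$ $\Ptrue$-almost surely. As this quantity is nondecreasing in $\delta$, I intersect the almost-sure events along a sequence $\delta_m\downarrow 0$ and conclude by $\lim_{\delta\to 0}R(\delta)=0$, which is precisely \Cref{corol_Lambda_root_infinite_continuous}. This yields the claimed locally uniform law of large numbers.

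The main obstacle is the trimming piece: unlike the \emph{pathwise} telescoping bound \eqref{eq_h_ukx_unif_Cauchy} available for the contrast function in \Cref{prop_unif_cv_l}, here only the $L^2$ bound of \Cref{lemma_Lambda_incr_L2_bound} is at hand, so the almost-sure statement cannot be read off termwise and must be recovered through the coupled choice $k_n=\lceil n/2\rceil$ together with Borel--Cantelli, all the while keeping the supremum over the ball \emph{inside} the averages so that the surviving limit is a genuine modulus of continuity vanishing as $\delta\to 0$.
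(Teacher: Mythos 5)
Your proof is correct and is essentially the paper's argument: the same triangle-inequality split into the averaged sup-oscillation term and the pointwise term handled by \Cref{corol_Lambda_conv_unif_x}, with the oscillation term treated by rerunning the trimming/ergodic/Borel--Cantelli machinery of \Cref{lemma_Lambda_conv_unif_x} and \Cref{corol_Lambda_conv_unif_x} on the random variable $\sup_{\theta':\Vert\theta'-\theta\Vert\leq\delta}\vert\Lambda_{u,\height{u},x}(\theta')-\Lambda_{u,\height{u},x}(\theta)\vert$ and concluding via \Cref{corol_Lambda_root_infinite_continuous}. The only difference is presentational: the paper compresses this transfer into one line (``use the exact same argument with $\Lambda_{u,\height{u},x}(\theta)$ replaced by the supremum of oscillations''), whereas you unfold it explicitly, including the coupled choice $k_n=\lceil n/2\rceil$ and the monotone-in-$\delta$ intersection of almost-sure events along $\delta_m\downarrow 0$ that the paper leaves implicit.
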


\begin{proof}
First, write:
\allowdisplaybreaks[4]
\begin{multline}
 \sup_{\theta'\in\Theta_0 \,:\, \Vert\theta'-\theta\Vert \leq \delta} 
	\left\vert
		\inv{\vert\T_n\vert} \sum_{u\in\T_n^*} \Lambda_{u,\height{u},x}(\theta')
		- \Esp_\cU \otimes \Etrue [  \Lambda_{\rooot,\infty}(\theta) ]
	\right\vert \\
\begin{aligned}
& \leq 
		\inv{\vert\T_n\vert} \sum_{u\in\T_n^*} 
			\sup_{\theta'\in\Theta_0 \,:\, \Vert\theta'-\theta\Vert \leq \delta}
		\bigl\vert \Lambda_{u,\height{u},x}(\theta') - \Lambda_{u,\height{u},x}(\theta) \bigr\vert 
	 \\*
& \qquad\qquad +
	\left\vert
		\inv{\vert\T_n\vert} \sum_{u\in\T_n^*} \Lambda_{u,\height{u},x}(\theta)
		- \Esp_\cU \otimes \Etrue [  \Lambda_{\rooot,\infty}(\theta) ] 
	\right\vert . \label{eq_upper_bound_unif_sup_Lambda_k}
\end{aligned}
\end{multline}
\allowdisplaybreaks[1]

Then, we use the exact same argument as in the proofs of \Cref{lemma_Lambda_conv_unif_x} and \Cref{corol_Lambda_conv_unif_x}
where for all $u\in\T^*$, the random variable $\Lambda_{u,\height{u},x}(\theta)$ is replaced by the random variable:
\begin{equation*}
\sup_{\theta'\in\Theta_0 \,:\, \Vert\theta'-\theta\Vert \leq \delta}
		\bigl\vert \Lambda_{u,\height{u},x}(\theta') - \Lambda_{u,\height{u},x}(\theta) \bigr\vert ,
\end{equation*}
	which are in $L^2(\Ptrue)$ using the assumptions of \Cref{prop_conv_Lambda_term}.
This gives us
that the first term in the upper bound of \eqref{eq_upper_bound_unif_sup_Lambda_k}
converges $\Ptrue$-\as as $n\to\infty$ to:
\begin{equation*}
\Esp_\cU \otimes \Etrue \left[
		\sup_{\theta' : \Vert \theta' - \theta \Vert \leq \delta} 
			\vert \Lambda_{\rooot,\infty}(\theta') - \Lambda_{\rooot,\infty}(\theta) \vert
	 \right],
\end{equation*}
which, by \Cref{corol_Lambda_root_infinite_continuous}, vanishes when $\delta\to 0$.
\Cref{corol_Lambda_conv_unif_x} implies that 
the second term in the upper bound of \eqref{eq_upper_bound_unif_sup_Lambda_k}
vanishes $\Ptrue$-\as when $n\to\infty$.
This concludes the proof. \end{proof}

Combining the previous lemmas in this subsection, we are now ready to prove \Cref{prop_conv_Lambda_term}.

\begin{proof}[Proof of \Cref{prop_conv_Lambda_term}]
By \Cref{lemma_Lambda_incr_L2_bound}, for all $u\in\T$, we have that $(\Lambda_{u,k,x}(\theta))_{k\in\N^*}$
is a Cauchy sequence uniformly \wrt $\theta\in\Theta_0$ in $L^2(\Prb_\cU\otimes\Ptrue)$
that converges to some limit $\Lambda_{u,\infty}(\theta)$
	(that does not depend on $x$).
By \Cref{corol_Lambda_conv_unif_x}, we have that $\Ptrue$-\as
the convergence for the the average of the quantities $\Lambda_{u,\height{u},x}(\thetaTrue)$ over $u\in\T_n^*$ holds uniformly in $x\in\SpaceX$,
that is, \eqref{eq_conv_as_sum_Lambda_ukx} in \Cref{prop_conv_Lambda_term} holds.
By \Cref{corol_Lambda_root_infinite_continuous},
we have that the function $\theta \mapsto \Esp_\cU  \otimes \Etrue [ \Lambda_{\rooot,\infty}(\theta) ]$ is continuous on $\Theta_0$.
Finally, the last part of the proposition is given by \Cref{lemma_conv_unif_sum_Lambda_k}.
\end{proof}

\subsubsection{Proof of Proposition \ref{prop_conv_Gamma_term}}
	\label{section_proof_prop_conv_Gamma_term}

Similarly to what we have done for Proposition~\ref{prop_conv_Lambda_term},
we are going to prove a version of Proposition~\ref{prop_conv_Gamma_term}
where the functions $\phi_\theta$ used in \eqref{eq_def_Gamma_ukx} to define $\Gamma_{u,k,x}(\theta)$
are replaced by scalar-valued functions, still denoted by $\varphi_\theta$, under more general assumptions.
The extension to matrix-valued functions is then straightforward
	by applying the result coordinate-wise.

Let $\Theta_0$ be a compact subset of $\Theta$, 
Let $\Theta_0$ be a closed ball in $\Theta$, 
and let $\phi : \Theta_0 \times \SpaceX^2 \times \SpaceY \to \R$
be a Borel function such that for all $x',x\in\SpaceX$ and $y\in\SpaceY$, 
$\theta \mapsto \phi(\theta, x', x, y) = \phi_\theta(x',x,y)$ is a continuous function on $\Theta_0$,
and such that:
\begin{equation*}
\Etrue\left[
		\sup_{\theta\in\Theta_0} \sup_{x,x'\in\SpaceX} \vert \phi_\theta(x,x',Y_{\rooot}) \vert^4
	\right] < \infty.
\end{equation*}
Let $\Gamma_{u,k,x}(\theta)$ be defined as in \eqref{eq_def_Gamma_ukx} on page~\pageref{eq_def_Gamma_ukx}
and note that it is in $L^2(\Prb_\cU \otimes \Ptrue)$.

\medskip

The proof of Proposition \ref{prop_conv_Lambda_term} can be straightforwardly adapted to  \Cref{prop_conv_Gamma_term}
	except for Lemma~\ref{lemma_Lambda_incr_L2_bound}.
Thus, for brevity, we only present the adaptation of Lemma~\ref{lemma_Lambda_incr_L2_bound} 
	to the terms $\Gamma_{u,k,x}(\theta)$.
(The details of the adaptation for the rest of the proof of Proposition \ref{prop_conv_Lambda_term}
	to the terms $\Gamma_{u,k,x}(\theta)$ can be found in \Cref{appendix_proof_prop_conv_Gamma_term}.)

We start with two lemmas giving coupling bounds that will be used to control the covariance terms 
that appear in the definition of the terms $\Gamma_{u,k,x}(\theta)$.
The following lemma is a variant of \Cref{lemme_exp_coupling_HMT} 
for two vertices of $\T$.

\begin{lemme}[Forward coupling bound for two vertices]
	\label{lemma_forward_coupling_two_vertices}
Assume that Assumptions \ref{assump_HMM_1} and \ref{assump_HMM_2} hold.
Then, for all $u,v\in\T$, all $y_{\T_n}\in \SpaceY^{\vert \T_n \vert}$ (and $n\in\N$)
and all initials distributions $\nu$ and $\nu'$ on $\SpaceX$,
we have:
\begin{equation*}
\Bignorm{ \int_{\SpaceX} 
		\Prb_\theta\Bigl( X_u \in \cdot, X_v \in \cdot \, \Bigm\vert \,  Y_{\T_n}=y_{\T_n}, X_{\rooot} = x  \Bigr) 
		[ \nu(\drv x) - \nu'(\drv x) ]
	}_{\mathrm{TV}}
\leq 2 \, \rho^{\min(\height{u}, \height{v})} .
\end{equation*}
\end{lemme}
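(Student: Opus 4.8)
The plan is to condition on the hidden state at the most recent common ancestor $w=u\land v$ and to distribute the contraction between the common path $\rooot\to w$ and the two branches $w\to u$ and $w\to v$. Write $p=\height{w}$, and recall from the Markov property of the HMT (see~\eqref{eq_illustration_Markov_prop} and Figure~\ref{fig_Markov_prop_HMT}) that conditioning on $X_w$ splits the tree into independent components: in particular, given $X_w$ and $Y_{\T_n}$, the variables $X_u$, $X_v$ and $X_\rooot$ are mutually independent (here $u$ and $v$ lie in distinct child-subtrees of $w$, and $\rooot$ lies in the component above $w$). Hence the joint smoothing law factorizes as
\begin{multline*}
\Prb_\theta\bigl(X_u\in\drv x_u,\, X_v\in\drv x_v \,\bigm\vert\, Y_{\T_n}=y_{\T_n},\, X_\rooot=x\bigr) \\
= \int_{\SpaceX} \Prb_\theta(X_u\in\drv x_u\mid Y_{\T_n}, X_w=x_w)\,\Prb_\theta(X_v\in\drv x_v\mid Y_{\T_n}, X_w=x_w)\,\Prb_\theta(X_w\in\drv x_w\mid Y_{\T_n}, X_\rooot=x).
\end{multline*}
Denoting by $K$ the product kernel $K(x_w;\cdot)=\Prb_\theta(X_u\in\cdot\mid Y_{\T_n}, X_w=x_w)\otimes\Prb_\theta(X_v\in\cdot\mid Y_{\T_n}, X_w=x_w)$, and by $\eta_\nu=\int_{\SpaceX}\Prb_\theta(X_w\in\cdot\mid Y_{\T_n}, X_\rooot=x)\,\nu(\drv x)$ (and likewise $\eta_{\nu'}$) the smoothing laws of $X_w$, the left-hand side of the lemma is exactly $\normTV{(\eta_\nu-\eta_{\nu'})K}$.

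I would then invoke Lemma~\ref{lemma_coumpling_bound_Dobrushin} to get $\normTV{(\eta_\nu-\eta_{\nu'})K}\le\delta(K)\,\normTV{\eta_\nu-\eta_{\nu'}}$. For the second factor, the single-vertex forgetting bound (Lemma~\ref{lemme_exp_coupling_HMT}) applied at $w$ gives $\normTV{\eta_\nu-\eta_{\nu'}}\le\rho^{p}$. For the Dobrushin coefficient of the product kernel $K$, I would combine the elementary inequality $\normTV{\mu_1\otimes\mu_2-\mu_1'\otimes\mu_2'}\le\normTV{\mu_1-\mu_1'}+\normTV{\mu_2-\mu_2'}$ with the single-vertex lemma re-rooted at $w$ (legitimate by the remark following Lemma~\ref{lemme_exp_coupling_HMT}): since conditioning on $X_w$ makes the marginal of $X_u$ depend only on the observations in the subtree $\T(w)$ by~\eqref{eq_illustration_Markov_prop}, taking $\nu=\delta_{x_w}$ and $\nu'=\delta_{x_w'}$ yields
\begin{equation*}
\bigl\Vert \Prb_\theta(X_u\in\cdot\mid Y_{\T_n}, X_w=x_w) - \Prb_\theta(X_u\in\cdot\mid Y_{\T_n}, X_w=x_w')\bigr\Vert_{\mathrm{TV}}\le\rho^{\height{u}-p},
\end{equation*}
and the analogous bound $\rho^{\height{v}-p}$ for $X_v$, so that $\delta(K)\le\rho^{\height{u}-p}+\rho^{\height{v}-p}$.

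Multiplying the two factors then gives
\begin{equation*}
\normTV{(\eta_\nu-\eta_{\nu'})K}\le\bigl(\rho^{\height{u}-p}+\rho^{\height{v}-p}\bigr)\rho^{p}=\rho^{\height{u}}+\rho^{\height{v}}\le 2\,\rho^{\min(\height{u},\height{v})},
\end{equation*}
which is the claim. The degenerate cases ($w=\rooot$, or $w\in\{u,v\}$ when one vertex is an ancestor of the other, or $u=v$) are covered by the same computation by reading $\rho^{0}=1$ for the empty path or empty branch. I expect the main obstacle to be conceptual rather than technical: one must recognize that spending all the contraction on the path down to $w$ only yields the too-weak $\rho^{\height{w}}$, and that the correct bound $\min(\height{u},\height{v})$ is obtained by splitting the contraction multiplicatively — $\rho^{p}$ from forgetting the root along $\rooot\to w$, and $\rho^{\height{u}-p}+\rho^{\height{v}-p}$ from the sensitivity of the two branch marginals to the value of $X_w$ — so that the exponents recombine into $\height{u}$ and $\height{v}$.
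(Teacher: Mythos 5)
Your proof is correct and arrives at the same intermediate bound as the paper, $\rho^{\height{u}}+\rho^{\height{v}} \le 2\,\rho^{\min(\height{u},\height{v})}$, via the same decomposition at $w=u\land v$ (a factor $\rho^{p}$ from the common path, and $\rho^{\height{u}-p}+\rho^{\height{v}-p}$ from the two branches); what differs is the machinery used to glue these together. The paper proceeds probabilistically: it constructs an explicit three-stage coupling of the two processes started from $\nu$ and $\nu'$ — couple along $\rooot\to w$ with failure probability at most $\rho^{p}$ via Lemma~\ref{lemme_exp_coupling_HMT}, keep the processes equal on both branches on the success event, and on the failure event run two conditionally independent branch couplings — and concludes with a union bound on the failure probabilities. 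You instead work analytically: you factorize the joint smoothing law through $X_w$ (valid since, given $X_w$ and $Y_{\T_n}$, the components containing $u$, $v$ and $\rooot$ are mutually independent by the Markov property), identify the left-hand side as $\normTV{(\eta_\nu-\eta_{\nu'})K}$ for a product kernel $K$, and combine the Dobrushin contraction of Lemma~\ref{lemma_coumpling_bound_Dobrushin} with the inequality $\normTV{\mu_1\otimes\mu_2-\mu_1'\otimes\mu_2'}\le\normTV{\mu_1-\mu_1'}+\normTV{\mu_2-\mu_2'}$. This buys rigor — there is no informal ‘‘still defined to be equal after the fork’’ step and no need to describe conditional laws on the failure event — whereas the paper's coupling formulation is the one it reuses directly for the backward analogue (Lemma~\ref{lemma_backward_coupling_two_vertices}). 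Two small points you should make explicit: Lemma~\ref{lemma_coumpling_bound_Dobrushin} is stated for kernels from $\SpaceX$ to itself, while your $K$ maps $\SpaceX$ into $\SpaceX\times\SpaceX$ (its proof carries over verbatim, but say so); and, strictly, given $(X_w, Y_{\T_n})$ the law of $X_u$ depends only on the observations in the child subtree of $w$ containing $u$ — a subset of $\T(w)$ — which via \eqref{eq_illustration_Markov_prop} is exactly what justifies invoking the re-rooted single-vertex bound with exponent $\height{u}-p$. Your reading of the degenerate cases ($w\in\{\rooot,u,v\}$, $u=v$) through the convention $\rho^{0}=1$ is also correct, since the trivial bound $\normTV{\delta_{x_w}-\delta_{x_w'}}\le 1$ plays the role of the empty branch.
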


For simplicity, \Cref{lemma_forward_coupling_two_vertices} is stated with $\rooot$ as the initial vertex,
but note that the results still holds when replacing $\rooot$ and $\T_n$
by $v$ and $\T(v,n)$ for any $v\in\Tpast$.
We shall reuse this fact later.

\begin{proof}
We are going to construct a coupling that achieves this minimum.
Denote by $((X_w', Y_w'), \linebreak w\in\T)$ the process started from the distribution $\nu'$
	(and similarly without the $'$).
Remark that we only need to define the (joint) coupling for the variables 
	$X_w$ and $X_w'$ for $w$ on the paths between the vertices $\rooot$, $u$ and $v$.

\Cref{lemme_exp_coupling_HMT} applied to the vertex $u\land v$ gives us a coupling
	for the variables $X_w$ and $X_w'$ for $w$ on the path between $\rooot$ and $u\land v$
	with successful coupling probability upper bounded by $1 - \rho^{\height{u\land v}}$.
On this successful coupling event before or on $u\land v$, the two processes are still defined to be equal
	after the fork on both branches leading to $u$ and $v$.

On the complementary event (no successful coupling before or on $u\land v$),
	we get two new distributions $\nu_{u\land v}$ and $\nu_{u\land v}'$ 
	for the variables $X_{u\land v}$ and $X_{u\land v}'$, respectively.
Note that conditioned on the value of $X_{u\land v}$, the two branches leading to $u$ and $v$ are independent.
Thus, applying \Cref{lemme_exp_coupling_HMT} to $u$ (resp. $v$) with the initial distributions $\nu_{u\land v}$ and $\nu_{u\land v}'$,
	we construct a coupling of the processes $X$ and $X'$ on the branch from $u\land v$ to $u$ (resp. $v$)
	with successful coupling probability upper bounded by $1 - \rho^{\height{u} - \height{u\land v}}$ 
		(resp. $1 - \rho^{\height{v} - \height{u\land v}}$).

Hence, the probability that we do not get a successful coupling on at least one of the two variables $X_u$ and $X_v$ 
	is upper bounded by $\rho^{\height{u\land v}} ( \rho^{\height{u} - \height{u\land v}} + \rho^{\height{v} - \height{u\land v}} )
	\leq 2 \rho^{\min(\height{u}, \height{v})}$.
This concludes the proof.
\end{proof}

The following lemma is a variant of \Cref{lemma_backward_mixing_bound} 
giving a ‘‘backward in time’’ coupling bound
for two vertices of $\T$.

\begin{lemme}[Backward coupling bound for two vertices]
	\label{lemma_backward_coupling_two_vertices}
Assume that Assumptions~\ref{assump_HMM_1}--\ref{assump_HMM_2} hold.
Let $k\in\N^*$, $x\in\SpaceX$ and $u\in \T$,
and let $v,w\in \Tpast(\parent^k(u),k) \setminus\{u\}$.
Then, we have:
\begin{align*}
\bigl\Vert  \Prb_\theta (  X_{v} \in \cdot, X_{w} \in \cdot
		\,\vert\, Y_{\DeltaR(u,k)}, X_{\parent^k(u)}=x ) 
 -  \Prb_\theta (  X_{v} \in \cdot, X_{w} \in \cdot
		\,\vert\, Y_{\DeltaR^*(u,k)}, X_{\parent^k(u)}=x )
	\bigr\Vert_{\text{TV}} \\
 \leq  2 \, \rho^{\min(d(u,v), d(u,w)) -1}.
\end{align*}
\end{lemme}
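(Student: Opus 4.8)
The plan is to reduce the statement to a single Dobrushin-type coupling bound for a Markov kernel from the hidden state at $\parent(u)$ to the joint hidden state at $(v,w)$, and then to bound the Dobrushin coefficient of that kernel by a staged coupling mirroring the proof of \Cref{lemma_forward_coupling_two_vertices}. First I would exploit that the two conditionings differ only in the extra observation $Y_u$, since $\DeltaR(u,k) = \DeltaR^*(u,k) \cup \{u\}$. Because $v,w \in \Tpast(\parent^k(u),k)\setminus\{u\}$, neither $v$ nor $w$ is a descendant of $u$ (the vertex $u$ sits at the maximal depth $k$ of $\Tpast(\parent^k(u),k)$), and likewise $\DeltaR^*(u,k)$ and $\parent^k(u)$ avoid the subtree $\T(u)$. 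Hence, by the branching Markov property at $\parent(u)$, conditionally on $X_{\parent(u)}$ the observation $Y_u$ is independent of $(X_v,X_w)$, of $Y_{\DeltaR^*(u,k)}$ and of $X_{\parent^k(u)}$. Conditioning first on $X_{\parent(u)}$ therefore writes both joint laws in the statement as $\mu_1 K$ and $\mu_0 K$, where $K(x_{\parent(u)};\cdot) = \Prb_\theta(X_v\in\cdot,\, X_w\in\cdot \mid X_{\parent(u)} = x_{\parent(u)},\, Y_{\DeltaR^*(u,k)},\, X_{\parent^k(u)} = x)$ is a common kernel, and $\mu_1,\mu_0$ are the laws of $X_{\parent(u)}$ given $Y_{\DeltaR(u,k)}$ (resp. $Y_{\DeltaR^*(u,k)}$) and $X_{\parent^k(u)}=x$. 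By \Cref{lemma_coumpling_bound_Dobrushin}, the left-hand side is then at most $\delta(K)\,\normTV{\mu_1 - \mu_0} \leq \delta(K)$, so it remains to show $\delta(K) \leq 2\rho^{\min(\dgr(u,v),\dgr(u,w)) - 1}$.

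To bound $\delta(K)$, I would construct a coupling of the conditioned process started from $X_{\parent(u)} = x_{\parent(u)}$ and from $X_{\parent(u)} = x_{\parent(u)}'$, following the path structure from $\parent(u)$ to $v$ and to $w$. Let $f$ denote the point at which the paths $[\parent(u),v]$ and $[\parent(u),w]$ diverge (the deeper of $u\wedge v$ and $u\wedge w$, which lies on the segment $[\parent^k(u),\parent(u)]$). Removing $f$ separates the tree into components containing, respectively, $\parent(u)$, $v$ and $w$, so that given $X_f$ the variables $X_v$ and $X_w$ are conditionally independent. I would first couple the two copies along the shared segment $[\parent(u),f]$; as in the proofs of \Cref{lemme_exp_coupling_HMT} and \Cref{lemma_backward_mixing_bound}, the conditioned chain along any tree path (forward- or backward-smoothing) has one-step Dobrushin coefficient at most $\rho$, so the two copies of $X_f$ fail to coincide with probability at most $\rho^{\dgr(\parent(u),f)}$. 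On the complementary event the branches to $v$ and to $w$ emanate from $f$ in two distinct directions, so I would couple them independently using the same per-step contraction, with failure probabilities at most $\rho^{\dgr(f,v)}$ and $\rho^{\dgr(f,w)}$. A union bound over the two branches gives $\delta(K) \leq \rho^{\dgr(\parent(u),f)}\bigl(\rho^{\dgr(f,v)} + \rho^{\dgr(f,w)}\bigr) = \rho^{\dgr(\parent(u),v)} + \rho^{\dgr(\parent(u),w)}$, since $f$ lies on both paths. Finally, as $v,w$ are not descendants of $u$, one has $\dgr(\parent(u),v) = \dgr(u,v) - 1$ and $\dgr(\parent(u),w) = \dgr(u,w) - 1$, whence $\delta(K) \leq 2\rho^{\min(\dgr(u,v),\dgr(u,w)) - 1}$, as claimed; the factor $2$ and the appearance of the minimum exactly parallel \Cref{lemma_forward_coupling_two_vertices}.

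The main obstacle, and the part requiring the most care, is the staged-coupling bound on $\delta(K)$: I must verify both that the two branches issuing from $f$ are conditionally independent given $X_f$ under the smoothing measure (so that their couplings may be run independently and combined by a union bound), and that the one-step Dobrushin bound $\rho$ holds uniformly along every segment involved — including branches that first run backward (up toward $\parent^k(u)$) before running forward — which is precisely the content of the forward- and backward-smoothing kernel computations in the proofs of \Cref{lemme_exp_coupling_HMT} and \Cref{lemma_backward_mixing_bound}. Organizing the geometry so that the exponents telescope correctly — in the boundary cases where one of $v,w$ lies on the path $[\parent^k(u),u]$ (so that $f$ coincides with that vertex and no further branch is needed), or where $u\wedge v = u\wedge w$ — is routine bookkeeping but should be stated explicitly to ensure the union bound yields $\dgr(\parent(u),v)$ and $\dgr(\parent(u),w)$ in the exponents in every configuration. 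An equivalent, slightly more algebraic route would replace the explicit coupling by the sub-multiplicativity of the Dobrushin coefficient (\Cref{lemma_Dobrushin_sub_multiplicative}) applied to the factorization $K = K_1 K_2$ through $X_f$, but the split of the second factor $K_2$ into two conditionally independent branches still has to be handled by hand.
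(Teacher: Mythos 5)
Your proposal is correct and follows essentially the same route as the paper: your divergence vertex $f$ is exactly the paper's $w_0$ (the vertex on the path between $v$ and $w$ closest to $u$), and the paper likewise couples along the segment from $\parent(u)$ to $w_0$ via the ``backward in time'' bound of \Cref{lemma_backward_mixing_bound}, then runs two independent couplings on the branches to $v$ and $w$ (conditionally independent given $X_{w_0}$ by the Markov property), concluding with the same union bound $\rho^{\dgr(\parent(u),v)}+\rho^{\dgr(\parent(u),w)}\leq 2\,\rho^{\min(\dgr(u,v),\dgr(u,w))-1}$. The only slip is your parenthetical identifying $f$ with the deeper of $u\land v$ and $u\land w$ on the ancestral segment of $u$, which fails when $u\land v=u\land w$ and both $v,w$ lie in the same side subtree; since your argument only uses that $f$ lies on both paths from $\parent(u)$, this does not affect the proof.
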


\begin{proof}
The idea of the proof is similar to that of \Cref{lemma_forward_coupling_two_vertices}.
We explicitly construct a coupling with coupling failure probability upper bounded by $2 \, \rho^{\min(d(u,v), d(u,w)) -1}$.
Denote by $w_0$ the vertex on the path between $v$ and $w$ that is the closest to $u$,
	and note that we have $w_0 \in \Tpast(\parent^k(u),k) \setminus\{u\}$.
Note that $w_0$ is on the path from $\parent(u)$ to $v$, and thus
	$\dgr(\parent(u), v) = \dgr(\parent(u), w_0) + \dgr(w_0, v)$, and similarly when replacing $v$ by $w$.
On the path from $\parent(u)$ to $w_0$, we use the coupling provided 
	by the ‘‘backward in time’’ coupling bound from \Cref{lemma_backward_mixing_bound}
	with successful probability $1 -  \rho^{\dgr(\parent(u),w_0)}$.
On the path from $w_0$ to $v$ and the other path from $w_0$ to $w$,
which are independent using the Markov property,
	we use two independent couplings that are constructed 
		using a similar coupling argument as in \Cref{lemma_backward_mixing_bound} 
		with $\parent(u)$ replaced by $w_0$.
Those independent couplings have successful probabilities $1 - \rho^{\dgr(w_0,v)}$ and 
		$1 - \rho^{\dgr(w_0,w)}$, respectively.
Note that the coupling we have constructed has a coupling failure probability upper bounded by $2 \, \rho^{\min(d(u,v), d(u,w)) -1}$.
\end{proof}

For brevity, for all $u\in\T$ we will denote $\phi_{\theta,u} = \phi_\theta(X_{\parent(u)}, X_u, Y_u)$ 
	and $\norm{\phi_u}_\infty = \sup_{\theta\in\Theta_0} \linebreak \sup_{x,x'\in\SpaceX}  \vert \phi_\theta(x',x, Y_u) \vert$.
The following lemma gives several upper bounds 
on the covariance terms that appear in the definition of the terms $\Gamma_{u,k,x}(\theta)$.
Remind from \eqref{eq_inclusion_Delta_Tpast} on page~\pageref{eq_inclusion_Delta_Tpast}
that $\DeltaR(\rooot,k)$ is a random subtree of the deterministic subtree $\Tpast(\parent^k(u),k)$.

Note that this lemma is stated under the assumptions of \Cref{prop_conv_Gamma_term},
but here we do not need the assumption that $\rho<1/2$ for the mixing rate $\rho$ of the HMT process $(X,Y)$.

\begin{lemme}
	\label{lemma_bound_covar_terms_Gamma}
Under the assumptions of \Cref{prop_conv_Gamma_term}
(without the need for the assumption that $\rho<1/2$),
for all $x,x'\in\SpaceX$, $\theta\in\Theta_0$, $k'\geq k>0$ and $u\in\T$, 
and for all $v,w\in \Tpast(\parent^k(u),k) \setminus\{\parent^k(u)\}$, we have: 
\begin{equation}\label{eq_lemma_covar_bound_1}
\vert \Cov_\theta[ \phi_{\theta, v}, \phi_{\theta, w} \,\vert\, Y_{\DeltaR(u,k)}, X_{\parent^k(u)}=x ] \vert
	\leq 2 \, \norm{\phi_v}_\infty \norm{\phi_w}_\infty \,  \rho^{d(v,w)-2} , 
\end{equation}
and,
\begin{multline}\label{eq_lemma_covar_bound_2}
\vert \Cov_\theta[ \phi_{\theta, v}, \phi_{\theta, w} \,\vert\, Y_{\DeltaR(u,k)}, X_{\parent^k(u)}=x ]
	- \Cov_\theta[ \phi_{\theta, v}, \phi_{\theta, w} \,\vert\, Y_{\DeltaR(u,k')}, X_{\parent^{k'}(u)}=x' ] \vert \\
	\leq 8 \, \norm{\phi_v}_\infty \norm{\phi_w}_\infty  \,  \rho^{\min( d(\parent^k(u),v), d(\parent^k(u),w) ) -2} . 
\end{multline}
Moreover, if  $v,w\in \DeltaR^*(u,k)$, then we have:
\begin{multline}\label{eq_lemma_covar_bound_3}
\vert \Cov_\theta[ \phi_{\theta, v}, \phi_{\theta, w} \,\vert\, Y_{\DeltaR(u,k)}, X_{\parent^k(u)}=x ] 	
	- \Cov_\theta[ \phi_{\theta, v}, \phi_{\theta, w} \,\vert\, Y_{\DeltaR^*(u,k)}, X_{\parent^k(u)}=x ] \vert \\
	\leq 8 \, \norm{\phi_v}_\infty \norm{\phi_w}_\infty \, \rho^{\min( d(u,v) , d(u,w) )-2} .
\end{multline}
\end{lemme}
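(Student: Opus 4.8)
The plan is to reduce every conditional covariance appearing in \eqref{eq_lemma_covar_bound_1}--\eqref{eq_lemma_covar_bound_3} to a functional of the joint law of the hidden edge-states $(X_{\parent(v)},X_v)$ and $(X_{\parent(w)},X_w)$, on which the coupling Lemmas~\ref{lemme_exp_coupling_HMT}, \ref{lemma_forward_coupling_two_vertices} and \ref{lemma_backward_coupling_two_vertices} act directly. Since the observations are frozen by the conditioning, each $\phi_{\theta,v}$ is a Borel function of $(X_{\parent(v)},X_v)$ bounded by $\norm{\phi_v}_\infty$, and throughout I shall use that $|\mu(h)-\mu'(h)|\le 2\norm{h}_\infty\,\normTV{\mu-\mu'}$ for bounded $h$ and probability measures $\mu,\mu'$, together with the fact that marginalization contracts the total variation norm.

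For the decorrelation bound \eqref{eq_lemma_covar_bound_1}, set $m=v\land w$. By the Markov property \eqref{eq_illustration_Markov_prop}, conditioning further on $X_m$ renders $\phi_{\theta,v}$ and $\phi_{\theta,w}$ independent, so that $\Cov_\theta[\phi_{\theta,v},\phi_{\theta,w}\mid\cdot]=\Cov_\theta[F(X_m),G(X_m)\mid\cdot]$, where $F(\cdot)=\Esp_\theta[\phi_{\theta,v}\mid X_m=\cdot,\cdot]$ and $G(\cdot)=\Esp_\theta[\phi_{\theta,w}\mid X_m=\cdot,\cdot]$. Applying the forward-smoothing coupling underlying \Cref{lemme_exp_coupling_HMT} along the path from $m$ to $v$ bounds the oscillation $\mathrm{osc}(F)\le 2\norm{\phi_v}_\infty\,\rho^{(d(m,v)-1)\vee 0}$, and symmetrically $\mathrm{osc}(G)\le 2\norm{\phi_w}_\infty\,\rho^{(d(m,w)-1)\vee 0}$. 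Since $|\Cov_\theta[F(X_m),G(X_m)]|\le\tfrac14\,\mathrm{osc}(F)\,\mathrm{osc}(G)$ and $d(m,v)+d(m,w)=d(v,w)$, this yields \eqref{eq_lemma_covar_bound_1}; the trivial oscillation bound $\mathrm{osc}\le 2\norm{\cdot}_\infty$ covers the degenerate cases where $v$ or $w$ sits close to $m$, which costs at most the two powers of $\rho$ already granted by the exponent $d(v,w)-2$.

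For the difference bounds \eqref{eq_lemma_covar_bound_2} and \eqref{eq_lemma_covar_bound_3}, write the covariance as $C(\mu)=\mu(\phi_{\theta,v}\otimes\phi_{\theta,w})-\mu_1(\phi_{\theta,v})\,\mu_2(\phi_{\theta,w})$, with $\mu$ the relevant joint law of $\bigl((X_{\parent(v)},X_v),(X_{\parent(w)},X_w)\bigr)$ and $\mu_1,\mu_2$ its marginals. Expanding $C(\mu)-C(\mu')$ and bounding each of the resulting terms by the total variation distance (using marginal contraction) gives $|C(\mu)-C(\mu')|\le c\,\norm{\phi_v}_\infty\norm{\phi_w}_\infty\,\normTV{\mu-\mu'}$ for a universal constant $c$, so it remains only to estimate the single distance $\normTV{\mu-\mu'}$. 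For \eqref{eq_lemma_covar_bound_2}, I first use the Markov property \eqref{eq_illustration_Markov_prop} together with the identity $\DeltaR(u,k')\cap\Tpast(\parent^k(u))=\DeltaR(u,k)$ to rewrite the law conditioned on $(Y_{\DeltaR(u,k')},X_{\parent^{k'}(u)}=x')$ as a mixture, over the conditional law of $X_{\parent^k(u)}$, of the law conditioned on $(Y_{\DeltaR(u,k)},X_{\parent^k(u)}=\cdot)$; thus $\mu$ and $\mu'$ differ only through the initial law at the anchor $\parent^k(u)$, and the two-vertex forward coupling of \Cref{lemma_forward_coupling_two_vertices}, whose construction couples all hidden states on the paths to $v$ and $w$ (hence the edge-states), gives $\normTV{\mu-\mu'}\le 2\rho^{\min(d(\parent^k(u),v),\,d(\parent^k(u),w))-2}$. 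For \eqref{eq_lemma_covar_bound_3} the two conditionings already share the anchor $X_{\parent^k(u)}=x$ and differ only by the extra observation $Y_u$, so the two-vertex backward coupling of \Cref{lemma_backward_coupling_two_vertices}, likewise propagated to the edge-states, gives $\normTV{\mu-\mu'}\le 2\rho^{\min(d(u,v),\,d(u,w))-2}$, the extra power accounting for the parent vertices being at most one step closer to $u$. Collecting the numerical constants then delivers the factor $8$ in both cases.

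The main obstacle is precisely these two difference bounds. One must first reconcile the two distinct conditionings---differing in their observed past and in their anchor state---into a common reference, which is where the Markov property \eqref{eq_illustration_Markov_prop} and the inclusion \eqref{eq_inclusion_Delta_Tpast} are used, and then propagate the two-vertex coupling bounds (stated for single vertices) to the joint law of the four hidden states $X_{\parent(v)},X_v,X_{\parent(w)},X_w$ that enter the two covariances. The latter step is legitimate because the explicit couplings constructed in \Cref{lemma_forward_coupling_two_vertices,lemma_backward_coupling_two_vertices} act on whole paths, so coupling $v$ and $w$ simultaneously couples their parents lying on the same paths; the delicate accounting is to keep track of the correct geometric exponent---distance from the boundary $\parent^k(u)$ in \eqref{eq_lemma_covar_bound_2} and from the newly observed vertex $u$ in \eqref{eq_lemma_covar_bound_3}---and of the constants, absorbing the single spare factor of $\rho$ between a vertex and its parent into the exponent $\min(\cdots)-2$ via $\rho<1$.
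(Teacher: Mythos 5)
Your proof is correct in substance, and for the two difference bounds \eqref{eq_lemma_covar_bound_2} and \eqref{eq_lemma_covar_bound_3} it follows essentially the paper's route: the paper likewise splits the covariance difference into the difference of $\Esp_\theta[\phi_{\theta,v}\,\phi_{\theta,w}\mid\cdot\,]$ plus two marginal-expectation difference terms, handles the joint term with the two-vertex forward coupling (\Cref{lemma_forward_coupling_two_vertices}, applied at $\parent(v)$ and $\parent(w)$) respectively the two-vertex backward coupling (\Cref{lemma_backward_coupling_two_vertices}), and the marginal terms with \Cref{lemme_exp_coupling_HMT} respectively \Cref{lemma_backward_mixing_bound}; your mixture representation over the conditional law of $X_{\parent^k(u)}$ (via \eqref{eq_illustration_Markov_prop} and $\DeltaR(u,k')\cap\Tpast(\parent^k(u))=\DeltaR(u,k)$) and the propagation of the path couplings to the edge-states is exactly what the paper uses implicitly. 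Where you genuinely diverge is \eqref{eq_lemma_covar_bound_1}: the paper bounds rectangle probabilities $\vert \Prb_\theta(X_{\{\parent(v),v\}}\in A, X_{\{\parent(w),w\}}\in B\mid\cdot) - \Prb_\theta(A\mid\cdot)\Prb_\theta(B\mid\cdot)\vert$ by a ``backward'' Dobrushin contraction along the path joining the two edges, conditioning one factor on the event attached to the other and distinguishing the ancestor and non-ancestor cases, whereas you condition on the hidden state at the junction $m=v\land w$, invoke the conditional independence of the two edges given $X_m$ (valid by the HMT Markov property, also conditionally on the observations), and combine the oscillation decay of $F$ and $G$ with Popoviciu/Cauchy--Schwarz, $\vert\Cov(F(X_m),G(X_m))\vert\le\frac14\,\mathrm{osc}(F)\,\mathrm{osc}(G)$. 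Your variant is arguably cleaner: it produces the covariance bound directly with an explicit constant (in fact $1$ rather than $2$ in the generic case, the degenerate cases $d(m,v)\le 1$ or $m\in\{v,w\}$ being absorbed exactly as you indicate by the two spare powers in the exponent $d(v,w)-2$), and it avoids the rectangle-to-covariance step that the paper leaves implicit.

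One concrete accounting point in \eqref{eq_lemma_covar_bound_2}--\eqref{eq_lemma_covar_bound_3}: your shortcut $\vert C(\mu)-C(\mu')\vert\le c\,\norm{\phi_v}_\infty\norm{\phi_w}_\infty\,\normTV{\mu-\mu'}$ with marginal contraction forces $c=6$ (one contribution $2\normTV{\mu-\mu'}$ from the joint term and $2\normTV{\mu_i-\mu_i'}\le 2\normTV{\mu-\mu'}$ from each marginal product term), and together with the joint coupling bound $\normTV{\mu-\mu'}\le 2\rho^{\min(\cdot,\cdot)-1}$ this yields $12\,\rho^{\min-1}=12\rho\cdot\rho^{\min-2}$, which beats the stated $8\,\rho^{\min-2}$ only when $\rho\le 2/3$; since the lemma is asserted for all $\rho\in(0,1)$, you should instead bound the two marginal terms directly with the one-vertex couplings, as the paper does: \Cref{lemme_exp_coupling_HMT} and \Cref{lemma_backward_mixing_bound} give total variation bounds $\rho^{\,\cdot\,}$ \emph{without} the factor $2$, restoring the count $4+2+2=8$ at exponent $\min(\cdot,\cdot)-1\le\min(\cdot,\cdot)-2$. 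This is cosmetic---downstream the constant is absorbed into $C$ in \Cref{lemma_Gamma_incr_L2_bound}---but it is the one place where your constants do not close exactly as stated.
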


\begin{proof}
We start by proving \eqref{eq_lemma_covar_bound_1}, that is, the first inequality in the lemma.
Let $A_1, A_2, B_1, B_2$ be measurable subsets of $\SpaceX$,
and we write $A = A_1 \times A_2$ and $B = B_1 \times B_2$.
If one of the two vertices $v$ and $w$ is an ancestor of the other,
say $w$ is an ancestor of $v$ (which implies that $w$ is also an ancestor of $\parent(v)$),
then using the Markov property of the HMT process $(X,Y)$, we get:
\begin{multline*}
\bigl\vert \Prb_\theta( X_{\{\parent(v),v\}}\in A, X_{\{\parent(w),w\}}\in B \,\vert\, Y_{\DeltaR(u,k)}, X_{\parent^k(u)}=x) \bigr. \\
	\ \bigl. - \Prb_\theta( X_{\{\parent(v),v\}}\in A \,\vert\, Y_{\DeltaR(u,k)}, X_{\parent^k(u)}=x)
		\Prb_\theta( X_{\{\parent(w),w\}}\in B \,\vert\, Y_{\DeltaR(u,k)}, X_{\parent^k(u)}=x) \bigr\vert \\
\begin{aligned}
&  = \Prb_\theta( X_{\{\parent(w),w\}}\in B \,\vert\, Y_{\DeltaR(u,k)}, X_{\parent^k(u)}=x) \\
& \qquad \times \Bigl\vert \int_{\SpaceX^2} \ind_{\{ (x_{\parent(v)},x_v) \in A \}} \, \Prb_\theta( X_v \in \drv x_v \,\vert\, Y_{\DeltaR(u,k)}, X_{\parent(v)}=x_{\parent(v)}) \Bigr. \\
& \quad \qquad \qquad  \times 
	[ \Prb_\theta( X_{\parent(v)}\in\drv x_{\parent(v)} \,\vert\, Y_{\DeltaR(u,k)}, X_w\in B_2, X_{\parent^k(u)}=x) \\
& \quad \qquad \qquad	 \qquad \qquad\qquad
		\Bigl. - \Prb_\theta( X_{\parent(v)}\in\drv x_{\parent(v)} \,\vert\, Y_{\DeltaR(u,k)}, X_{\parent^k(u)}=x) ] \Bigr\vert \\
&  \leq \rho^{\dgr(\parent(v),w)} \\
&  = \rho^{\dgr(v,w)-1} ,
\end{aligned}
\end{multline*}
where the inequality follows
using the same argument as in the proof of the ‘‘backward in time’’ coupling \Cref{lemma_backward_mixing_bound}
(with the role of $\parent(u)$ replaced by $w$ 
	and using the initial distributions $\Prb((X_{\parent(w)},X_w)\in\cdot \,\vert\, Y_{\DeltaR(u,k)}, X_w\in B_2, X_{\parent^k(u)}=x)$
	with $B' = B$ and $\SpaceX$ respectively).

Otherwise, we have that both $\parent(v)$ and $\parent(w)$ are on the path between $v$ and $w$,
and similarly to the first case, we get:
\begin{multline*}
\bigl\vert \Prb_\theta( X_{\{\parent(v),v\}}\in A, X_{\{\parent(w),w\}}\in B \,\vert\, Y_{\DeltaR(u,k)}, X_{\parent^k(u)}=x) \bigr. \\
	\ \bigl. - \Prb_\theta( X_{\{\parent(v),v\}}\in A \,\vert\, Y_{\DeltaR(u,k)}, X_{\parent^k(u)}=x)
		\Prb_\theta( X_{\{\parent(w),w\}}\in B \,\vert\, Y_{\DeltaR(u,k)}, X_{\parent^k(u)}=x) \bigr\vert \\
\begin{aligned}
&  = \Prb_\theta( X_{\{\parent(w),w\}}\in B \,\vert\, Y_{\DeltaR(u,k)}, X_{\parent^k(u)}=x) \\
& \qquad \times \Bigl\vert \int_{\SpaceX^2} \ind_{\{ (x_{\parent(v)},x_v) \in A \}} \, \Prb_\theta( X_v \in \drv x_v \,\vert\, Y_{\DeltaR(u,k)}, X_{\parent(v)}=x_{\parent(v)}) \Bigr. \\
& \quad \qquad \qquad  \times 
	[ \Prb_\theta( X_{\parent(v)}\in\drv x_{\parent(v)} \,\vert\, Y_{\DeltaR(u,k)}, X_{\parent(w)}\in B_1, X_{\parent^k(u)}=x) \\
& \quad \qquad \qquad	 \qquad \qquad\qquad
		\Bigl. - \Prb_\theta( X_{\parent(v)}\in\drv x_{\parent(v)} \,\vert\, Y_{\DeltaR(u,k)}, X_{\parent^k(u)}=x) ] \Bigr\vert \\
&  \leq \rho^{\dgr(\parent(v),\parent(w))} \\
&  = \rho^{\dgr(v,w)-2} . 
\end{aligned}
\end{multline*}
Thus, in both case, we get:
\begin{multline*}
\bigl\vert \Prb_\theta( X_{\{\parent(v),v\}}\in A, X_{\{\parent(w),w\}}\in B \,\vert\, Y_{\DeltaR(u,k)}, X_{\parent^k(u)}=x) \bigr. \\
	\ \bigl. - \Prb_\theta( X_{\{\parent(v),v\}}\in A \,\vert\, Y_{\DeltaR(u,k)}, X_{\parent^k(u)}=x)
		\Prb_\theta( X_{\{\parent(w),w\}}\in B \,\vert\, Y_{\DeltaR(u,k)}, X_{\parent^k(u)}=x) \bigr\vert \\
\leq \rho^{\dgr(v,w)-2} . 
\end{multline*}
This gives that \eqref{eq_lemma_covar_bound_1} holds. (Note that the functions $\phi_{\theta, v}$ and $\phi_{\theta, w}$ can take positive, null or negative values.)

For \eqref{eq_lemma_covar_bound_2}, that is, the second inequality in the lemma, use the decomposition:
\begin{multline*}
\vert \Cov_\theta[ \phi_{\theta, v}, \phi_{\theta, w} \,\vert\, Y_{\DeltaR(u,k)}, X_{\parent^k(u)}=x ] 
	- \Cov_\theta[ \phi_{\theta, v}, \phi_{\theta, w} \,\vert\, Y_{\DeltaR(u,k')}, X_{\parent^{k'}(u)}=x' ] \vert \\
\begin{aligned}
&\leq \vert \Esp_\theta[ \phi_{\theta, v} \, \phi_{\theta, w} \,\vert\, Y_{\DeltaR(u,k)}, X_{\parent^k(u)}=x ] 
	- \Esp_\theta[ \phi_{\theta, v} \, \phi_{\theta, w} \,\vert\, Y_{\DeltaR(u,k')}, X_{\parent^{k'}(u)}=x' ] \vert \\
& \quad + \vert \Esp_\theta[ \phi_{\theta, v} \,\vert\, Y_{\DeltaR(u,k)}, X_{\parent^k(u)}=x ] 
	- \Esp_\theta[ \phi_{\theta, v} \,\vert\, Y_{\DeltaR(u,k')}, X_{\parent^{k'}(u)}=x' ] \vert \\
& \qquad \qquad \qquad \qquad
	\times \vert \Esp_\theta[ \phi_{\theta, w} \,\vert\, Y_{\DeltaR(u,k)}, X_{\parent^k(u)}=x ]  \vert \\
& \quad + \vert \Esp_\theta[ \phi_{\theta, w} \,\vert\, Y_{\DeltaR(u,k)}, X_{\parent^k(u)}=x ] 
	- \Esp_\theta[ \phi_{\theta, w} \,\vert\, Y_{\DeltaR(u,k')}, X_{\parent^{k'}(u)}=x' ] \vert \\
& \qquad \qquad \qquad \qquad
	\times \vert \Esp_\theta[ \phi_{\theta, v} \,\vert\, Y_{\DeltaR(u,k')}, X_{\parent^{k'}(u)}=x' ]  \vert ,
\end{aligned}
\end{multline*}
and then use the joint coupling \Cref{lemma_forward_coupling_two_vertices} with $v'=\parent(v)$ and $w'=\parent(w)$
	for the first term in the upper bound,
and use the coupling \Cref{lemme_exp_coupling_HMT} for the other two terms
	with $\parent(v)$ and $\parent(w)$, respectively.

For \eqref{eq_lemma_covar_bound_3}, that is, the third inequality in the lemma, use a similar decomposition as for \eqref{eq_lemma_covar_bound_2},
and then use the ‘‘backward in time’’ coupling for two vertices from \Cref{lemma_backward_coupling_two_vertices} for the first term in the upper bound,
	and use the ‘‘backward in time’’ coupling \Cref{lemma_backward_mixing_bound} for the other two terms.
This gives an upper bound of $8 \, \norm{\phi_v}_\infty \norm{\phi_w}_\infty \, \rho^{m}$
with $m=\min\{ \dgr(\parent(u),w_0) \,: \, w_0\in\{\parent(v),v,\parent(w),w\} \}$.
Noting $m\leq \min( \dgr(u,v), \dgr(u,w) ) - 2$, we get that \eqref{eq_lemma_covar_bound_3} holds.
This concludes the proof of the lemma.
\end{proof}

We are now ready to prove the following lemma which is the adaptation of Lemma~\ref{lemma_Lambda_incr_L2_bound} 
	to the terms $\Gamma_{u,k,x}(\theta)$,
	and which gives us a uniform $L^2(\Ptrue)$ approximation bound.

Note that the condition $\rho < 1/2$ 
on the mixing rate $\rho$ of the HMT process $(X,Y)$
is due to the coupling bounds from \Cref{lemma_bound_covar_terms_Gamma}
and the grouping of terms used in the proof of \Cref{lemma_Gamma_incr_L2_bound}
(the upper bounds at the end of the proof only add a constant multiplicative factor).
See the discussion in \Cref{rem_rho_smaller_than_2}.

\begin{lemme}\label{lemma_Gamma_incr_L2_bound}
Under the assumptions of \Cref{prop_conv_Gamma_term},
there exists a positive constant $C<\infty$ such that
for all $u\in\T$ and $0 < k \leq k'$, we have:
\begin{align*}
&  \Esp_\cU \otimes \Etrue \left[ \sup_{\theta\in\Theta_0} \sup_{x,x'\in\SpaceX}  \vert \Gamma_{u,k,x}(\theta) - \Gamma_{u,k',x'}(\theta) \vert^2 \right]^{1/2}  \\
& \qquad\qquad\qquad\qquad\qquad\qquad \leq C \, \Etrue\left[
		\sup_{\theta\in\Theta_0} \sup_{x,x'\in\SpaceX} \vert \varphi_\theta(x,x',Y_{\rooot}) \vert^4
	\right]^{1/2}
	k^2 \, (2 \rho)^{  k/3  } .
\end{align*}
\end{lemme}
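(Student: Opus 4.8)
The plan is to adapt the matching scheme from the proof of \Cref{lemma_score_incr_L2_bound}, now expanding the two conditional variances in \eqref{eq_def_Gamma_ukx} into double sums of conditional covariances. Writing $S_k = \DeltaR^*(u,k)\setminus\{\parent^k(u)\}$ and using bilinearity of the covariance together with $\DeltaR(u,k) = \DeltaR^*(u,k)\cup\{u\}$, I would first record the decomposition
\begin{equation*}
\Gamma_{u,k,x}(\theta) = V^{(k)}_u + 2\sum_{v\in S_k} C^{(k)}_{u,v} + \sum_{v,w\in S_k} D^{(k)}_{v,w} ,
\end{equation*}
where $V^{(k)}_u = \Var_\theta[\phi_{\theta,u}\,\vert\, Y_{\DeltaR(u,k)}, X_{\parent^k(u)}=x]$, where $C^{(k)}_{u,v} = \Cov_\theta[\phi_{\theta,u},\phi_{\theta,v}\,\vert\, Y_{\DeltaR(u,k)}, X_{\parent^k(u)}=x]$, and where $D^{(k)}_{v,w} = \Cov_\theta[\phi_{\theta,v},\phi_{\theta,w}\,\vert\, Y_{\DeltaR(u,k)}, X_{\parent^k(u)}=x] - \Cov_\theta[\phi_{\theta,v},\phi_{\theta,w}\,\vert\, Y_{\DeltaR^*(u,k)}, X_{\parent^k(u)}=x]$; the analogous decomposition holds for $\Gamma_{u,k',x'}(\theta)$ with $S_{k'}\supseteq S_k$. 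The difference $\Gamma_{u,k,x}(\theta)-\Gamma_{u,k',x'}(\theta)$ then falls into three groups, each to be controlled by one of the covariance bounds of \Cref{lemma_bound_covar_terms_Gamma}.

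The term $V^{(k)}_u-V^{(k')}_u$ is handled by \eqref{eq_lemma_covar_bound_2} with $v=w=u$, whose exponent is $d(\parent^k(u),u)-2=k-2$. The cross terms $\sum_v C^{(k)}_{u,v}$ form a single-indexed sum with one free vertex: matching $C^{(k)}_{u,v}$ with $C^{(k')}_{u,v}$ for $v\in S_k$ via \eqref{eq_lemma_covar_bound_2} and bounding the leftover terms with $v\in S_{k'}\setminus S_k$ by the decorrelation estimate \eqref{eq_lemma_covar_bound_1}, exactly as the conditional-expectation increments are treated in the proof of \Cref{lemma_score_incr_L2_bound}, gives a total contribution of the (subdominant) order $k\,\rho^{k/2}$. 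The delicate group is the double sum $\sum_{v,w\in S_k}(D^{(k)}_{v,w}-D^{(k')}_{v,w})$ together with the leftover pairs $(v,w)\in S_{k'}\times S_{k'}\setminus S_k\times S_k$. The leftover pairs have at least one vertex in $S_{k'}\setminus S_k$, hence far from $u$, so $D^{(k')}_{v,w}$ is controlled by \eqref{eq_lemma_covar_bound_3}. For $(v,w)\in S_k\times S_k$ the key is that $D^{(k)}_{v,w}-D^{(k')}_{v,w}$ admits three complementary bounds of the form $C\norm{\phi_v}_\infty\norm{\phi_w}_\infty\rho^{e}$: a backward exponent $e=\min(d(u,v),d(u,w))-2$, since each of $D^{(k)}_{v,w}$ and $D^{(k')}_{v,w}$ is bounded by \eqref{eq_lemma_covar_bound_3}; a forward exponent $e=\min(d(\parent^k(u),v),d(\parent^k(u),w))-2$, obtained by rewriting $D^{(k)}_{v,w}-D^{(k')}_{v,w}$ as a difference of two forward increments (full and reduced past) and applying \eqref{eq_lemma_covar_bound_2}, whose proof uses only the forward couplings of \Cref{lemme_exp_coupling_HMT} and \Cref{lemma_forward_coupling_two_vertices} and so applies verbatim with $\DeltaR$ replaced by $\DeltaR^*$; and a decorrelation exponent $e=d(v,w)-2$ from \eqref{eq_lemma_covar_bound_1}.

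The main work, and the main obstacle, is the resulting double geometric sum, where for each pair one uses the largest of the three exponents. The decisive geometric fact is that the three exponents cannot be small simultaneously: if $v$ is close to $u$ it is far from $\parent^k(u)$, so forcing the backward exponent to be small pushes the forward exponent onto $w$, which must then be far from $v$, making the decorrelation exponent large. Quantitatively, writing $s_v=d(u,v)\le s_w=d(u,w)$ and using $d(\parent^k(u),v)\ge k-s_v$ and $d(v,w)\ge s_w-s_v$, the largest of the three exponents is at least $\min_{s_v\le s_w}\max(s_v,\,k-s_w,\,s_w-s_v)$, whose optimum $k/3$ is attained near $s_v=k/3$, $s_w=2k/3$. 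Reindexing each vertex by $(j,q)$ as in \Cref{lemma_score_incr_L2_bound} (so that a given pattern is realised by $\sim 2^{q}$ vertices) and splitting the summation ranges at thresholds near $k/3$ and $2k/3$ so that on each block the dominant exponent is used, the three-fold sum reduces to geometric series in $2\rho$; since $\rho<1/2$ we have $2\rho<1$, these series converge, and balancing the three regimes yields the rate $(2\rho)^{k/3}$ together with a polynomial prefactor $k^2$ (one power of $k$ per index, just as the single power of $k$ appears in \Cref{lemma_score_incr_L2_bound}). Finally, to pass from these pointwise estimates to the $L^2(\Prb_\cU\otimes\Ptrue)$-norm I would apply Minkowski's inequality over the finitely many groups and, on each product $\norm{\phi_v}_\infty\norm{\phi_w}_\infty$, the Cauchy--Schwarz inequality, which by stationarity gives $\Etrue[\norm{\phi_v}_\infty^2\norm{\phi_w}_\infty^2]\le\Etrue[\sup_{\theta\in\Theta_0}\sup_{x,x'\in\SpaceX}\vert\phi_\theta(x,x',Y_\rooot)\vert^4]$; this is exactly why the fourth-moment hypothesis enters and why the stated prefactor is the square root of the $L^4$ quantity.
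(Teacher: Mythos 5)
Your overall architecture is the same as the paper's: your bilinear expansion of the two conditional variances is equivalent to the decomposition into the six groups $A,\dots,F$ (following \cite[Lemma~17]{doucAsymptoticPropertiesMaximum2004}) used in the paper's proof; your treatment of the pairs in $S_k\times S_k$ (writing, as you do, $S_k=\DeltaR^*(u,k)\setminus\{\parent^k(u)\}$) via the three complementary exponents of \Cref{lemma_bound_covar_terms_Gamma} and the balance $\min\max(s_v,\,k-s_w,\,s_w-s_v)=k/3$ is exactly how the paper bounds its term $A$, splitting at $\floor{k/3}$ and $\floor{2k/3}$; and the Cauchy--Schwarz/stationarity step producing the square root of the $L^4$ moment is identical. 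There is, however, a genuine gap in your treatment of the leftover pairs $(v,w)\in S_{k'}\times S_{k'}\setminus S_k\times S_k$: you propose to control $D^{(k')}_{v,w}$ by \eqref{eq_lemma_covar_bound_3} \emph{alone}, i.e.\ by $8\,\norm{\phi_v}_\infty\norm{\phi_w}_\infty\,\rho^{\min(d(u,v),d(u,w))-2}$. This exponent only sees the vertex closer to $u$, and nothing forces that vertex to lie in $S_{k'}\setminus S_k$: take $v=\parent(u)\in S_k$ and let $w$ range over $S_{k'}\setminus S_k$. Each such pair is leftover, yet receives the useless bound $8\,\norm{\phi_v}_\infty\norm{\phi_w}_\infty\,\rho^{-1}$, and there are of order $2^{k'}$ such vertices $w$ (since $\Tpast(u,k'-1)\subset\DeltaR(u,k')$ by \eqref{eq_inclusion_Delta_Tpast}). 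Your bound on this group therefore grows without limit in $k'$ and cannot even be uniform in $k'\geq k$, let alone of order $k^2(2\rho)^{k/3}$.

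The repair --- and what the paper actually does for the corresponding terms $D$ and $E$ of its decomposition --- is to apply the decorrelation bound \eqref{eq_lemma_covar_bound_1} to each of the two covariances making up $D^{(k')}_{v,w}$ as well, and to take the pairwise \emph{minimum} of the two estimates: when one vertex is close to $u$ and the other lies in $\DeltaR^*(u,k')\setminus\DeltaR^*(u,k)$, the triangle inequality gives $d(v,w)\geq d(u,w)-d(u,v)>k-d(u,v)$, so the decorrelation exponent is large precisely where the backward exponent \eqref{eq_lemma_covar_bound_3} degenerates. Redoing the regime-split geometric sums with this minimum (this is again where $2\rho<1$ is needed) yields contributions of order $k\,(2\rho)^{k}$ when both vertices are in the leftover region and $k^2(2\rho)^{\floor{k/2}}$ in the mixed case, both subdominant to the $k^2(2\rho)^{k/3}$ coming from the $S_k\times S_k$ block; note that your cross terms with $u$ itself were already handled correctly via \eqref{eq_lemma_covar_bound_1}, matching the paper's term $F$. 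With this one correction, your argument delivers the stated bound by the same route as the paper.
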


\begin{proof}\allowdisplaybreaks[3]
Let $u$, $k$ and $k'$ be as in the lemma.
Similarly to the proof of \Cref{lemma_score_incr_L2_bound}, we use the bounds from \Cref{lemma_bound_covar_terms_Gamma}
	and Minkowski's inequality to bound the left hand side of the inequality in the lemma.
For a finite subset $I\subset \Tpast$, we write $S_I = \sum_{v\in I} \phi_{\theta,v}$ (the dependence on $\theta$ is implicit).
Similarly to the proof of \cite[Lemma~17]{doucAsymptoticPropertiesMaximum2004},
the difference $\Gamma_{u,k,x}(\theta) - \Gamma_{u,k',x'}(\theta)$ 
may be rewritten as $A + 2B +C + D + 2E + 2F$, 
where all those terms are random variables which depend on $Y_{\DeltaR(u,k')}$
and implicitly on $\cU$, and are define as:
\begin{align*}
A  = & \ \Var_\theta[ S_{\DeltaR^*(u,k)} \,\vert\, Y_{\DeltaR(u,k)}, X_{\parent^k(u)}=x ]
	- \Var_\theta[ S_{\DeltaR^*(u,k)} \,\vert\, Y_{\DeltaR^*(u,k)}, X_{\parent^k(u)}=x ] \\*
& \	- \Var_\theta[ S_{\DeltaR^*(u,k)} \,\vert\, Y_{\DeltaR(u,k')}, X_{\parent^{k'}(u)}=x' ]
	+ \Var_\theta[ S_{\DeltaR^*(u,k)} \,\vert\, Y_{\DeltaR^*(u,k')}, X_{\parent^{k'}(u)}=x' ] , \\
B = & \ \Cov_\theta[ S_{\DeltaR^*(u,k)}, \phi_{\theta,u} \,\vert\, Y_{\DeltaR(u,k)}, X_{\parent^k(u)}=x ] \\*
& \	- \Cov_\theta[ S_{\DeltaR^*(u,k)}, \phi_{\theta,u} \,\vert\, Y_{\DeltaR(u,k')}, X_{\parent^{k'}(u)}=x' ] , \\
C = & \ \Var_\theta[ \phi_{\theta,u} \,\vert\, Y_{\DeltaR(u,k)}, X_{\parent^k(u)}=x ]
	- \Var_\theta[ \phi_{\theta,u} \,\vert\, Y_{\DeltaR(u,k')}, X_{\parent^{k'}(u)}=x' ], \\
D = & \ \Var_\theta[ S_{\DeltaR^*(u,k') \setminus \DeltaR^*(u,k)} \,\vert\, Y_{\DeltaR(u,k')}, X_{\parent^{k'}(u)}=x' ] \\*
& \	- \Var_\theta[ S_{\DeltaR^*(u,k') \setminus \DeltaR^*(u,k)} \,\vert\, Y_{\DeltaR^*(u,k')}, X_{\parent^{k'}(u)}=x' ] , \\
E = & \ \Cov_\theta[ S_{\DeltaR^*(u,k') \setminus \DeltaR^*(u,k)},  S_{\DeltaR^*(u,k)}
				\,\vert\, Y_{\DeltaR(u,k')}, X_{\parent^{k'}(u)}=x' ] \\*
& \	- \Cov_\theta[ S_{\DeltaR^*(u,k') \setminus \DeltaR^*(u,k)} , S_{\DeltaR^*(u,k)}
				\,\vert\, Y_{\DeltaR^*(u,k')}, X_{\parent^{k'}(u)}=x' ] , \\
F = & \ \Cov_\theta[ S_{\DeltaR^*(u,k') \setminus \DeltaR^*(u,k)},  \phi_{\theta,u}
				\,\vert\, Y_{\DeltaR(u,k')}, X_{\parent^{k'}(u)}=x' ] \\*
& \	- \Cov_\theta[ S_{\DeltaR^*(u,k') \setminus \DeltaR^*(u,k)} , \phi_{\theta,u}
				\,\vert\, Y_{\DeltaR^*(u,k')}, X_{\parent^{k'}(u)}=x' ] .
\end{align*}

Using Minkowski's inequality, we will upper bound each of those six terms separately.
First remark using Cauchy-Schwarz inequality, the stationarity of the process $((X_u,Y_u), u\in\Tpast)$
	and the assumptions in the proposition, that we have
	$\Etrue[ \norm{\phi_v}_\infty^2 \norm{\phi_w}_\infty^2 ] \leq \Etrue[ \norm{\phi_\rooot}_\infty^4] < \infty$
	for all $v,w\in\Tpast$.
	
Remind from \eqref{eq_inclusion_Delta_Tpast} on page~\pageref{eq_inclusion_Delta_Tpast}
that $\Delta(u,k)$ is a random subtree of the deterministic subtree $\Tpast(\parent^k(u),k)$.
\medskip

\textbf{Upper bound for A:}
Applying the three inequalities in \Cref{lemma_bound_covar_terms_Gamma} and Minkowski's inequality, 
	we get that $\Etrue[ \vert A \vert^2 ]^{1/2}$
 	is upper bounded (up to the factor $\Etrue[ \norm{\phi_\rooot}_\infty^4]$) by:
\begin{align}
 &2   \sum_{v,w\in \Tpast(\parent^k(u),k)} \bigl( 2 \times 8 \rho^{\min(d(v,u), d(w,u)) -2} 
 					\land 2 \times 8 \rho^{\min( d(v, \parent^k(u)) , d(w, \parent^k(u)) ) -2} 
 					\land 4 \times 2 \rho^{d(v,w) -2 } \bigr) \nonumber\\
\leq & \ \frac{32}{\rho^2}   \sum_{v,w\in \Tpast(\parent^k(u),k)} \bigl(  \rho^{\min(d(v,u), d(w,u)) } 
 					\land  \rho^{\min( d(v, \parent^k(u)) , d(w, \parent^k(u)) ) } 
 					\land  \rho^{d(v,w) } \bigr) .
			\label{eq_Gamma_bound_A}
\end{align}
Note that the value of this sum does not depend on the choice of $u\in\T$.

For all $j\in\N$, denote $u_j = \parent^j(u)$.
We will divide the sum in the upper bound of \eqref{eq_Gamma_bound_A}
	according to four cases: $v,w \in \Tpast(u_k,k) \setminus \Tpast(u_{\floor{k/3}},\floor{k/3})$,
		or $v,w \in \Tpast(u_{\floor{2k/3}},\floor{2k/3})$,
		or $v\in \Tpast(u_k,k) \setminus \Tpast(u_{\floor{2k/3}},\floor{2k/3})$ and $w \in \Tpast(u_{\floor{k/3}},\floor{k/3})$
		or similarly exchanging the roles of $v$ and $w$.
Note that those conditions are non-exclusive and we will count some vertices several times, but this is not a problem.

Let $i,j\in\N$ be such that $u \land v = u_i$ and $u \land w = u_j$,
and let $a,b\in\N$ be such that $a = d(u_i, v)$ and $b=d(u_j,w)$.
Note that for $v,w$ in the first case, either $\min(d(v,u), d(w,u))$ or $d(v,w)$ is large,
and thus using elementary computation we upper bound the sum for $v,w$ in the first case by:
\begin{align*}
& \sum_{i=\floor{k/3}}^k \sum_{j=\floor{k/3}}^k \sum_{a=0}^i \sum_{b=0}^j 2^{a+b} 
					\bigl( \rho^{\min(i+a, j+b)} \land \rho^{a +b +\vert j-i \vert} \bigr) \\
\leq & \ 2  \sum_{i=\floor{k/3}}^k \sum_{j=i}^k \sum_{a=0}^i \sum_{b=0}^j 2^{a+b} 
					\bigl( \rho^{i+\min(a,b)} \land \rho^{a +b } \bigr) \\
\leq & \ \frac{8 (1-\rho) }{(1-2\rho)^3}\, k\, (2\rho)^{\floor{k/3}+1}.
\end{align*}
Note that for $v,w$ in the second case, either $\min( d(v, \parent^k(u)) , d(w, \parent^k(u))$ or $d(v,w)$ is large,
and thus using elementary computation we upper bound the sum for $v,w$ in the second case by:
\begin{align*}
 \sum_{i=0}^{\floor{2k/3}} \sum_{j=0}^{\floor{2k/3}} \sum_{a=0}^i \sum_{b=0}^j 2^{a+b} 
				\bigl( \rho^{\min( k-i + a , k-j +b)} \land \rho^{a +b +\vert j-i \vert} \bigr) 
\leq  \ \frac{8 (1-\rho) }{(1-2\rho)^3}\, k\, (2\rho)^{\floor{k/3}+1}.
\end{align*}
Note that for $v,w$ in the third and fourth case, either $\min( d(v, \parent^k(u)) , d(w, \parent^k(u))$ or $d(v,w)$ is large,
and thus we upper bound the sum for $v,w$ in the third and fourth case by:
\begin{equation*}
 2 \sum_{i=0}^{\floor{k/3}} \sum_{j=\floor{2k/3}}^{k} \sum_{a=0}^i \sum_{b=0}^j 2^{a+b} 
				 \rho^{a +b +\vert j-i \vert}  
\leq  \   \frac{2 }{1-2\rho} \, k^2 \, \rho^{ k/3 -1} .
\end{equation*}
Putting those three upper bounds together,
we get that $\Etrue[\vert A\vert^2]^{1/2}$ is upper bounded by an expression as in the lemma.
\medskip

\textbf{Upper bound for B:}
Using the first and second inequalities in \Cref{lemma_bound_covar_terms_Gamma} and Minkowski's inequality, 
we get that $\Etrue[ \vert B \vert^2 ]^{1/2}$
 	is upper bounded (up to the factor $\Etrue[ \norm{\phi_\rooot}_\infty^4]$) by:
\begin{align*}
 8   \sum_{v\in \DeltaR^*(u,k)} ( \rho^{d(v,u)-2} \land \rho^{d(v, \parent^k(u))-2} ) 
  \leq C \,  k \, \bigl( \max(\rho, 2\rho^2) \bigr)^{k/2} ,
\end{align*}
where we used the same computation as in the proof of \Cref{lemma_score_incr_L2_bound}
and $C<\infty$ is some finite constant (which depends only on $\rho$).
\medskip

\textbf{Upper bound for C:}
Using the second equation in \Cref{lemma_bound_covar_terms_Gamma},
	we get that $\Etrue[\vert C \vert^2 ]^{1/2} \leq 8 \rho^{k-2}  \Etrue[ \norm{\phi_\rooot}_\infty^2]$.
\medskip

\textbf{Upper bound for D:}
Using the first and third equation in \Cref{lemma_bound_covar_terms_Gamma}, Minkowski's inequality
	and elementary computation,
	we get that $\Etrue[ \vert D \vert^2 ]^{1/2}$
 	is upper bounded (up to the factor $\Etrue[ \norm{\phi_\rooot}_\infty^4]$) by:
\begin{align*}
   \sum_{v,w\in \Tpast(u_{k'},k')\setminus\Tpast(u_k,k)}
 					( 2\times 2\rho^{d(v,w)-2} \land 8 \rho^{\min(d(v,u), d(w,u))-2} ) 
\leq  \  \frac{96}{\rho^2(1-2\rho)^4} \, k \, (2\rho)^k .
\end{align*}
\medskip

\textbf{Upper bound for E:}
Using the first and third equation in  \Cref{lemma_bound_covar_terms_Gamma}, Minkowski's inequality
	and elementary computation,
	we get that $\Etrue[ \vert E \vert^2 ]^{1/2}$
 	is upper bounded (up to the factor $\Etrue[ \norm{\phi_\rooot}_\infty^4]$) by:
\begin{multline*}
  \sum_{v\in \Tpast(u_{k'},k') \setminus \Tpast(u_k,k)} \sum_{w\in \Tpast(u_k,k)} 
 						( 2\times 2\rho^{d(v,w)-2} \land 8 \rho^{\min(d(v,u), d(w,u))-2} )  \\
\leq  \  \frac{64 }{\rho^2(1-\rho)(1-2\rho)^2} \, k^2 \, (2\rho)^{\floor{k/2}} .
\end{multline*}

\textbf{Upper bound for F:}
Using the first equation in \Cref{lemma_bound_covar_terms_Gamma} and Minkowski's inequality, 
	we get that $\Etrue[ \vert F \vert^2 ]^{1/2}$
 	is upper bounded (up to the factor $\Etrue[ \norm{\phi_\rooot}_\infty^4]$) by:
\begin{align*}
   2 \times 2  \sum_{v\in \Tpast(u_{k'},k') \setminus \Tpast(u_k,k)} \rho^{d(v,u)-2} 
\leq  \ \frac{4 }{1- 2\rho} \,  \frac{\rho^{k-1}}{1-\rho} \cdot
\end{align*}

Hence, as the $L^2(\Ptrue)$ norm for the six terms  $A$, $B$, $C$, $D$, $E$ and $F$ are all upper bounded
by expressions as in the lemma, we get that the upper bound in the lemma holds.
This concludes the proof.
\allowdisplaybreaks[1]
\end{proof}

As annonce at the beginning of this subsection,
the rest of the proof of \Cref{prop_conv_Gamma_term} closely follows 
the lines of the proof of \Cref{prop_conv_Lambda_term}.

\section{Extension to the non-stationary case}
	\label{section_non_stationary}

In Sections~\ref{section_strong_consistency} and~\ref{section_asymptotic_normality}, 
	the stationarity assumption of the process $(Y_u : u\in\T)$ played a crucial role.
In this section, we extend the strong consistency and the asymptotic normality of the MLE for the HMT
	to the case where this process is not stationary.
	
Hence, we assume that the HMT process $(X',Y') = ((X_u',Y_u') : u\in\T)$ has the same transition kernel $Q_{\thetaTrue}$ and $G_{\thetaTrue}$
	that are parametrized by some $\thetaTrue\in\Theta$ as before,
	and the hidden variable $X_{\rooot}'$ of the root vertex $\rooot$ has distribution $\zeta$.
This initial distribution $\zeta$ is unknown to us, may depend on $\thetaTrue$,
	and in general is different from the invariant distribution $\pi_{\thetaTrue}$.
As before, we will denote by $(X,Y) = ((X_u,Y_u) : u\in\T)$ a stationary process 
	distributed according to the same parameter $\thetaTrue$.
Note that, in this section, we will use the convention that objects with an added $'$ symbol 
	are related to the non-stationary process $(X',Y')$,
	while those without the $'$ symbol are their counterpart for the stationary process $(X,Y)$.
Also note that due to the non-stationarity assumption, in this section, we will only consider the HMT process
	on the original tree $\Tpast = \T(\rooot)$.

For the non-stationary process $(X',Y')$, 
similarly to the stationary case in \eqref{eq_def_l_nx_theta_2} on page~\pageref{eq_def_l_nx_theta_2},
define its log-likelihood 
for all $n\in\N$ and $x\in\SpaceX$ as:
\begin{equation}\label{eq_def_l_nx_theta_non_stationary}
\ell_{n,x}'(\theta) := \ell_{n,x}(\theta;Y_{\T_n}') ,
\end{equation}
where $\ell_{n,x}(\theta;\cdot)$ is defined in \eqref{eq_def_l_nx_theta} on page~\pageref{eq_def_l_nx_theta}.
Moreover, 
when Assumptions~\ref{assump_HMM_0}-\ref{assump_HMM_3} and~\ref{assump_HMM_4} hold and $\Theta$ is compact,
similarly to the stationary case in \eqref{eq_def_MLE_hat_theta_nx} on page~\pageref{eq_def_MLE_hat_theta_nx},
we define the MLE $\hat{\theta}_{n,x}'$ for the non-stationary process as:
\begin{equation}
	\label{eq_def_MLE_hat_theta_nx_prime}
\hat \theta_{n,x}' = \hat \theta_{n,x}'(Y_{\T_n}') \in \argmax_{\theta\in\Theta} \ell_{n,x}'(\theta).
\end{equation}
Denote by $\Pzeta$ the probability distribution of the non-stationary HMT process $(X',Y')$,
and by $\Ezeta$ the corresponding expectation.

We can now prove the strong consistency of the MLE for a non-stationary HMT process.

\begin{theo}[Strong consistency of the MLE, non-stationary case]
	\label{thm_strong_constistency_non_stationary}
Assume that Assumptions~\ref{assump_HMM_1}--\ref{assump_HMM_4} hold.
the contrast function $\ell$ has a unique maximum
	(which is then located at $\thetaTrue\in\Theta$ by \Cref{prop_global_max_l}) 
	and  $\Theta$ is compact.
Then, the MLE is strongly consistent, that is,
for all initial distributions $\zeta$ and all $x\in\SpaceX$, the MLE
	$\hat \theta_{n,x}'$
	converges $\Pzeta$-\as as $n\to\infty$
	to the true parameter $\thetaTrue\in\Theta$.
\end{theo}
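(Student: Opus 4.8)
The plan is to reduce the non-stationary statement to the stationary \Cref{thm_Strong_consistency_MLE} by showing that the normalized non-stationary log-likelihood converges, uniformly in $\theta$, to the \emph{same} contrast function $\ell$, $\Pzeta$-almost surely. Once we establish
\[
\lim_{n\to\infty} \sup_{\theta\in\Theta} \bigl\vert \vert\T_n\vert^{-1}\ell_{n,x}'(\theta) - \ell(\theta) \bigr\vert = 0 \quad \Pzeta\text{-\as},
\]
the remainder is verbatim the argmax argument of \Cref{thm_Strong_consistency_MLE}: the contrast function $\ell$ depends only on the transition kernels $Q_{\thetaTrue}, G_{\thetaTrue}$ and not on the initial law, so by \Cref{prop_global_max_l} it still has its unique maximum at $\thetaTrue$ and by \Cref{prop_l_continuous_complete} it is continuous; compactness of $\Theta$ together with the definition \eqref{eq_def_MLE_hat_theta_nx_prime} of $\hat\theta_{n,x}'$ then forces $\hat\theta_{n,x}' \to \thetaTrue$ $\Pzeta$-\as

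To prove the uniform convergence, I would couple the stationary process $(X,Y)$ and the non-stationary process $(X',Y')$ on a common probability space using the Doeblin minorization $Q_{\thetaTrue}(x;\cdot)\geq \sigma^-\lambda(\cdot)$ from \eqref{eq_Doeblin_cond_Q_theta}: at every vertex, independently with probability $\sigma^-$, both child values are ``regenerated'' from the common measure $\lambda$ using the \emph{same} draw, and otherwise follow the residual kernels driven by common noise. Under this coupling, as soon as some vertex strictly below a vertex $w$ on the ancestral path regenerates, one has $X_w=X_w'$ and hence $(X_v,Y_v)=(X_v',Y_v')$ for every descendant $v$ of $w$. Decomposing both log-likelihoods by \eqref{eq_l_nx_as_sum_h_ukx} into increments $\h_{u,\height{u},x}(\theta)$ (computed from $Y'$, resp.\ from $Y$) and fixing a truncation depth $k$, I would replace each full increment by its depth-$k$ version $\h_{u,k,x}(\theta)$ at a uniform cost of $\rho^{k-1}/(1-\rho)$ per vertex by \eqref{eq_h_ukx_unif_Cauchy}. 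The truncated increment at $u$ with $\height{u}>k$ is a function of the observations on $\Delta(u,k)\subset\T(\parent^k(u))$ only; hence the truncated increments built from $Y$ and from $Y'$ coincide whenever the path from $\rooot$ to $\parent^k(u)$ contains a regeneration, an event whose complement has probability at most $(1-\sigma^-)^{\height{u}-k}\leq\rho^{\height{u}-k}$.

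The contribution of the ``good'' vertices is then bounded by the truncation error $2\rho^{k-1}/(1-\rho)$, while the ``bad'' set $B_n^{(k)}$ of vertices whose truncated increments differ is handled through its relative size: summing the failure probabilities over generations gives $\Esp[\vert B_n^{(k)}\vert]\leq \rho^{-k}\sum_{m=0}^n (2\rho)^m$, which, divided by $\vert\T_n\vert$, decays exponentially in $n$ for each fixed $k$ and for \emph{every} $\rho\in(0,1)$ (only the relative size of the discrepancy set matters, which is why, unlike the asymptotic-normality extension that invokes a coupling-time construction and needs $\rho<1/2$, no condition on $\rho$ is required here). Markov's inequality and Borel--Cantelli then yield $\vert\T_n\vert^{-1}\vert B_n^{(k)}\vert\to 0$ $\Pzeta$-\as The magnitude of the increments on $B_n^{(k)}$ is controlled by the $L^2$ envelope of \eqref{eq_h_ukx_unif_bounded}, which is integrable by \Cref{assump_HMM_3bis}; combining Cauchy--Schwarz with the ergodic \Cref{lemma_ergodic_convergence} (applied with $k=0$ to the square of that envelope) shows that the bad-set contribution vanishes $\Pzeta$-\as Letting $n\to\infty$, then $k\to\infty$, and invoking the stationary uniform convergence of \Cref{prop_unif_cv_l} for the $Y$-side, gives the claim. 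The main obstacle is exactly this last step: the set $B_n^{(k)}$ and the observations $Y'$ are not independent, so the envelope estimate must be transported from the stationary side through the coupling (on good vertices $Y'=Y$, while the bad-set mass is shown negligible), and one must verify that the normalized $L^2$ mass of the envelope remains almost surely bounded under the non-stationary law $\Pzeta$ and not merely under $\Ptrue$.
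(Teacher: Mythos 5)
Your route is genuinely different from the paper's. The paper does not couple the two processes at all: it proves the domination $\Pzeta( X'_{\T^*}\in\cdot\,, Y'_{\T^*}\in\cdot) \ll \Ptrue( X_{\T^*}\in\cdot\,, Y_{\T^*}\in\cdot)$ by a single change-of-measure computation at the two children of the root, using that $\frac{\drv\pi_{\thetaTrue}}{\drv\lambda}\in[\sigma^-,\sigma^+]$, which gives a Radon--Nikodym density bounded by $(\sigma^+/\sigma^-)^2$; the $\Ptrue$-a.s.\ uniform convergence of \Cref{prop_unif_cv_l} then transfers verbatim to $\Pzeta$ (the root term being negligible after normalization), and the Wald argmax argument concludes. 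Your coupling-plus-truncation argument reproves the transferred uniform convergence by hand. Its quantitative core is sound: the pathwise bound \eqref{eq_h_ukx_unif_Cauchy} does apply to $Y'$ since it is deterministic in the observations, your regeneration construction from \eqref{eq_Doeblin_cond_Q_theta} is a valid coupling, the estimate $\Esp[\vert B_n^{(k)}\vert] \leq \rho^{-k}\sum_{m\leq n}(2\rho)^m$ is correct, and your observation that only \emph{relative} negligibility of the bad set is needed --- so that, unlike the coupling-time construction of \Cref{lemma_coupling_time} used for asymptotic normality, no condition $\rho<1/2$ (equivalently $\sigma^-\geq 1/2$) is required --- is exactly right and consistent with the paper, which imposes no condition on $\rho$ here.

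The one genuine gap is the step you yourself flag, and your proposed resolution is circular as stated: on $B_n^{(k)}$ one has precisely $Y'\neq Y$, so ``on good vertices $Y'=Y$'' transports nothing where it is needed, and \Cref{lemma_ergodic_convergence} is proved only under the stationary law $\Ptrue$, so it cannot be applied to the squared envelope of $Y'$ directly. The fix requires a marginal domination: for every $u$ with $\height{u}\geq 1$, the law of $X'_u$ under $\Pzeta$ is $\zeta Q_{\thetaTrue}^{\height{u}}$, whose density \wrt $\lambda$ lies in $[\sigma^-,\sigma^+]$ by \Cref{assump_HMM_2}-\ref{assump_HMM_2:item1}, whence $\Ezeta\bigl[\bigl(\log b^+ \lor \vert\log(\sigma^- b^-(Y'_u))\vert\bigr)^2\bigr] \leq (\sigma^+/\sigma^-)\,\Etrue\bigl[\bigl(\log b^+ \lor \vert\log(\sigma^- b^-(Y_\rooot))\vert\bigr)^2\bigr] < \infty$ uniformly in $u$ by \Cref{assump_HMM_3bis}. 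Since $B_n^{(k)}$ and $Y'$ are not independent (the regeneration indicators drive $X'$), one should then bound each term by Cauchy--Schwarz at the level of expectations, $\Ezeta\bigl[\ind_{\{u\in B_n^{(k)}\}} \vert\h'_{u,k,x}\vert\bigr] \leq \Prb(u\in B_n^{(k)})^{1/2}\,\Ezeta[\vert\h'_{u,k,x}\vert^2]^{1/2}$, sum over $u\in\T_n$, and conclude by Markov's inequality and Borel--Cantelli; no ergodic theorem under $\Pzeta$ is needed. Note, however, that the domination just invoked is exactly the ingredient of the paper's absolute-continuity argument --- once you grant it, the paper's two-step proof (transfer \Cref{prop_unif_cv_l}, then argmax) makes the entire coupling construction unnecessary for consistency.
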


\begin{proof}We start by proving that for any $n\in\N^*$, the distribution of the non-stationary HMT process $(X',Y')$ on $\T^*$
is absolutely continuous \wrt the distribution of the stationary HMT process $(X,Y)$ on $\T^*$,
that is:
\begin{equation}\label{eq_absolute_continuous_HMT_non_stationary}
\Pzeta( X_{\T^*}' \in \cdot, Y_{\T^*}' \in\cdot)
\ll \Prb_{\thetaTrue, \pi_{\thetaTrue}}( X_{\T^*}' \in \cdot,  Y_{\T^*}' \in\cdot)
= \Ptrue( X_{\T^*} \in \cdot, Y_{\T^*} \in\cdot).
\end{equation}
Remind that \Cref{assump_HMM_2}-\ref{assump_HMM_2:item1} implies that
$\pi_{\thetaTrue} \ll \lambda$ with density $\frac{\drv \pi_{\thetaTrue}}{\drv \lambda}$ 
taking value in $[ \sigma^-, \sigma^+]$.
Denote by $u_1$ and $u_2$ the two children vertices of $\rooot$.
Using \Cref{assump_HMM_2},
for any non-negative measurable function $f$ from $\SpaceX^2$ to $\R_+$,
we get:
\begin{multline*}
\int_{\SpaceX^2} f(x_{u_1}, x_{u_2}) \, \Pzeta( X_{u_1}' \in \drv x_{u_1}, X_{u_2}' \in \drv x_{u_2}) \\
\begin{aligned}
& = \int_{\SpaceX^3} f(x_{u_1}, x_{u_2}) \, q_{\thetaTrue}(x_{\rooot}, x_{u_1}) q_{\thetaTrue}(x_{\rooot}, x_{u_2})
			\, \lambda(\drv x_{u_1}) \lambda(\drv x_{u_2}) \zeta(\drv x_{\rooot}) \\
& \leq \left( \frac{\sigma^+}{\sigma^-} \right)^2
	\int_{\SpaceX^2} f(x_{u_1}, x_{u_2}) \, \pi_{\thetaTrue}(\drv x_{u_1}) \pi_{\thetaTrue}(\drv x_{u_2}) \\
& = \left( \frac{\sigma^+}{\sigma^-} \right)^2
	\int_{\SpaceX^2} f(x_{u_1}, x_{u_2}) \, \Ptrue( X_{u_1} \in \drv x_{u_1}, X_{u_2} \in \drv x_{u_2}) .
\end{aligned}
\end{multline*}
In particular, 
for any measurable subset $A$ of $\SpaceX^{\T^*} \times \SpaceY^{\T^*}$, we can choose $f$ to be define as:
\begin{align*}
f(x_{u_1}, x_{u_2}) 
& = \Ezeta[ \ind_A(X_{\T^*}', Y_{\T^*}') \,\vert\, X_{u_1}'=x_{u_1}, X_{u_2}'=x_{u_2} ] \\
& = \Etrue[ \ind_A(X_{\T^*}, Y_{\T^*}) \,\vert\, X_{u_1}=x_{u_1}, X_{u_2}=x_{u_2} ] 
\end{align*}
Hence, we get that \eqref{eq_absolute_continuous_HMT_non_stationary} holds.

Using \eqref{eq_absolute_continuous_HMT_non_stationary}, 
we get that \Cref{prop_unif_cv_l} also holds $\Pzeta$-\as  with $\ell_{n,x}(\theta)$ replaced by $\ell_{n,x}'(\theta)$,
that is, in the non-stationary case.
Thus, the proof of \Cref{thm_Strong_consistency_MLE} can be immediately adapted to the non-stationary case
(note that Propositions~\ref{prop_l_continuous_complete} and~\ref{prop_global_max_l} state properties of the contrast function $\ell$,
		which is the same in the stationary and non-stationary cases).
This concludes the proof of the Theorem.
\end{proof}

Using a similar argument as for \Cref{thm_strong_constistency_non_stationary},
we get that in the non-stationary case,
the normalized observed information $-\vert\T_n\vert^{-1} \nabla_\theta^2 \ell_{n,x}'(\theta_n)$
converges $\Pzeta$-\as locally uniformly to the limiting Fisher information $\cI(\thetaTrue)$ 
(which is defined in \eqref{eq_def_Fisher_information}).
Note that the condition $\rho<1/2$ on the mixing rate $\rho$
of the HMT process $(X,Y)$
is inherited from \Cref{thm_LLN_observed_information}
in the stationary case.
See the discussion in \Cref{rem_rho_smaller_than_2}
for comments on this condition on $\rho$.

\begin{theo}[Convergence of the normalized observed information, non-stationary case]
	\label{thm_LLN_observed_information_non_stationary}
Assume that Assumptions~\ref{assump_HMM_1}--\ref{assump_HMM_3}
	and \ref{assump_HMM_4}--\ref{assump_HMM_grad_3} hold.
Assume that $\rho<1/2$.
Assume that $\Theta$ is compact.
Then, for all initial distributions $\zeta$ and all $x\in\SpaceX$, we have:
\begin{equation*}\lim_{\delta\to 0} \lim_{n\to\infty} \sup_{\theta\in\ThetaNeighborhood \,:\, \Vert \theta - \thetaTrue \Vert \leq \delta}
	\ \Bigl\Vert{-\vert\T_n\vert^{-1} \nabla_\theta^2 \ell_{n,x}'(\theta) - \cI(\thetaTrue)}\Bigr\Vert = 0
\quad \text{$\Pzeta$-\as}
\end{equation*}
\end{theo}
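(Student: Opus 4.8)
The plan is to deduce the non-stationary statement from its stationary counterpart \Cref{thm_LLN_observed_information} by an absolute continuity argument, exactly in the spirit of the proof of \Cref{thm_strong_constistency_non_stationary}. First I would note that, by the Hessian decomposition \eqref{eq_Hessian_likelihood} applied to the non-stationary process $(X',Y')$, the normalized observed information $-\vert\T_n\vert^{-1}\nabla_\theta^2\ell_{n,x}'(\theta)$ is a deterministic measurable function of the observations $(Y_u', u\in\T_n)$ alone, the hidden variables being integrated out and $X_\rooot'=x$ being fixed. Consequently the event
\[
E' := \Bigl\{ \lim_{\delta\to 0}\lim_{n\to\infty}\sup_{\theta:\Vert\theta-\thetaTrue\Vert\le\delta}\bigl\Vert -\vert\T_n\vert^{-1}\nabla_\theta^2\ell_{n,x}'(\theta)-\cI(\thetaTrue)\bigr\Vert=0 \Bigr\}
\]
is the pullback, under the map $\omega\mapsto (Y_\rooot'(\omega),Y_{\T^*}'(\omega))$, of a fixed Borel subset of $\SpaceY^{\T}$; and \Cref{thm_LLN_observed_information} asserts precisely that this Borel set carries full measure under the law of $(Y_\rooot,Y_{\T^*})$ governing the stationary process.

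The key step is therefore to upgrade the absolute continuity \eqref{eq_absolute_continuous_HMT_non_stationary} so that it also controls the root observation $Y_\rooot$, namely to show that the law of $(Y_\rooot', X_{\T^*}', Y_{\T^*}')$ under $\Pzeta$ is absolutely continuous \wrt the law of $(Y_\rooot, X_{\T^*}, Y_{\T^*})$ under $\Ptrue$. This follows by the very same computation as in \Cref{thm_strong_constistency_non_stationary}: writing the density of the restriction to $\T_n$ \wrt the natural product reference measure, the dependence on $x_\rooot$ is confined to the factor $g_{\thetaTrue}(x_\rooot,y_\rooot)\,q_{\thetaTrue}(x_\rooot,x_{u_1})\,q_{\thetaTrue}(x_\rooot,x_{u_2})$, where $u_1,u_2$ are the two children of $\rooot$, while all remaining factors are common to both processes. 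Integrating this factor against $\pi_{\thetaTrue}$ yields, by \Cref{assump_HMM_2}, the strictly positive lower bound $(\sigma^-)^2\int_{\SpaceX}g_{\thetaTrue}(x_\rooot,y_\rooot)\,\pi_{\thetaTrue}(\drv x_\rooot)>0$, so that the zero set of the stationary density is contained in that of the non-stationary density. This gives absolute continuity on each $\T_n$ and, by Kolmogorov's extension theorem, on the whole of $\T$.

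Granting this, the conclusion is immediate: $E'$ lies in the $\sigma$-field generated by $(Y_\rooot', X_{\T^*}', Y_{\T^*}')$, it has full probability under $\Ptrue$ by \Cref{thm_LLN_observed_information}, and therefore it has full probability under $\Pzeta$ as well. Transferring the whole event at once also accounts automatically for the isolated root term $-\vert\T_n\vert^{-1}\nabla_\theta^2\log g_\theta(x,Y_\rooot')$ of \eqref{eq_Hessian_likelihood}, which in any case tends to $0$ since $\vert\T_n\vert\to\infty$ and $\sup_{\Vert\theta-\thetaTrue\Vert\le\delta}\Vert\nabla_\theta^2\log g_\theta(x,Y_\rooot')\Vert$ is finite $\Pzeta$-\as (the law of $Y_\rooot'$ being dominated by $\mu$, on which the supremum is finite $\mu$-almost everywhere by \Cref{assump_HMM_grad_1} and \Cref{assump_HMM_grad_3}); no separate control of the $\Lambda$ and $\Gamma$ sums is needed.

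The main obstacle is precisely the point handled in the second paragraph: unlike the situation on $\T^*$, the observed information genuinely depends on the root observation $Y_\rooot$, since $\rooot\in\Delta(u)$ for every $u\in\T^*$, so that each increment $\Lambda_{u,\height{u},x}$ and $\Gamma_{u,\height{u},x}$ sees $Y_\rooot$. Hence the absolute continuity stated on $\T^*$ in \eqref{eq_absolute_continuous_HMT_non_stationary} does not by itself suffice and must be extended to incorporate $Y_\rooot$. An alternative route, avoiding this extension, would be to invoke the exponential forgetting bound \Cref{lemme_exp_coupling_HMT} to show that the influence of $Y_\rooot$ on the normalized observed information decays to $0$ as $n\to\infty$, so that $E'$ coincides $\Ptrue$-\as with an event measurable \wrt $Y_{\T^*}$ alone, which then transfers through \eqref{eq_absolute_continuous_HMT_non_stationary} as originally stated.
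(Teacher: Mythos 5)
Your strategy is the paper's own: the paper proves this theorem in a single sentence, by transferring the stationary \Cref{thm_LLN_observed_information} through the absolute-continuity argument from the proof of \Cref{thm_strong_constistency_non_stationary}, and your proposal fleshes out exactly that transfer, including the two points the paper leaves implicit (the root observation and the isolated root Hessian term). Your upgraded absolute continuity is correct: the density of the stationary law of $(Y_\rooot, X_{u_1}, X_{u_2})$ \wrt $\mu\otimes\lambda\otimes\lambda$, namely $\int_{\SpaceX} g_{\thetaTrue}(x,y_\rooot)\, q_{\thetaTrue}(x,x_{u_1})\, q_{\thetaTrue}(x,x_{u_2})\, \pi_{\thetaTrue}(\drv x)$, is bounded below by $(\sigma^-)^2 \int_{\SpaceX} g_{\thetaTrue}(x,y_\rooot)\, \pi_{\thetaTrue}(\drv x) > 0$ everywhere (positivity from \Cref{assump_HMM_2}), while conditionally on $(Y_\rooot, X_{u_1}, X_{u_2})$ the remaining variables have the same law under $\Pzeta$ and $\Ptrue$ by the Markov property; and your disposal of the root term $\vert\T_n\vert^{-1}\nabla_\theta^2\log g_\theta(x,Y_\rooot')$ via domination of the law of $Y_\rooot'$ by $\mu$ is also sound.

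Two points need repair, neither fatal. First, the claim in your last paragraph that each increment $\Lambda_{u,\height{u},x}$ and $\Gamma_{u,\height{u},x}$ ``sees'' $Y_\rooot$ is false: although $\rooot\in\Delta(u)$, these quantities condition on $X_\rooot = x$, and given $X_\rooot$ the observation $Y_\rooot$ is independent of all other variables, so the factor $g_\theta(x,y_\rooot)$ cancels between numerator and denominator of every conditional density; equivalently, \eqref{eq_def_exple_p_theta} factorizes $\ell_{n,x}(\theta; y_{\T_n})$ as $\log g_\theta(x,y_\rooot)$ plus a function of $y_{\T_n^*}$ alone. Hence the whole $\Lambda$/$\Gamma$ part of \eqref{eq_Hessian_likelihood} is $\sigma(Y_{\T^*})$-measurable, \eqref{eq_absolute_continuous_HMT_non_stationary} as stated already transfers it, and your treatment of the single root term finishes the proof --- so neither your upgraded absolute continuity nor the exponential-forgetting alternative is actually needed, though both are correct routes. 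Second, the step ``absolute continuity on each $\T_n$ and, by Kolmogorov's extension theorem, on the whole of $\T$'' is not a valid inference in general: absolutely continuous finite-dimensional marginals do not imply absolute continuity on the product $\sigma$-field (the density martingale may fail to be uniformly integrable; two Markov chains with distinct kernels give mutually singular laws with equivalent marginals). It is rescued here because the Radon--Nikodym derivative you compute depends only on $(y_\rooot, x_{u_1}, x_{u_2})$ and is therefore the same for every $n$; alternatively, mimic the paper's proof of \eqref{eq_absolute_continuous_HMT_non_stationary} by choosing $f(y_\rooot, x_{u_1}, x_{u_2})$ as the common conditional expectation of $\ind_A$, for an arbitrary measurable subset $A$ of the infinite product space. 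With these two corrections your argument is complete and matches the paper's intended proof.
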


In particular, 
combining Theorems~\ref{thm_strong_constistency_non_stationary} 
	and~\ref{thm_LLN_observed_information_non_stationary},
we get that the normalized observed information
$-\vert\T_n\vert^{-1} \nabla_\theta^2 \ell_{n,x}(\hat\theta_{n,x})$
at the MLE $\hat\theta_{n,x}$ is a strongly consistent estimator
of the Fisher information matrix $\cI(\thetaTrue)$.

\medskip

Before proving the asymptotic normality of the MLE in the non-stationary case,
we start with the following lemma which present a coupling construction for the two processes
	$(X,Y)$ and $(X',Y')$.

\begin{lemme}[Coupling construction of two HMTs]
	\label{lemma_coupling_time}
Assume that Assumptions~\ref{assump_HMM_1} and \ref{assump_HMM_2} hold.
Further assume that $\sigma^- \geq 1/2$.
Then, it is possible to construct the two processes $(X,Y)$ and $(X',Y')$ on a common probability space such that
there exists an \as finite random time $N$, which we call the \emph{coupling time},
such that $(X_u,Y_u) = (X_u', Y_u')$ for all $u\in \T_n$ with $n\geq N$.
\end{lemme}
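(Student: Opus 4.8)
The plan is to build both processes on a common probability space through a maximal (Doeblin-type) coupling that regenerates along edges, and then to recognize the set of not-yet-coupled vertices as a Galton--Watson tree whose extinction is guaranteed by the hypothesis $\sigma^- \geq 1/2$. Recall from \eqref{eq_Doeblin_cond_Q_theta} that $\sigma^- \lambda(\cdot) \leq Q_{\thetaTrue}(x;\cdot)$ for every $x\in\SpaceX$, so we may write
\begin{equation*}
Q_{\thetaTrue}(x;\cdot) = \sigma^- \lambda(\cdot) + (1-\sigma^-) R(x;\cdot),
\qquad R(x;\cdot) := \frac{Q_{\thetaTrue}(x;\cdot) - \sigma^- \lambda(\cdot)}{1-\sigma^-},
\end{equation*}
where $R(x;\cdot)$ is a probability kernel. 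For each vertex $u\in\T^*$ I would introduce mutually independent sources of randomness: a Bernoulli variable $B_u$ of parameter $\sigma^-$ (the \emph{regeneration indicator} of the edge $(\parent(u),u)$), a variable $Z_u$ with law $\lambda$, enough randomness to sample from the residual kernels $R(x;\cdot)$, and randomness $W_u$ to sample the observation kernel $G_{\thetaTrue}$.

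On this common space I would then generate the hidden states top-down, starting from $X_\rooot\sim\pi_{\thetaTrue}$ and $X'_\rooot\sim\zeta$ (coupled arbitrarily, say independently). For each $u\in\T^*$, if $B_u = 1$ I set $X_u = X'_u = Z_u$, and if $B_u = 0$ I sample $X_u\sim R(X_{\parent(u)};\cdot)$ and $X'_u\sim R(X'_{\parent(u)};\cdot)$ using the same residual randomness; finally I generate $Y_u$ from $X_u$ and $Y'_u$ from $X'_u$ using the common randomness $W_u$. By the Doeblin decomposition above, each of $X$ and $X'$ is then a branching Markov chain with transition kernel $Q_{\thetaTrue}$ and the prescribed initial law, so $(X,Y)$ and $(X',Y')$ have the correct HMT distributions. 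Two structural facts are immediate: if the edge $(\parent(u),u)$ regenerates (i.e.\ $B_u=1$) then $X_u = X'_u$; and if $X_{\parent(u)} = X'_{\parent(u)}$ then $X_u = X'_u$ regardless of $B_u$, since both use the same residual randomness. Consequently, once a vertex is \emph{coupled} (meaning its hidden states agree), every descendant is coupled as well, and moreover $Y_u = Y'_u$ whenever $X_u = X'_u$ by the common choice of $W_u$.

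Call a vertex $u$ \emph{uncoupled} if no edge on the path from $\rooot$ to $u$ regenerates, and let $\mathcal{T}^{\mathrm{uc}}$ be the (random) set of uncoupled vertices; it is a subtree containing $\rooot$, and every vertex outside it is coupled. Since each of the two children of an uncoupled vertex is itself uncoupled exactly when its edge fails to regenerate, independently with probability $1-\sigma^-$, the subtree $\mathcal{T}^{\mathrm{uc}}$ is a Galton--Watson tree with offspring law $\mathrm{Binomial}(2,1-\sigma^-)$, whose mean is $2(1-\sigma^-)$. Here the assumption $\sigma^- \geq 1/2$ enters decisively: it forces the mean offspring to be at most $1$, so the tree is (sub)critical; as the offspring law is non-degenerate (it charges $0$ with probability $(\sigma^-)^2 > 0$), the tree becomes extinct almost surely. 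Letting $N$ be one plus the (a.s.\ finite) height of $\mathcal{T}^{\mathrm{uc}}$, every vertex $u$ with $\height{u}\geq N$ lies outside $\mathcal{T}^{\mathrm{uc}}$, hence is coupled, so $X_u = X'_u$ and $Y_u = Y'_u$, which yields the claim. The main obstacle is not any single estimate but the correct bookkeeping of the coupling: one must verify that the edgewise Doeblin splitting reproduces both marginal HMT laws simultaneously and that ``coupled'' propagates down subtrees, after which the critical Galton--Watson extinction (and the precise role of $\sigma^- \geq 1/2$) delivers the finite coupling time.
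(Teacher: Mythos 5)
Your proposal is correct and follows essentially the same route as the paper's proof: a Doeblin-type edge-by-edge coupling in which each edge couples with probability $\sigma^-$, so that the uncoupled vertices form a Galton--Watson tree with offspring law $\mathrm{Binomial}(2,1-\sigma^-)$ and mean $2(1-\sigma^-)$, whose almost sure extinction under $\sigma^- \geq 1/2$ (including the critical case, by non-degeneracy of the offspring law) yields the almost surely finite coupling time $N$. Your explicit splitting $Q_{\thetaTrue} = \sigma^- \lambda + (1-\sigma^-)R$ with i.i.d.\ regeneration indicators is only a mild variant of the paper's construction via maximal couplings conditional on the parent states; it makes the Galton--Watson structure exact rather than stochastically dominating, but the key mechanism, the offspring distribution, and the role of the hypothesis $\sigma^- \geq 1/2$ are identical.
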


We will denote by $\PtrueZeta$ the probability distribution that realizes this coupling.

Note that $\rho \leq 1/2$ implies that $\sigma^- \geq \sigma^+ / 2 \geq 1/2$
	(see \Cref{assump_HMM_2}).

\begin{proof}
We first construct the coupling only for the process $X$ and $X'$.
We define the coupling construction inductively on the height of the tree.
For the root vertex, we use an independent coupling construction for $X_{\rooot}$ and $X_{\rooot}'$,
	which are distributed according to $\pi_{\thetaTrue}$ and $\zeta$ respectively
	(note that it is also possible to use a perfect coupling with probability error $\normTV{\pi_{\thetaTrue} - \zeta}$).
Then, if the coupling has been constructed up to generation $n\in\N$, using the Markov property, 
	we proceed to construct independently the coupling for each vertices in generation $n+1$.
Let $u\in\G_{n+1}$. 
If the variables were already coupled for the parent vertex $\parent(u)$,
	that is $X_{\parent(u)} = X_{\parent(u)}'$, then we choose the new value $X_u = X_u'$ 
	according to the transition kernel $Q_{\thetaTrue}$.
Otherwise, if $X_{\parent(u)} \neq X_{\parent(u)}'$, then using the uniform geometric ergodicity (remind \Cref{assump_HMM_2})
	of the transition kernel $Q_{\thetaTrue}$, we know that 
		$\sup_{x,x'\in\SpaceX} \normTV{Q_{\thetaTrue}(x;\cdot) - Q_{\thetaTrue}(x';\cdot)} \leq 1- \sigma^-$,
	and thus we can construct a coupling of $X_u$ and $X_u'$ conditionally on $X_{\parent(u)} \neq X_{\parent(u)}'$
		with exact matching probability at least $1- \sigma^-$.
We have constructing the matching for $u$, and thus for the whole generation $n+1$.
Using Kolmogorov extension theorem, there exists a coupling measure for the whole tree $\T$
	whose finite dimensional marginals are the ones given above.

Remark that the joint process $(X,Y)$ satisfies a uniform geometric ergodicity bound
	with the same constant $1- \sigma^-$.
Thus, the construction above can be extended to the joint process $(X,Y)$.
Denote by $\PtrueZeta$ the probability distribution of the coupling
	we have constructed for the joint process $(X,Y)$.

Define the random coupling time $N = \inf \{ n\in\N \,\vert\, \forall u\in\G_n,\, X_u = X_u' \}$,
	which is the first generation for which the exact coupling occurs for all vertices
	(and $N = \infty$ is this never happens).
We are left to prove that $\PtrueZeta$-\as $N<\infty$.
We say that a vertex $u$ is a special vertex if $X_u \neq X_u'$.
Note that if $u$ is not special, then all its descendants are also not special.
Also note that special vertices form a Bienaymé-Galton-Watson tree 
	whose (homogeneous) offspring distribution takes the values:
	$0$ with probability $(\sigma^-)^2$; $1$ with probability $2 \sigma^- (1-\sigma^-)$;
	and $2$ with probability $(1-\sigma^-)^2$.
The average of this offspring distribution is $2 (1-\sigma^-)$.
Hence, the number of special vertices if finite $\PtrueZeta$-\as, 
	that is, $N$ is finite $\PtrueZeta$-\as, if and only if $2 (1-\sigma^-) \leq 1$,
	that is, $\sigma^- \geq 1/2$.
This concludes the proof.
\end{proof}

Remind that the log-likelihood function $\ell_{n,x}$ (resp. $\ell_{n,x}'$), which is a random function depending 
on $Y_{\T_n}$ from the stationary HMT process
(resp. on $Y_{\T_n}'$ from the non-stationary HMT process),
is defined in \eqref{eq_def_l_nx_theta_2} on page~\pageref{eq_def_l_nx_theta_2}
(resp. just before \eqref{eq_def_l_nx_theta_non_stationary} on page~\pageref{eq_def_l_nx_theta_non_stationary}).
For all $\theta\in\Theta$, define:
\begin{align*}
D_{n,x}(\theta) & = \ell_{n,x}'(\theta) - \ell_{n,x}(\theta) \\
& = \sum_{u\in\T_n} \log p_\theta (Y_u' \,\vert\, Y'_{\Delta^*(u,\height{u})}, X_{\rooot}'=x)
			- \log p_\theta (Y_u \,\vert\, Y_{\Delta^*(u,\height{u})}, X_{\rooot}=x) ,
\end{align*}
where remind that $p_\theta$ denotes possibly conditional density
(see \eqref{eq_def_exple_p_theta} on page~\pageref{eq_def_exple_p_theta}).

Remind that when Assumptions~\ref{assump_HMM_0}-\ref{assump_HMM_3} and~\ref{assump_HMM_4} hold and $\Theta$ is compact,
for all $x\in\SpaceX$, the MLE $\hat{\theta}_{n,x}$ (resp. $\hat{\theta}'_{n,x}$)
is a random variable which depends on $Y_{\T_n}$ from the stationary HMT process
(resp. on $Y_{\T_n}'$ from the non-stationary HMT process)
and is defined in \eqref{eq_def_MLE_hat_theta_nx} on page~\pageref{eq_def_MLE_hat_theta_nx}
(resp. in \eqref{eq_def_MLE_hat_theta_nx_prime} on page~\pageref{eq_def_MLE_hat_theta_nx_prime}).

To prove that  $\lim_{n\to\infty} \vert \T_n \vert^{1/2} ( \hat{\theta}'_{n,x} - \hat{\theta}_{n,x} ) = 0$ $\PtrueZeta$-\as,
and thus that $\hat{\theta}'_{n,x}$ and $\hat{\theta}_{n,x}$ are asymptotically normal with the same covariance matrix
(remind \Cref{thm_asymptotic_normality}),
we must first prove that the function $\theta \mapsto D_{n,x}(\theta)$ satisfies some kind of continuity property.
Note that we proved in the proof of \Cref{thm_strong_constistency_non_stationary} that 
\Cref{prop_unif_cv_l} holds both in the stationary and the non-stationary cases,
and thus we have: 
\begin{equation*}
\lim_{n\to\infty} \sup_{\theta\in\Theta} \Bigl \vert \vert \T_n \vert^{-1} D_{n,x}(\theta)  \Bigr\vert = 0
\quad \PtrueZeta\text{-\as}
\end{equation*}
However, we need some kind of continuity property without the normalizing term $\vert \vert \T_n \vert^{-1}$,
which is given by the following lemma. 

\begin{lemma}
	\label{lemma_coupling_continuity_Dnx}
Assume that Assumptions~\ref{assump_HMM_1}--\ref{assump_HMM_4} hold.
Further assume that $\rho < 1/2$.
Then, for all initial distributions $\zeta$ and all $x\in\SpaceX$, we have:
\begin{equation*}
\lim_{n\to\infty} \vert D_{n,x}(\hat{\theta}'_{n,x}) - D_{n,x}(\hat{\theta}_{n,x}) \vert = 0,
\quad \text{$\PtrueZeta$-\as}
\end{equation*}
\end{lemma}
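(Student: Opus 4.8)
The plan is to upgrade the crude bound $\sup_{\theta}\lvert\vert\T_n\vert^{-1}D_{n,x}(\theta)\rvert\to0$ (already available from \Cref{prop_unif_cv_l} in both regimes) to uniform convergence of $D_{n,x}(\theta)$ \emph{itself} — without the normalization — towards a continuous limit function, and then to conclude from the strong consistency of both MLEs. Throughout I work under the coupling distribution $\PtrueZeta$; note that since $\rho<1/2$ we have $\sigma^-\geq 1/2$ (see \Cref{assump_HMM_2}), so the coupling of \Cref{lemma_coupling_time} applies, and since each marginal of $\PtrueZeta$ is the corresponding law, the strong consistency statements of \Cref{thm_Strong_consistency_MLE} and \Cref{thm_strong_constistency_non_stationary} both hold $\PtrueZeta$-\as

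First I would write $D_{n,x}(\theta)=\sum_{u\in\T_n}d_u(\theta)$, where $d_u(\theta):=\h'_{u,\height{u},x}(\theta)-\h_{u,\height{u},x}(\theta)$, the primed increment being built from the non-stationary process $(X',Y')$ and the unprimed one from the stationary process $(X,Y)$ (recall $\h_{u,\height{u},x}(\theta)=\log p_\theta(Y_u\mid Y_{\Delta^*(u)},X_\rooot=x)$). By \Cref{lemma_coupling_time} there is an \as finite coupling time $N$; the set of \emph{special} vertices $S=\{v:(X_v,Y_v)\neq(X_v',Y_v')\}$ is then contained in $\T_{N-1}$, so $h_S:=\max\{\height{v}:v\in S\}$ is finite $\PtrueZeta$-\as

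The crux is a per-vertex geometric bound. Fix $u$ with $m:=\height{u}>h_S+1$ and set $k=m-h_S-1\geq1$, so $k\leq\height{u}$ and $\Delta(u,k)=\DeltaR(u,k)$ is deterministic. Since $\Delta(u,k)\subset\T(\parent^k(u),k)$ only involves vertices of height $\geq m-k=h_S+1>h_S$, it contains no special vertex, hence $Y_{\Delta(u,k)}=Y'_{\Delta(u,k)}$ and therefore $\h_{u,k,x}(\theta)=\h'_{u,k,x}(\theta)$. Comparing depth $k$ with depth $m$ separately for each process via \eqref{eq_h_ukx_unif_Cauchy} in \Cref{lemma_h_ukx_unif_Cauchy_and_bounded} gives
\[
\sup_{\theta\in\Theta}\bigl|d_u(\theta)\bigr|
\leq \sup_{\theta}\bigl|\h_{u,m,x}(\theta)-\h_{u,k,x}(\theta)\bigr|
+\sup_{\theta}\bigl|\h'_{u,m,x}(\theta)-\h'_{u,k,x}(\theta)\bigr|
\leq \frac{2\rho^{k-1}}{1-\rho}=\frac{2\rho^{-h_S-2}}{1-\rho}\,\rho^{m}.
\]
Summing over the $2^m$ vertices of $\G_m$ yields a contribution bounded by a random constant times $(2\rho)^m$, which is summable in $m$ precisely because $\rho<1/2$; the finitely many vertices of height $\leq h_S+1$ contribute an \as finite amount by the uniform bound \eqref{eq_h_ukx_unif_bounded}. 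Hence $\sum_{u\in\T}\sup_{\theta\in\Theta}|d_u(\theta)|<\infty$ $\PtrueZeta$-\as

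With this absolute and uniform summability in hand, the Weierstrass criterion gives that $D_{n,x}(\theta)\to D_\infty(\theta):=\sum_{u\in\T}d_u(\theta)$ uniformly on $\Theta$, and that $D_\infty$ is continuous, each $d_u$ being continuous in $\theta$ by the dominated-convergence argument used in the proof of \Cref{prop_l_continuous_complete}. Finally I would telescope
\[
D_{n,x}(\hat\theta'_{n,x})-D_{n,x}(\hat\theta_{n,x})
=\bigl[D_{n,x}-D_\infty\bigr](\hat\theta'_{n,x})
+\bigl[D_\infty(\hat\theta'_{n,x})-D_\infty(\hat\theta_{n,x})\bigr]
+\bigl[D_\infty-D_{n,x}\bigr](\hat\theta_{n,x}),
\]
where the first and third terms are each at most $\sup_\theta|D_{n,x}(\theta)-D_\infty(\theta)|\to0$, and the middle term tends to $D_\infty(\thetaTrue)-D_\infty(\thetaTrue)=0$ by continuity of $D_\infty$ together with $\hat\theta'_{n,x},\hat\theta_{n,x}\to\thetaTrue$ $\PtrueZeta$-\as The main obstacle is the per-vertex estimate: one must choose the trim depth $k$ large enough to retain the forgetting bound yet small enough that the trimmed past $\Delta(u,k)$ avoids every special vertex, and then control the exponential growth of $|\G_m|=2^m$ by the forgetting rate $\rho^m$ — it is exactly here that the hypothesis $\rho<1/2$ is indispensable.
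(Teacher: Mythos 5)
Your proof is correct and takes essentially the same route as the paper's: the coupling time from \Cref{lemma_coupling_time}, a per-vertex bound of order $\rho^{\height{u}-N}$ beyond the coupling height (the paper re-derives it directly from the forgetting \Cref{lemme_exp_coupling_HMT}, while you obtain the same bound by matching the trimmed increments on the coupled past and applying \eqref{eq_h_ukx_unif_Cauchy} once per process), absolute summability of $\sup_\theta\vert d_u(\theta)\vert$ against the branching factor $2^m$ using exactly $\rho<1/2$, and a conclusion via continuity in $\theta$ together with strong consistency of both MLEs. The only cosmetic difference is your packaging through the uniform limit $D_\infty$, where the paper instead truncates to the finite tree $\T_{N_\eps}$ and uses continuity of $\ell_{N_\eps,x}$ and $\ell'_{N_\eps,x}$.
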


\begin{proof}
{[The proof is a straightforward adaptation of the proof of \cite[Lemmas~11 and~12]{doucAsymptoticPropertiesMaximum2004}.]}

Let $N$ be the random time provided by \Cref{lemma_coupling_time}.
We first prove that $\PtrueZeta$-\as, we have:
\begin{equation}\label{eq_sum_remainder_non_stationary}
\sum_{u\in \T \setminus T_{N}} \sup_{\theta\in\Theta} \vert \log p_\theta (Y_u' \,\vert\, Y'_{\Delta^*(u,\height{u})}, X_{\rooot}'=x)
			- \log p_\theta (Y_u \,\vert\, Y_{\Delta^*(u,\height{u})}, X_{\rooot}=x) \vert
< \infty .
\end{equation}
Note that for all $u\in\T_n$ and $v$ an ancestor of $u$ (distinct of $u$), we have:
\begin{align*}
p_\theta (Y_u & \,\vert\, Y_{\Delta^*(u,\height{u})}, X_{\rooot}=x) \\
& = \int_{\SpaceX^3} g_\theta(x_u, Y_u) q_\theta(x_{\parent(u)},x_u) \, \lambda(\drv x_u)
		\Prb_\theta(X_{\parent(u)} \in \drv x_{\parent(u)} \,\vert\, X_v =x_v, Y_{\Delta^*(u, \height{u}-\height{v})} ) \\
& \qquad\qquad\qquad \times  \Prb_\theta( X_v \in \drv x_v \,\vert\, Y_{\Delta^*(u,\height{u})}, X_{\rooot}=x) ,
\end{align*}
and similarly for $p_\theta (Y'_u  \,\vert\, Y'_{\Delta^*(u,\height{u})}, X'_{\rooot}=x)$.
Using the fact that for $v\in\T$ with height $\height{v} \geq N$, we have $Y_v = Y'_v$,
and using \Cref{lemme_exp_coupling_HMT}, we have for all $u\in\T$ with height $\height{u}>N$:
\begin{align*}
\vert  p_\theta ( Y_u' & \,\vert\, Y'_{\Delta^*(u,\height{u})}, X_{\rooot}'=x)
			-  p_\theta (Y_u \,\vert\, Y_{\Delta^*(u,\height{u})}, X_{\rooot}=x) \vert \\
& \leq  2\, \rho^{\height{u}-N-1} \sigma^+ \int g_\theta(x,Y_u) \, \lambda(\drv x) .
\end{align*}
Thus, using a similar argument as in the proof of \Cref{lemma_h_ukx_unif_Cauchy_and_bounded}, we get:
\begin{align*}
\vert \log p_\theta (Y_u' \,\vert\, Y'_{\Delta^*(u,\height{u})}, X_{\rooot}'=x)
			- \log p_\theta (Y_u \,\vert\, Y_{\Delta^*(u,\height{u})}, X_{\rooot}=x) \vert
 \leq \frac{ \rho^{\height{u}-N-1} }{1-\rho} \cdot
\end{align*}
Hence, the sum in \eqref{eq_sum_remainder_non_stationary}
is $\PtrueZeta$-\as upper bounded by a constant times $\sum_{k=N+1}^\infty 2^k \rho^{k} \leq (2\rho)^{N+1}/(1-2\rho)$,
and is thus finite $\PtrueZeta$-\as

Let $\eps>0$.
Using \eqref{eq_sum_remainder_non_stationary},
there exists a random integer $N_\eps$
which $\PtrueZeta$-\as is finite and satifies:
\begin{equation*}
\sum_{u\in \T \setminus T_{N_\eps}} \sup_{\theta\in\Theta} \vert \log p_\theta (Y_u' \,\vert\, Y'_{\Delta^*(u,\height{u})}, X_{\rooot}'=x)
			- \log p_\theta (Y_u \,\vert\, Y_{\Delta^*(u,\height{u})}, X_{\rooot}=x) \vert
\leq \eps .
\end{equation*}
Thus, $\PtrueZeta$-\as, for all $n\geq N_\eps$, we have:
\begin{align*}
\vert D_{n,x}(\hat{\theta}'_{n,x}) - D_{n,x}(\hat{\theta}_{n,x}) \vert
\leq 2 \eps + \vert \ell'_{N_\eps,x}(\hat{\theta}_{n,x}') - \ell'_{N_\eps,x}(\hat{\theta}_{n,x}) \vert
		+ \vert \ell_{N_\eps,x}(\hat{\theta}_{n,x}') - \ell_{N_\eps,x}(\hat{\theta}_{n,x}) \vert.
\end{align*}
Note that under the given assumptions, the functions $\theta \mapsto \ell'_{N_\eps,x}(\theta)$
	and $\theta \mapsto \ell_{N_\eps,x}(\theta)$ are continuous $\PtrueZeta$-\as
	(see the proof of \Cref{prop_l_continuous_complete}).
Hence, the proof is complete upon observing that $\hat{\theta}_{n,x}'$ and $\hat{\theta}_{n,x}$
	both converge $\PtrueZeta$-\as to $\thetaTrue$ (see \Cref{thm_strong_constistency_non_stationary}),
	and that $\eps$ was arbitrary.
\end{proof}

We can now prove the asymptotic normality of the MLE $\hat{\theta}_{n,x}'$ in the non-stationary case.
Remind that the contrast function $\ell$ is defined in \eqref{eq_def_contrast_function} on page~\pageref{eq_def_contrast_function}.

\begin{theo}[Asymptotic normality of the MLE, non-stationary case]
	\label{thm_asymptotic_normality_non_stationary}
Assume that Assumptions~\ref{assump_HMM_1}--\ref{assump_HMM_grad_3} hold.
Assume that $\rho<1/2$.
Further assume that the contrast function $\ell$ has a unique maximum
	(which is then located at $\thetaTrue\in\Theta$ by \Cref{prop_global_max_l}) 
	and that $\Theta$ is compact, $\thetaTrue$ is an interior point of $\Theta$,
	and the limiting Fisher information matrix $\cI(\thetaTrue)$ (which is defined in \eqref{eq_def_Fisher_information}) is non-singular.
Then, for all initial distributions $\zeta$ and for all $x\in\SpaceX$, we have:
\begin{align*}
\vert \T_n \vert^{1/2} \bigl( \hat{\theta}_{n,x}' - \thetaTrue \bigr)
	\underset{n\to\infty}{\overset{(d)}{\longrightarrow}}
\cN(0, \cI(\thetaTrue)^{-1})
\quad \text{under $\Pzeta$,}
\end{align*}
where $\cN(0,M)$ denotes the centered Gaussian distribution with covariance matrix $M$.
\end{theo}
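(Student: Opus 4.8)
The plan is to derive the non-stationary statement from its stationary counterpart (Theorem~\ref{thm_asymptotic_normality}) by showing that, under a suitable coupling, the two maximizers $\hat{\theta}_{n,x}'$ and $\hat{\theta}_{n,x}$ differ by $o(\vert\T_n\vert^{-1/2})$. Since $\rho<1/2$ forces $\sigma^-\geq 1/2$, I would first invoke Lemma~\ref{lemma_coupling_time} to realize the stationary process $(X,Y)$ and the non-stationary process $(X',Y')$ on a common space under $\PtrueZeta$, with an \as finite coupling time $N$. Working under $\PtrueZeta$, the marginal law of $(X,Y)$ is the stationary law with initial distribution $\pi_{\thetaTrue}$, so $\hat{\theta}_{n,x}$, being a function of $Y_{\T_n}$ only, is still asymptotically normal by Theorem~\ref{thm_asymptotic_normality}; likewise both $\hat{\theta}_{n,x}$ and $\hat{\theta}_{n,x}'$ converge $\PtrueZeta$-\as to $\thetaTrue$ by Theorems~\ref{thm_Strong_consistency_MLE} and~\ref{thm_strong_constistency_non_stationary}.

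The key step is the \as bound $\vert\T_n\vert^{1/2}\bigl(\hat{\theta}_{n,x}'-\hat{\theta}_{n,x}\bigr)\to 0$. To obtain it, note that since $\thetaTrue$ is an interior point of $\Theta$ and $\hat{\theta}_{n,x}\to\thetaTrue$, for $n$ large $\hat{\theta}_{n,x}$ lies in the interior, so $\nabla_\theta\ell_{n,x}(\hat{\theta}_{n,x})=0$. A second-order Taylor expansion of $\ell_{n,x}$ about $\hat{\theta}_{n,x}$ then gives
\begin{equation*}
\ell_{n,x}(\hat{\theta}_{n,x}) - \ell_{n,x}(\hat{\theta}_{n,x}')
= -\tfrac12\,(\hat{\theta}_{n,x}'-\hat{\theta}_{n,x})^t\, H_n\, (\hat{\theta}_{n,x}'-\hat{\theta}_{n,x}),
\end{equation*}
where $H_n$ is an averaged Hessian of $\ell_{n,x}$ along the segment joining the two maximizers. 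By Theorem~\ref{thm_LLN_observed_information}, together with the consistency of both maximizers, $-\vert\T_n\vert^{-1}H_n\to\cI(\thetaTrue)$ $\PtrueZeta$-\as; as $\cI(\thetaTrue)$ is positive definite, for $n$ large $-\vert\T_n\vert^{-1}H_n\geq\tfrac12\lambda_{\min}(\cI(\thetaTrue))\,I_{\dimTheta}$, whence
\begin{equation*}
\ell_{n,x}(\hat{\theta}_{n,x}) - \ell_{n,x}(\hat{\theta}_{n,x}')
\geq \tfrac14\,\lambda_{\min}(\cI(\thetaTrue))\,\vert\T_n\vert\,\Vert\hat{\theta}_{n,x}'-\hat{\theta}_{n,x}\Vert^2 .
\end{equation*}

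On the other hand, since $\hat{\theta}_{n,x}'$ maximizes $\ell_{n,x}'$ and $\hat{\theta}_{n,x}$ maximizes $\ell_{n,x}$, writing $D_{n,x}=\ell_{n,x}'-\ell_{n,x}$ gives
\begin{equation*}
D_{n,x}(\hat{\theta}_{n,x}') - D_{n,x}(\hat{\theta}_{n,x})
= \bigl[\ell_{n,x}'(\hat{\theta}_{n,x}')-\ell_{n,x}'(\hat{\theta}_{n,x})\bigr]
+ \bigl[\ell_{n,x}(\hat{\theta}_{n,x})-\ell_{n,x}(\hat{\theta}_{n,x}')\bigr]
\geq \ell_{n,x}(\hat{\theta}_{n,x})-\ell_{n,x}(\hat{\theta}_{n,x}'),
\end{equation*}
both bracketed terms being non-negative. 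Combining this with the quadratic lower bound and with Lemma~\ref{lemma_coupling_continuity_Dnx}, which asserts $\vert D_{n,x}(\hat{\theta}_{n,x}')-D_{n,x}(\hat{\theta}_{n,x})\vert\to 0$ $\PtrueZeta$-\as, I obtain
\begin{equation*}
\tfrac14\,\lambda_{\min}(\cI(\thetaTrue))\,\vert\T_n\vert\,\Vert\hat{\theta}_{n,x}'-\hat{\theta}_{n,x}\Vert^2
\leq \bigl\vert D_{n,x}(\hat{\theta}_{n,x}')-D_{n,x}(\hat{\theta}_{n,x})\bigr\vert \longrightarrow 0,
\end{equation*}
and hence $\vert\T_n\vert^{1/2}\bigl(\hat{\theta}_{n,x}'-\hat{\theta}_{n,x}\bigr)\to 0$ $\PtrueZeta$-\as

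It then remains to assemble the pieces. Writing $\vert\T_n\vert^{1/2}(\hat{\theta}_{n,x}'-\thetaTrue)=\vert\T_n\vert^{1/2}(\hat{\theta}_{n,x}'-\hat{\theta}_{n,x})+\vert\T_n\vert^{1/2}(\hat{\theta}_{n,x}-\thetaTrue)$, the first summand tends to $0$ $\PtrueZeta$-\as while the second converges in distribution to $\cN(0,\cI(\thetaTrue)^{-1})$ under $\PtrueZeta$ (by Theorem~\ref{thm_asymptotic_normality}, using that the $\PtrueZeta$-law of $Y_{\T_n}$ equals its $\Ptrue$-law); Slutsky's theorem yields $\vert\T_n\vert^{1/2}(\hat{\theta}_{n,x}'-\thetaTrue)\weakCV\cN(0,\cI(\thetaTrue)^{-1})$ under $\PtrueZeta$. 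Since $\hat{\theta}_{n,x}'$ depends only on $Y_{\T_n}'$, whose $\PtrueZeta$-law coincides with its $\Pzeta$-law, the same convergence holds under $\Pzeta$, which is the claim. The main obstacle is the core \as estimate $\vert\T_n\vert^{1/2}(\hat{\theta}_{n,x}'-\hat{\theta}_{n,x})\to 0$; its delicate ingredients — the existence of a finite coupling time (requiring $\sigma^-\geq 1/2$, i.e.\ $\rho<1/2$) and the unscaled continuity of $D_{n,x}$ evaluated at the two maximizers — are precisely what Lemmas~\ref{lemma_coupling_time} and~\ref{lemma_coupling_continuity_Dnx} provide, so the remaining work lies only in combining them correctly with the quadratic expansion.
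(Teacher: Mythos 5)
Your proposal is correct and follows essentially the same route as the paper: the same coupling via Lemma~\ref{lemma_coupling_time}, the same Taylor expansion of $\ell_{n,x}$ around its maximizer combined with Theorem~\ref{thm_LLN_observed_information} to get the quadratic lower bound on $D_{n,x}(\hat{\theta}'_{n,x})-D_{n,x}(\hat{\theta}_{n,x})$, and the same appeal to Lemma~\ref{lemma_coupling_continuity_Dnx} to conclude $\vert\T_n\vert^{1/2}(\hat{\theta}'_{n,x}-\hat{\theta}_{n,x})\to 0$ $\PtrueZeta$-\as before applying Slutsky's theorem. You merely spell out two steps the paper leaves implicit (interiority justifying $\nabla_\theta\ell_{n,x}(\hat{\theta}_{n,x})=0$, and the equality of the $\PtrueZeta$- and $\Pzeta$-laws of $Y_{\T_n}'$ in the final transfer), which is fine.
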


\begin{proof}
{[The proof is a straightforward adaptation of the proof of \cite[Theorem~6]{doucAsymptoticPropertiesMaximum2004}.]}

Define $\eps_n = \vert \T_n \vert^{1/2} ( \hat{\theta}_{n,x} - \hat{\theta}'_{n,x} )$ for all $n\in\N$,
and remark that it is sufficient to prove that $\lim_{n\to\infty} \eps_n = 0$ $\PtrueZeta$-\as
Since $\hat{\theta}'_{n,x}$ is the maximizer of the function $\theta \mapsto \ell'_{n,x}(\theta)$,
we have that $\ell'_{n,x}(\hat{\theta}'_{n,x}) \geq \ell'_{n,x}(\hat{\theta}_{n,x})$.
Thus, using a Taylor expansion of $\ell_{n,x}$ around its maximizer $\hat{\theta}_{n,x}$
	(for which we have $\nabla_\theta \ell_{n,x}(\hat{\theta}_{n,x})=0$),
we get that there exists $t_n\in [0,1]$ such that:
\begin{align*}
D_{n,x}(\hat{\theta}'_{n,x}) - D_{n,x}(\hat{\theta}_{n,x})
& \geq \ell_{n,x}(\hat{\theta}_{n,x}) - \ell_{n,x}(\hat{\theta}_{n,x}') \\
& = - \inv{2} \vert \T_n \vert^{-1} \eps_n^t \nabla_\theta^2 \ell_{n,x}(t_n  \hat{\theta}_{n,x}' + (1-t_n) \hat{\theta}_{n,x}) \eps_n.
\end{align*}
Note that we have $\lim_{n\to\infty} t_n  \hat{\theta}_{n,x}' + (1-t_n) \hat{\theta}_{n,x} = \thetaTrue$
	$\PtrueZeta$-\as
	by \Cref{thm_strong_constistency_non_stationary}.
Thus, applying \Cref{thm_LLN_observed_information}, we have:
\begin{equation*}
\lim_{n\to\infty} -  \vert \T_n \vert^{-1}  \nabla_\theta^2 \ell_{n,x}(t_n  \hat{\theta}_{n,x}' + (1-t_n) \hat{\theta}_{n,x})
= \cI(\thetaTrue), \quad \text{$\PtrueZeta$-\as}
\end{equation*}
As $\cI(\thetaTrue)$ is positive definite, there exits $M>0$ such that on a set with $\PtrueZeta$-probability one
 	and for $n$ sufficiently large, we have:
\begin{equation*}
D_{n,x}(\hat{\theta}'_{n,x}) - D_{n,x}(\hat{\theta}_{n,x})
\geq M \vert \eps_n \vert^2 .
\end{equation*}
Then, the proof is complete by applying \Cref{lemma_coupling_continuity_Dnx}.
\end{proof}



\newcommand{\etalchar}[1]{$^{#1}$}



\appendix

\section{Ergodic theorem for Markov processes indexed by trees with neighborhood-dependent functions}
	\label{appendix_ergodic_theorems}

In this appendix, we prove generalization of the ergodic theorems in \cite{GuyonLimitTheorem}
	and in \cite{weibelErgodicTheoremBranching2024},
which give \as and $L^2$ convergences for branching Markov chains,
to allow for neighborhood-dependent functions.
Those ergodic theorems are used to prove the ergodic convergence lemmas in \Cref{subsection_ergodic_theorem}
	which are used in the main body of this article.
Remind that we need those generalization as in the study of asymptotic property of the MLE for the HMT
relies on the study of the likelihood contribution functions $\h_{u,k,x}(\theta; Y_{\Delta(u,k)})$ 
	(defined in \eqref{eq_def_h_ukx} on page \pageref{eq_def_h_ukx})
	which are neighborhood-dependent.

Remind from \Cref{section_notations} that if $(X,Y)$ is a HMT process, 
then the joint process $((X_u,Y_u) , u\in\T)$ is a branching Markov chain.
Thus, it is enough to prove those ergodic theorems for branching Markov chains instead of HMT processes.

\medskip

Let $Q$ be a transition kernel on $(\SpaceX, \BorelX)$ where $\SpaceX$ is a metric space.
We assume that $Q$ has a unique invariant probability distribution $\pi$ and 
is uniformly geometrically ergodic,
that is, there exists $\rho\in(0,1)$ and a finite positive constant $C$ 
	such that for all $x\in\SpaceX$, we have $\normTV{Q^n(x;\cdot) - \pi} \leq C \rho^n$.
Remind from \Cref{lemma_Q_unif_geom_ergodic} that this covers the case
$Q = Q_\theta$ for any $\theta\in\Theta$.
Let $X = (X_u , u\in\T)$ be a branching Markov chain
with transition kernel $Q$ and initial distribution $\pi$.
Denote by $\Prb_Q$ the probability distribution of the process $X$,
and by $\Esp_Q$ the corresponding expectation.

In this section, for a probability measure $\nu$ on $\SpaceX$, a transition kernel $Q$ on $(\SpaceX, \BorelZ{\SpaceZ})$
and a Borel integrable function $f$ on $\BorelZ{\SpaceZ}$ where $\SpaceZ = \SpaceX^{A}$ for some finite subset $A\subset\T$,
we will write $\nu Q$ for the image probability measure $(\nu Q) (\cdot) = \int_{\SpaceX} Q(x;\cdot) \, \nu(\drv x)$,
and $Q f$ for the Borel function $(Q f) (x) = \int_{\SpaceZ} f(z) \, Q(x;\drv z)$.
For a probability measure $\nu$ on $\SpaceX$ and a Borel integrable function $f$ on $\SpaceX$,
we will write $\scalProd{\nu}{f} = \nu f= \int_{\SpaceX} f\, \drv \nu$.

We will need the following lemma which states geometric convergence bounds for functions in $L^2(\pi)$.

\begin{lemme}[Convergence bounds when $Q$ is uniformly geometrically ergodic]
	\label{lemma_assump_Thm_ergo_Q_unif_geom_ergo}
Assume that the transition kernel $Q$ has a unique invariant measure $\pi$,
and that $Q$ is uniformly geometrically ergodic.
Then, there exists finite positive constants $\alpha\in(0,1)$ and  $M<\infty$ such that
for all functions $f\in L^2(\pi)$, we have:
\begin{equation*}
\forall n\in\N,\qquad
\sup_{k\in\N} \pi Q^k (Q^n f - \scalProd{\pi}{f})^2 = \pi (Q^n f - \scalProd{\pi}{f})^2  \leq M \alpha^{2n} \norm{f- \scalProd{\pi}{f}}_{L^2(\pi)}^2.
\end{equation*}
In particular, the function $f$ satisfies 	$\sup_{n\in\N} \pi Q^n f^2 < \infty$.
\end{lemme}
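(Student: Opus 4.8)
The plan is to first dispatch the two easy assertions using only the invariance $\pi Q^k = \pi$, and then to reduce the main inequality to an $L^2(\pi)$ contraction estimate for centered functions. Since $\pi$ is invariant, for any integrable $\phi$ one has $\pi Q^k \phi = \pi\phi$; applying this with $\phi = (Q^n f - \scalProd{\pi}{f})^2$ shows that $\pi Q^k (Q^n f - \scalProd{\pi}{f})^2$ does not depend on $k$, which is exactly the claimed equality. Likewise $\pi Q^n f^2 = \pi f^2 = \norm{f - \scalProd{\pi}{f}}_{L^2(\pi)}^2 + \scalProd{\pi}{f}^2 < \infty$ for $f\in L^2(\pi)$, settling the final clause. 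Writing $g := f - \scalProd{\pi}{f}$, which is centered ($\scalProd{\pi}{g}=0$) and still in $L^2(\pi)$, it then suffices to prove $\pi(Q^n g)^2 \le M\alpha^{2n}\norm{g}_{L^2(\pi)}^2$.

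For the core estimate, I would fix $x\in\SpaceX$ and exploit the signed measure $m_x := Q^n(x,\cdot) - \pi$, which has zero total mass and, by uniform geometric ergodicity, satisfies $\normTV{m_x}\le C\rho^n$ for some $C<\infty$ and $\rho\in(0,1)$. Let $m_x = m_x^+ - m_x^-$ be its Jordan decomposition; with the paper's convention $m_x^+(\SpaceX)=m_x^-(\SpaceX)=\normTV{m_x}=:t_x\le C\rho^n$, and moreover $m_x^+\le Q^n(x,\cdot)$ and $m_x^-\le\pi$ as measures. Because $g$ is centered, $Q^n g(x) = \int g\,\drv m_x = \int g\,\drv m_x^+ - \int g\,\drv m_x^-$, and Cauchy--Schwarz against each part (each of total mass $t_x$) yields $(\int g\,\drv m_x^\pm)^2 \le t_x\int g^2\,\drv m_x^\pm$. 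Combining,
\begin{equation*}
(Q^n g(x))^2 \le 2\,t_x\int g^2\,\drv\lvert m_x\rvert \le 2C\rho^n\bigl(Q^n(g^2)(x) + \pi g^2\bigr),
\end{equation*}
where I used $\lvert m_x\rvert = m_x^+ + m_x^- \le Q^n(x,\cdot) + \pi$. Integrating this against $\pi$ and invoking invariance once more ($\pi Q^n(g^2)=\pi g^2$) gives $\pi(Q^n g)^2 \le 4C\rho^n\,\pi g^2$.

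This already proves the statement: taking $\alpha := \sqrt{\rho}\in(0,1)$ and $M := 4C$, one has $\rho^n = \alpha^{2n}$, so $\pi(Q^n g)^2 \le M\alpha^{2n}\norm{g}_{L^2(\pi)}^2$, and undoing the substitution $g = f - \scalProd{\pi}{f}$ recovers the displayed bound. The only genuinely delicate point, and hence the step I would verify most carefully, is the signed-measure bookkeeping: checking that $g$ is $m_x^\pm$-integrable for $\pi$-almost every $x$ (which follows from $\pi Q^n\lvert g\rvert = \pi\lvert g\rvert < \infty$), and that the Jordan parts are dominated by $Q^n(x,\cdot)$ and $\pi$ respectively, so that the total-variation integral is controlled by $Q^n(g^2)(x)+\pi g^2$. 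Everything else is a routine use of Cauchy--Schwarz and the invariance of $\pi$.
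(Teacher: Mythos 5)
Your proof is correct, but it takes a genuinely different route from the paper's. The paper disposes of the core inequality in one line by invoking \cite[Proposition 22.3.5 and Definition 22.3.1]{DoucMC}, which converts uniform geometric ergodicity in total variation directly into an $L^2(\pi)$-operator-norm contraction, $\norm{Q^n(f-\scalProd{\pi}{f})}_{L^2(\pi)} \leq M^{1/2}\alpha^n \norm{f-\scalProd{\pi}{f}}_{L^2(\pi)}$; the equality over $k$ and the final clause then follow from the invariance $\pi Q^k = \pi$, exactly as in your first paragraph. You instead prove the $L^2$ bound from scratch: setting $m_x = Q^n(x,\cdot)-\pi$, your dominations $m_x^+ \leq Q^n(x,\cdot)$ and $m_x^- \leq \pi$ are correct (restrict to the positive set and its complement), and your identity $m_x^\pm(\SpaceX) = \normTV{m_x}$ is consistent with the paper's normalization $\normTV{\mu_1-\mu_2} = \sup_A \vert \mu_1(A)-\mu_2(A)\vert$ precisely because $m_x$ has zero total mass; Cauchy--Schwarz on each Jordan part and one more use of invariance ($\pi Q^n(g^2)=\pi g^2$) then give $\pi(Q^n g)^2 \leq 4C\rho^n\,\pi g^2$ for centered $g$. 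The trade-off is quantitative: your argument yields $\alpha=\sqrt{\rho}$ (decay $\rho^n$ for the squared norm), whereas the cited operator-theoretic result gives $\alpha$ essentially equal to $\rho$; since the lemma asserts only the existence of some $\alpha\in(0,1)$, and its downstream uses in \Cref{Ergodic_theorem_with_neighborhood_functions} and \Cref{Strong_LLN_with_neighborhood_functions} treat $\alpha$ as an unspecified constant in $(0,1)$, this loss is harmless here. Your integrability bookkeeping ($g$ is $m_x^\pm$-integrable for $\pi$-almost every $x$ since $\pi Q^n\vert g\vert = \pi\vert g\vert < \infty$) also matches the paper's own convention, stated immediately after the lemma, that these quantities need only be defined $\pi$-almost everywhere. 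In short: a correct, self-contained, more elementary proof, at the cost of a slightly weaker (but here immaterial) rate.
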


Note, using Cauchy-Schwarz and Jensen's inequalities, 
that $\sup_{n\in\N} \pi Q^n f^2 < \infty$ implies that
$Q^n f$, $Q^n f^2$, and $Q^k (Q^n f \times Q^m f)$ (with $n,m,k\in\N$) 
are well-defined and finite $\pi$-almost everywhere and are $\pi$-integrable.

\begin{proof}
Using \cite[Proposition 22.3.5 and Definition 22.3.1]{DoucMC}, 
we get that there exists finite positive constants $\alpha\in(0,1)$ and $M<\infty$ such that
for all functions $f\in L^2(\pi)$, we have
	$\pi (Q^n f - \scalProd{\pi}{f})^2 
		= \norm{Q^n ( f - \scalProd{\pi}{f})}_{L^2(\pi)}^2 
		\leq M \alpha^{2n} \norm{f- \scalProd{\pi}{f}}_{L^2(\pi)}^2$ for all $n\in\N$.
In particular, we get that $\sup_{n\in\N} \pi Q^n f^2 < \infty$.
\end{proof}

Let $k\in\N$ be fixed.
Remind from \Cref{subsection_ergodic_theorem} the definitions of the subtrees $\Delta(u,k)$,
of their shapes $\Shape\bigl( \Delta(u,k) \bigr)$ (defined in \eqref{eq_def_shape_subtree}),
and of the finite set of possible shapes $\ShapeSetValues_k$ (defined in \eqref{eq_def_set_possible_shapes}).
For simplicity, in this appendix we will write $\ShapeValue_u$ instead of $\Shape\bigl( \Delta(u,k) \bigr)$.
Also remind from \Cref{subsection_ergodic_theorem} the definition of a collection of neighborhood-shape-dependent
	functions $(f_{\ShapeValue} : \SpaceX^{\ShapeValue} \to \R)_{\ShapeValue\in\ShapeSetValues_k}$.
Remind that for such a collection of functions, we simply write $f_{\Delta(u,k)}$ or $f_{\ShapeValue_u}$ 	instead of $f_{\Shape(\Delta(u,k))}$.  And also remind that we write $f_{\ShapeValue_u}(X_{\Delta(u,k)})$ for the evaluation of $f_{\ShapeValue_u} = f_{\Delta(u,k)}$ on $X_{\Delta(u,k)}$.
Note that up to translation, we may identify $\SpaceX^{\ShapeValue}$ and $\SpaceX^{\Delta(u,k)}$
for any $u\in\T\setminus\T_{k-1}$ such that $\ShapeValue_u = \ShapeValue$.

Remind that any subset $A\subset \T$, we denote by $X_A$ the gathered variables
	$(X_v \,:\, v\in A)$.
For a collection of neighborhood-shape-dependent
	functions $\f = (f_{\ShapeValue} : \SpaceX^{\ShapeValue} \to \R)_{\ShapeValue\in\ShapeSetValues_k}$,
define the empirical average of $\f$ over a finite subset $A\subset\T\setminus\T_{k-1}$ as:
\begin{equation}\label{eq_def_M_A_f}
\bar M_{A}(\f) = 
	\vert A \vert^{-1} \sum_{u\in A} f_{\ShapeValue_u}(X_{\Delta(u,k)}) .
\end{equation}

For a neighborhood shape $\ShapeValue\in\ShapeSetValues_k$,
let $u\in\G_k$ be the unique vertex such that $\ShapeValue_u = \ShapeValue$,
and define the transition kernel $Q^{\ShapeValue}$ on $(\SpaceX, \BorelZ{\SpaceX^{\ShapeValue}})$
for any $x\in\SpaceX$ and any Borel function $f$ on $\SpaceX^{\ShapeValue} = \SpaceX^{\Delta(u)}$ 
which is in $L^1(X_{\Delta(u)}) = L^1(X_{\ShapeValue})$
by:
\begin{equation}\label{eq_def_Q_S}
Q^{\ShapeValue} f (x) = \Esp_Q \Bigl[ f(X_{\Delta(u)}) \Bigm\vert X_{\rooot} = x \Bigr] .
\end{equation}
That is, from the value $x\in\SpaceX$ of the root vertex $v$ in $\ShapeValue$,
the transition kernel $Q^{\ShapeValue}$
returns the distribution of the Markov process $X$ on $\ShapeValue$ with transition kernel $Q$
	conditioned on the value $X_v$ of the vertex $v$ being $x$.
Note that \eqref{eq_def_Q_S} also extends to any vertex $u\in\T\setminus\T_{k-1}$ such that $\ShapeValue_u = \ShapeValue$,
which gives us:
\begin{equation}\label{eq_def_Q_S_u}
Q^{\ShapeValue_u} f (x) = \Esp_Q \Bigl[ f(X_{\Delta(u,k)}) \Bigm\vert X_{\parent^k(u)} = x \Bigr] .
\end{equation}
Moreover, using Jensen's inequality, note that if $f \in L^2(X_{\ShapeValue})$, 
then $Q^{\ShapeValue} f$ is in $L^2(\pi) = L^2(X_{\rooot})$.

\medskip

Remind that as $\T$ is a plane rooted tree, we can enumerate its vertices as a sequence $(v_j)_{j\in\N}$
in a breadth-first-search manner, that is, which is increasing for $<$
(note that $u_0 = \rooot$).
Also remind that, for $n\geq \vert \T_{k-1}\vert$, if $V_n$ is uniformly distributed over 
$A_n := \{ v_j \,:\, \vert \T_{k-1} \vert < j \leq n \}
= \Delta(v_n) \setminus \T_{k-1}$,
then the distribution of $\ShapeValue_{V_n}$ converges to the uniform distribution over $\ShapeSetValues_k$
as $n\to\infty$.

We are now ready to state the ergodic theorem with neighborhood-shape-dependent functions
for branching Markov chains indexed by the infinite complete binary tree $\T$.
Remind that $\bar M_{A_n}(\f)$ is defined in \eqref{eq_def_M_A_f}.

\begin{theo}[Ergodic theorem with neighborhood-dependent functions]
	\label{Ergodic_theorem_with_neighborhood_functions}
Let $k\in\N$ be fixed.
Let $(v_j)_{j\in\N}$ be the sequence enumerating the vertices in $\T$ in a breadth-first-search manner.
For all $n> \vert \T_{k-1}\vert$, define $A_n = \Delta(v_n) \setminus \T_{k-1}$.

Let $Q$ be a transition kernel on $(\SpaceX, \BorelX)$
which is uniformly geometrically ergodic and has a unique invariant probability measure $\pi$.
Let $X = (X_u, u\in\T)$ be a branching Markov chain 	with transition kernel $Q$ and initial distribution $\pi$.

Let $\f = (f_{\ShapeValue} : \SpaceX^{\ShapeValue} \to \R)_{\ShapeValue\in\ShapeSetValues_k}$ 
be a collection of neighborhood-shape-dependent Borel functions that are in $L^2(X)$.
Then, we have:
\begin{equation}
	\label{eq_conv_thm_ergo_neigh_funct}
\bar M_{A_n}(\f) 
	{\underset{n\to\infty}{\longrightarrow}}
\Esp_{U_k} \otimes \Esp_Q \bigl[ f_{\ShapeValue_{U_k}}(X_{\Delta(U_k)}) \bigr]
\qquad \text{ in $L^2(\pi) = L^2(X)$},
\end{equation}
where $U_k$ is uniformly distributed over $\G_k$ and independent of the process $X$,
and $\Esp_{U_k} \otimes \Esp_Q$ denotes the joint expectation over $U_k$ and $X$.
\end{theo}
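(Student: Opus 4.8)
The plan is to establish the $L^2(\Prb_Q)$ convergence to the deterministic constant $L := \Esp_{U_k} \otimes \Esp_Q[ f_{\ShapeValue_{U_k}}(X_{\Delta(U_k)}) ]$ by the classical two–step route: showing $\Esp_Q[\bar M_{A_n}(\f)] \to L$ and $\Var_Q(\bar M_{A_n}(\f)) \to 0$, which together give $\Esp_Q[(\bar M_{A_n}(\f) - L)^2] \to 0$. Throughout I write $r_u = \parent^k(u)$ for the root of $\Delta(u,k)$ and $h_u := Q^{\ShapeValue_u} f_{\ShapeValue_u}$ for the pointwise function on $\SpaceX$ giving the conditional mean, so that $\Esp_Q[ f_{\ShapeValue_u}(X_{\Delta(u,k)}) \mid X_{r_u} ] = h_u(X_{r_u})$ by the branching Markov property. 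By Jensen's inequality $h_u \in L^2(\pi)$ with $\norm{h_u}_{L^2(\pi)} \le \max_{\ShapeValue \in \ShapeSetValues_k} \norm{f_\ShapeValue}_{L^2} =: B < \infty$, since $\ShapeSetValues_k$ is finite.

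For the mean, stationarity gives $\Esp_Q[ f_{\ShapeValue_u}(X_{\Delta(u,k)}) ] = \scalProd{\pi}{h_u}$, a quantity depending on $u$ only through $\ShapeValue_u$. Hence $\Esp_Q[\bar M_{A_n}(\f)] = |A_n|^{-1} \sum_{u \in A_n} \scalProd{\pi}{h_u}$ is exactly the expectation of $\scalProd{\pi}{Q^{\ShapeValue_{V_n}} f_{\ShapeValue_{V_n}}}$ under $V_n$ uniform on $A_n$. Since the distribution of $\ShapeValue_{V_n}$ converges to the uniform law on the finite set $\ShapeSetValues_k$ (recalled just before the theorem), this converges to $|\ShapeSetValues_k|^{-1} \sum_{\ShapeValue \in \ShapeSetValues_k} \scalProd{\pi}{Q^\ShapeValue f_\ShapeValue} = L$, using $|\ShapeSetValues_k| = |\G_k|$.

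The work is in the variance. Writing $\tilde g(u) = f_{\ShapeValue_u}(X_{\Delta(u,k)}) - \scalProd{\pi}{h_u}$ for the centered terms, I expand $\Var_Q(\bar M_{A_n}(\f)) = |A_n|^{-2} \sum_{u,w \in A_n} \Esp_Q[ \tilde g(u) \tilde g(w) ]$ and split the pairs according to whether $\Delta(u,k)$ and $\Delta(w,k)$ overlap. Because every shape has at most $2^{k+1}$ vertices, each $u$ has at most a constant $c_k$ (depending only on $k$) of partners $w$ with overlapping subtree; for these $|\Esp_Q[\tilde g(u)\tilde g(w)]| \le B^2$ by Cauchy--Schwarz, so the overlapping pairs contribute at most $c_k B^2 |A_n|$, which after the $|A_n|^{-2}$ normalization tends to $0$. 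For a disjoint pair, conditioning on the value of $X$ at a suitable separating vertex on the path joining the two subtrees, together with \Cref{lemma_assump_Thm_ergo_Q_unif_geom_ergo}, gives $|\Esp_Q[\tilde g(u)\tilde g(w)]| \le C_k \, \alpha^{\dgr(r_u,r_w)}$ for a constant $C_k$ depending only on $k$, where $\alpha \in (0,1)$ is the $L^2(\pi)$ contraction rate of the lemma: the separating vertex is the common ancestor $r_u \land r_w$ when it is a strict ancestor of both roots (the branches are then conditionally independent and one applies the lemma to each of $h_u - \scalProd{\pi}{h_u}$ and $h_w - \scalProd{\pi}{h_w}$), and otherwise, for nested subtrees, it is the vertex where the connecting path leaves $\Delta(u,k)$, which costs only the fixed factor $\alpha^{-k}$.

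It then remains to show $|A_n|^{-2} \sum_{u,w \in A_n} \alpha^{\dgr(r_u,r_w)} \to 0$, and this counting is the main obstacle. Grouping pairs by the height $j$ of $r_u \land r_w$ and by the two depths $\dgr(r_u \land r_w, r_u)$ and $\dgr(r_u \land r_w, r_w)$, and using that at most $2^k$ vertices $u$ share the same root $r_u$, the double sum reduces to $\sum_{z,z' \in R_n} \alpha^{\dgr(z,z')}$ over the root set $R_n = \{ r_u : u \in A_n \}$, which is a near-complete tree of height $N-k$ with $N = \height{v_n}$. The delicate point is the competition between the branching rate $2$ and the decay $\alpha$: the crude per-vertex bound $\sum_{z'} \alpha^{\dgr(z,z')}$ diverges once $2\alpha \ge 1$ because of the exponentially many descendants, so no pairwise argument suffices and one must sum over the whole triangle. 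The resolution, which I will carry out by an explicit geometric computation grouping terms by generation, is that the full triangular double sum is dominated by its last generation and grows like $4^{N}\alpha^{2N}$ up to constants, whereas $|A_n|^2 \asymp 4^{N}$; hence the normalized sum decays like $\alpha^{2N} \to 0$ for \emph{every} $\alpha \in (0,1)$, so that no additional smallness condition on the mixing rate is needed. Combining the mean and variance estimates completes the proof, with the argument applying verbatim coordinate by coordinate when the $f_\ShapeValue$ are vector- or matrix-valued.
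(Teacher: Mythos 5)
Your proposal is correct, and its skeleton coincides with the paper's own proof: reduce to centered functions using the convergence of the shape distribution of a uniform vertex of $A_n$, then bound the second moment of the centered average pair by pair, conditioning at a separating vertex (the branch point, or the exit vertex in the nested case) and combining Cauchy--Schwarz with the $L^2(\pi)$ contraction of \Cref{lemma_assump_Thm_ergo_Q_unif_geom_ergo}. Where you genuinely diverge is the final step. The paper never sums $\alpha^{\dgr}$ over all pairs: it fixes a scale $K$, bounds every pair with $\tilde d(u,v) > 2K$ uniformly by $M\alpha^{K}$, disposes of the remaining pairs by the soft fact that the distance between two independent uniform vertices of $A_n$ tends to infinity in probability (\cite[Lemma~3.1]{weibelErgodicTheoremBranching2024}), and only then lets $K\to\infty$; see \eqref{eq_M_An_upper_bound_thm_ergo_2}. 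That route gives convergence with no rate, but transfers to the much more general averaging sets and tree shapes of \cite{weibelErgodicTheoremBranching2024}. Your explicit generation-by-generation counting instead buys a quantitative rate, and the deferred computation does go through: the paper performs an essentially identical triangular sum, for the single-generation averages $\bar M_{\G_n}(\f)$, in the proof of \Cref{Strong_LLN_with_neighborhood_functions}, resolving the same branching-versus-mixing competition with no condition on $\alpha$. Two small corrections to your quantitative claims, neither affecting the conclusion. First, your covariance bound $C_k\,\alpha^{\dgr(r_u,r_w)}$ (two-sided decay from conditioning at the branch point) is in fact sharper than the paper's one-sided bound $M\alpha^{\tilde d(u,v)-K}$ with $\tilde d \geq \dgr/2$, obtained by applying Jensen to only one factor; either suffices. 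Second, the double sum $\sum_{z,z'} \alpha^{\dgr(z,z')}$ over a complete tree of height $N$ is of order $4^N\alpha^{2N}$ only in the regime $\alpha > 1/\sqrt{2}$; for $\alpha \leq 1/\sqrt{2}$ the geometric series over the height of the branch point flips and the sum is of order $2^N$ up to polynomial factors, so the normalized variance decays like $\max(\alpha^{2N}, 2^{-N})$ (up to polynomials) rather than always like $\alpha^{2N}$. Since both rates vanish, your key point stands: the convergence holds for every $\alpha\in(0,1)$, with no smallness condition on the mixing rate.
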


As an immediate corollary, we get that the result still holds if $A_n$ is replaced by $\T_n\setminus\T_{k-1}$.

\begin{remark}[More general assumptions]
Note that without changing the proof, we could replace the subtrees $\Delta(u,k)$
	by general subtrees (\ie a connected subsets) $\Neighbor_u$
	of the the $k$-neighborhood $B_{\T}(u,k) := \{ v\in\T \,:\, d(u,v) \leq k\}$ of $u$
	such that $\Neighbor_u$ contains $u$.
In that case, we must assume that  the distribution of the shape (\ie when seen up to translation) $\Shape(\Neighbor_{V_n})$
converges to some limit distribution.
Also note that we could allow more general choices as in \cite{weibelErgodicTheoremBranching2024} 
for the averaging sets $(A_n)_{n\in\N}$,
for the tree $\T$, and for the transition kernel $Q$ and initial distribution of the branching Markov chain $X$.
\end{remark}

\begin{proof}
\textbf{First case: we only have constant functions 
$f_{\ShapeValue} \equiv c(\ShapeValue)$ 
	for all $\ShapeValue\in\ShapeSetValues_k$.}
Then, for every $n\in\N$ we have:
\begin{equation}\label{eq_M_An_f_constant}
\bar M_{A_n}(\f) = \sum_{\ShapeValue\in\ShapeSetValues_k}
	c(\ShapeValue)\, \frac{\vert u\in A_n : \ShapeValue_u = \ShapeValue \vert}{\vert A_n \vert},
\end{equation}
where the right hand side converges in distribution (and thus in $L^2$) as $n\to\infty$ to
$\Esp_{U_k} \bigl[ c(\ShapeValue_{U_k}) \bigr]$
(remind that the distribution of $\ShapeValue_{V_n}$ converges to the uniform distribution on $\G_k$
when $n\to\infty$).
This concludes the proof in this first case.
\medskip

\textbf{General case:} 
We adapt the proof of \cite[Theorem~2.2]{weibelErgodicTheoremBranching2024}
to the case of neighborhood-shape-dependent functions.

Using the first case and the linearity in $\f$ of the empirical
	averages, and replacing $f_{\ShapeValue}$
	by $f_{\ShapeValue} - \scalProd{\pi}{Q^{\ShapeValue} f_{\ShapeValue}}$,
	we may assume that $\scalProd{\pi}{Q^{\ShapeValue} f_{\ShapeValue}} = 0$
	for all $\ShapeValue\in\ShapeSetValues_k$.
For all $n\in\N$, we have:
\begin{equation}\label{eq_Esp_M_A_f_Neigh}
\Esp_Q\Bigl[ \bar M_{A_n}(\f)^2\Bigr]
= \inv{\vert A_n \vert^2} \sum_{u,v \in A_n} 
	\Esp_Q\Bigl[ f_{\ShapeValue_u}(X_{\Delta(u,k)}) f_{\ShapeValue_v}(X_{\Delta(v,k)}) \Bigr].
\end{equation}

Using \Cref{lemma_assump_Thm_ergo_Q_unif_geom_ergo},
as the transition kernel $Q$ is uniformly geometrically ergodic and has unique invariant probability measure $\pi$,
and as the function $Q^{\ShapeValue} f_{\ShapeValue}$ for $\ShapeValue\in\ShapeSetValues_k$
are all in $L^2(\pi)$
(see the comment just after \eqref{eq_def_Q_S_u}),
we have that
$C_{\ShapeValue} := \sup_{n\in\N} \pi Q^n (Q^{\ShapeValue} f_{\ShapeValue})^2 < \infty$ for all $\ShapeValue\in\ShapeSetValues_k$.
Define $C := \max_{\ShapeValue\in\ShapeSetValues_k} C_{\ShapeValue} < \infty$ (remind that $\ShapeSetValues_k$ is finite).
Thus, for $u\in\T$, we have:
\begin{align*}
\Esp_Q\Bigl[ f_{\ShapeValue_u}(X_{\Delta(u,k)})^2  \Bigr]
& = \pi Q^{(\height{u}-k)_+} ( Q^{\ShapeValue_u} f_{\ShapeValue_u} )^2
\leq C < \infty.
\end{align*}
Hence, for all $u,v\in\T$, using Cauchy-Schwarz inequality, we have:
\begin{equation}
	\label{eq_cov_Ou_Ov_upperbound}
\Esp_Q\Bigl[ f_{\ShapeValue_u}(X_{\Delta(u,k)}) f_{\ShapeValue_v}(X_{\Delta(v,k)}) \Bigr]
 \leq \left( \Esp_Q\Bigl[ f_{\ShapeValue_u}(X_{\Delta(u,k)})^2  \Bigr] \Esp_Q\Bigl[ f_{\ShapeValue_v}(X_{\Delta(v,k)})^2 \Bigr] \right)^{1/2}
 \leq C < \infty.
\end{equation}

Let $u,v\in\T$ such that $d(u,v)>2k$,
which implies that $\Delta(u,k) \cap \Delta(v,k) = \emptyset$.
Without loss of generality, assume that $\height{u}\geq \height{v}$.
Then, we have that $\height{u\land v} < \height{u} -k$.
Denote by $v_0$ the last ancestor of $u$ in $\Delta(v,k) \cup \{ u\land v\}$.
Remark that $u\land v$ is an ancestor of $v_0$.
Then, we have:
\begin{align}
\Esp_Q\Bigl[ f_{\ShapeValue_u}(X_{\Delta(u,k)})  f_{\ShapeValue_v}&(X_{\Delta(v,k)})  \Bigr]  \nonumber\\
& = \Esp_Q\Bigl[ \Esp_Q\bigl[ f_{\ShapeValue_u}(X_{\Delta(u,k)}) \bigm\vert X_{\Delta(v,k)},\, X_{u\land v} \bigr]  f_{\ShapeValue_v}(X_{\Delta(v,k)}) \Bigr] \nonumber\\
& \leq \left( \Esp_Q\Bigl[ \Esp_Q\bigl[ f_{\ShapeValue_u}(X_{\Delta(u,k)}) \bigm\vert X_{\Delta(v,k)},\, X_{u\land v} \bigr]^2 \Bigr]
		\Esp_Q\Bigl[ f_{\ShapeValue_v}(X_{\Delta(v,k)})^2 \Bigr]
	\right)^{1/2} \nonumber\\
& \leq C^{1/2} \left( 
		\Esp_Q\Bigl[ \Esp_Q\bigl[ f_{\ShapeValue_u}(X_{\Delta(u,k)}) \bigm\vert X_{v_0} \bigr]^2 \Bigr]
	\right)^{1/2} \nonumber\\
& \leq C^{1/2} \left( 
		\Esp_Q\Bigl[ \Esp_Q\bigl[ f_{\ShapeValue_u}(X_{\Delta(u,k)}) \bigm\vert X_{u\land v} \bigr]^2 \Bigr]
	\right)^{1/2} \nonumber\\
& = C^{1/2} \left(
		\pi Q^{\height{u\land v}} \Bigl( Q^{d(u\land v,u)-k} Q^{\ShapeValue_u} f_{\ShapeValue_u} \Bigr)^2
	\right)^{1/2} \nonumber\\
& \leq C^{1/2} 
	\left( \max_{\ShapeValue\in\ShapeSetValues_k} \
		\pi Q^{\height{u\land v}} \Bigl( Q^{\tilde d(u, v) -k} Q^{\ShapeValue} f_{\ShapeValue} \Bigr)^2 
	\right)^{1/2}
	,	\label{eq_cov_Ou_Ov_upperbound_conv2}
\end{align}
where we used Cauchy-Schwarz inequality in the first inequality,
we used \eqref{eq_cov_Ou_Ov_upperbound} and the Markov property of the process $X$ in the second inequality,
and we used Jensen's inequality in the third inequality.
Remark that $\tilde d(u,v) = \max( d(u\land v, u), d(u\land v, v))$ is a distance on $\T$
that satisfies $d / 2 \leq \tilde d \leq d$.

Let $U_n$ and $V_n$ be uniformly distributed over $A_n$,
and independent of each other and of the branching Markov chain $X$,
and denote by $\Prb_{U_n, V_n}$ their joint probability distribution.
Using again \Cref{lemma_assump_Thm_ergo_Q_unif_geom_ergo}, there exist finite constants $M < \infty$ and $\alpha\in (0,1)$
such that for all $\ShapeValue\in\ShapeSetValues$ we have 
\begin{equation}\label{eq_geom_bound_Q_f_S}
\forall m\in\N,\qquad
\sup_{j\in\N} \pi Q^j \bigl(Q^m Q^{\ShapeValue} f_{\ShapeValue} \bigr)^2  \leq M^2 \alpha^{2m}.
\end{equation}
Hence, combining \eqref{eq_Esp_M_A_f_Neigh}, \eqref{eq_cov_Ou_Ov_upperbound}, \eqref{eq_cov_Ou_Ov_upperbound_conv2}
and \eqref{eq_geom_bound_Q_f_S},
we get for any $K\geq k$:
\begin{align}
\Esp_Q\Bigl[ \bar M_{A_n}(\f)^2\Bigr]
& \leq C\, \Prb_{U_n, V_n}( \tilde d(U_n, V_n) \leq 2K) \nonumber\\
& \qquad 	+ C^{1/2} \,  \vert A_n \vert^{-2} 
	\sum_{u,v\in A_n : \tilde d(u,v) > 2K} \left( \max_{\ShapeValue\in\ShapeSetValues_k} \
		\pi Q^{\height{u\land v}} \Bigl( Q^{\tilde d(u, v) -K} Q^{\ShapeValue} f_{\ShapeValue} \Bigr)^2 
	\right)^{1/2} \nonumber\\
& \leq C\, \Prb_{U_n, V_n}( d(U_n, V_n) \leq 4K) \nonumber\\
& \qquad 	+ C^{1/2} \,  \vert A_n \vert^{-2} 
	\sum_{u,v\in A_n : \tilde d(u,v) > 2K} 
		M \alpha^{ \tilde d(u,v) - K}
		\label{eq_M_An_upper_bound_thm_ergo} \\
& \leq C\, \Prb_{U_n, V_n}( d(U_n, V_n) \leq 4K) 
	+ C^{1/2} M \alpha^{K}
	. 	\label{eq_M_An_upper_bound_thm_ergo_2}
\end{align}
Let $\eps >0$.
Let $K\geq k$ be such that $C^{1/2} M \alpha^K < \eps$.
Using \cite[Lemma~3.1]{weibelErgodicTheoremBranching2024},
we get that the first term in the right hand side 
of \eqref{eq_M_An_upper_bound_thm_ergo_2} goes to zero as $n\to\infty$.
Thus, for $n$ large enough, the right hand side of \eqref{eq_M_An_upper_bound_thm_ergo_2}
is upper bounded by $2\eps$.
This being true for all $\eps>0$, we get that $\lim_{n\to\infty} \Esp_Q\bigl[ \bar M_{A_n}(\f)^2\bigr] = 0$.
This concludes the proof.
\end{proof}

We now state and prove a strong law of large numbers for branching Markov chains indexed by
the infinite complete binary tree $\T$ and with neighborhood-shape-dependent functions.
This result uses the same assumptions as in \Cref{Ergodic_theorem_with_neighborhood_functions}.

\begin{theo}[Strong law of larger numbers with neighborhood-dependent function]
	\label{Strong_LLN_with_neighborhood_functions}

Let $Q$ be a transition kernel on $(\SpaceX, \BorelX)$
which is uniformly geometrically ergodic and has a unique invariant probability measure $\pi$.
Let $X = (X_u, u\in\T)$ be a branching Markov chain 	with transition kernel $Q$ and initial distribution $\pi$.

Let $k\in\N$ be fixed.
Let $U_k$ be uniformly distributed over $\G_k$ and independent of the process $X$,
and let $\Esp_{U_k} \otimes \Esp_Q$ denote the joint expectation over $U_k$ and $X$.
Let $\f = (f_{\ShapeValue} : \SpaceX^{\ShapeValue} \to \R)_{\ShapeValue\in\ShapeSetValues_k}$ 
be a collection of neighborhood-shape-dependent Borel functions that are in $L^2(X)$.

Then, we have: \begin{equation*}
\text{\as}\qquad
\lim_{n\to\infty} \bar M_{\G_n}(\f)
= \lim_{n\to\infty} \bar M_{\T_n\setminus \T_{k-1}}(\f)
= \Esp_{U_k} \otimes \Esp_Q \bigl[ f_{\ShapeValue_{U_k}}(X_{\Delta(U_k)}) \bigr] .
\end{equation*}
Moreover, there exist finite constants $C_0 < \infty$ and $\beta\in(0,1)$ such that:
\begin{equation}\label{eq_LLN_upper_bound_Esp_M_Gn}
\forall n\geq k, \qquad \Esp_Q \left[ \left(\bar M_{\G_n}(\f) - 
		\Esp_{U_k} \otimes \Esp_Q \bigl[ f_{\ShapeValue_{U_k}}(X_{\Delta(U_k)}) \bigr]
		\right)^2 \right] \leq C_0 \beta^n .
\end{equation}
\end{theo}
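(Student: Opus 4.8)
The plan is to establish the quantitative $L^2$ bound \eqref{eq_LLN_upper_bound_Esp_M_Gn} first, and then to read off both almost sure convergences from it. As in the proof of \Cref{Ergodic_theorem_with_neighborhood_functions}, I would start by reducing to centered functions. Writing $c(\ShapeValue) = \scalProd{\pi}{Q^{\ShapeValue} f_{\ShapeValue}}$, the constant-function identity \eqref{eq_M_An_f_constant} is in fact \emph{exact} over a single generation: the shape $\ShapeValue_u$ of a vertex $u\in\G_n$ depends only on the last $k$ edges of the path from $\rooot$ to $u$, so for $n\geq k$ each shape of $\ShapeSetValues_k$ is carried by exactly $2^{n-k}$ vertices of $\G_n$, whence $2^{-n}\sum_{u\in\G_n} c(\ShapeValue_u) = \Esp_{U_k}[c(\ShapeValue_{U_k})]$, which by \eqref{eq_def_Q_S_u} equals $\Esp_{U_k}\otimes\Esp_Q[f_{\ShapeValue_{U_k}}(X_{\Delta(U_k)})]$. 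Consequently, replacing each $f_{\ShapeValue}$ by $f_{\ShapeValue}-c(\ShapeValue)$, I may assume $\scalProd{\pi}{Q^{\ShapeValue} f_{\ShapeValue}}=0$ for all $\ShapeValue$, in which case for $n\geq k$ the left-hand side of \eqref{eq_LLN_upper_bound_Esp_M_Gn} is simply $\Esp_Q[\bar M_{\G_n}(\f)^2]$.

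For the bound itself I would expand $\Esp_Q[\bar M_{\G_n}(\f)^2] = 2^{-2n}\sum_{u,v\in\G_n}\Esp_Q[f_{\ShapeValue_u}(X_{\Delta(u,k)})\,f_{\ShapeValue_v}(X_{\Delta(v,k)})]$ and split the pairs by $d(u,v)$. For the near-diagonal pairs ($d(u,v)\leq 2k$, i.e.\ $\height{u\land v}\geq n-k$) I would use the crude estimate \eqref{eq_cov_Ou_Ov_upperbound}; a fixed $u$ has only $2^k$ such partners, so their contribution is $O(2^{-n})$. For the remaining pairs the neighborhoods $\Delta(u,k)$ and $\Delta(v,k)$ are disjoint, and I would reuse verbatim the covariance estimate \eqref{eq_cov_Ou_Ov_upperbound_conv2} together with the geometric bound \eqref{eq_geom_bound_Q_f_S} (both furnished by \Cref{lemma_assump_Thm_ergo_Q_unif_geom_ergo} and the vanishing of $\scalProd{\pi}{Q^{\ShapeValue}f_{\ShapeValue}}$), giving a factor $\alpha^{\tilde d(u,v)-k}$. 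Since all vertices sit in $\G_n$, setting $j=\tilde d(u,v)=n-\height{u\land v}$, a fixed $u$ has exactly $2^{j-1}$ partners at fork-distance $j$; combining the $2^n$ choices of $u$, the count $2^{j-1}$, the decay $\alpha^{j-k}$ and the normalization $2^{-2n}$ leaves $2^{-n}\sum_{i\geq 1}(2\alpha)^{i}$ up to constants, which is $O(\beta^n)$ for any $\beta\in(\max(\alpha,1/2),1)$ after absorbing the at most polynomial factor arising in the boundary regime $2\alpha=1$. This yields \eqref{eq_LLN_upper_bound_Esp_M_Gn}.

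From \eqref{eq_LLN_upper_bound_Esp_M_Gn} the series $\sum_{n\geq k}\Esp_Q[(\bar M_{\G_n}(\f)-\Esp_{U_k}\otimes\Esp_Q[f_{\ShapeValue_{U_k}}(X_{\Delta(U_k)})])^2]$ converges, so Markov's inequality and the Borel--Cantelli lemma give the almost sure convergence of $\bar M_{\G_n}(\f)$. To pass to $\bar M_{\T_n\setminus\T_{k-1}}(\f)$, I would write it as the weighted average $\bigl(\sum_{m=k}^n 2^m \bar M_{\G_m}(\f)\bigr)/\bigl(\sum_{m=k}^n 2^m\bigr)$ of the generation averages, whose weights $2^m$ satisfy the hypotheses of Toeplitz's lemma (they are nonnegative, the denominator tends to infinity, and each fixed weight becomes negligible in the total), so this average inherits the same almost sure limit. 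The only genuinely delicate point is the combinatorial bookkeeping of the second paragraph, namely controlling the competition between the branching factor $2$ (the $2^{j-1}$ partners) and the ergodic decay $\alpha$ in the sum over common-ancestor heights; note that, contrary to the central limit theorems recalled in \Cref{rem_rho_smaller_than_2}, no upper restriction on $\alpha$ (nor on $\rho$) is needed here, because the $2^{-2n}$ normalization of a single generation always dominates and the exponential rate $\beta<1$ survives for every $\alpha\in(0,1)$.
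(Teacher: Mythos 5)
Your proposal is correct and follows essentially the same route as the paper: reduce to the centered case, bound $\Esp_Q\bigl[\bar M_{\G_n}(\f)^2\bigr]$ by splitting pairs $u,v\in\G_n$ according to their distance and reusing the covariance estimates \eqref{eq_cov_Ou_Ov_upperbound} and \eqref{eq_cov_Ou_Ov_upperbound_conv2} together with \eqref{eq_geom_bound_Q_f_S} (the paper invokes \eqref{eq_M_An_upper_bound_thm_ergo}, which packages exactly this pair-splitting computation, and obtains the same $\max\bigl((n+1)2^{-(n-k)}, \alpha^{n-k}\bigr)$ rate valid for every $\alpha\in(0,1)$), then conclude by Markov plus Borel--Cantelli and a weighted Ces\`aro/Toeplitz argument for $\T_n\setminus\T_{k-1}$. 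Your observation that the empirical shape distribution on $\G_n$ is \emph{exactly} uniform for $n\geq k$ (each shape carried by $2^{n-k}$ vertices) is a mild sharpening of the paper's appeal to convergence in distribution of the shapes, but it does not change the substance of the argument.
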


\begin{proof}
After using \eqref{eq_M_An_upper_bound_thm_ergo},
the proof is an easy adaptation of the proof of \cite[Theorem~14]{GuyonLimitTheorem}.

The case of $\bar M_{\T_n \setminus \T_{k-1}}(\f)$ follows directly from the case of $\bar M_{\G_n}(\f)$ as:
\begin{equation*}
\vert \bar M_{\T_n}(\f) \vert \leq \sum_{j=k}^n \frac{\vert \G_j\vert}{\vert \T_n \setminus \T_{k-1} \vert} \bar M_{\G_j}(\f) . 
\end{equation*} 

Thus, it it enough to treat the case of $\bar M_{\G_n}(\f)$.
In the case where all functions $f_{\ShapeValue}$ for $\ShapeValue\in\ShapeSetValues_k$ are constant,
writing  $\bar M_{\G_n}(\f)$ as in \eqref{eq_M_An_f_constant},
and using the convergence in distribution of $(\ShapeValue_{U_n})_{n\in\N}$ the uniform distribution over $\ShapeSetValues_k$,
we get that the sought convergence holds \as for $\bar M_{\G_n}(\f)$.
Thus, without loss of generality, we assume that $\scalProd{\pi}{Q^{\ShapeValue} f_{\ShapeValue}} = 0$ 	for all $\ShapeValue\in\ShapeSetValues_k$.
	
Remark that it is enough to prove that $\sum_{n\geq k} \Esp[ \bar M_{\G_n}(\f)^2 ] < \infty$, as then we can immediately conclude
	using Borel-Cantelli lemma with Markov's inequality.
Thus, for $n\geq k$, using \eqref{eq_M_An_upper_bound_thm_ergo} with $n' = \vert \T_n \vert$ (such that $A_{n'} =\G_n$) and $K=k$, 
we get:		\begin{align*}
\Esp_Q \bigl[ \bar M_{\G_n}(\f)^2 \bigr]
& \leq C\, 2^{-(n-2k)} 
	+ C^{1/2} M \,  
	2^{-2n} \sum_{u,v\in \G_n : {d}(u,v) > 2k} \alpha^{\tilde d(u, v) -k} \nonumber\\
& = C\, 2^{-(n-2k)} 
	+ C^{1/2} M \, \sum_{j=k}^n 2^{-(n-j)- \ind_{\{j>0\}}} \alpha^{j-k} \nonumber\\
& \leq C\, 2^{-(n-2k)} 
	+ C^{1/2} M \, 2^{-(n-k)}  \sum_{j=k}^n 2^{j-k} \alpha^{j-k} \nonumber\\
& \leq C\, 2^{-(n-2k)} 
	+ C^{1/2} M \, 2^{-(n-k)} C' \max(n+1, (2\alpha)^{n-k}) \nonumber\\
& \leq C\, 2^{-(n-2k)} 
	+ C^{1/2} M C' \, \max((n+1)2^{-(n-k)}, \alpha^{n-k}) , 
	\end{align*}
where $C'$ is a constant whose value only depends on the value of $2\alpha$. 
Hence, there exist finite constants $C_0 < \infty$ and $\beta\in(0,1)$ 
	such that \eqref{eq_LLN_upper_bound_Esp_M_Gn} holds.
In particular, we get that $\sum_{n\geq k} \Esp[ \bar M_{\G_n}(\f)^2 ] < \infty$.
This concludes the proof of the theorem.
\end{proof}

\section{Proof of the ‘‘backward’’ coupling Lemma~\ref{lemma_backward_mixing_bound}}
	\label{appendix_proof_backward_coupling_lemma}

We now prove \Cref{lemma_backward_mixing_bound}.

\begin{proof}[Proof of \Cref{lemma_backward_mixing_bound}]
The proof relies on a ‘‘backward in time’’ bound from $u$ to $u\land v$,
and then a ‘‘forward in time’’ bound from $u\land v$ to $v$.
We divide the proof in two cases: first when $v$ is an ancestor of $u$, and then the general case.

\newcommand{\DeltaRminusSetUj}[1]{\DeltaR^-(u,k,#1)}

For all $j\leq k$, define the vertex $U_{j} = \parent^j(u)$ which is random for $j > \height{u}$
	(in which case, it depends on $\cU$).
Write $x_{U_{k}}=x$.
For all $j\in \{ 1, \cdots, k\}$, define the random set  (which depends on $\cU$):
\begin{equation*}
\DeltaRminusSetUj{j} = (\DeltaR^*(u,k) \setminus \{U_k\}) \cap (\Tpast(U_k) \setminus \Tpast(U_{j-1})),
\end{equation*}
and in particular remark that $U_0 = u \not\in \DeltaRminusSetUj{j}$ and $U_k = \parent^k(u) \not\in \DeltaRminusSetUj{j}$.

\textbf{Case 1: $v$ is an ancestor of $u$.}
We mimic the proof of \cite[Proposition~12.5.4]{CappeHMM}.
The proof of the first case relies on the observation that conditioned 
	on $X_{\parent^k(u)}$ and $Y_{\DeltaR(u,k)}$,
the backward ancestral process $X$ from $U_0 = u$ to $U_k = \parent^k(u)$ is a non-homogeneous Markov chain
satisfying a uniform mixing condition.
The fact that $(X_{U_j})_{0\leq j \leq k}$ is a Markov chain
comes from the Markov property of the HMT $(X,Y)$
(remind the discussion around \eqref{eq_illustration_Markov_prop} on page~\pageref{eq_illustration_Markov_prop})
which gives for all $j\in \{ 1, \cdots, k\}$:
\begin{align}
\cL( X_{U_j} \,\vert\, Y_{\DeltaR(u,k)}, X_{U_k}, X_{\Tpast(U_{j-1})} ) 
& = \cL( X_{U_j} \,\vert\, Y_{\DeltaR(u,k)}, X_{U_k}, X_{U_{j-1}} ) \nonumber\\
& = \cL( X_{U_j} \,\vert\, Y_{\DeltaRminusSetUj{j}}, X_{U_k}, X_{U_{j-1}} ) .
	\label{eq_Markov_prop_backward_kernel}
\end{align}

For all integers $j\in \{ 1, \cdots, k\}$,
the backward transition kernel (which depends on $\cU$) from $X_{U_{j-1}}$ to $X_{U_j}$ is defined as:
\begin{align*}
\text{B}_{x_{U_{k}},j}[y_{\DeltaR(u,k)}] (x_{U_{j-1}};f)  
= \Esp_\theta\bigl[ f(X_{U_{j}})  \,\bigm\vert\, Y_{\DeltaR(u,k)} = y_{\DeltaR(u,k)}, X_{U_k} = x_{U_k}, X_{U_{j-1}}=x_{U_{j-1}} \bigr] ,
\end{align*}
for any $x_{U_{j-1}}\in\SpaceX$ and any bounded Borel function $f$ on $\SpaceX$.
By the Markov property (see \eqref{eq_Markov_prop_backward_kernel}), 
note that $\text{B}_{x_{U_{k}},j}[y_{\DeltaR(u,k)}] (x_{U_{j-1}},f)$
only depends on $y_{\DeltaRminusSetUj{j}}$ instead of $y_{\DeltaR(u,k)}$, that is:
\begin{align}
\text{B}_{x_{U_{k}},j}[y_{\DeltaR(u,k)}] (x_{U_{j-1}};&f)  \nonumber\\
&  = \Esp_\theta\bigl[ f(X_{U_{j}})  \,\bigm\vert\, Y_{\DeltaRminusSetUj{j}} = y_{\DeltaRminusSetUj{j}}, X_{U_k} = x_{U_k}, X_{U_{j-1}}=x_{U_{j-1}} \bigr]
	\nonumber\\
&  = \frac{
	\int_{\SpaceX} f(x_{U_{j}}) \, p_\theta( y_{\DeltaRminusSetUj{j}}, x_{U_j} \,\vert\, X_{U_{k}} = x_{U_{k}})
		 q_\theta(x_{U_{j}},x_{U_{j-1}}) \,  \lambda(\drv x_{U_j}) 
}
{
	\int_{\SpaceX}  p_\theta( y_{\DeltaRminusSetUj{j}}, x_{U_j} \,\vert\, X_{U_{k}} = x_{U_{k}})
		 q_\theta(x_{U_{j}},x_{U_{j-1}}) \,  \lambda(\drv x_{U_j}) 
} , \label{eq_def_backward_kernel}
\end{align}
where:
\begin{multline*}
p_\theta( y_{\DeltaRminusSetUj{j}}, x_{U_{j}} \,\vert\, X_{U_{k}} = x_{U_{k}}) = \\
\int_{\SpaceX^{\DeltaRminusSetUj{j}}} 
		\prod_{w\in \DeltaRminusSetUj{j}} q_\theta(x_{\parent(w)},x_w) g_\theta(x_w, y_w)  
			\prod_{w\in \DeltaRminusSetUj{j} \setminus \{ U_{j} \}} \lambda(\drv x_w)  .
\end{multline*}
To simplify notations, we will keep the dependence on $y_{\DeltaR(u,k)}$ for all indices $j$.
Note that the integral in the denominator in the right hand side of \eqref{eq_def_backward_kernel} is lower bounded by:
\begin{equation*}
\prod_{w\in \DeltaRminusSetUj{j}} \sigma^- \int_{\SpaceX} g_\theta(x_w, y_w) \lambda(\drv x_w) ,
\end{equation*}
and is thus positive $\Prb_{\theta}$-\as under \Cref{assump_HMM_2}.

Using \Cref{assump_HMM_2}, we get that those backward transition kernels satisfy the following Doeblin condition
(remind \Cref{def_Doeblin_condition}):
\begin{equation*}
\frac{\sigma^-}{\sigma^+}  \nu_{x_{U_{k}},j}[y_{\DeltaR(u,k)}] (f)
\leq \text{B}_{x_{U_{k}},j}[y_{\DeltaR(u,k)}] (x_{U_{j-1}};f),
\end{equation*}
where for any bounded Borel function $f$ on $\SpaceX$, we have:
\begin{align*}
\nu_{x_{U_{k}},j}[y_{\DeltaR(u,k)}] (f) 
& = \Esp_\theta\bigl[ f(X_{U_{j}})  \,\bigm\vert\, Y_{\DeltaRminusSetUj{j}} = y_{\DeltaRminusSetUj{j}}, X_{U_k} = x_{U_k} \bigr]
\nonumber \\
 & = \frac{
	\int_{\SpaceX} f(x_{U_{j}}) \, p_\theta( y_{\DeltaRminusSetUj{j}}, x_{U_j} \,\vert\, X_{U_{k}} = x_{U_{k}})
		  \,  \lambda(\drv x_{U_j}) 
}
{
	\int_{\SpaceX}  p_\theta( y_{\DeltaRminusSetUj{j}}, x_{U_j} \,\vert\, X_{U_{k}} = x_{U_{k}})
		  \,  \lambda(\drv x_{U_j}) 
} ,
\end{align*}
where note that the only difference with the definition of $\text{B}_{x_{U_{k}},j}[y_{\DeltaR(u,k)}] (x_{U_{j-1}};f)$
is that the term $q_\theta(x_{U_j},x_{U_{j-1}})$ has disappeared in both the numerator and the denominator of $\nu_{x_{U_{k}},j}[y_{\DeltaR(u,k)}] (f)$.
Thus, \Cref{lemma_Doeblin_Dobrushin} shows that the Dobrushin coefficient $\delta(B_{x_{U_k},j})$ (defined in \eqref{eq_def_Dobrushin_coefficient})
of the backward transition kernel $B_{x_{U_k},j}$
is upper bounded by $\rho = 1 - \sigma^- / \sigma^+$.

Note that the Markov property in \eqref{eq_Markov_prop_backward_kernel} (with $j=1$) also gives us:
\begin{align}
\cL( X_{U_1} \,\vert\, Y_{\DeltaR(u,k)}, X_{U_k}, X_{U_{0}} ) 
& = \cL( X_{U_1} \,\vert\, Y_{\DeltaRminusSetUj{j}}, X_{U_k}, X_{U_{0}} ) \nonumber\\
& = \cL( X_{U_1} \,\vert\, Y_{\DeltaR^*(u,k)}, X_{U_k}, X_{U_{0}} ) .
	\label{eq_Markov_prop_backward_kernel_2}
\end{align}
Finally, if we write:
\begin{multline}
 \Prb_\theta( X_v \in \cdot \,\vert\, Y_{\DeltaR(u,k)} = y_{\DeltaR(u,k)}, X_{U_{k}} = x_{U_{k}} ) \\
\begin{aligned}
&  \qquad = \int \Prb_\theta( X_v \in \cdot \,\vert\, 
		Y_{\DeltaRminusSetUj{1}} = y_{\DeltaRminusSetUj{1}}, X_{U_{k}} = x_{U_{k}}, X_{U_1}=x_{U_1} ) \\
& \qquad \qquad \qquad
	\times \Prb_\theta( X_{U_1} \in \drv x_{U_1} \,\vert\, Y_{\DeltaR(u,k)} = y_{\DeltaR(u,k)}, X_{U_{k}} = x_{U_{k}} ),
\end{aligned}\label{eq_backward_coupling_decomp_proba_1}
\end{multline}
and we also write (using  \eqref{eq_Markov_prop_backward_kernel_2}):
\begin{multline}
\Prb_\theta( X_v \in \cdot \,\vert\, Y_{\DeltaR^*(u,k)} = y_{\DeltaR^*(u,k)}, X_{U_{k}} = x_{U_{k}} ) \\
\begin{aligned}
& \qquad = \int \Prb_\theta( X_v \in \cdot \,\vert\, 
		Y_{\DeltaRminusSetUj{1}} = y_{\DeltaRminusSetUj{1}}, X_{U_{k}} = x_{U_{k}}, X_{U_1}=x_{U_1} ) \\
& \qquad \qquad \qquad
	\times \Prb_\theta( X_{U_1} \in \drv x_{U_1} \,\vert\, Y_{\DeltaR^*(u,k)} = y_{\DeltaR^*(u,k)}, X_{U_{k}} = x_{U_{k}} ),
\end{aligned}\label{eq_backward_coupling_decomp_proba_2}
\end{multline}
then the two distributions (for $X_v$) on the left hand sides of those displayed equations
can be considered as obtained through running $\dgr(u,v)-1$ iterations of the backward ancestral conditional Markov chain
described above, using two different initial conditions.
Therefore, as the Dobrushin coefficient is sub-multiplicative (remind \Cref{lemma_Dobrushin_sub_multiplicative}),
we get that those two probability distribution differ by at most $2\, \rho^{d(u,v)-1}$ in total variation.
This concludes the proof of the first case.
\medskip

\textbf{Case 2: general case.}
The proof of the second case relies on the observation that conditioned 
	on $X_{\parent^k(u)}$ and $Y_{\DeltaR(u,k)}$,
if we consider the process $X$ backward from $u$ to $u\land v$ (remind that $v\in\DeltaR^*(u,k)$) and then forward from $u\land v$ to $v$,
we get a non-homogeneous Markov chain satisfying uniform mixing rate $\rho$.
Note that as $v\in \Tpast(\parent^k(u),k) \setminus\{u\}$, we have that $u\land v \in \{U_1, \cdots, U_k\}$.
Using the first case, it only remains to check those observations for the forward segment,
which were already proved in the proof of \Cref{lemme_exp_coupling_HMT}.

Hence, if we use the same decomposition 
as in \eqref{eq_backward_coupling_decomp_proba_1} and \eqref{eq_backward_coupling_decomp_proba_1},
which corresponds to run $\dgr(u,v)-1$ iterations of the backward-forward conditional chain
described above ($\dgr(u, u\land v) -1$ backward iterations and $\dgr(u\land v,v)$ forward iterations),
we get as in the first case that those two probability distribution differ 
by at most $2\, \rho^{d(u,v)-1}$ in total variation.
This concludes the proof of the lemma.
\end{proof}

\section{Proof of \texorpdfstring{\eqref{eq_lim_Y_Tm_cond_past}}{} (used in the proof of Proposition~\ref{prop_global_max_l})}
	\label{section_convergence_Y_triangles}

Let $m\in\N^*$ be fixed through this section.

For ease of read, we restate some notation definitions used only in the proof of \Cref{prop_global_max_l}.
For $u,v\in\Tpast$ with $h(u) \equiv h(v)  \mod m+1$, 
we write $T(u,m) < T(v,m)$ if $u < v$.
Moreover for $u,v\in\Tpast$, we write $u < T(v,m)$
if $h(u)< h(v)$ or 
$h(u) \leq h(v)+m$ and for all $w\in T(v,m)$ with $h(w)=h(u)$ (note that such $w$ must exist),
	we have $u < w$.
Informally $u$ is ‘‘above or on the left of $T(v,m)$’’.
For all $u\in\T$, $k\in\N$, define the random subtrees which depend on $\cU$: 
\begin{equation*}
\DeltaR^*(\T(u,m),k) = \bigcup \bigl\{ T(v,m) : v \in \DeltaR^*(u, k(m+1)) \text{ such that }  \height{v}\equiv \height{u} \mod m+1 \bigr\} ,
\end{equation*}
and $\DeltaR(\T(u,m),k) = \DeltaR^*(\T(u,m),k) \cup \T(u,m)$.
When $\height{u}\geq k(m+1)$, then those subtrees do not depend on $\cU$,
and we write $\Delta^*(\T(u,m),k) = \DeltaR^*(\T(u,m),k)$ and $\Delta(\T(u,m),k) = \DeltaR(\T(u,m),k)$
to indicate it.
See \Cref{fig_illustration_Delta_T_u_k} on page~\pageref{fig_illustration_Delta_T_u_k} for an illustration
of the ‘‘past’’ subtree $\DeltaR^*(\T(u,m),k)$ of the block subtree $\T(u,m)$.

The goal of this section is to prove \eqref{eq_lim_Y_Tm_cond_past} for all $\theta\in\Theta$ and $x\in\SpaceX$,
which we restate here for ease of read:
\begin{equation*}
\lim_{k\to\infty} \Esp_\cU \otimes \Etrue \left[
	\log p_\theta(Y_{\T_m} \,\vert\, Y_{\DeltaR^*(\T_m,k)} , X_{\parent^{k(m+1)}(\rooot)} =x )
	\right] 
= \vert \T_m \vert \, \ell(\theta) .
\end{equation*}

\subsection{Decomposition of the log-likelihood into subtree increments}

Following \eqref{eq_def_H_ukx}, 
for all $u\in\T$, $k\in\N$, $x\in\SpaceX$ and $\theta\in\Theta$, using the conditional probabilities formula, define:
\begin{align}\label{eq_def_H_ukx:appendix_triangle}
\Hterm_{\T(u,m),k,x}(\theta) 
& = \frac{p_\theta(Y_{\DeltaR(\T(u,m),k)} \,\vert\, X_{\parent^k(u)} = x)}
	{p_\theta(Y_{\DeltaR^*(\T(u,m),k)} \,\vert\, X_{\parent^k(u)} = x)} \\ 
& = \int p_\theta(Y_{\T(u,m)} \,\vert\, X_u =x_u)  \, 
	\Prb_\theta ( X_u\in \drv x_u \,\vert\, Y_{\DeltaR^*(\T(u,m),k)} , X_{\parent^{(k-1)(m+1)}(u)} =x ) , \nonumber
\end{align}
where:
\begin{equation*}
p_\theta(Y_{\T(u,m)} \,\vert\, X_u =x_u) 
= \int_{\SpaceX^{\vert \T(u,m)\vert}}
	g_\theta(x_u,Y_u) \prod_{w\in \T(u,m)\setminus\{u\}} g_\theta(x_w, Y_w) q_\theta(x_{\parent(w)},x_w) \, \lambda(\drv x_w) .
\end{equation*}
We then define the log-likelihood contribution of the subtree $\T(u,m)$ with past over $k\in\N$ subtree generations
	(that is, $k(m+1)$ (node) generations) as:
\begin{equation*}
\h_{\T(u,m),k,x}(\theta) = \log \Hterm_{\T(u,m),k,x}(\theta) 
\end{equation*}

For all $n\in\N^*$, we decompose the tree $\T_{n(m+1)-1}$ into subtrees of height $m$ (such as $\T_m$),
and we order those subtrees according to $<$.
Hence, using \eqref{eq_def_l_nx_theta}, \eqref{eq_def_l_nx_theta_2} and \eqref{eq_def_H_ukx:appendix_triangle} 
	and a telescopic sum argument,
the log-likelihood of the observed variables $Y_{\T_{n(m+1)-1}}$ can be rewritten as:
\begin{equation}\label{eq_l_sum_incr:appendix_triangle}
\ell_{n(m+1)-1,x}(\theta)  = \sum_{k=0}^{n-1} \sum_{u\in\G_{k(m+1)}}  \h_{\T(u,m),k,x}(\theta). \end{equation}

\subsection{Construction of the log-likelihood increments with infinite past for subtrees}

In this subsection, we construct the log-likelihood increments with infinite past for subtrees.

To construct the limit of the functions $\h_{\T(u,m),k,x}(\theta)$ we first prove the following
lemma which states some uniform bound about the asymptotic behavior of those
functions when $k \to \infty$.

\begin{lemme}[Uniform bounds for $\h_{\T(u,m),k,x}(\theta)$]
	Assume that Assumptions \ref{assump_HMM_1}--\ref{assump_HMM_2}
	and \ref{assump_HMM_3}-\ref{assump_HMM_3:item2} hold.
For all vertices $u\in\T$ and all integers $k,k' \in\N^*$,
the following assertions hold true:
\begin{equation}\label{eq_h_ukx_unif_Cauchy:appendix_triangle}
\sup_{\theta\in\Theta} \sup_{x,x'\in\SpaceX}
	\vert \h_{\T(u,m),k,x}(\theta) - \h_{\T(u,m),k',x'}(\theta) \vert
\leq \frac{\rho^{(k\land k')(m+1) -1} }{(1-\rho)^{\vert \T_m \vert}} ,
\end{equation}
\begin{equation}\label{eq_h_ukx_unif_bounded:appendix_triangle}
\sup_{\theta\in\Theta} \sup_{k\in\N^*} \sup_{x\in\SpaceX}
	\vert \h_{\T(u,m),k,x}(\theta) \vert
\leq \bigl( {\vert \T_m \vert} \log b^+ \bigr)	\lor	\left\vert \sum_{w\in\T(u,m)} \log(\sigma^- b^-(Y_w) ) \right\vert .
\end{equation}
\end{lemme}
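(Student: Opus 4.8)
The plan is to follow the proof of the single-vertex estimates in \Cref{lemma_h_ukx_unif_Cauchy_and_bounded} line by line, substituting the block $\T(u,m)$ for the vertex $u$ and the block emission density $p_\theta(Y_{\T(u,m)} \,\vert\, X_u = x_u)$ for $g_\theta(x_u, Y_u)$. The key structural point is that the whole block $\T(u,m)$ is attached to its past through the single edge $(\parent(u),u)$, so I would first rewrite the increment by conditioning on the hidden state at $\parent(u)$. Setting
\[
G(x_{\parent(u)}) = \int_{\SpaceX} p_\theta(Y_{\T(u,m)} \,\vert\, X_u = x_u)\, q_\theta(x_{\parent(u)},x_u)\, \lambda(\drv x_u),
\]
the Markov property of the HMT (see \eqref{eq_illustration_Markov_prop}) gives the representation
\[
\Hterm_{\T(u,m),k,x}(\theta) = \int_{\SpaceX} G(x_{\parent(u)})\, \Prb_\theta\bigl( X_{\parent(u)} \in \drv x_{\parent(u)} \,\bigm\vert\, Y_{\DeltaR^*(\T(u,m),k)},\, X_{\parent^{k(m+1)}(u)} = x \bigr),
\]
and, after inserting via the Markov property at $X_{\parent^{k(m+1)}(u)}$ the conditional law of that ancestor under the deeper past, the analogous representation for $\Hterm_{\T(u,m),k',x'}(\theta)$.

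For the Cauchy bound \eqref{eq_h_ukx_unif_Cauchy:appendix_triangle}, since $G \geq 0$ I would apply the forgetting bound \Cref{lemme_exp_coupling_HMT} to the hidden variable $X_{\parent(u)}$, whose graph-distance to the base ancestor $\parent^{k(m+1)}(u)$ of the shallower past equals $k(m+1)-1$; this produces
\[
\bigl\vert \Hterm_{\T(u,m),k,x}(\theta) - \Hterm_{\T(u,m),k',x'}(\theta) \bigr\vert \leq \rho^{(k\land k')(m+1)-1}\, \sup_{x_{\parent(u)}\in\SpaceX} G(x_{\parent(u)}).
\]
Using $q_\theta \leq \sigma^+$ in $G$ and $q_\theta \geq \sigma^-$ in the lower bound $\Hterm_{\T(u,m),k,x}(\theta) \geq \sigma^- \int_{\SpaceX} p_\theta(Y_{\T(u,m)} \,\vert\, X_u = x_u)\, \lambda(\drv x_u)$, the common integral cancels in the ratio and $\vert \log a - \log b\vert \leq \vert a - b\vert/(a\land b)$ yields the bound $\rho^{(k\land k')(m+1)-1}/(1-\rho)$; since $\vert\T_m\vert \geq 1$ gives $(1-\rho)^{\vert\T_m\vert} \leq 1-\rho$, the stated (looser) estimate follows a fortiori.

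The uniform bound \eqref{eq_h_ukx_unif_bounded:appendix_triangle} comes from integrating out all hidden variables of the block. Bounding $g_\theta \leq b^+$ at each of the $\vert\T_m\vert$ vertices and using that $\lambda$ is a probability measure with $\int q_\theta(\cdot,x_w)\, \lambda(\drv x_w) = 1$ gives $p_\theta(Y_{\T(u,m)} \,\vert\, X_u = x_u) \leq (b^+)^{\vert\T_m\vert}$, hence $\Hterm_{\T(u,m),k,x}(\theta) \leq (b^+)^{\vert\T_m\vert}$. For the lower bound I would again use the parent formulation: substituting $q_\theta \geq \sigma^-$ decouples the block integral into a product over $w\in\T(u,m)$, and each factor $\int g_\theta(x_w,Y_w)\, \lambda(\drv x_w) \geq b^-(Y_w)$ is positive by \Cref{assump_HMM_2}; this holds for every $w$ including $w=u$, which is exactly why conditioning on $X_{\parent(u)}$ rather than on $X_u$ is needed, so that $g_\theta(x_u,Y_u)$ is integrated against $\lambda$ as well. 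One obtains $\Hterm_{\T(u,m),k,x}(\theta) \geq \prod_{w\in\T(u,m)} \sigma^- b^-(Y_w)$, and passing to logarithms gives \eqref{eq_h_ukx_unif_bounded:appendix_triangle}.

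The individual single-vertex steps transfer verbatim, so the only genuinely new work — and the part I expect to require the most care — is the layer-by-layer integration of the block density $p_\theta(Y_{\T(u,m)} \,\vert\, X_u = x_u)$ over the $\vert\T_m\vert - 1$ internal hidden variables to produce the product bounds, combined with the bookkeeping that locates the single entry edge $(\parent(u),u)$ and thereby fixes the forgetting depth at $(k\land k')(m+1)$.
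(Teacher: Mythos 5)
Your proposal is correct and follows essentially the same route as the paper's proof: the block representation conditioned at $X_{\parent(u)}$, the forgetting bound of \Cref{lemme_exp_coupling_HMT} over the $k(m+1)-1$ edges from $\parent^{k(m+1)}(u)$ to $\parent(u)$, the product bounds $\prod_{w\in\T(u,m)} \sigma^- b^-(Y_w) \leq \Hterm_{\T(u,m),k,x}(\theta) \leq (b^+)^{\vert\T_m\vert}$, and the inequality $\vert \log a - \log b \vert \leq \vert a-b\vert/(a\land b)$. The only (harmless) deviation is in the Cauchy bound, where you bound just the entry-edge kernel by $\sigma^\pm$ and cancel the common block integral $I = \int_{\SpaceX} p_\theta(Y_{\T(u,m)} \,\vert\, X_u = x_u)\,\lambda(\drv x_u)$, obtaining the sharper constant $1/(1-\rho)$, whereas the paper bounds all $\vert\T_m\vert$ kernels and gets $1/(1-\rho)^{\vert\T_m\vert}$; your estimate correctly implies the stated one a fortiori.
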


\begin{proof}
{[The proof is a straightforward adaptation of the proof of \cite[Lemma 12.3.2]{CappeHMM} 
	using \Cref{lemme_exp_coupling_HMT} for the coupling.]}
Let $k' \geq k \geq 1$,
and write  $v = \parent^{k(m+1)}(u)$, $v' = \parent^{k'(m+1)}(u)$.
Then, write:
\begin{align}
\Hterm_{\T(u,m),k,x}(\theta)
 =  \int_{\SpaceX^2} & \left[ \int_{\SpaceX^{\T(u,m)}} \prod_{w\in \T(u,m)} g_\theta(x_w, Y_w) q_\theta(x_{\parent(w)}, x_w) \lambda(\drv x_w) \right]
 											\label{eq_H_ukxpi_1:appendix_triangle}\\
	&  \times \Prb_\theta( X_{\parent(u)} \in \drv x_{\parent(u)}
			\,\vert\,  Y_{\DeltaR^*(\T(u,m),k)} , X_v =x_v )
	\times \delta_x(\drv x_v) , \nonumber
\end{align}
and using the Markov property at $X_v$, write:
\begin{align}
\Hterm_{\T(u,m),k',x'}(\theta)
 =  \int_{\SpaceX^2} & \left[ \int_{\SpaceX^{\T(u,m)}} \prod_{w\in \T(u,m)} g_\theta(x_w, Y_w) q_\theta(x_{\parent(w)}, x_w) \lambda(\drv x_w) \right] 
 											\label{eq_H_ukxpi_2:appendix_triangle}\\
	&  \times \Prb_\theta( X_{\parent(u)} \in \drv x_{\parent(u)}
			\,\vert\,  Y_{\DeltaR^*(\T(u,m),k)} , X_v =x_v ) \nonumber\\
	& \times \Prb_\theta( X_v \in \drv x_v
		\,\vert\,  Y_{\DeltaR^*(\T(u,m),k')\setminus \DeltaR(\T(u,m),k)} , X_{v'} =x' ) . \nonumber
\end{align}
Applying \Cref{lemme_exp_coupling_HMT}, we get 
(note that the integrands in \eqref{eq_H_ukxpi_1:appendix_triangle} and \eqref{eq_H_ukxpi_2:appendix_triangle} are non-negative):
\begin{align}
\vert \Hterm_{\T(u,m),k,x}(\theta) & - \Hterm_{\T(u,m),k',x'}(\theta) \vert \nonumber\\
& \leq \rho^{k(m+1)-1} \sup_{x_{\parent(u)}\in\SpaceX} 
		\int \prod_{w\in \T(u,m)} g_\theta(x_w, Y_w) q_\theta(x_{\parent(w)}, x_w) \lambda(\drv x_w) \nonumber\\
& \leq \rho^{k(m+1)-1} (\sigma^+)^{\vert \T_m \vert}   \prod_{w\in \T(u,m)} \int  g_\theta(x_w, Y_w) \lambda(\drv x_w) . 
	\label{eq_upper_bound_H_ukxpi:appendix_triangle}
\end{align}
The integral in \eqref{eq_H_ukxpi_1:appendix_triangle} can be lower bounded giving us:
\begin{equation}\label{eq_H_ukxpi_lower_bound:appendix_triangle}
\Hterm_{u,k,x}(\theta) \geq (\sigma^-)^{\vert \T_m \vert}   \prod_{w\in \T(u,m)} \int  g_\theta(x_w, Y_w) \lambda(\drv x_w) ,
\end{equation}
where the right hand side is positive by \Cref{assump_HMM_2}-\ref{assump_HMM_2:item2};
and similarly for \eqref{eq_H_ukxpi_2:appendix_triangle}.
Combining \eqref{eq_upper_bound_H_ukxpi:appendix_triangle} with \eqref{eq_H_ukxpi_lower_bound:appendix_triangle},
and with the inequality $\vert \log x - \log y \vert \leq \vert x-y \vert / (x\land y)$, we get the first assertion of the lemma:
\begin{equation*}
	\vert \h_{\T(u,m),k,x}(\theta) - \h_{\T(u,m),k',x'}(\theta) \vert
	\leq \left( \frac{\sigma^+}{\sigma^-} \right)^{\vert \T_m \vert} \rho^{k(m+1)-1} 
	= \frac{ \rho^{k(m+1)-1} }{ (1-\rho)^{\vert \T_m \vert} } \cdot
\end{equation*}
Combining \eqref{eq_def_H_ukx:appendix_triangle} and \eqref{eq_H_ukxpi_lower_bound:appendix_triangle},
we get:
\begin{equation*}
\prod_{w\in\T(u,m)} \sigma^- b^-(Y_w)  \leq  \Hterm_{\T(u,m),k,x}(\theta)  \leq  (b^+)^{\vert \T_m \vert} ,
\end{equation*}
which yields the second assertion of the lemma 
(remind that $b^-(Y_w) > 0$ for all $w\in\Tpast$ by \Cref{assump_HMM_2}-\ref{assump_HMM_2:item2}).
\end{proof}

We are now ready to construct the limit of the functions $\h_{\T(u,m),k,x}(\theta)$
and state some properties of this limit.
Note that this result is stated for every $u\in\T$, but we will only need it for $u=\rooot$.
Remind that we are in the stationary case, and that the HMT process $(X,Y)$ is defined on $\Tpast$.

\begin{prop}[Properties of the limit function $\h_{\T(u,m),\infty}(\theta)$]
	\label{prop_existence_stationary_likelihood:appendix_triangle}
Assume that Assumptions \ref{assump_HMM_0}--\ref{assump_HMM_3} hold.
For every $u\in\T$ and $\theta\in\Theta$,
there exists $\h_{\T(u,m),\infty}(\theta) \in L^1(\P_\cU \otimes \Ptrue)$
such that for all $x\in\SpaceX$, the sequence $(\h_{\T(u,m),k,x}(\theta))_{k\in\N}$
converges $\P_\cU \otimes \Ptrue$-\as and in $L^1(\P_\cU \otimes \Ptrue)$ to $\h_{\T(u,m),\infty}(\theta)$.

Furthermore, this convergence is uniform over $\theta\in\Theta$ and $x\in\SpaceX$, that is,
we have that $\lim_{k\to\infty} \sup_{\theta\in\Theta} \sup_{x\in\SpaceX} \vert \h_{\T(u,m),k,x}(\theta) - \h_{\T(u,m),\infty}(\theta)\vert = 0$
$\P_\cU \otimes \Ptrue$-\as and in $L^1(\P_\cU \otimes \Ptrue)$.
\end{prop}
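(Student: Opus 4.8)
The plan is to mirror, essentially verbatim, the proof of Proposition~\ref{prop_existence_stationary_likelihood}, since the two uniform estimates \eqref{eq_h_ukx_unif_Cauchy:appendix_triangle} and \eqref{eq_h_ukx_unif_bounded:appendix_triangle} established for the block increments $\h_{\T(u,m),k,x}(\theta)$ play exactly the roles that \eqref{eq_h_ukx_unif_Cauchy} and \eqref{eq_h_ukx_unif_bounded} played for the single-vertex increments. First I would fix $u\in\T$ and $\theta\in\Theta$, and read the Cauchy bound \eqref{eq_h_ukx_unif_Cauchy:appendix_triangle}: for $k'\geq k\geq 1$ it shows that $\vert \h_{\T(u,m),k,x}(\theta) - \h_{\T(u,m),k',x'}(\theta)\vert$ is bounded by $\rho^{k(m+1)-1}/(1-\rho)^{\vert\T_m\vert}$, uniformly in $\theta\in\Theta$ and $x,x'\in\SpaceX$. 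Since this bound is summable and tends to $0$ as $k\to\infty$, the sequence $(\h_{\T(u,m),k,x}(\theta))_{k\in\N}$ is uniformly Cauchy, hence converges $\P_\cU\otimes\Ptrue$-almost surely to a limit that does not depend on $x$; I would denote this limit by $\h_{\T(u,m),\infty}(\theta)$.

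Next I would argue integrability. The uniform bound \eqref{eq_h_ukx_unif_bounded:appendix_triangle} dominates $\vert\h_{\T(u,m),k,x}(\theta)\vert$ by the random variable $(\vert\T_m\vert\log b^+)\lor\bigl\vert\sum_{w\in\T(u,m)}\log(\sigma^- b^-(Y_w))\bigr\vert$, uniformly in $k$, $\theta$, and $x$. Here the key point is that $\T(u,m)$ is a \emph{finite} block, of cardinality $\vert\T_m\vert$; by the stationarity of the HMT process (Assumption~\ref{assump_HMM_0}) each $\log b^-(Y_w)$ has the same law as $\log b^-(Y_\rooot)$, which lies in $L^1(\Ptrue)$ by Assumption~\ref{assump_HMM_3}-\ref{assump_HMM_3:item2}, so the finite sum is in $L^1(\P_\cU\otimes\Ptrue)$ and the dominating variable is integrable. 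Consequently $(\h_{\T(u,m),k,x}(\theta))_{k\in\N}$ is uniformly bounded in $L^1(\P_\cU\otimes\Ptrue)$, its almost sure limit $\h_{\T(u,m),\infty}(\theta)$ is in $L^1(\P_\cU\otimes\Ptrue)$, and by dominated convergence the convergence also holds in $L^1(\P_\cU\otimes\Ptrue)$.

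Finally, for the last assertion I would observe that the bound in \eqref{eq_h_ukx_unif_Cauchy:appendix_triangle} is uniform in $\theta$ and $x$ and independent of these; letting $k'\to\infty$ in that estimate gives $\sup_{\theta\in\Theta}\sup_{x\in\SpaceX}\vert\h_{\T(u,m),k,x}(\theta)-\h_{\T(u,m),\infty}(\theta)\vert\leq \rho^{k(m+1)-1}/(1-\rho)^{\vert\T_m\vert}$, which tends to $0$ as $k\to\infty$ both $\P_\cU\otimes\Ptrue$-almost surely and, again by the integrable domination from \eqref{eq_h_ukx_unif_bounded:appendix_triangle}, in $L^1(\P_\cU\otimes\Ptrue)$. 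I do not expect any genuine obstacle here: the only thing to be careful about is confirming that the block version of the dominating bound remains integrable, which is guaranteed precisely because the subtree $\T(u,m)$ is finite so that the sum over $w\in\T(u,m)$ is a finite sum of $L^1$ terms; everything else is a transcription of the single-vertex argument with $\rho^{k-1}$ replaced by $\rho^{k(m+1)-1}$ and $1-\rho$ by $(1-\rho)^{\vert\T_m\vert}$.
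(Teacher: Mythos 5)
Your proof is correct and is essentially identical to the paper's own argument: the a.s.\ limit independent of $x$ from the uniform Cauchy bound \eqref{eq_h_ukx_unif_Cauchy:appendix_triangle}, the $L^1(\P_\cU\otimes\Ptrue)$ membership and convergence via the integrable domination \eqref{eq_h_ukx_unif_bounded:appendix_triangle}, and uniformity in $(\theta,x)$ by letting $k'\to\infty$ in the Cauchy estimate. Your added justification that the dominating variable is integrable (finiteness of $\T(u,m)$, stationarity, and \Cref{assump_HMM_3}-\ref{assump_HMM_3:item2}) is correct and in fact spells out a step the paper leaves implicit.
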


The limit function $\h_{\T(u,m),\infty}(\theta)$ can be interpreted as 
$\log p_\theta( Y_{\T(u,m)} \,\vert\, Y_{\DeltaR^*(\T(u,m),\infty)} ) $,
where $\DeltaR^*(\T(u,m),\infty) = \{ v\in\Tpast \,:\, v <_\cU \T(u,m) \}$ is a random subset of vertices.
Note that $\h_{\T(u,m),\infty}(\theta)$ is a function 
of the random set of variables  $(Y_v, v\in \DeltaR(\T(u,m),\infty))$,
where we define $\DeltaR(\T(u,m),\infty) = \DeltaR^*(\T(u,m),\infty) \cup \T(u,m)$,
and thus implicitly depend on $\cU$ trough $\DeltaR(\T(u,m),\infty)$.

\begin{proof}
Fix some $u\in\T$. 
Note that \eqref{eq_h_ukx_unif_Cauchy:appendix_triangle} shows
that the sequence $( \h_{\T(u,m),k,x}(\theta) )_{k\in\N}$ is Cauchy uniformly in $\theta$ and $x$,
and thus has $\P_\cU \otimes \Ptrue$-almost surely a limit when $k \to \infty$
which does not depend on $x$; we denote this limit by $\h_{\T(u,m),\infty}(\theta)$.
Furthermore, we get from \eqref{eq_h_ukx_unif_bounded:appendix_triangle} that 
$( \h_{\T(u,m),k,x}(\theta) )_{k\in\N}$ is uniformly bounded in $L^1(\P_\cU\otimes\Ptrue)$,
and thus $\h_{\T(u,m),\infty}(\theta)$ is in $L^1(\P_\cU\otimes\Ptrue)$ and
the convergence also holds in $L^1(\P_\cU\otimes\Ptrue)$.
Finally, as the bound in \eqref{eq_h_ukx_unif_Cauchy:appendix_triangle} is uniform in $\theta$ and $x$,
we get that the convergence holds uniform over $\theta$ and $x$
both $\P_\cU \otimes \Ptrue$-almost surely and in $L^1(\P_\cU\otimes\Ptrue)$.
\end{proof}

\subsection{Properties of the contrast function}

As the functions $\h_{\T(u,m),\infty}(\theta)$ are in $L^1(\Prb_\cU\otimes\Ptrue)$ 
under the assumptions used in \Cref{prop_existence_stationary_likelihood:appendix_triangle},
we can now define the \emph{contrast function} $\ell^{(m)}$ 
(which is deterministic) for block subtree of height $m$
as: 
\begin{equation*}
	\ell^{(m)}(\theta)
= \Esp_\cU \otimes \Etrue \bigl[ \h_{\T_m,\infty}(\theta) \bigr] ,
\end{equation*}
where remind  $\Esp_\cU \otimes \Etrue$ is the expectation corresponding to $\Prb_\cU\otimes\Ptrue$.
We prove under the $L^2$ regularity assumption \Cref{assump_HMM_3bis}
the convergence of the normalized log-likelihood to this contrast function.

\begin{prop}[Ergodic convergence for the log-likelihood]
	\label{prop_conv_likelihood_to_contrast_func:appendix_triangle}
Assume that Assumptions~\ref{assump_HMM_0}--\ref{assump_HMM_3bis} hold.
Then, for all $x\in\SpaceX$, the normalized log-likelihood $\vert \T_{n(m+1)-1} \vert^{-1} \ell_{n(m+1)-1,x}(\theta)$
converges $\Ptrue$-\as  to the contrast function $\ell^{(m)}(\theta)$ as $n\to \infty$.

\begin{equation}\label{eq_conv_ergodic_ell:appendix_triangle}
\lim_{n\to\infty}
\frac{\vert \T_m\vert}{\vert \T_{n(m+1)-1} \vert} \, \ell_{n(m+1)-1,x}(\theta)
= \ell^{(m)}(\theta)
\qquad \text{$\Ptrue$-\as}
\end{equation}

In particular, we get that $\ell^{(m)}(\theta) = \vert \T_m \vert \ell(\theta)$.
\end{prop}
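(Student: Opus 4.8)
The plan is to run, for height-$m$ blocks, exactly the argument used in the main-body proof of \Cref{prop_conv_likelihood_to_contrast_func}, replacing single vertices by the block subtrees $\T(u,m)$ and individual generations by block generations (of width $m+1$). The three ingredients are already in place: the block decomposition \eqref{eq_l_sum_incr:appendix_triangle}, the uniform Cauchy and boundedness bounds \eqref{eq_h_ukx_unif_Cauchy:appendix_triangle}--\eqref{eq_h_ukx_unif_bounded:appendix_triangle}, and the infinite-past limit $\h_{\T(u,m),\infty}(\theta)$ from \Cref{prop_existence_stationary_likelihood:appendix_triangle}. The starting observation is that the number of blocks appearing in \eqref{eq_l_sum_incr:appendix_triangle} is exactly $\sum_{k=0}^{n-1} 2^{k(m+1)} = \vert\T_{n(m+1)-1}\vert / \vert\T_m\vert$, which I denote $N_n$, so that the left-hand side of \eqref{eq_conv_ergodic_ell:appendix_triangle} is precisely the empirical average $N_n^{-1}\sum_{\text{blocks}}\h_{\T(u,m),k_u,x}(\theta)$ of the block contributions over all blocks, where $k_u$ is the block generation of $u$.

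First I would fix a block-past depth $k_0\in\N^*$ and, using \eqref{eq_h_ukx_unif_Cauchy:appendix_triangle}, replace in this empirical average each full-past contribution $\h_{\T(u,m),k_u,x}(\theta)$ by its trimmed counterpart $\h_{\T(u,m),k_0,x}(\theta)$: for every block with $k_u\geq k_0$ the error is uniformly at most $\rho^{k_0(m+1)-1}/(1-\rho)^{\vert\T_m\vert}$, while the finitely many blocks with $k_u<k_0$ form a fixed set whose total contribution is an a.s. finite random variable (the root block $\T_m$ being handled by \Cref{assump_HMM_2}, the others by \eqref{eq_h_ukx_unif_bounded:appendix_triangle}), and hence vanishes $\Ptrue$-a.s. after division by $N_n\to\infty$. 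This is the exact analogue of \eqref{eq_bound_likelihood_and_approx}.

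Next comes the ergodic step. The trimmed contribution $\h_{\T(u,m),k_0,x}(\theta)$ is a neighborhood-shape-dependent function of $Y_{\DeltaR(\T(u,m),k_0)}$, taking only finitely many shapes (determined, up to translation, by the relative block-position of $\T(u,m)$ among blocks at its block generation inside its $k_0$-th block-ancestor), and it lies in $L^2(\Ptrue)$ by \eqref{eq_h_ukx_unif_bounded:appendix_triangle} together with \Cref{assump_HMM_3bis}. I would then apply the ergodic theorem for neighborhood-shape-dependent functions in its coarse-grained form, namely \Cref{Ergodic_theorem_with_neighborhood_functions} and \Cref{Strong_LLN_with_neighborhood_functions} applied to the branching Markov chain on the coarse $2^{m+1}$-ary block-tree whose transition kernel is $Q^{m+1}$, which is uniformly geometrically ergodic with mixing rate $\rho^{m+1}$ by \Cref{lemma_Q_unif_geom_ergodic}. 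This yields $\Ptrue$-a.s. convergence of the block average of the trimmed contributions to $\Esp_\cU\otimes\Etrue[\h_{\T_m,k_0,x}(\theta)]$, using the block analogues of the shape identities \eqref{eq_equal_h_ukx_up_to_shape} and \eqref{eq_equality_expectation_root_and_U_k}. Applying \eqref{eq_h_ukx_unif_Cauchy:appendix_triangle} once more together with \Cref{prop_existence_stationary_likelihood:appendix_triangle} bounds $\vert\Esp_\cU\otimes\Etrue[\h_{\T_m,k_0,x}(\theta)] - \ell^{(m)}(\theta)\vert$ by $\rho^{k_0(m+1)-1}/(1-\rho)^{\vert\T_m\vert}$. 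Combining the three estimates gives $\limsup_{n\to\infty}\vert \frac{\vert\T_m\vert}{\vert\T_{n(m+1)-1}\vert}\ell_{n(m+1)-1,x}(\theta) - \ell^{(m)}(\theta)\vert \leq 2\rho^{k_0(m+1)-1}/(1-\rho)^{\vert\T_m\vert}$ $\Ptrue$-a.s.; since the left-hand side is independent of $k_0$, letting $k_0\to\infty$ proves \eqref{eq_conv_ergodic_ell:appendix_triangle}. Finally, the identity $\ell^{(m)}(\theta)=\vert\T_m\vert\,\ell(\theta)$ follows by comparing \eqref{eq_conv_ergodic_ell:appendix_triangle} with \Cref{prop_conv_likelihood_to_contrast_func} along the subsequence $n(m+1)-1$: the latter gives $\vert\T_{n(m+1)-1}\vert^{-1}\ell_{n(m+1)-1,x}(\theta)\to\ell(\theta)$, so multiplying by $\vert\T_m\vert$ identifies the two limits.

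The main obstacle is the ergodic step: one must make precise the coarse-graining to the block-tree and check that the trimmed block contributions genuinely fit the neighborhood-shape-dependent framework. Concretely, the delicate points are the bookkeeping of block shapes (verifying that there are finitely many and that the distribution of the shape of a uniformly chosen block converges to the correct limit, so that the expectation $\Esp_\cU$ over the random backward block-spine of the root in \Cref{prop_existence_stationary_likelihood:appendix_triangle} matches the uniform block-average), and confirming that the appendix ergodic theorems apply to the $2^{m+1}$-ary block-tree with kernel $Q^{m+1}$, which they do by the remark following \Cref{Ergodic_theorem_with_neighborhood_functions} allowing more general tree shapes and transition kernels. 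Everything else is a routine transcription of the main-body proof with blocks substituted for vertices.
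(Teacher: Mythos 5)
Your overall architecture is exactly the paper's: trim each block's past to a fixed depth $k_0$ via \eqref{eq_h_ukx_unif_Cauchy:appendix_triangle}, absorb the finitely many boundary blocks using \eqref{eq_h_ukx_unif_bounded:appendix_triangle} and \Cref{assump_HMM_2}-\ref{assump_HMM_2:item3}, apply an ergodic theorem for block-shape-dependent functions to get convergence to $\Esp_\cU\otimes\Etrue[\h_{\T_m,k_0,x}(\theta)]$, pass to infinite past with \Cref{prop_existence_stationary_likelihood:appendix_triangle}, let $k_0\to\infty$, and identify $\ell^{(m)}(\theta)=\vert\T_m\vert\,\ell(\theta)$ by comparison with \Cref{prop_conv_likelihood_to_contrast_func} along the subsequence $n(m+1)-1$ (your block count $N_n=\vert\T_{n(m+1)-1}\vert/\vert\T_m\vert$ and the final normalization are correct). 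The one place you diverge is the justification of the ergodic step, and there your argument has a genuine gap. You propose to coarse-grain to a $2^{m+1}$-ary block-tree ``whose transition kernel is $Q^{m+1}$, uniformly geometrically ergodic with mixing rate $\rho^{m+1}$.'' This cannot work as stated: the trimmed contribution $\h_{\T(u,m),k_0,x}(\theta)$ depends on $Y$ over the whole block and its block-past, so it is not a function of a root-only coarse chain; the coarse state must carry the full block tuple $(X,Y)_{\T(u,m)}$. But with tuple states the $2^{m+1}$ child blocks, though conditionally independent given the parent block, are \emph{not} identically distributed as functions of the parent state --- each child hangs from a different leaf of the parent block --- so the coarse process is not a branching Markov chain in the sense of \Cref{def:Markov_proc}, and Theorems~\ref{Ergodic_theorem_with_neighborhood_functions} and~\ref{Strong_LLN_with_neighborhood_functions} do not directly apply. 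The remark you invoke after \Cref{Ergodic_theorem_with_neighborhood_functions} relaxes the neighborhoods, averaging sets and tree shape, not the i.i.d.-siblings structure or position-dependence of the kernel. Moreover $Q^{m+1}$ only describes root-to-root marginals along a single lineage; it misses the correlations between sibling block roots induced by shared ancestors inside the parent block, so both the kernel identification and the $\rho^{m+1}$ mixing-rate claim are artifacts of that mis-modeling.

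The paper avoids coarse-graining altogether: it stays on the original binary tree and applies ``a straightforward modification of \Cref{lemma_ergodic_convergence}'' in which the neighborhood-shape-dependent functions are attached only to vertices $u\in\G_{j(m+1)}$ (so the averaging runs over the sub-lattice of block generations, with shapes classified by the block analogue of \eqref{eq_def_shape_subtree} and the identities \eqref{eq_equal_h_ukx_up_to_shape:appendix_triangle} and \eqref{eq_equality_expectation_root_and_U_k}); the covariance-decay bounds in the appendix proofs depend only on tree distance and go through unchanged on this sub-lattice. To repair your version you would either adopt this direct modification, or develop ergodic theorems for conditionally-independent-children processes with position-dependent kernels and tuple states --- which is essentially the same work under a different name. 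Everything else in your proposal is a faithful and correct transcription of the paper's argument.
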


\begin{proof}
Let $\theta\in\Theta$ be some parameter.
Fix some $k\in\N^*$ and $x\in\SpaceX$.
Remind \eqref{eq_l_sum_incr:appendix_triangle}. Applying \eqref{eq_h_ukx_unif_Cauchy:appendix_triangle} for each vertex $u\in\G_{j(m+1)}$ with $j\in\{k,\cdots, n-1\}$, we get:
\begin{multline}
\frac{\vert \T_m\vert}{\vert \T_{n(m+1)-1} \vert}
	\left\vert
		\ell_{n(m+1)-1,x}(\theta)  - \sum_{j=k}^{n-1} \sum_{u\in\G_{j(m+1)}}  \h_{\T(u,m),k,x}(\theta)
	\right\vert
					\\
 \qquad \leq  \frac{\rho^{k(m+1)-1}}{(1-\rho)^{\vert \T_m\vert}}
	+ \frac{\vert \T_m\vert}{\vert \T_{n(m+1)-1} \vert} 
		\sum_{j=0}^{k-1} \sum_{u\in\G_{j(m+1)}} \vert \h_{\T(u,m),j,x}(\theta) \vert
.	\label{eq_bound_likelihood_and_approx:appendix_triangle}
\end{multline}
Note that by \eqref{eq_h_ukx_unif_bounded:appendix_triangle}, we have that $\vert \h_{\T(u,m),j,x}(\theta) \vert < \infty$ $\Ptrue$-\as
	for all $j\in\N^*$ and $u\in\G_{j(m+1)}$.
For $u=\rooot$, we have $\h_{\T_m,0,x}(\theta) = \log p_\theta(Y_{\T_m} \,\vert\, X_{\rooot}=x)$ which is finite $\Ptrue$-\as by \Cref{assump_HMM_2}-\ref{assump_HMM_2:item3}.

The definition of the \emph{shape} 
from Section~\ref{subsection_ergodic_theorem} 
can straightforwardly  be adapted to the (deterministic) subtrees
$\Delta(\T(u,m),k)$ for vertices $u\in\G_{j(m+1)}$ with $j\geq k$, 
where $u$ is seen as a distinguished vertex of $\Delta(\T(u,m),k)$.
Following \eqref{eq_def_shape_subtree} (on page \pageref{eq_def_shape_subtree}) in the initial vertex-by-vertex decomposition setting,
for a vertex $u\in\G_{j(m+1)}$ with $j\geq k$, let $v_u\in\G_{k(m+1)}$ be the unique vertex $\G_{k(m+1)}$
such that $\Delta(\T(u,m),k)$ and $\Delta(\T(v_u,m),k)$ have the same shape.
Then, we have:
\begin{multline}\label{eq_equal_h_ukx_up_to_shape:appendix_triangle}
\h_{\T(u,m),k,x}\bigl(\theta; Y_{\Delta(\T(u,m),k)}=y_{\Delta(\T(u,m),k)}\bigr)  \\
	=  \h_{\T(v_u,m),k,x}\bigl(\theta; Y_{\Delta(\T(v_u,m),k)}=y_{\Delta(\T(u,m),k))}\bigr) .
\end{multline}
Moreover, using \eqref{eq_h_ukx_unif_bounded:appendix_triangle} together with \Cref{assump_HMM_3bis},
	we get for every $u\in\G_{j(m+1)}$ with $j\geq k$
	that the random variable $\h_{\T(u,m),k,x}(\theta; Y_{\Delta(\T(u,m),k)})$ is in $L^2(\Ptrue)$.
Hence, applying a straightforward modification of \Cref{lemma_ergodic_convergence} for subtree blocks $\T(u,m)$
to the collection of neighborhood-shape-dependent functions $( \h_{\T(v,m),k,x}(\theta; Y_{\Delta(\T(v,m))}=\cdot) )_{v\in\G_{k(m+1)}}$
(remind that indexing functions with $\G_{k(m+1)}$ or with the set of possible shapes 	is equivalent by \eqref{eq_def_set_possible_shapes}),
and using \eqref{eq_equal_h_ukx_up_to_shape:appendix_triangle} and \eqref{eq_equality_expectation_root_and_U_k} (in \Cref{rem_rerooting_delta}),
we get:
\begin{equation}
	\label{eq_conv_ergo_likelihood:appendix_triangle}
\frac{\vert \T_m\vert}{\vert \T_{n(m+1)-1} \vert}
 	\sum_{j=k}^{n-1} \sum_{u\in\G_{j(m+1)}}  \h_{\T(u,m),k,x}(\theta)
\underset{n\to\infty}{\longrightarrow}
	\Esp_\cU \otimes \Etrue \bigl[ \h_{\T_m,k,x}(\theta) \bigr]
\qquad \text{$\Ptrue$-\as }\end{equation}

Using \eqref{eq_h_ukx_unif_Cauchy:appendix_triangle} with \Cref{prop_existence_stationary_likelihood:appendix_triangle},
	we get:
\begin{equation*}
\bigl\vert \Esp_\cU \otimes \Etrue \bigl[ \h_{\T_m,k,x}(\theta) \bigr] 
		- \Esp_\cU \otimes \Etrue \bigl[ \h_{\T_m,\infty}(\theta) \bigr] \bigr\vert
		\leq  \frac{\rho^{k(m+1)-1}}{(1-\rho)^{\vert\T_m\vert}} \cdot
\end{equation*}
Thus, combining this bound with \eqref{eq_bound_likelihood_and_approx:appendix_triangle} and \eqref{eq_conv_ergo_likelihood:appendix_triangle},
we get $\Ptrue$-\as that:
\begin{equation*}
\limsup_{n\to\infty} \left\vert
		\frac{\vert \T_m\vert}{\vert \T_{n(m+1)-1} \vert} \ell_{n(m+1)-1,x}(\theta) 	
		- \Esp_\cU \otimes \Etrue \bigl[ \h_{\T_m,\infty}(\theta) \bigr]
	\right\vert
\leq 2\, \frac{\rho^{k(m+1)-1}}{(1-\rho)^{\vert\T_m\vert}} \cdot
\end{equation*}
As the left hand side does not depend on $k$,
letting $k\to\infty$, we get that \eqref{eq_conv_ergodic_ell:appendix_triangle} in the lemma holds.
Lastly, as the limit must be the same as in \Cref{prop_conv_likelihood_to_contrast_func}, 
we get that $\ell^{(m)}(\theta) = \vert \T_m \vert \ell(\theta)$.
This concludes the proof.
\end{proof}

We are now ready to close this section by proving that \eqref{eq_lim_Y_Tm_cond_past} holds.

\begin{prop}
Assume that Assumptions~\ref{assump_HMM_0}--\ref{assump_HMM_3bis} hold.
Then, \eqref{eq_lim_Y_Tm_cond_past} holds for all $\theta\in\Theta$ and $x\in\SpaceX$.
\end{prop}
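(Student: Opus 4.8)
The plan is to recognize that the quantity inside the limit is nothing but the expectation of the subtree increment $\h_{\T_m,k,x}(\theta)$ constructed earlier in this section, and then to chain the two propositions we have just proved. First I would observe that, specializing the definition \eqref{eq_def_H_ukx:appendix_triangle} of $\Hterm_{\T(u,m),k,x}(\theta)$ to the root vertex $u=\rooot$ (for which $\T(\rooot,m)=\T_m$ and the conditioning ancestor is $\parent^{k(m+1)}(\rooot)$), the ratio of densities there is exactly the conditional density
\begin{equation*}
\Hterm_{\T_m,k,x}(\theta)
= p_\theta\bigl(Y_{\T_m} \,\bigm\vert\, Y_{\DeltaR^*(\T_m,k)},\, X_{\parent^{k(m+1)}(\rooot)} = x\bigr),
\end{equation*}
so that $\h_{\T_m,k,x}(\theta) = \log \Hterm_{\T_m,k,x}(\theta)$ is precisely the logarithm appearing in \eqref{eq_lim_Y_Tm_cond_past}. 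Consequently the left-hand side of \eqref{eq_lim_Y_Tm_cond_past} equals $\lim_{k\to\infty} \Esp_\cU \otimes \Etrue\bigl[ \h_{\T_m,k,x}(\theta) \bigr]$.

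Next I would invoke \Cref{prop_existence_stationary_likelihood:appendix_triangle}, which states that $(\h_{\T_m,k,x}(\theta))_{k\in\N}$ converges to $\h_{\T_m,\infty}(\theta)$ in $L^1(\P_\cU \otimes \Ptrue)$ (and the limit is independent of $x$). Passing the limit through the expectation via this $L^1$ convergence yields
\begin{equation*}
\lim_{k\to\infty} \Esp_\cU \otimes \Etrue\bigl[ \h_{\T_m,k,x}(\theta) \bigr]
= \Esp_\cU \otimes \Etrue\bigl[ \h_{\T_m,\infty}(\theta) \bigr]
= \ell^{(m)}(\theta),
\end{equation*}
the last equality being the definition of the block contrast function $\ell^{(m)}$.

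Finally I would apply the identification $\ell^{(m)}(\theta) = \vert \T_m \vert\, \ell(\theta)$ established in \Cref{prop_conv_likelihood_to_contrast_func:appendix_triangle}, which combines the two ergodic convergences of the normalized log-likelihood (the single-vertex one of \Cref{prop_conv_likelihood_to_contrast_func} and the block one of \Cref{prop_conv_likelihood_to_contrast_func:appendix_triangle}) by noting that both must share the same a.s. limit. Chaining the three displays then gives \eqref{eq_lim_Y_Tm_cond_past}.

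There is no serious analytic obstacle here, since the heavy lifting (the uniform Cauchy bound \eqref{eq_h_ukx_unif_Cauchy:appendix_triangle}, the $L^1$ construction of the limit, and the ergodic averaging) is already carried out in the preceding propositions. The only point that requires care is bookkeeping: one must check that the conditioning ancestor $\parent^{k(m+1)}(\rooot)$ and the ``past'' set $\DeltaR^*(\T_m,k)$ in \eqref{eq_lim_Y_Tm_cond_past} match exactly those used in the definition of $\Hterm_{\T_m,k,x}(\theta)$, so that the logarithm in \eqref{eq_lim_Y_Tm_cond_past} is genuinely $\h_{\T_m,k,x}(\theta)$ and the $L^1$ convergence of \Cref{prop_existence_stationary_likelihood:appendix_triangle} applies verbatim.
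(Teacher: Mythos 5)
Your proposal is correct and follows essentially the same route as the paper's own proof: identify the logarithm in \eqref{eq_lim_Y_Tm_cond_past} as $\h_{\T_m,k,x}(\theta)$, pass to the limit via the $L^1(\P_\cU\otimes\Ptrue)$ convergence of \Cref{prop_existence_stationary_likelihood:appendix_triangle} to obtain $\ell^{(m)}(\theta)$, and conclude with the identification $\ell^{(m)}(\theta)=\vert\T_m\vert\,\ell(\theta)$ from \Cref{prop_conv_likelihood_to_contrast_func:appendix_triangle}. The extra bookkeeping you spell out (matching the conditioning ancestor $\parent^{k(m+1)}(\rooot)$ and the past set $\DeltaR^*(\T_m,k)$ with the definition of $\Hterm_{\T_m,k,x}(\theta)$) is left implicit in the paper but is exactly the right check.
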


\begin{proof}
Applying \Cref{prop_existence_stationary_likelihood:appendix_triangle},
we get that the left hand side of \eqref{eq_lim_Y_Tm_cond_past} is equal to $\ell^{(m)}(\theta) = \Esp_\cU \otimes \Etrue[h_{\T(\rooot,m),\infty}(\theta)]$,
which is equal to $\vert \T_m \vert \ell(\theta)$ by \Cref{prop_conv_likelihood_to_contrast_func:appendix_triangle}.
\end{proof}

\section{Details of the proof of Proposition \ref{prop_conv_Gamma_term}}
	\label{appendix_proof_prop_conv_Gamma_term}

Remind that the proof of \Cref{prop_conv_Lambda_term} can be straightforwardly adapted to  \Cref{prop_conv_Gamma_term}
	except for Lemma~\ref{lemma_Lambda_incr_L2_bound}.
In \Cref{section_proof_prop_conv_Gamma_term}, for brevity, 
	we have only presented the adaptation of Lemma~\ref{lemma_Lambda_incr_L2_bound} 
	to the terms $\Gamma_{u,k,x}(\theta)$.
In this appendix, we present all the details of the adaptation of the rest of the proof of \Cref{prop_conv_Lambda_term}
	to the terms $\Gamma_{u,k,x}(\theta)$.

\medskip

The following lemma gives an exponential bound on the $L^2(\Ptrue)$ norm
uniformly in $x\in\SpaceX$ for the the average of the quantities $\Gamma_{u,\height{u},x}(\thetaTrue)$
over $u\in\T_n^*$.

\begin{lemme}
	\label{lemma_Gamma_conv_unif_x}	
Under the assumptions of \Cref{prop_conv_Gamma_term}, for all $x\in\SpaceX$ and $\theta\in\Theta_0$,
	there exist finite constants $C<\infty$ and $\alpha \in (0,1)$ such that for all $n\in\N^*$
	we have:
\begin{equation}\label{eq_Gamma_exp_decay_unif_x}
\Etrue\left[  \sup_{x\in\SpaceX} \left\vert
	\inv{\vert \T_n \vert} \sum_{u\in\T_n^*} \Gamma_{u,\height{u},x}(\theta) 
	- \Esp_\cU \otimes \Etrue\bigl[  \Gamma_{\rooot,\infty}(\theta) \bigr]
\right\vert^2 \right]^{1/2}
\leq C \alpha^n .
\end{equation}
\end{lemme}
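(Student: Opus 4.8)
The goal is \Cref{lemma_Gamma_conv_unif_x}, which is the exact analogue for the $\Gamma$-terms of \Cref{lemma_Lambda_conv_unif_x} proved for the $\Lambda$-terms. Since the statement of \Cref{prop_conv_Gamma_term} differs from that of \Cref{prop_conv_Lambda_term} only in the integrability hypothesis on $\phi_\theta$ (fourth moment instead of second moment) and in the value of the mixing constraint ($\rho<1/2$ instead of $\rho<1/\sqrt2$), and since \Cref{lemma_Gamma_incr_L2_bound} already provides the $\Gamma$-analogue of \Cref{lemma_Lambda_incr_L2_bound}, the plan is to mimic the proof of \Cref{lemma_Lambda_conv_unif_x} line by line, substituting the $\Gamma$-quantities for the $\Lambda$-quantities and invoking \Cref{lemma_Gamma_incr_L2_bound} at the place where \Cref{lemma_Lambda_incr_L2_bound} was used.

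\begin{proof}
The proof is identical to that of \Cref{lemma_Lambda_conv_unif_x},
replacing everywhere the terms $\Lambda_{u,k,x}(\theta)$, $\Lambda_{u,\infty}(\theta)$
by $\Gamma_{u,k,x}(\theta)$, $\Gamma_{u,\infty}(\theta)$,
and using \Cref{lemma_Gamma_incr_L2_bound} in place of \Cref{lemma_Lambda_incr_L2_bound}.
Let $x'\in\SpaceX$ and $\theta\in\Theta_0$.
Using Minkowski's inequality and Jensen's inequality, for all $n,k\in\N^*$, we get:
\begin{multline}
\Etrue\left[  \sup_{x\in\SpaceX} \left\vert
		\inv{\vert \T_n \vert} \sum_{u\in\T_n^*} \Gamma_{u,\height{u},x}(\theta)
		- \Esp_\cU \otimes \Etrue \bigl[  \Gamma_{\rooot,\infty}(\theta) \bigr]
	\right\vert^2 \right]^{1/2} \\
\begin{aligned}
& \leq  \Etrue\left[  \sup_{x,x'\in\SpaceX} \left\vert  \inv{\vert \T_n \vert}
	 		 \sum_{u\in\T_{k-1}^*} \Gamma_{u,\height{u},x}(\theta)
		\right\vert^2 \right]^{1/2} \\*
& \qquad +  \Etrue\left[  \sup_{x,x'\in\SpaceX}  \left\vert \inv{\vert \T_n \vert}
	 		 \sum_{u\in\T_n\setminus\T_{k-1}} \Gamma_{u,\height{u},x}(\theta)  - \Gamma_{u,k,x'}(\theta)
		\right\vert^2 \right]^{1/2} \\*
& \qquad + \Etrue\left[   \left\vert
		\inv{\vert \T_n \vert} \sum_{u\in\T_n\setminus\T_{k-1}} \Gamma_{u,k,x'}(\theta)
		- \Esp_\cU \otimes \Etrue \bigl[  \Gamma_{\rooot,k,x'}(\theta) \bigr]
	\right\vert^2 \right]^{1/2}  \\*
& \qquad + \Esp_\cU \otimes \Etrue \bigl[
		\vert \Gamma_{\rooot,k,x'}(\theta) - \Gamma_{\rooot,\infty}(\theta) \vert^2
	\bigr]^{1/2} .
\end{aligned}\label{eq_sum_Gamma_conv_L2_approx}
\end{multline}
The first, second and fourth terms in the right hand side of \eqref{eq_sum_Gamma_conv_L2_approx}
are controlled exactly as in the proof of \Cref{lemma_Lambda_conv_unif_x},
using \Cref{lemma_Gamma_incr_L2_bound} together with the uniform bound on the terms $\Gamma_{u,k,x}(\theta)$
that follows from it (analogous to \eqref{eq_unif_bound_dot_hukx}):
there exist a finite constant $C<\infty$ and $\beta\in (0,1)$ such that
the first term is upper bounded by $C 2^{-(n-k)}$,
and the second and fourth terms are both upper bounded by $C \beta^{k/2}$.
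For the third term, for a vertex $u$ in $\T\setminus \T_{k-1}$,
let $v_u\in\G_k$ be the unique vertex that satisfies the shape equality constraint
\eqref{eq_def_shape_subtree}, so that
$\Gamma_{u,k,x'}(\theta; Y_{\Delta(u,k)}=y_{\Delta(u,k)}) = \Gamma_{v_u,k,x'}(\theta; Y_{\Delta(v_u)}=y_{\Delta(u,k)})$.
Using the definition of $\Gamma_{u,k,x}(\theta)$ in \eqref{eq_def_Gamma_ukx}
together with the fourth-moment assumption on $\phi_\theta$ in \Cref{prop_conv_Gamma_term},
we get that $\Gamma_{u,k,x'}(\theta; Y_{\Delta(u,k)})$ is in $L^2(\Ptrue)$ for every $u\in\T\setminus \T_{k-1}$.
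Hence, applying \eqref{eq_LLN_upper_bound_Esp_M_Gn_main_body} in \Cref{lemma_ergodic_convergence}
to the collection of neighborhood-shape-dependent functions $( \Gamma_{v,k,x'}(\theta; Y_{\Delta(v)}=\cdot) )_{v\in\G_k}$,
together with \eqref{eq_equality_expectation_root_and_U_k} in \Cref{rem_rerooting_delta},
there exist $\gamma\in (0,1)$ and a finite constant $C'<\infty$ (not depending on $k$ and $n$)
such that the third term is upper bounded by $C' \gamma^{n-k}$ for all $n\geq k$.
Taking $k = \ceil{n/2}$ yields an exponential bound on the left hand side of \eqref{eq_sum_Gamma_conv_L2_approx},
which gives \eqref{eq_Gamma_exp_decay_unif_x}.
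This concludes the proof.
\end{proof}

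The main point of care is that the key input bound, namely \Cref{lemma_Gamma_incr_L2_bound}, rests on the covariance estimates of \Cref{lemma_bound_covar_terms_Gamma} and on the grouping of terms that forces the stronger constraint $\rho<1/2$; this is precisely why \Cref{prop_conv_Gamma_term} requires $\rho<1/2$ and the fourth-moment integrability, whereas the $\Lambda$-case needed only $\rho<1/\sqrt2$ and the second moment. With \Cref{lemma_Gamma_incr_L2_bound} in hand, however, no genuinely new difficulty arises: the ergodic convergence \Cref{lemma_ergodic_convergence} is applied to the neighborhood-shape-dependent functions built from $\Gamma_{v,k,x'}(\theta)$ exactly as it was applied to those built from $\Lambda_{v,k,x'}(\theta)$, and the remaining decomposition, continuity of $\theta\mapsto\Gamma_{\rooot,k,x}(\theta)$ and of $\theta\mapsto\Gamma_{\rooot,\infty}(\theta)$, the corollary giving almost-sure convergence via Borel--Cantelli, and the locally uniform law of large numbers all transfer verbatim from \Cref{section_proof_prop_conv_Lambda_term}. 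I would present \Cref{lemma_Gamma_conv_unif_x} as above and then note that the corollary (the $\Gamma$-analogue of \Cref{corol_Lambda_conv_unif_x}), the continuity lemmas (the analogues of \Cref{lemma_Lambda_root_continuous} and \Cref{corol_Lambda_root_infinite_continuous}), and the uniform law of large numbers (the analogue of \Cref{lemma_conv_unif_sum_Lambda_k}) follow by the same arguments, assembling to \Cref{prop_conv_Gamma_term}.
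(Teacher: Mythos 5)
Your proposal is correct and follows essentially the same route as the paper's own proof: the identical four-term Minkowski--Jensen decomposition, the bounds $C2^{-(n-k)}$ and $C\beta^{k/2}$ from \Cref{lemma_Gamma_incr_L2_bound} (together with the uniform bound analogous to \eqref{eq_unif_bound_dot_hukx}), the shape-equality identification and $L^2$ integrability feeding into \eqref{eq_LLN_upper_bound_Esp_M_Gn_main_body} of \Cref{lemma_ergodic_convergence} for the third term, and the final choice $k=\ceil{n/2}$. Your closing remarks on why $\rho<1/2$ and the fourth moment are needed, and on how the remaining lemmas transfer verbatim, also match the paper's presentation in \Cref{appendix_proof_prop_conv_Gamma_term}.
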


\begin{proof}
Let $x'\in\SpaceX$ and $\theta\in\Theta_0$.
Using Minkowski's inequality and Jensen's inequality, for all $n,k\in\N^*$, we get:
\begin{multline}
\Etrue\left[  \sup_{x\in\SpaceX} \left\vert
		\inv{\vert \T_n \vert} \sum_{u\in\T_n^*} \Gamma_{u,\height{u},x}(\theta) 
		- \Esp_\cU \otimes \Etrue \bigl[  \Gamma_{\rooot,\infty}(\theta) \bigr]
	\right\vert^2 \right]^{1/2} \displaybreak[3] \\
\begin{aligned} 
& \leq  \Etrue\left[  \sup_{x,x'\in\SpaceX} \left\vert  \inv{\vert \T_n \vert}
	 		 \sum_{u\in\T_{k-1}^*} \Gamma_{u,\height{u},x}(\theta) 
		\right\vert^2 \right]^{1/2} \\*
& \qquad +  \Etrue\left[  \sup_{x,x'\in\SpaceX}  \left\vert \inv{\vert \T_n \vert}
	 		 \sum_{u\in\T_n\setminus\T_{k-1}} \Gamma_{u,\height{u},x}(\theta)  - \Gamma_{u,k,x'}(\theta) 
		\right\vert^2 \right]^{1/2} \\*
& \qquad + \Etrue\left[   \left\vert
		\inv{\vert \T_n \vert} \sum_{u\in\T_n\setminus\T_{k-1}} \Gamma_{u,k,x'}(\theta) 
		- \Esp_\cU \otimes \Etrue \bigl[  \Gamma_{\rooot,k,x'}(\theta) \bigr]
	\right\vert^2 \right]^{1/2}  \\* 
& \qquad + \Esp_\cU \otimes \Etrue \bigl[  
		\vert \Gamma_{\rooot,k,x'}(\theta) - \Gamma_{\rooot,\infty}(\theta) \vert^2
	\bigr]^{1/2} 
.	
\end{aligned}\label{eq_sum_Gamma_conv_L2_approx}
\end{multline}
Using \Cref{lemma_Gamma_incr_L2_bound} 
together with \eqref{eq_unif_bound_dot_hukx} on page~\pageref{eq_unif_bound_dot_hukx}
(which, remind, are both immediate consequences of \Cref{lemma_score_incr_L2_bound}), 
there exists a finite constant $C<\infty$  and $\beta\in (0,1)$ such that
the first term in the right hand side of \eqref{eq_sum_Gamma_conv_L2_approx}
is upper bounded by $C 2^{-(n-k)}$
(note that $\frac{\vert T_{k-1} \vert}{\vert \T_n \vert} \leq 2^{-(n-k)}$),
and the second and fourth terms in the right hand side of \eqref{eq_sum_Gamma_conv_L2_approx}
are both upper bounded by $C \beta^{k/2}$.

We now give an upper bound for the second term in the right hand side of \eqref{eq_sum_Gamma_conv_L2_approx}.
For a vertex $u$ in $\T\setminus \T_{k-1}$, 
let $v_u\in\G_k$ be the unique vertex that satisfies the shape equality constraint \eqref{eq_def_shape_subtree} (on page \pageref{eq_def_shape_subtree}),
then we have:
\begin{equation}\label{eq_equal_Gamma_ukx_up_to_shape}
\Gamma_{u,k,x'}(\theta; Y_{\Delta(u,k)}=y_{\Delta(u,k)}) 
	=  \Gamma_{v_u,k,x'}(\theta; Y_{\Delta(v_u)}=y_{\Delta(u,k)}) .
\end{equation}
Moreover, using the definition of $\Gamma_{u,k,x}(\theta)$ in \eqref{eq_def_Gamma_ukx} 
	together with the assumption on $\phi_\theta$ in \Cref{prop_conv_Gamma_term},
	we get that the random variable $\Gamma_{u,k,x'}(\theta; Y_{\Delta(u,k)}=y_{\Delta(u,k)})$ 
	is in $L^2(\Ptrue)$ for every $u\in\T\setminus \T_{k-1}$.
Thus, we can apply \Cref{lemma_ergodic_convergence} (see in particular \eqref{eq_LLN_upper_bound_Esp_M_Gn_main_body})
to the collection of neighborhood-shape-dependent functions $( \Gamma_{v_u,k,x'}(\theta; Y_{\Delta(v)}=\cdot) )_{v\in\G_k}$
(remind that indexing functions with $\G_k$ or with $\ShapeSetValues_k$ is equivalent by \eqref{eq_def_set_possible_shapes}).
Using \eqref{eq_LLN_upper_bound_Esp_M_Gn_main_body} in \Cref{lemma_ergodic_convergence}
together with \eqref{eq_equal_h_ukx_up_to_shape} and \eqref{eq_equality_expectation_root_and_U_k} in \Cref{rem_rerooting_delta},
	we get that
	there exist $\gamma\in (0,1)$ and a finite constant $C'<\infty$
	(note that they both do not depend on $k$ and $n$) such that for all $n,k\in\N^*$ with $n\geq k$,
the second term in the right hand side of \eqref{eq_sum_Gamma_conv_L2_approx} 
	is upper bounded by $C' \gamma^{n-k}$.

Hence, taking $k = \ceil{n / 2}$, we get that the left hand side of \eqref{eq_sum_Gamma_conv_L2_approx} 
is upper bounded by $2 C \beta^{n/4}  + C' \alpha^{n/2} + C 2^{-n/2+1}$,
and thus decays at exponential rate as desired.
This concludes the proof.
\end{proof}

\Cref{lemma_Gamma_conv_unif_x} implies as a corollary
the convergence $\Ptrue$-\as and in $L^2(\Ptrue)$ uniformly in $x\in\SpaceX$
for the the sum of the quantities $\Gamma_{u,\height{u},x}(\thetaTrue)$ over $u\in\T_n^*$.

\begin{corol}
	\label{corol_Gamma_conv_unif_x}	
Under the assumptions of \Cref{prop_conv_Gamma_term}, for all $x\in\SpaceX$ and $\theta\in\Theta_0$, 
	we have:
\begin{equation*}\lim_{n\to\infty}
  \sup_{x\in\SpaceX} \left\vert
	\inv{\vert \T_n \vert} \sum_{u\in\T_n^*} \Gamma_{u,\height{u},x}(\theta) 
	- \Esp_\cU \otimes \Etrue \bigl[  \Gamma_{\rooot,\infty}(\theta) \bigr]
\right\vert
= 0
\quad \text{$\Ptrue$-\as and in $L^2(\Ptrue)$.}
\end{equation*}
\end{corol}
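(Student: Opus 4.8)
The plan is to mirror exactly the argument already used for \Cref{corol_Lambda_conv_unif_x}, now leaning on the exponential $L^2$ bound supplied by \Cref{lemma_Gamma_conv_unif_x} in place of \Cref{lemma_Lambda_conv_unif_x}. The $L^2(\Ptrue)$ half of the statement is immediate: \Cref{lemma_Gamma_conv_unif_x} asserts the existence of finite constants $C<\infty$ and $\alpha\in(0,1)$ such that
\begin{equation*}
\Etrue\left[ \sup_{x\in\SpaceX} \left\vert \inv{\vert\T_n\vert} \sum_{u\in\T_n^*} \Gamma_{u,\height{u},x}(\theta) - \Esp_\cU\otimes\Etrue[\Gamma_{\rooot,\infty}(\theta)] \right\vert^2 \right]^{1/2} \leq C\alpha^n ,
\end{equation*}
and the right-hand side tends to $0$ as $n\to\infty$, which yields the claimed $L^2(\Ptrue)$ convergence with the supremum over $x$ kept inside the expectation.

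For the almost sure convergence, the key observation is that the exponential rate makes the bound square-summable: since $\alpha\in(0,1)$, we have $\sum_{n\in\N^*} C^2\alpha^{2n} < \infty$, and therefore
\begin{equation*}
\sum_{n\in\N^*} \Etrue\left[ \sup_{x\in\SpaceX} \left\vert \inv{\vert\T_n\vert} \sum_{u\in\T_n^*} \Gamma_{u,\height{u},x}(\theta) - \Esp_\cU\otimes\Etrue[\Gamma_{\rooot,\infty}(\theta)] \right\vert^2 \right] < \infty .
\end{equation*}
First I would fix $\eps>0$ and apply Markov's inequality to the nonnegative random variable $\sup_{x\in\SpaceX}\vert\cdots\vert^2$ (the squared deviation inside the expectation), controlling $\Ptrue\bigl(\sup_{x\in\SpaceX}\vert\cdots\vert > \eps\bigr)$ by $\eps^{-2}$ times the $n$-th summand above. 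Summability of these probabilities over $n$ then lets the Borel--Cantelli lemma conclude that, $\Ptrue$-almost surely, $\sup_{x\in\SpaceX}\vert\cdots\vert\leq\eps$ for all large $n$; taking $\eps$ along a sequence decreasing to $0$ gives $\Ptrue$-almost sure convergence, uniformly in $x$.

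Since all the genuine work has already been absorbed into \Cref{lemma_Gamma_incr_L2_bound} and \Cref{lemma_Gamma_conv_unif_x} (in particular the covariance coupling estimates of \Cref{lemma_bound_covar_terms_Gamma} and the term-grouping that forces $\rho<1/2$), I do not expect any real obstacle at this stage: the argument is the routine upgrade of an exponentially decaying $L^2$ bound to almost sure convergence through Borel--Cantelli and Markov's inequality. The only point deserving a little care is to keep the supremum over $x\in\SpaceX$ under the expectation at every step, so that both conclusions are genuinely uniform in $x$ as stated; this is automatic because \Cref{lemma_Gamma_conv_unif_x} already displays its bound with $\sup_{x\in\SpaceX}$ inside the expectation.
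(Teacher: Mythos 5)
Your proof is correct and follows exactly the route taken in the paper: the $L^2(\Ptrue)$ convergence is read off directly from the exponential bound in \Cref{lemma_Gamma_conv_unif_x}, and the almost sure convergence is then obtained from square-summability of that bound via Markov's inequality and the Borel--Cantelli lemma, with the supremum over $x\in\SpaceX$ kept inside the expectation throughout. Nothing is missing.
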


\begin{proof}
The convergence in $L^2(\Ptrue)$ follows immediately from \Cref{lemma_Gamma_conv_unif_x}.
Moreover, using again \Cref{lemma_Gamma_conv_unif_x}, we have:
\begin{equation*}
\sum_{n\in\N^*} \Etrue\left[ \sup_{x\in\SpaceX} \left\vert
		\inv{\vert\T_n\vert} \sum_{u\in\T_n^*} \Gamma_{u,\infty}(\theta)
		- \Esp_\cU \otimes \Etrue [  \Gamma_{\rooot,\infty}(\theta) ]
	\right\vert^2 \right] < \infty .
\end{equation*}
Hence, Borel-Cantelli lemma and Markov's inequality imply that the convergence 
in the lemma also holds $\Ptrue$-\as
\end{proof}

The following lemma gives some continuity properties of the function $\theta \mapsto \Gamma_{\rooot,k,x}(\theta)$.

\begin{lemme}\label{lemma_Gamma_root_continuous}
Under the assumptions of \Cref{prop_conv_Gamma_term}, for all $x\in\SpaceX$ and $k\in\N$,
the random function $\theta \mapsto \Gamma_{\rooot,k,x}(\theta)$ is $\Prb_\cU \otimes \Ptrue$-\as continuous on $\Theta_0$.
Moreover, for all $\theta\in\Theta_0$, we have:
\begin{equation*}
\lim_{\delta\to 0} \Esp_\cU \otimes \Etrue\left[
		\sup_{\theta' \in\Theta_0 : \Vert \theta' - \theta \Vert \leq \delta} 
			\vert \Gamma_{\rooot,k,x}(\theta') - \Gamma_{\rooot,k,x}(\theta) \vert^2
	\right] = 0.
\end{equation*}
\end{lemme}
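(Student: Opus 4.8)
The plan is to mimic the proof of \Cref{lemma_Lambda_root_continuous} (which itself adapts \cite[Lemma~14]{doucAsymptoticPropertiesMaximum2004}), with the one structural change that $\Gamma_{\rooot,k,x}(\theta)$ is built from conditional \emph{variances} rather than conditional expectations. Recalling the definition \eqref{eq_def_Gamma_ukx}, I first expand each conditional variance as a double sum of conditional covariances, writing for a finite index set $I$ that $\Var_\theta[\sum_{v\in I}\phi_{\theta,v}\mid\cdots]=\sum_{v,w\in I}\Cov_\theta[\phi_{\theta,v},\phi_{\theta,w}\mid\cdots]$. This reduces the whole question to the continuity in $\theta$ of the individual conditional covariance terms $\Cov_\theta[\phi_{\theta,v},\phi_{\theta,w}\mid Y_{\DeltaR(\rooot,k)},X_{\parent^k(\rooot)}=x]$ (and the analogous ones conditioned on $Y_{\DeltaR^*(\rooot,k)}$), together with a square-integrable dominating function. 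This is exactly the point where the $L^4$ assumption on $\phi_\theta$ in \Cref{prop_conv_Gamma_term} enters, in contrast to the $L^2$ assumption used for $\Lambda$.

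For the dominating bound, I would use $|\Cov_\theta[\phi_{\theta,v},\phi_{\theta,w}\mid\cdots]|\leq 2\,\norm{\phi_v}_\infty\norm{\phi_w}_\infty$ (with $\norm{\phi_v}_\infty=\sup_{\theta\in\Theta_0}\sup_{x,x'}|\phi_\theta(x',x,Y_v)|$ as introduced before \Cref{lemma_bound_covar_terms_Gamma}), which follows from the trivial bounds on the two conditional expectations making up the covariance. Summing over $v,w\in\Tpast(\parent^k(\rooot),k)$ (a finite deterministic subtree containing the random $\DeltaR(\rooot,k)$, by \eqref{eq_inclusion_Delta_Tpast}) gives
\begin{equation*}
\sup_{\theta\in\Theta_0}\vert\Gamma_{\rooot,k,x}(\theta)\vert
\leq 4\Bigl(\sum_{v\in\Tpast(\parent^k(\rooot),k)}\norm{\phi_v}_\infty\Bigr)^2 .
\end{equation*}
By Cauchy--Schwarz and the stationarity of $(X,Y)$, each product $\norm{\phi_v}_\infty\norm{\phi_w}_\infty$ lies in $L^2(\Ptrue)$ since $\Etrue[\norm{\phi_\rooot}_\infty^4]<\infty$; as the sum is finite, the upper bound is a random variable in $L^2(\Ptrue)$ depending only on $Y_{\Tpast(\parent^k(\rooot),k)}$ (not on $\cU$). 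This is the main technical step, and its difficulty is purely the bookkeeping needed to convert the fourth-moment control of $\phi$ into genuine $L^2$ control of the dominating function.

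Next I would establish the pointwise continuity. Writing each covariance as $\Cov_\theta[\phi_{\theta,v},\phi_{\theta,w}\mid\cdots]=\Esp_\theta[\phi_{\theta,v}\phi_{\theta,w}\mid\cdots]-\Esp_\theta[\phi_{\theta,v}\mid\cdots]\,\Esp_\theta[\phi_{\theta,w}\mid\cdots]$, each conditional expectation is a ratio of integrals exactly as in \eqref{eq_control_varphi_frac}. Using Assumptions~\ref{assump_HMM_1}--\ref{assump_HMM_3}, the integrands in the numerators are continuous in $\theta$ and dominated (up to a constant power of $\sigma^+b^+$) by the square-integrable random variables built from the $\norm{\phi_\cdot}_\infty$, while the denominators are continuous in $\theta$ and bounded below away from zero by \Cref{assump_HMM_2}-\ref{assump_HMM_2:item2}; dominated convergence then yields $\Prb_\cU\otimes\Ptrue$-\as continuity of each ratio. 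Since products and finite sums of continuous functions are continuous, $\theta\mapsto\Gamma_{\rooot,k,x}(\theta)$ is $\Prb_\cU\otimes\Ptrue$-\as continuous on $\Theta_0$.

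Finally, for the $L^2$ modulus-of-continuity statement, I would combine the just-established a.s. continuity with the $L^2(\Prb_\cU\otimes\Ptrue)$ dominating bound: the random variable $\sup_{\theta'\in\Theta_0:\norm{\theta'-\theta}\leq\delta}\vert\Gamma_{\rooot,k,x}(\theta')-\Gamma_{\rooot,k,x}(\theta)\vert^2$ converges to $0$ $\Prb_\cU\otimes\Ptrue$-\as as $\delta\to0$ (by continuity) and is dominated by four times the square of the $L^2$ bound above, so dominated convergence gives $\lim_{\delta\to0}\Esp_\cU\otimes\Etrue[\cdots]=0$. I expect no further obstacle here, as this last step is identical in form to the corresponding passage for $\Lambda$; the only genuinely new work is the covariance expansion and the $L^4$-to-$L^2$ Cauchy--Schwarz bound of the second paragraph.
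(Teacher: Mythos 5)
Your proposal is correct and takes essentially the same route as the paper's proof: the same dominating bound $\sup_{\theta\in\Theta_0}\vert\Gamma_{\rooot,k,x}(\theta)\vert \leq 4\bigl(\sum_{v\in\Tpast(\parent^k(\rooot),k)}\norm{\phi^v}_\infty\bigr)^2$ with the $L^4$-to-$L^2$ Cauchy--Schwarz step, and your expansion of the variances into covariances and of each covariance into conditional expectations of $\phi_{\theta,v_1}\phi_{\theta,v_2}^\epsilon$ is exactly the paper's reduction to the family $\phi_\theta^{(2,\epsilon)}$, $\epsilon\in\{0,1\}$. The remaining steps — the ratio-of-integrals representation with continuous, dominated numerator and lower-bounded denominator, dominated convergence for a.s.\ continuity, and a final dominated convergence for the $L^2$ modulus of continuity — coincide with the paper's argument.
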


\begin{proof}
We mimic the proof of \cite[Lemma 14]{doucAsymptoticPropertiesMaximum2004}.

For all $v\in\Tpast$, define the random variable 
$\norm{\phi^v}_\infty = \sup_{\theta'\in\Theta_0} \sup_{x,x'\in\SpaceX} \vert \phi_{\theta'}(x',x,Y_v) \vert$.
Remind that under the assumptions of \Cref{prop_conv_Gamma_term},
the HMT process $(X,Y)$ is stationary and the random variable $\norm{\phi^{\rooot}}_\infty$ is in $L^4(\Ptrue)$.
Thus, for all $v\in\Tpast$, the random variable $\norm{\phi^v}_\infty$ is in $L^4(\Ptrue)$.
Remind from \eqref{eq_inclusion_Delta_Tpast} on page~\pageref{eq_inclusion_Delta_Tpast}
that $\DeltaR(\rooot,k)$ is a random subtree of the deterministic subtree $\Tpast(\parent^k(u),k)$.
Then, note that we have: 
\begin{equation*}
\sup_{\theta\in\Theta_0} \vert\Gamma_{\rooot,k,x}(\theta)\vert \leq 4 \left( \sum_{v\in\Tpast(\parent^k(\rooot),k)} \norm{\phi^v}_\infty \right)^2 ,
\end{equation*}
where the upper bound is a random variable in $L^2(\Ptrue)$ (and thus in $L^2(\Prb_\cU\otimes\Ptrue)$)
which depends on $Y_{\Tpast(\parent^k(u),k)}$ but not on $\cU$.
Hence, to prove the lemma, it suffices to prove that for all $v_1, v_2\in \Tpast(\parent^k(u),k)\setminus\{\parent^k(\rooot)\}$ 
and $\epsilon\in\{0,1\}$,
we have $\Prb_\cU \otimes \Ptrue$-\as:
\begin{align*}
\lim_{\delta\to 0} \sup_{\theta' \in\Theta_0 : \Vert \theta' - \theta \Vert \leq \delta} \
	\Bigl\vert \Bigr.& \Esp_{\theta'}[ \phi_{\theta'}^{(2,\epsilon)}(X_{\parent({v_1})},X_{v_1},Y_{v_1},X_{\parent({v_2})},X_{v_2},Y_{v_2}) 
			\,\vert\, Y_{\DeltaR(\rooot,k)}, X_{\parent^k(\rooot)}=x ] \\
	& \Bigl. - \Esp_{\theta}[ \phi_{\theta}^{(2,\epsilon)}(X_{\parent({v_1})},X_{v_1},Y_{v_1},X_{\parent({v_2})},X_{v_2},Y_{v_2})
			\,\vert\, Y_{\DeltaR(\rooot,k)}, X_{\parent^k(\rooot)}=x ]
	\Bigr\vert
= 0,\end{align*}
where:
\begin{equation*}
\phi_{\theta'}^{(2,\epsilon)}(X_{\parent({v_1})},X_{v_1},Y_{v_1},X_{\parent({v_2})},X_{v_2},Y_{v_2}) 
:= \phi_{\theta'}(X_{\parent({v_1})},X_{v_1},Y_{v_1})
		\phi_{\theta'}(X_{\parent({v_2})},X_{v_2},Y_{v_2})^\epsilon .
\end{equation*}

Denote $x_{\parent^k(\rooot)}=x$, and write:
\begin{multline}
\Esp_{\theta}[ \phi_{\theta'}^{(2,\epsilon)}(X_{\parent({v_1})},X_{v_1},Y_{v_1},X_{\parent({v_2})},X_{v_2},Y_{v_2}) 
	 \,\vert\, Y_{\DeltaR(\rooot,k)}, X_{\parent^k(\rooot)}=x ] \\
 = \frac{
		\int_{\SpaceX^{\vert\DeltaR(\rooot,k)\vert-1}} 
			\phi_{\theta'}^{(2,\epsilon)}(x_{\parent({v_1})},x_{v_1},Y_{v_1},x_{\parent({v_2})},x_{v_2},Y_{v_2}) \,
		\Psi( \drv x_{\DeltaR(\rooot,k)\setminus\{\parent^k(\rooot)\}} )
	}
	{
		\int_{\SpaceX^{\DeltaR(\rooot,k)\setminus\{\parent^k(\rooot)\}}}
			1 \,  \Psi( \drv x_{\DeltaR(\rooot,k)\setminus\{\parent^k(\rooot)\}} )
	}\cdot 	\label{eq_control_phi_frac}
\end{multline}
where:
\begin{equation*}
\Psi( \drv x_{\DeltaR(\rooot,k)\setminus\{\parent^k(\rooot)\}} ) :=
\prod_{w\in\DeltaR(\rooot,k)\setminus\{\parent^k(\rooot)\}} q_\theta(x_{\parent(w)},x_w) g_\theta(x_w,Y_w)
			\lambda(\drv x_w) .
\end{equation*}
Using Assumptions~\ref{assump_HMM_1}-\ref{assump_HMM_3} 
(which are part of the assumptions in \Cref{prop_conv_Gamma_term}),
we know that the integrand in the numerator of the right hand side of \eqref{eq_control_phi_frac}
is continuous \wrt $\theta$ and is upper bounded by the random variable 
$\norm{\phi^{v_1}}_\infty (\norm{\phi^{v_2}}_\infty)^\epsilon 
(\sigma^+ b^+)^{\vert \Tpast(\parent^k(u),k)\vert-1}$ (remind that $\sigma^+\geq 1$ and $b^+\geq 1$).
And similarly, the denominator is continuous \wrt $\theta$, 
and, using \Cref{assump_HMM_2}-\ref{assump_HMM_2:item2}, is lower bounded by the random variable:
\begin{equation*}
\prod_{w\in\DeltaR(\rooot,k)\setminus\{\parent^k(\rooot)\}} \sigma^- \inf_{\theta'\in\Theta}\int g_{\theta'}(x_w,Y_w) \lambda(\drv x_w) > 0.
\end{equation*}
Hence, using dominated convergence, we conclude that $\Prb_\cU\otimes\Ptrue$-\as the left hand side of \eqref{eq_control_phi_frac}
is continuous \wrt $\theta$.
This concludes the proof.
\end{proof}

As a corollary of \Cref{lemma_Gamma_root_continuous},
we get that the function $\theta \mapsto \Gamma_{\rooot,\infty}(\theta)$ is continuous in $L^2(\Ptrue)$.

\begin{corol}\label{corol_Gamma_root_infinite_continuous}
Under the assumptions of \Cref{prop_conv_Gamma_term}, for all $\theta\in\Theta_0$, we have:
\begin{equation*}
\lim_{\delta\to 0} \Esp_\cU \otimes \Etrue \left[ 
		\sup_{\theta' \in\Theta_0 : \Vert \theta' - \theta \Vert \leq \delta} 
			\vert \Gamma_{\rooot,\infty}(\theta') - \Gamma_{\rooot,\infty}(\theta) \vert^2
	\right]  = 0.
\end{equation*}
In particular, the function $\theta \mapsto \Esp_\cU  \otimes \Etrue [ \Gamma_{\rooot,\infty}(\theta) ]$ is continuous on $\Theta_0$.
\end{corol}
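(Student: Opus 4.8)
The plan is to follow verbatim the scheme of the proof of \Cref{corol_Lambda_root_infinite_continuous}, replacing the role of \Cref{lemma_Lambda_incr_L2_bound} by its $\Gamma$-analogue \Cref{lemma_Gamma_incr_L2_bound}, and the role of \Cref{lemma_Lambda_root_continuous} by \Cref{lemma_Gamma_root_continuous}. First I would fix $\theta\in\Theta_0$, an integer $k\in\N^*$ and a point $x\in\SpaceX$, and insert the finite-past quantity $\Gamma_{\rooot,k,x}$ via the triangle inequality: for every $\theta'$ in the ball of radius $\delta$ around $\theta$,
\begin{equation*}
\vert \Gamma_{\rooot,\infty}(\theta') - \Gamma_{\rooot,\infty}(\theta) \vert
\leq \sup_{\vartheta\in\Theta_0} \vert \Gamma_{\rooot,\infty}(\vartheta) - \Gamma_{\rooot,k,x}(\vartheta) \vert
	+ \vert \Gamma_{\rooot,k,x}(\theta') - \Gamma_{\rooot,k,x}(\theta) \vert
	+ \vert \Gamma_{\rooot,k,x}(\theta) - \Gamma_{\rooot,\infty}(\theta) \vert .
\end{equation*}
Taking the supremum over $\theta'$ with $\Vert \theta' - \theta \Vert \leq \delta$, then the $(\Esp_\cU \otimes \Etrue[\,\cdot\,])^{1/2}$ norm, and applying Minkowski's inequality, I obtain an upper bound of the form
\begin{multline*}
\Esp_\cU \otimes \Etrue\Bigl[ \sup_{\theta' : \Vert \theta' - \theta \Vert \leq \delta} \vert \Gamma_{\rooot,\infty}(\theta') - \Gamma_{\rooot,\infty}(\theta) \vert^2 \Bigr]^{1/2} \\
\leq 2\, \Esp_\cU \otimes \Etrue\Bigl[ \sup_{\vartheta\in\Theta_0} \vert \Gamma_{\rooot,\infty}(\vartheta) - \Gamma_{\rooot,k,x}(\vartheta) \vert^2 \Bigr]^{1/2}
	+ \Esp_\cU \otimes \Etrue\Bigl[ \sup_{\theta' : \Vert \theta' - \theta \Vert \leq \delta} \vert \Gamma_{\rooot,k,x}(\theta') - \Gamma_{\rooot,k,x}(\theta) \vert^2 \Bigr]^{1/2} .
\end{multline*}

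Next I would bound the first term on the right-hand side using \Cref{lemma_Gamma_incr_L2_bound} in the limit $k'\to\infty$ (recall that $\Gamma_{\rooot,\infty}(\theta)$ is, by definition, the $L^2(\Prb_\cU\otimes\Ptrue)$ limit of $\Gamma_{\rooot,k',x'}(\theta)$, so the bound of \Cref{lemma_Gamma_incr_L2_bound} passes to the limit). This yields a constant $C<\infty$, independent of $\delta$ and $k$, such that this term is at most $C\, k^2 (2\rho)^{k/3}$. Here is the only point deserving care: unlike the clean geometric bound $C\alpha^k$ available for $\Lambda$, the $\Gamma$ bound carries a polynomial prefactor $k^2$; but since the assumptions of \Cref{prop_conv_Gamma_term} include $\rho<1/2$, we have $2\rho<1$, so $k^2 (2\rho)^{k/3}\to 0$ as $k\to\infty$ and the prefactor is harmless.

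Finally I would take $\limsup_{\delta\to 0}$: by the continuity \Cref{lemma_Gamma_root_continuous}, the second term vanishes, leaving
\begin{equation*}
\limsup_{\delta\to 0} \Esp_\cU \otimes \Etrue\Bigl[ \sup_{\theta' : \Vert \theta' - \theta \Vert \leq \delta} \vert \Gamma_{\rooot,\infty}(\theta') - \Gamma_{\rooot,\infty}(\theta) \vert^2 \Bigr]^{1/2} \leq 2\, C\, k^2 (2\rho)^{k/3} .
\end{equation*}
Since the left-hand side does not depend on $k$, letting $k\to\infty$ makes the right-hand side vanish, which gives the first displayed identity of the corollary. The \emph{in particular} assertion, namely continuity of $\theta\mapsto \Esp_\cU \otimes \Etrue[\Gamma_{\rooot,\infty}(\theta)]$, then follows by Jensen's inequality, since $\vert \Esp_\cU \otimes \Etrue[\Gamma_{\rooot,\infty}(\theta')] - \Esp_\cU \otimes \Etrue[\Gamma_{\rooot,\infty}(\theta)] \vert \leq \Esp_\cU \otimes \Etrue[\vert \Gamma_{\rooot,\infty}(\theta') - \Gamma_{\rooot,\infty}(\theta)\vert] \leq \Esp_\cU \otimes \Etrue[\sup_{\Vert \theta'-\theta\Vert\leq\delta}\vert \Gamma_{\rooot,\infty}(\theta') - \Gamma_{\rooot,\infty}(\theta)\vert^2]^{1/2} \to 0$ as $\delta\to 0$. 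I do not expect any genuine obstacle here: the argument is a routine adaptation, and the only substantive inputs (the approximation bound and the finite-$k$ continuity) are supplied respectively by \Cref{lemma_Gamma_incr_L2_bound} and \Cref{lemma_Gamma_root_continuous}.
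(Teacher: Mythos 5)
Your proposal is correct and follows essentially the same route as the paper's own proof: insert the finite-past quantity $\Gamma_{\rooot,k,x}$ via Minkowski's inequality, bound the approximation error uniformly in $\theta$ by \Cref{lemma_Gamma_incr_L2_bound} (passed to the limit $k'\to\infty$, which is legitimate since the sup over $\Theta_0$ sits inside the expectation), use \Cref{lemma_Gamma_root_continuous} to kill the finite-$k$ term as $\delta\to 0$, and then let $k\to\infty$. The only cosmetic difference is that the paper absorbs the polynomial prefactor into a clean geometric bound $2C\beta^{k/2}$ with $\beta\in(0,1)$, whereas you keep the explicit form $C\,k^2(2\rho)^{k/3}$ and correctly observe that it vanishes because $\rho<1/2$.
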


\begin{proof}
Using Minkowski's inequality and \Cref{lemma_Gamma_incr_L2_bound}, 
there exist a finite constant $C<\infty$ and $\beta\in (0,1)$ such that for all $x\in\SpaceX$ and $k\in\N^*$, we have:
\begin{multline}
 \Esp_\cU \otimes \Etrue \left[ 
		\sup_{\theta' \in\Theta_0 : \Vert \theta' - \theta \Vert \leq \delta} 
			\vert \Gamma_{\rooot,\infty}(\theta') - \Gamma_{\rooot,\infty}(\theta) \vert^2
	 \right]^{1/2} \\
 \leq 	2 C \beta^{k/2}
	+ \Esp_\cU \otimes \Etrue \left[ 
		\sup_{\theta' \in\Theta_0 : \Vert \theta' - \theta \Vert \leq \delta} 
			\vert \Gamma_{\rooot,k,x}(\theta') - \Gamma_{\rooot,k,x}(\theta) \vert^2
	\right]^{1/2} . \label{eq_bound_cont_Gamma_root}
\end{multline}
Using \Cref{lemma_Gamma_root_continuous}, we get:
\begin{equation*}
\limsup_{\delta\to 0}
\Esp_\cU \otimes \Etrue \left[ 
		\sup_{\theta' \in\Theta_0 : \Vert \theta' - \theta \Vert \leq \delta} 
			\vert \Gamma_{\rooot,\infty}(\theta') - \Gamma_{\rooot,\infty}(\theta) \vert^2
	 \right]^{1/2}
\leq 2 C \beta^{k/2},
\end{equation*}
and taking $k\to\infty$, the upper bound vanishes.
This concludes the proof.
\end{proof}

We now prove a locally uniform law of large numbers for the quantities $\Gamma_{u,k,x}(\theta)$.

\begin{lemme}\label{lemma_conv_unif_sum_Gamma_k}
Under the assumptions of \Cref{prop_conv_Gamma_term}, for all $x\in\SpaceX$, we have:
\begin{align*}
\lim_{\delta\to 0} \lim_{n\to\infty} \sup_{\theta'\in\Theta_0 \,:\, \Vert\theta'-\theta\Vert \leq \delta}
	\left\vert
		\inv{\vert\T_n\vert} \sum_{u\in\T_n^*} \Gamma_{u,\height{u},x}(\theta')
		- \Esp_\cU \otimes \Etrue [  \Gamma_{\rooot,\infty}(\theta) ] 
	\right\vert
= 0,
\quad \Ptrue\text{-\as}
\end{align*}
\end{lemme}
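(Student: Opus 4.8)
The plan is to follow verbatim the proof of \Cref{lemma_conv_unif_sum_Lambda_k}, replacing the increments $\Lambda_{u,k,x}(\theta)$ by $\Gamma_{u,k,x}(\theta)$ everywhere. This substitution is legitimate because every auxiliary ingredient invoked in that proof now has an exact $\Gamma$-counterpart already established in this appendix: \Cref{corol_Gamma_conv_unif_x} plays the role of \Cref{corol_Lambda_conv_unif_x}, and \Cref{corol_Gamma_root_infinite_continuous} plays the role of \Cref{corol_Lambda_root_infinite_continuous}.

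First I would split the quantity sitting under the two limits, via the triangle inequality exactly as in \eqref{eq_upper_bound_unif_sup_Lambda_k}, into the sum of a \emph{modulus-of-continuity} term $\inv{\vert\T_n\vert}\sum_{u\in\T_n^*}\sup_{\theta'\in\Theta_0 : \Vert\theta'-\theta\Vert\leq\delta}\vert\Gamma_{u,\height{u},x}(\theta')-\Gamma_{u,\height{u},x}(\theta)\vert$ and a \emph{pointwise} term $\vert\inv{\vert\T_n\vert}\sum_{u\in\T_n^*}\Gamma_{u,\height{u},x}(\theta)-\Esp_\cU\otimes\Etrue[\Gamma_{\rooot,\infty}(\theta)]\vert$. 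By \Cref{corol_Gamma_conv_unif_x}, the pointwise term vanishes $\Ptrue$-\as (uniformly in $x$) as $n\to\infty$.

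For the modulus-of-continuity term, I would rerun the argument of \Cref{lemma_Gamma_conv_unif_x} and \Cref{corol_Gamma_conv_unif_x} verbatim, but with the random variable $\Gamma_{u,\height{u},x}(\theta)$ replaced by the oscillation random variable $\sup_{\theta'\in\Theta_0 : \Vert\theta'-\theta\Vert\leq\delta}\vert\Gamma_{u,\height{u},x}(\theta')-\Gamma_{u,\height{u},x}(\theta)\vert$. The point to check is that these new random variables still satisfy the hypotheses of that argument: they remain functions of $Y_{\Delta(u,k)}$ alone, so they form a neighborhood-shape-dependent collection with the shape equality \eqref{eq_equal_Gamma_ukx_up_to_shape} preserved under $\sup_{\theta'}$, and they lie in $L^2(\Ptrue)$ — the latter following from the $L^4$ integrability of $\phi_\theta$ assumed in \Cref{prop_conv_Gamma_term} together with the covariance bounds of \Cref{lemma_bound_covar_terms_Gamma}, exactly as for $\Gamma_{u,k,x}$ itself. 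This yields that the modulus-of-continuity term converges $\Ptrue$-\as as $n\to\infty$ to $\Esp_\cU\otimes\Etrue[\sup_{\theta'\in\Theta_0 : \Vert\theta'-\theta\Vert\leq\delta}\vert\Gamma_{\rooot,\infty}(\theta')-\Gamma_{\rooot,\infty}(\theta)\vert]$, and then \Cref{corol_Gamma_root_infinite_continuous} shows this limit vanishes as $\delta\to0$.

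The main (and essentially only) obstacle is bookkeeping rather than new analysis: one must confirm that the approximation bounds used in the $\Lambda$-proof — in particular the exponential $L^2$ bound of \Cref{lemma_Gamma_incr_L2_bound}, which is what imposes $\rho<1/2$ — carry over intact when the increments are replaced by their $\delta$-oscillations, and that taking the supremum over the closed ball $\{\theta' : \Vert\theta'-\theta\Vert\leq\delta\}$ is compatible with the measurability and shape-identification steps. Since $\Theta_0$ is a closed ball and $\theta\mapsto\phi_\theta$ is continuous, these suprema are measurable and the required $L^4$ domination holds uniformly in $\theta'$, so no genuinely new difficulty arises beyond those already resolved in \Cref{lemma_Gamma_incr_L2_bound,lemma_Gamma_conv_unif_x,corol_Gamma_root_infinite_continuous}.
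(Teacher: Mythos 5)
Your proposal is correct and coincides with the paper's own proof: the paper splits the supremum via the same triangle inequality into the oscillation term and the pointwise term, handles the pointwise term by \Cref{corol_Gamma_conv_unif_x}, and treats the oscillation term by rerunning the arguments of \Cref{lemma_Gamma_conv_unif_x} and \Cref{corol_Gamma_conv_unif_x} with $\Gamma_{u,\height{u},x}(\theta)$ replaced by $\sup_{\theta'\in\Theta_0 : \Vert\theta'-\theta\Vert\leq\delta}\vert \Gamma_{u,\height{u},x}(\theta')-\Gamma_{u,\height{u},x}(\theta)\vert$, concluding with \Cref{corol_Gamma_root_infinite_continuous} as $\delta\to 0$. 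Your bookkeeping remarks (shape-dependence of the oscillation variables, $L^2$ membership via the $L^4$ assumption, and the fact that the uniform-in-$\theta$ bound of \Cref{lemma_Gamma_incr_L2_bound} carries over to the oscillations) are exactly the implicit checks the paper relies on.
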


\begin{proof}
First, write:
\allowdisplaybreaks[4]
\begin{multline}
 \sup_{\theta'\in\Theta_0 \,:\, \Vert\theta'-\theta\Vert \leq \delta} 
	\left\vert
		\inv{\vert\T_n\vert} \sum_{u\in\T_n^*} \Gamma_{u,\height{u},x}(\theta')
		- \Esp_\cU \otimes \Etrue [  \Gamma_{\rooot,\infty}(\theta) ]
	\right\vert \\
\begin{aligned}
& \leq 
		\inv{\vert\T_n\vert} \sum_{u\in\T_n^*} 
			\sup_{\theta'\in\Theta_0 \,:\, \Vert\theta'-\theta\Vert \leq \delta}
		\bigl\vert \Gamma_{u,\height{u},x}(\theta') - \Gamma_{u,\height{u},x}(\theta) \bigr\vert 
	 \\*
& \qquad\qquad +
	\left\vert
		\inv{\vert\T_n\vert} \sum_{u\in\T_n^*} \Gamma_{u,\height{u},x}(\theta)
		- \Esp_\cU \otimes \Etrue [  \Gamma_{\rooot,\infty}(\theta) ] 
	\right\vert . \label{eq_upper_bound_unif_sup_Gamma_k}
\end{aligned}
\end{multline}
\allowdisplaybreaks[1]

Then, we use the exact same argument as in the proofs of \Cref{lemma_Gamma_conv_unif_x} and \Cref{corol_Gamma_conv_unif_x}
where for all $u\in\T^*$, the random variable $\Gamma_{u,k,x}(\theta)$ is replaced by the random variable:
\begin{equation*}
\sup_{\theta'\in\Theta_0 \,:\, \Vert\theta'-\theta\Vert \leq \delta}
		\bigl\vert \Gamma_{u,\height{u},x}(\theta') - \Gamma_{u,\height{u},x}(\theta) \bigr\vert ,
\end{equation*}
	which are in $L^2(\Ptrue)$ using the assumptions of \Cref{prop_conv_Gamma_term}.
This gives us
that the first term in the upper bound of \eqref{eq_upper_bound_unif_sup_Gamma_k}
converges $\Ptrue$-\as as $n\to\infty$ to:
\begin{equation*}
\Esp_\cU \otimes \Etrue \left[
		\sup_{\theta' : \Vert \theta' - \theta \Vert \leq \delta} 
			\vert \Gamma_{\rooot,\infty}(\theta') - \Gamma_{\rooot,\infty}(\theta) \vert
	 \right],
\end{equation*}
which, by \Cref{corol_Gamma_root_infinite_continuous}, vanishes when $\delta\to 0$.
\Cref{corol_Gamma_conv_unif_x} implies that 
the second term in the upper bound of \eqref{eq_upper_bound_unif_sup_Gamma_k}
vanishes $\Ptrue$-\as when $n\to\infty$.
This concludes the proof. \end{proof}

Combining the previous lemmas in this appendix and \Cref{lemma_Gamma_incr_L2_bound}, 
we are now ready to prove \Cref{prop_conv_Gamma_term}.

\begin{proof}[Proof of \Cref{prop_conv_Gamma_term}]
By \Cref{lemma_Gamma_incr_L2_bound}, for all $u\in\T$, we have that $(\Gamma_{u,k,x}(\theta))_{k\in\N^*}$
is a Cauchy sequence uniformly \wrt $\theta\in\Theta_0$ in $L^2(\Prb_\cU\otimes\Ptrue)$
that converges to some limit $\Gamma_{u,\infty}(\theta)$
	(that does not depend on $x$).
By \Cref{corol_Gamma_conv_unif_x}, we have that $\Ptrue$-\as
the convergence for the the average of the quantities $\Gamma_{u,\height{u},x}(\thetaTrue)$ over $u\in\T_n^*$ holds uniformly in $x\in\SpaceX$,
that is, \eqref{eq_conv_as_sum_Gamma_ukx} in \Cref{prop_conv_Gamma_term} holds.
By \Cref{corol_Gamma_root_infinite_continuous},
we have that the function $\theta \mapsto \Esp_\cU  \otimes \Etrue [ \Gamma_{\rooot,\infty}(\theta) ]$ is continuous on $\Theta_0$.
Finally, the last part of the proposition is given by \Cref{lemma_conv_unif_sum_Gamma_k}.
\end{proof}


\begin{thebibliography}{HBSLLB{\etalchar{+}}17}

\bibitem[Ath69]{athreyaLimitTheoremsMultitype1969}
Krishna~Balasundaram Athreya.
\newblock Limit theorems for multitype continuous time markov branching
  processes: {{I}}. {{The}} case of an eigenvector linear functional.
\newblock {\em Zeitschrift f{\"u}r Wahrscheinlichkeitstheorie und Verwandte
  Gebiete}, 12(4):320--332, 1969.

\bibitem[BDSG09]{bercuAsymptoticAnalysisBifurcating2009}
Bernard Bercu, Beno{\^\i}te De~Saporta, and Anne {G{\'e}gout-Petit}.
\newblock Asymptotic {{Analysis}} for {{Bifurcating AutoRegressive Processes}}
  via a {{Martingale Approach}}.
\newblock {\em Electronic Journal of Probability}, 14(none), January 2009.

\bibitem[BFA22]{bouguilaHiddenMarkovModels2022}
Nizar Bouguila, Wentao Fan, and Manar Amayri, editors.
\newblock {\em Hidden {{Markov Models}} and {{Applications}}}.
\newblock Unsupervised and {{Semi-Supervised Learning}}. Springer International
  Publishing, Cham, 2022.

\bibitem[Bil99]{billingsleyConvergenceProbabilityMeasures1999}
Patrick Billingsley.
\newblock {\em Convergence of Probability Measures}.
\newblock Wiley Series in Probability and Statistics. {{Probability}} and
  Statistics Section. Wiley, New York, 2nd ed edition, 1999.

\bibitem[BP66]{baumStatisticalInferenceProbabilistic1966}
Leonard~E. Baum and Ted Petrie.
\newblock Statistical {{Inference}} for {{Probabilistic Functions}} of {{Finite
  State Markov Chains}}.
\newblock {\em The Annals of Mathematical Statistics}, 37(6):1554--1563,
  December 1966.

\bibitem[BPD22a]{PendaCLT}
S.~Val{\`e}re Bitseki~Penda and Jean-Fran{\c c}ois Delmas.
\newblock Central limit theorem for bifurcating {{Markov}} chains under
  pointwise ergodic conditions.
\newblock {\em The Annals of Applied Probability}, 32(5), October 2022.

\bibitem[BPD22b]{PendaCLTestim}
S.~Val{\`e}re Bitseki~Penda and Jean-Fran{\c c}ois Delmas.
\newblock Central {{Limit Theorem}} for {{Kernel Estimator}} of {{Invariant
  Density}} in {{Bifurcating Markov Chains Models}}.
\newblock {\em Journal of Theoretical Probability}, October 2022.

\bibitem[BPDG14]{bitsekipendaDeviationInequalitiesModerate2014a}
S.~Val{\`e}re Bitseki~Penda, Hac{\`e}ne Djellout, and Arnaud Guillin.
\newblock Deviation inequalities, moderate deviations and some limit theorems
  for bifurcating {{Markov}} chains with application.
\newblock {\em The Annals of Applied Probability}, 24(1), February 2014.

\bibitem[BPSW70]{baumMaximizationTechniqueOccurring1970a}
Leonard~E. Baum, Ted Petrie, George Soules, and Norman Weiss.
\newblock A {{Maximization Technique Occurring}} in the {{Statistical
  Analysis}} of {{Probabilistic Functions}} of {{Markov Chains}}.
\newblock {\em The Annals of Mathematical Statistics}, 41(1):164--171, February
  1970.

\bibitem[BRR98]{bickelAsymptoticNormalityMaximumlikelihood1998}
Peter~J. Bickel, Ya'acov Ritov, and Tobias Ryd{\'e}n.
\newblock Asymptotic normality of the maximum-likelihood estimator for general
  hidden {{Markov}} models.
\newblock {\em The Annals of Statistics}, 26(4), August 1998.

\bibitem[BS96]{bertsekasStochasticOptimalControl1996}
Dimitri~P. Bertsekas and Steven~E. Shreve.
\newblock {\em Stochastic Optimal Control: The Discrete Time Case}.
\newblock Optimization and Neural Computation Series. Athena Scientific,
  Belmont, Mass, 1996.

\bibitem[BWX13]{biesingerDiscoveringMappingChromatin2013}
Jacob Biesinger, Yuanfeng Wang, and Xiaohui Xie.
\newblock Discovering and mapping chromatin states using a tree hidden
  {{Markov}} model.
\newblock {\em BMC Bioinformatics}, 14(S5):S4, April 2013.

\bibitem[CB01]{choiMultiscaleImageSegmentation2001}
Hyeokho Choi and Richard~G. Baraniuk.
\newblock Multiscale image segmentation using wavelet-domain hidden {{Markov}}
  models.
\newblock {\em IEEE Transactions on Image Processing}, 10(9):1309--1321,
  September 2001.

\bibitem[CMR05]{CappeHMM}
Olivier Capp{\'e}, {\'E}ric Moulines, and Tobias Ryd{\'e}n.
\newblock {\em Inference in {{Hidden Markov Models}}}.
\newblock Springer {{Series}} in {{Statistics}}. Springer New York, New York,
  NY, 2005.

\bibitem[CNB98]{CrouseHMT}
Matt~S. Crouse, Robert~D. Nowak, and Richard~G. Baraniuk.
\newblock Wavelet-based statistical signal processing using hidden {{Markov}}
  models.
\newblock {\em IEEE Transactions on Signal Processing}, 46(4):886--902, April
  1998.

\bibitem[DGCC05]{durandAnalysisPlantArchitecture2005}
Jean-Baptiste Durand, Yann Gu{\'e}don, Yves Caraglio, and Evelyne Costes.
\newblock Analysis of the plant architecture via tree-structured statistical
  models: The hidden {{Markov}} tree models.
\newblock {\em New Phytologist}, 166(3):813--825, June 2005.

\bibitem[DGG04]{durandComputationalMethodsHidden2004}
Jean-Baptiste Durand, Paulo Gon{\c c}alves, and Yann Gu{\'e}don.
\newblock Computational {{Methods}} for {{Hidden Markov Tree Models}}---{{An
  Application}} to {{Wavelet Trees}}.
\newblock {\em IEEE Transactions on Signal Processing}, 52(9):2551--2560,
  September 2004.

\bibitem[DLR77]{dempsterMaximumLikelihoodIncomplete1977a}
Arthur~P. Dempster, Nan~M. Laird, and Donald~B. Rubin.
\newblock Maximum {{Likelihood}} from {{Incomplete Data Via}} the
  {{{\emph{EM}}}} {{Algorithm}}.
\newblock {\em Journal of the Royal Statistical Society Series B: Statistical
  Methodology}, 39(1):1--22, September 1977.

\bibitem[DM01]{doucAsymptoticsMaximumLikelihood2001}
Randal Douc and Catherine Matias.
\newblock Asymptotics of the {{Maximum Likelihood Estimator}} for {{General
  Hidden Markov Models}}.
\newblock {\em Bernoulli}, 7(3):381, June 2001, 3318493.

\bibitem[DM10]{delmasDetectionCellularAging2010}
Jean-Fran{\c c}ois Delmas and Laurence Marsalle.
\newblock Detection of cellular aging in a {{Galton}}{\textendash}{{Watson}}
  process.
\newblock {\em Stochastic Processes and their Applications},
  120(12):2495--2519, December 2010.

\bibitem[DMOVH11]{doucConsistencyMaximumLikelihood2011}
Randal Douc, Eric Moulines, Jimmy Olsson, and Ramon Van~Handel.
\newblock Consistency of the maximum likelihood estimator for general hidden
  {{Markov}} models.
\newblock {\em The Annals of Statistics}, 39(1), February 2011.

\bibitem[DMPS18]{DoucMC}
Randal Douc, {\'E}ric Moulines, Pierre Priouret, and Philippe Soulier.
\newblock {\em Markov {{Chains}}}.
\newblock Springer {{Series}} in {{Operations Research}} and {{Financial
  Engineering}}. {Springer International Publishing}, {Cham}, 2018.

\bibitem[DMR04]{doucAsymptoticPropertiesMaximum2004}
Randal Douc, {\'E}ric Moulines, and Tobias Ryd{\'e}n.
\newblock Asymptotic properties of the maximum likelihood estimator in
  autoregressive models with {{Markov}} regime.
\newblock {\em The Annals of Statistics}, 32(5), October 2004.

\bibitem[DRS16]{doucMaximizingSetAsymptotic2016}
Randal Douc, Fran{\c c}ois Roueff, and Tepmony Sim.
\newblock The maximizing set of the asymptotic normalized log-likelihood for
  partially observed {{Markov}} chains.
\newblock {\em The Annals of Applied Probability}, 26(4), August 2016.

\bibitem[Duf11]{dufloRandomIterativeModels2011}
Marie Duflo.
\newblock {\em Random Iterative Models}.
\newblock {Springer}, {Berlin}, 2011.

\bibitem[DWB08]{duarteWaveletdomainCompressiveSignal2008}
Marco~F. Duarte, Michael~B. Wakin, and Richard~G. Baraniuk.
\newblock Wavelet-domain compressive signal reconstruction using a {{Hidden
  Markov Tree}} model.
\newblock In {\em 2008 {{IEEE International Conference}} on {{Acoustics}},
  {{Speech}} and {{Signal Processing}}}, pages 5137--5140, Las Vegas, NV, USA,
  March 2008. IEEE.

\bibitem[GL06]{genon-catalotLerouxMethodGeneral2006}
Valentine {Genon-Catalot} and Catherine Laredo.
\newblock Leroux's method for general hidden {{Markov}} models.
\newblock {\em Stochastic Processes and their Applications}, 116(2):222--243,
  February 2006.

\bibitem[GOB13]{graveHiddenMarkovTree2013}
{\'E}douard Grave, Guillaume Obozinski, and Francis Bach.
\newblock Hidden {{Markov}} tree models for semantic class induction.
\newblock In {\em {{CoNLL}}}, volume Proceedings of the Seventeenth Conference
  on Computational Natural Language Learning, pages 94--103, Sofia, Bulgaria,
  2013. Association for Computational Linguistics.

\bibitem[Guy07]{GuyonLimitTheorem}
Julien Guyon.
\newblock Limit theorems for bifurcating {{Markov}} chains. {{Application}} to
  the detection of cellular aging.
\newblock {\em The Annals of Applied Probability}, 17(5-6):1538--1569, October
  2007, 0710.5434.

\bibitem[HBSLLB{\etalchar{+}}17]{hanzouli-bensalahFrameworkBasedHidden2017}
Houda Hanzouli-Ben~Salah, Jerome Lapuyade-Lahorgue, Julien Bert, Didier Benoit,
  Philippe Lambin, Angela Van~Baardwijk, Emmanuel Monfrini, Wojciech
  Pieczynski, Dimitris Visvikis, and Mathieu Hatt.
\newblock A framework based on hidden {{Markov}} trees for multimodal image
  co-segmentation.
\newblock {\em Medical Physics}, 44(11):5835--5848, November 2017.

\bibitem[HYG17]{huMicrocalcificationDiagnosisDigital2017}
Kai Hu, Wei Yang, and Xieping Gao.
\newblock Microcalcification diagnosis in digital mammography using extreme
  learning machine based on hidden {{Markov}} tree model of dual-tree complex
  wavelet transform.
\newblock {\em Expert Systems with Applications}, 86:135--144, November 2017.

\bibitem[JP99]{jensenAsymptoticNormalityMaximum1999}
Jens~Ledet Jensen and Niels~V{\ae}ver Petersen.
\newblock Asymptotic normality of the maximum likelihood estimator in state
  space models.
\newblock {\em The Annals of Statistics}, 27(2), April 1999.

\bibitem[KDM13]{kondoHiddenMarkovTree2013}
Shuhei Kondo, Kevin Duh, and Yuji Matsumoto.
\newblock Hidden markov tree model for word alignment.
\newblock In {\em WMT}, volume Proceedings of the Eighth Workshop on
  Statistical Machine Translation, pages 503--511, Sofia, Bulgaria, 2013.
  Association for Computational Linguistics.

\bibitem[Kos01]{koskiHiddenMarkovModels2001}
Timo Koski.
\newblock {\em Hidden {{Markov}} Models for Bioinformatics}.
\newblock Number~2 in Computational Biology. Kluwer academic publ, Dordrecht
  [etc.], 2001.

\bibitem[KS19]{kasaharaAsymptoticPropertiesMaximum2019}
Hiroyuki Kasahara and Katsumi Shimotsu.
\newblock Asymptotic properties of the maximum likelihood estimator in regime
  switching econometric models.
\newblock {\em Journal of Econometrics}, 208(2):442--467, February 2019.

\bibitem[Ler92]{lerouxMaximumlikelihoodEstimationHidden1992}
Brian~G. Leroux.
\newblock Maximum-likelihood estimation for hidden {{Markov}} models.
\newblock {\em Stochastic Processes and their Applications}, 40(1):127--143,
  February 1992.

\bibitem[LGM00]{leglandExponentialForgettingGeometric2000}
Fran{\c c}ois Le~Gland and Laurent Mevel.
\newblock Exponential {{Forgetting}} and {{Geometric Ergodicity}} in {{Hidden
  Markov Models}}.
\newblock {\em Mathematics of Control, Signals, and Systems}, 13(1):63--93,
  February 2000.

\bibitem[MBY{\etalchar{+}}12]{makhijaniAccelerated3DMERGE2012}
Mahender~K. Makhijani, Niranjan Balu, Kiyofumi Yamada, Chun Yuan, and
  Krishna~S. Nayak.
\newblock Accelerated {{3D MERGE}} carotid imaging using compressed sensing
  with a hidden markov tree model.
\newblock {\em Journal of Magnetic Resonance Imaging}, 36(5):1194--1202,
  November 2012.

\bibitem[ME14]{mamonHiddenMarkovModels2014}
Rogemar~S. Mamon and Robert~J. Elliott, editors.
\newblock {\em Hidden {{Markov Models}} in {{Finance}}: {{Further
  Developments}} and {{Applications}}, {{Volume II}}}, volume 209 of {\em
  International {{Series}} in {{Operations Research}} \& {{Management
  Science}}}.
\newblock Springer US, Boston, MA, 2014.

\bibitem[MW43]{mannStatisticalTreatmentLinear1943}
Henry~B. Mann and Abraham Wald.
\newblock On the {{Statistical Treatment}} of {{Linear Stochastic Difference
  Equations}}.
\newblock {\em Econometrica}, 11(3/4):173, July 1943, 1905674.

\bibitem[NSK20]{nakashimaLineageEMAlgorithm2020}
So~Nakashima, Yuki Sughiyama, and Tetsuya~J Kobayashi.
\newblock Lineage {{EM}} algorithm for inferring latent states from cellular
  lineage trees.
\newblock {\em Bioinformatics}, 36(9):2829--2838, May 2020.

\bibitem[OCB{\etalchar{+}}09]{olariuModifiedVariationalBayes2009}
Victor Olariu, Daniel Coca, Stephen~A. Billings, Peter Tonge, Paul Gokhale,
  Peter~W. Andrews, and Visakan Kadirkamanathan.
\newblock Modified variational {{Bayes EM}} estimation of hidden {{Markov}}
  tree model of cell lineages.
\newblock {\em Bioinformatics}, 25(21):2824--2830, November 2009.

\bibitem[Rab89]{rabinerTutorialHiddenMarkov1989}
Lawrence~R. Rabiner.
\newblock A tutorial on hidden {{Markov}} models and selected applications in
  speech recognition.
\newblock {\em Proceedings of the IEEE}, 77(2):257--286, 1989.

\bibitem[RCBK00]{rombergMultiscaleClassificationUsing2000}
Justin Romberg, Hyeokho Choi, Richard~G. Baraniuk, and Nicholas Kingsbury.
\newblock Multiscale classification using complex wavelets and hidden
  {{Markov}} tree models.
\newblock In {\em Proceedings 2000 {{International Conference}} on {{Image
  Processing}} ({{Cat}}. {{No}}.{{00CH37101}})}, pages 371--374 vol.2,
  Vancouver, BC, Canada, 2000. IEEE.

\bibitem[RS18]{raschMathematicalStatistics2018}
Dieter Rasch and Dieter Schott.
\newblock {\em Mathematical Statistics}.
\newblock John Wiley \& Sons, Hoboken, 2018.

\bibitem[SH17]{shahdoostiImageDenoisingDual2017}
Hamid~Reza Shahdoosti and Seyede~Mahya Hazavei.
\newblock Image denoising in dual contourlet domain using hidden {{Markov}}
  tree models.
\newblock {\em Digital Signal Processing}, 67:17--29, August 2017.

\bibitem[Wal49]{waldNoteConsistencyMaximum1949}
Abraham Wald.
\newblock Note on the {{Consistency}} of the {{Maximum Likelihood Estimate}}.
\newblock {\em The Annals of Mathematical Statistics}, 20(4):595--601, December
  1949.

\bibitem[Wei24]{weibelErgodicTheoremBranching2024}
Julien Weibel.
\newblock Ergodic theorem for branching {{Markov}} chains indexed by trees with
  arbitrary shape, March 2024, arXiv, 2403.16505, [math].

\bibitem[XJS18]{xieGeographicalHiddenMarkov2018}
Miao Xie, Zhe Jiang, and Arpan~Man Sainju.
\newblock Geographical {{Hidden Markov Tree}} for {{Flood Extent Mapping}}.
\newblock In {\em Proceedings of the 24th {{ACM SIGKDD International
  Conference}} on {{Knowledge Discovery}} \& {{Data Mining}}}, pages
  2545--2554, London United Kingdom, July 2018. ACM.

\bibitem[YD15]{yuAutomaticSpeechRecognition2015}
Dong Yu and Li~Deng.
\newblock {\em Automatic {{Speech Recognition}}: {{A Deep Learning Approach}}}.
\newblock Signals and {{Communication Technology}}. Springer London, London,
  2015.

\bibitem[ZM09]{zucchiniHiddenMarkovModels2009}
Walter Zucchini and Iain~L. MacDonald.
\newblock {\em Hidden {{Markov Models}} for {{Time Series}}: {{An Introduction
  Using R}}}.
\newblock {Chapman and Hall/CRC}, 0 edition, April 2009.

\end{thebibliography}
\end{document}